\newtheorem{theorem}{Theorem}[section]
\newtheorem*{theorem*}{Theorem}
\newtheorem*{lemma*}{Lemma}
\newtheorem*{proposition*}{Proposition}
\newtheorem*{corollary*}{Corollary}
\newtheorem{lemma}[theorem]{Lemma}
\newtheorem{defn}[theorem]{Definition}
\newtheorem{prop}[theorem]{Proposition}
\newtheorem{cor}[theorem]{Corollary}
\newtheorem{example}[theorem]{Example}
\theoremstyle{definition}
\newtheorem{remark}[theorem]{Remark}
\newtheorem{question}[theorem]{Question}
\def\dl{\delta}
\def\ep{\epsilon}
\def\lm{\lambda}
\def\ri{\rightarrow}
\def\sse{\subseteq}
\def\gm{\gamma}
\def\bt{\beta}
\def\al{\alpha}
\def\pa{\partial}
\def\map{\rightarrow}
\newcommand\LMY{\lim^Y_{n \map \infty}}
\newcommand\LMX{\lim^X_{n \map \infty}}
\newcommand\RED{\textcolor{red}}
\def\L{\mathcal{L}}
\def\F{\mathcal{F}}
\def\Y{\mathcal{Y}}
\def\G{\mathcal{G}}
\def\R{\mathbb {R}}
\def\N{\mathbb {N}}
\def\Z{\mathbb {Z}}
\title[Surjectivity of the CT map in metric (graph) bundles]{Surjectivity of the Cannon--Thurston map in metric (graph) bundles}
\author{Rakesh Halder}
\address{Tata Institute of Fundamental Research (TIFR), Mumbai, India}
\email{rhalder.math@gmail.com}
\subjclass[2020]{20F65, 20F67}
\keywords{Hyperbolic metric spaces (groups), Cannon--Thurston map, Metric (graph) bundles}
\begin{document}
	
	\maketitle
	
	\begin{abstract}
		
{\em Metric (graph) bundles} generalize the notion of {\em fiber bundles} to the context of geometric group theory and were introduced by Mj and Sardar in \cite{pranab-mahan}. Suppose $X$ is a metric (graph) bundle over $B$ such that the fibers are ({\em uniformly}) hyperbolic, and the total space $X$ is also hyperbolic. In this generality, Mj--Sardar proved that the inclusion of a fiber into $X$ admits a continuous extension to the (Gromov) boundary.

\noindent In this article, we prove that such a continuous extension map between boundaries is {\em surjective} in the following two key settings.

\begin{enumerate}
\item The fibers are uniformly quasiisometric to a nonelementary hyperbolic group.

\item The fibers are one-ended hyperbolic metric spaces.
\end{enumerate}

\noindent Our result generalizes a theorem of Bowditch in which the fibers were assumed to be the hyperbolic plane, and it answers a question posed by Lazarovich, Margolis and Mj in \cite{NirMaMj-commen}.
	\end{abstract}
	
	\tableofcontents
	
	\section{Introduction}\label{intro}
{\em Metric (graph) bundles} generalize the notion of {\em fiber bundles} to the context of geometric group theory and were introduced by Mj and Sardar (\cite{pranab-mahan}). At the same time, metric (graph) bundles generalize the existing coarse-geometric notion of {\em trees of metric spaces} a la Bestvina--Feighn (\cite{BF,mitra-trees}) in the special case where the inclusions of the edge spaces into the adjacent vertex spaces are uniformly coarsely surjective (quasiisometries). Suppose $X$ is a metric (graph) bundle over $B$ such that the fibers are ({\em uniformly}) hyperbolic, and the total space $X$ is also hyperbolic. In this generality, Mj--Sardar proved that the inclusion of a fiber into $X$ admits a continuous extension to the (Gromov) boundary.\smallskip

\noindent{\em In this article, we prove that such a continuous extension map between boundaries is {\em surjective} in the following two key settings (Corollary \ref{cor-main gp over ray new intro}, Theorem \ref{thm-combo1}).

\begin{enumerate}
	\item The fibers are uniformly quasiisometric to a fixed nonelementary hyperbolic group.
	
	\item The fibers are one-ended hyperbolic metric spaces.
\end{enumerate}}
	
Suppose $H<G$ are (Gromov) hyperbolic groups. A question posed in \cite[p. $136$]{mitra-ct}, \cite[p. $527$]{mitra-trees} is the following: {\em does the inclusion $i:H\map G$ extend to a continuous map $\pa i:\pa H\map\pa G$}? See also \cite[Question $1.19$]{bestvinaprob}. Such a continuous extension, if it exists, is known as a {\em Cannon--Thurston map} (or {\em CT map} for short) \cite{mitra-ct, mitra-trees} after the pioneering work of Cannon and Thurston (\cite{CT,CTpub}). 
Their result proves the following. Suppose $M$ is a closed hyperbolic $3$-manifold fibering over a circle with fiber a closed orientable hyperbolic surface $S$. Then the inclusion $\pi_1(S)\map\pi_1(M)$ extends continuously to the boundary (and the extension map between boundaries is surjective). More generally, one may ask the same question in the context of (Gromov) hyperbolic metric spaces $Y\sse X$ (see Definition \ref{CT-map}). 

\noindent Generalizing the foundational work of Cannon and Thurston, some positive answers to the above question were given in \cite{mitra-ct, mitra-trees}. Since then, numerous significant results concerning the existence and structure of CT maps have been obtained, particularly in the setting of hyperbolic subgroups of hyperbolic groups -- see, for example, \cite{ps-krishna, ps-kap, mitra-endlam, kl15, dkt, JKLO, baker-riley-hydra, ps-rakesh-ct}. However, the general question posed by Mahan Mitra regarding the existence of CT maps for arbitrary hyperbolic groups was answered negatively in \cite{baker-riley}. Further examples illustrating the nonexistence of CT maps in the hyperbolic group context can be found in \cite{mats-oguni} and \cite{HMS-landing}. For a comprehensive survey of CT maps, we refer the reader to \cite{icm-mj}.
	
In \cite{bowditch-stacks}, Bowditch studied metric (graph) bundles in a special case, namely when the base is either $\mathbb{R}$ or $\mathbb{R}_{\ge 0}$, but using a different terminology. He referred to such structures as {\em bi-infinite stacks} and {\em semi-infinite stacks}, respectively. Through this framework, Bowditch established several results concerning Teichm\"uller geodesics and ending laminations in the setting of hyperbolic surfaces. The notion of {\em metric fibrations} were introduced in \cite{farb-mosher} and that can be thought of as metric bundles \cite{pranab-mahan}. In \cite{hamenst-word}, the author used metric fibrations and proved a combination theorem regarding the hyperbolicity of extensions of surface groups. 

Metric graph bundles (respectively, metric bundles) are formally defined in Definition \ref{defn-metric graph bundle} (respectively, Definition \ref{defn-metric bundle}). Let $\dl\ge0$, $L\ge0$. Suppose $\pi:X\map B$ is a metric (graph) bundle such that:

\begin{itemize}
\item the fibers $F_b:=\pi^{-1}(b),~b\in B$ are $\dl$-hyperbolic,

\item the barycenter maps $\pa^3F_b\map F_b,~b\in B$ are $L$-coarsely surjective, i.e., $L$-neighborhood of the image is $F_b$, and

\item the total space $X$ is $\dl$-hyperbolic.
\end{itemize}
Then for any $b\in B$, the inclusion $F_b\map X$ admits a CT map $\pa F_b\map\pa X$ (\cite[Theorem $5.3$]{pranab-mahan}). In this paper, we address the following question.
	
\begin{question}\label{qsn-surj in metric bundle}
Is the above CT map $\pa F_b\map\pa X$ surjective?
\end{question}	

{\em This paper aims to give a positive answer to Question \ref{qsn-surj in metric bundle}.} One of the consequences of our more general results give an affirmative answer to Question \ref{qsn-surj in metric bundle}  as follows. 


\begin{cor}[see Theorem \ref{thm-application}]\label{cor-main gp over ray new intro}
Let $k\ge1$, $D\ge1$. Suppose $\pi:X\map B$ is a metric graph bundle such that:

\begin{enumerate}
\item the fibers $F_b:=\pi^{-1}(b),~b\in V(B)$ are $k$-quasiisometric to a fixed nonelementary hyperbolic group, and

\item for any vertex $b\in V(B)$, the valence at $u$ in $F_b$ is bounded above by $D$, where $u$ is a vertex of $F_b$.
\end{enumerate}

Further, assume that $X$ is hyperbolic. Then for any $b\in V(B)$, the CT map $\pa F_b\to\pa_s X$ is surjective. 
\end{cor}

Question \ref{qsn-surj in metric bundle} has its origins in two key settings, outlined below (see Subsection \ref{exp sec} for additional sources of examples). Suppose 
$1 \xrightarrow{} H \xrightarrow{} G \xrightarrow{p} Q \xrightarrow{}1$ 
is a short exact sequence of hyperbolic groups with $H$ nonelementary. Let $S_H\sse S_G$ be a finite generating sets for $H$ and $G$ respectively, and let $S_Q=\{p(x):x\in S_G\}\setminus\{1\}$. This gives rise to a metric graph bundle $\pi: X \to B$, where $X$, $B$, and the fibers are the Cayley graphs of $G$, $Q$, and $H$, respectively, with respect to $S_G,~S_Q$ and $S_H$ (see \cite[Example $1.8$]{pranab-mahan}). In this setting, note that the CT map, as in Question \ref{qsn-surj in metric bundle} (see \cite{mitra-ct}), is surjective. Indeed, it is enough to show that the limit set $\Lambda_G(H)$ is equal to $\pa G$ (see Lemma \ref{CT image is limit set}). By \cite[Theorem 5.1]{coornaert-mea}, $\pa G$ is minimal, closed, $G$-invariant set (in $\pa G$). On the other hand, for any $g\in G$, $g.\Lambda_G(H)=\Lambda_G(gH)=\Lambda_G(gHg^{-1})=\Lambda_G(H)$ (for the first two equalities, see \cite[Lemma 2.9]{ps-conical}, for instance). Hence $\Lambda_G(H)=\pa G$ as $\Lambda_G(H)$ is closed in $\pa G$. Examples of this sort include the celebrated work of Cannon and Thurston \cite{CT}.

\noindent As mentioned above, Bowditch, in \cite{bowditch-stacks}, studied {\em bi-infinite stacks} and {\em semi-infinite stacks} over $\R$ and $\R_{\ge0}$ respectively. 
However, following \cite{pranab-mahan}, we will simply refer to such spaces as metric bundles (Definition \ref{defn-metric bundle}). Bowditch gave a positive answer to Question~\ref{qsn-surj in metric bundle} in the case where the fibers are hyperbolic planes as follows.
\begin{theorem}\textup{(\cite[Theorem 2.6.1]{bowditch-stacks})}\label{bowditch thm}
Suppose $\pi:X\map[0,\infty)$ is a metric bundle such that the fibers are isometric to the hyperbolic plane $\mathbb H^2$. Further, suppose that $X$ is hyperbolic. Then the inclusion $\pi^{-1}(0)\map X$ admits a surjective Cannon--Thurston map.
\end{theorem}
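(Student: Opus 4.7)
The plan is to establish surjectivity by producing, for each $\xi\in\partial X$, a sequence $(y_n)\subset F_0:=\pi^{-1}(0)$ with $y_n\to\xi$ in the Gromov compactification $X\cup\partial X$. Given such a sequence, continuity of the CT map $\partial i:\partial F_0\to\partial X$ forces every subsequential limit $\eta$ of $(y_n)$ in $\partial F_0$ to satisfy $\partial i(\eta)=\xi$, which is the desired surjectivity. Fix $\xi\in\partial X$, a basepoint $p_0\in F_0$, and a geodesic ray $\gamma:[0,\infty)\to X$ from $p_0$ to $\xi$. The central object to analyze is the $1$-Lipschitz profile $f(t):=\pi(\gamma(t))$.

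First I would dispose of the \emph{bounded case}: $f([0,\infty))\subset[0,M]$ for some $M$. Here $\gamma$ lives in the slab $\pi^{-1}([0,M])$, and iterating the uniform fiber-to-fiber quasi-isometries supplied by the bundle produces a coarse retraction $\Phi:\pi^{-1}([0,M])\to F_0$ with displacement bounded by a constant $K=K(M)$. Setting $y_n:=\Phi(\gamma(n))$ gives $d_X(y_n,\gamma(n))\le K$, so $y_n\to\xi$ in $X\cup\partial X$ directly.

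The main case is when $f$ is unbounded, so along some $t_k\to\infty$ one has $f(t_k)\to\infty$. Here I would apply the ladder machinery of Mj--Sardar, specialized to the hyperbolic-plane fiber setting. For each $k$ let $x_k:=\gamma(t_k)\in F_{f(t_k)}$, let $q_k\in F_{f(t_k)}$ be the point obtained by flowing $p_0$ up through the bundle, and let $\sigma_k$ be the $\mathbb H^2$-geodesic in $F_{f(t_k)}$ from $q_k$ to $x_k$. Using the edge-wise quasi-isometries between adjacent fibers, lift $\sigma_k$ to every intermediate fiber to form a \emph{ladder} $L_k\subset X$. The slice $\mu_k:=L_k\cap F_0$ is a geodesic segment in the hyperbolic plane $F_0$; let $y_k$ be its endpoint lying over $x_k$.

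The hard part is to show that $(y_k)$ converges to $\xi$, equivalently that the Gromov products $(y_k\mid\gamma(t_k))_{p_0}\to\infty$. I would combine two ingredients: (i) ladders in such bundles are uniformly quasi-convex and quasi-isometrically embedded in $X$, a consequence of the Mitra-type ladder lemma available in the Mj--Sardar framework; and (ii) the hyperbolic plane is one-ended, so as $\gamma$ exits into $F_{f(t_k)}$ the companion ladder $L_k$ cannot collapse back to a bounded subregion of $F_0$. With these in hand, a slim-triangle argument in $X$ shows that the geodesic from $p_0$ to $y_k$ fellow-travels $\gamma|_{[0,t_k]}$ on a long prefix, yielding the required lower bound on Gromov products. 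The restriction to $\mathbb H^2$ fibers enters exactly through the canonical choice of $\sigma_k$ and through the transitive isometry group that makes the transverse lifts across fibers canonical up to uniform bounded error; it is this piece that will need genuine generalization to prove the author's main results beyond Bowditch's setting.
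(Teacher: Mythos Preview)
Your proposal has a genuine gap in the unbounded case. Consider the hardest subcase: $\xi=\Sigma(\infty)$ for a qi section $\Sigma$ through $p_0$. Then the geodesic $\gamma$ is asymptotic to $\Sigma$, so $x_k=\gamma(t_k)$ lies within bounded distance of $\Sigma(f(t_k))$. Flowing $x_k$ back down the bundle to $F_0$ therefore lands near $\Sigma(0)=p_0$; your points $y_k$ stay in a bounded region of $F_0$ and cannot converge to $\xi$ in $\partial X$. The fellow-travelling claim fails for exactly the same reason: the geodesic $[p_0,y_k]$ is short, not long. Your invocation of one-endedness (``the ladder cannot collapse back to a bounded subregion'') is precisely the statement that needs proof, and the ladder construction you describe---a single flowed geodesic segment---does not see the one-ended structure at all.

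The paper's argument (which follows Bowditch) is organized quite differently. It first proves a decomposition $\partial X=\Lambda_X(F_0)\cup\partial_{good}X$, where $\partial_{good}X$ consists of endpoints of \emph{good} qi sections (barycenters of flowed ideal triangles). The limit-set part is automatic. For a good qi section $\Sigma$, one-endedness is used in $F_0$ to produce, for each $n$, a path $\alpha_{n,0}$ joining two points on a fixed bi-infinite fiber geodesic through $\Sigma(0)$ while staying outside the $n$-ball about $\Sigma(0)$. These paths are flowed up to all fibers. If no flowed path ever comes close to $\Sigma$, then flaring forces $d_i(\Sigma(i),\alpha_{n,i})\ge ab^i$, and exponential divergence of geodesics in $F_i$ then forces $\mathrm{length}(\alpha_{n,i})\gtrsim b^{ab^i}$---doubly exponential in $i$. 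But the flow only stretches lengths exponentially, a contradiction. The nearby point on some $\alpha_{n,i}$ then yields a qi lift whose foot in $F_0$ lies outside the $n$-ball and converges to $\Sigma(\infty)$. The key idea you are missing is this growth-rate contradiction, which is where one-endedness actually does its work.
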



	
In the proof of Theorem \ref{bowditch thm}, apart from the hyperbolicity of $\mathbb H^2$, one crucial condition used is that $\mathbb H^2$ is one-ended. 
It is not hard to see that the barycenter map for the hyperbolic plane $\mathbb H^2$ is coarsely surjective. 
However, one can easily construct examples of hyperbolic metric bundles whose fibers do not satisfy the coarse surjectivity of the barycenter map, and for which the CT map is not surjective (for instance, see \cite[Example $3.14$]{NirMaMj-commen}).
	Motivated by such examples and Theorem \ref{bowditch thm}, Lazarovich, Margolis and Mj posed the following question. 
	
	\begin{question}\textup{(\cite[Question 3.15]{NirMaMj-commen})}\label{main qsn}
Let $k\ge1$, $L\ge0$. Suppose $\pi:X\map [0,\infty)$ is a metric graph bundle  such that: $(1)$ the fibers $\pi^{-1}(i),~i\in\N\cup\{0\}$ are $k$-quasiisometric to a fixed one-ended hyperbolic space, and $(2)$ the barycenter maps $\pa^3(\pi^{-1}(i))\map \pi^{-1}(i)$ are $L$-coarsely surjective. Further, assume that $X$ is hyperbolic.

Is the CT map $\pa\pi^{-1}(0)\map\pa X$ surjective?
	\end{question}
	
Note that the existence of a CT map in Question \ref{main qsn} follows from a more general result of Mitra (\cite{mitra-trees}). 
In this article, we give an affirmative answer to Question \ref{main qsn}. In fact, we prove this result when the fibers are not necessarily one-ended as follows. 
	

\begin{theorem}[see Theorem \ref{thm-application}]\label{thm-combo1}
Let $\dl\ge0$, $L\ge0$ and $D\ge1$. Suppose $\pi:X\map B$ is a metric graph bundle such that 

\begin{itemize}
	\item the fibers $F_b:=\pi^{-1}(b),~b\in V(B)$ are $\dl$-hyperbolic,
	
	\item the barycenter maps $\pa^3F_b\map F_b,~b\in V(B)$ are $L$-coarsely surjective, and 
	
	\item the total space $X$ is $\dl$-hyperbolic.
\end{itemize}
Further, we assume one of the following.

$(A)$ Suppose that for each fiber $F_b,~b\in V(B)$ and for each vertex $u\in F_b$, the valence at $u$ in $F_b$ is bounded above by $D$.

$(B)$ Suppose that the fibers $F_b,~b\in V(B)$ are one-ended and proper metric spaces.\smallskip

Finally, let $A$ be a qi embedded subgraph of $B$, and let $Y=\pi^{-1}(A)$. Then the CT map $\pa_s Y\map\pa_s X$ is surjective.
\end{theorem}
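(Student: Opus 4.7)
The plan is: for each $\xi \in \partial X$, to produce $\eta \in \partial F_0$ with $\partial i(\eta) = \xi$, where $i: F_0 \hookrightarrow X$ is the fiber inclusion. Fix a vertex $x_0 \in F_0$ and a geodesic ray $r: [0,\infty) \to X$ from $x_0$ to $\xi$. I would split the argument based on whether the projection $\pi \circ r$ in the base $[0,\infty)$ is bounded.

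If $\pi(r) \subseteq [0,N]$ for some $N$ (the bounded, horizontal case), then $r$ lies in the finite slab $\pi^{-1}([0,N])$. The uniform quasi-isometries between adjacent fibers in a metric graph bundle (obtained by tracking vertical edges) compose to give a quasi-isometry between $F_0$ and the slab; this induces a homeomorphism of Gromov boundaries, and pulling back $\xi$ produces the desired $\eta$.

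Otherwise $\pi(r)$ is unbounded (the vertical case): choose $s_n \to \infty$ with $t_n := \pi(r(s_n)) \to \infty$, set $p_n = r(s_n)$, and for each $n$ construct a ladder $L_n \subseteq X$ over the base geodesic $[0,t_n]$ seeded by a geodesic in $F_{t_n}$ connecting $p_n$ to a lift of $x_0$; by Mj--Sardar such ladders are uniformly quasiconvex in $X$. Let $q_n \in L_n \cap F_0$ be a point close to $[x_0, p_n]$ at which this $X$-geodesic last leaves $F_0$. Since $p_n \to \xi$ along $r$, ladder quasiconvexity gives $d_X(x_0, q_n) \to \infty$ and $q_n \to \xi$ in $\partial X$. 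It then suffices to show that $(q_n)$ converges in the Gromov boundary of $F_0$ to some $\eta$; continuity of the CT map then automatically forces $\partial i(\eta) = \xi$.

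The crux, and main obstacle, is precisely this convergence of $(q_n)$ in $\partial F_0$: a priori the $q_n$ can oscillate over $F_0$ despite converging to $\xi$ in $\partial X$, and this is exactly the mechanism behind counterexamples such as \cite[Example $3.14$]{NirMaMj-commen} to unconditional surjectivity. The two hypotheses rule out such pathologies in different ways. Under (B), one-endedness and properness of the fibers give topological rigidity restricting how the ladder traces $L_n \cap F_0$ can spread in $F_0$. Under (A), the quasi-isometry of each fiber to a fixed nonelementary hyperbolic group, combined with the bounded valence, provides a cocompact homogeneous model with uniform exponential growth; together with the coarse surjectivity of the barycenter maps, this yields uniform control on the widths of $L_n \cap F_0$ near $x_0$. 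In both cases the goal reduces to showing that the Gromov product $(q_m | q_n)^{F_0}_{x_0}$ tends to infinity, producing the desired $\eta \in \partial F_0$ and completing the proof.
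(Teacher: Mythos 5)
Your dichotomy on whether $\pi\circ r$ is bounded is reasonable, and the bounded case is handled correctly (indeed, more simply: $\pi^{-1}([0,N])$ lies within bounded Hausdorff distance of $F_0$ in $X$, so $\Lambda_X(\pi^{-1}([0,N]))=\Lambda_X(F_0)$). The gap is in the unbounded case, and it is fatal. The assertion that ``ladder quasiconvexity gives $d_X(x_0,q_n)\to\infty$ and $q_n\to\xi$ in $\partial X$'' is false as stated and, more to the point, \emph{is} the theorem: by Lemma \ref{CT image is limit set}, $\partial i_{F_0,X}(\partial F_0)=\Lambda_X(F_0)$, so surjectivity of the CT map is exactly the claim $\Lambda_X(F_0)=\partial X$, and ``$q_n\to\xi$ with $q_n\in F_0$'' is just the assertion $\xi\in\Lambda_X(F_0)$. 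It does not follow from quasiconvexity. Concretely, if $\xi=\Sigma(\infty)$ for a good qi section $\Sigma$ through (or near) $x_0$ (Lemma \ref{for special qi sec}, Corollary \ref{cor-existence good qi}), then by qi-stability the geodesic ray $[x_0,\xi)$ exits $F_0$ within bounded distance of $x_0$ and thereafter shadows $\Sigma$; your $q_n$ then stay near $x_0$ for all $n$ and do not converge to $\xi$. If quasiconvexity alone implied $d_X(x_0,q_n)\to\infty$, the theorem would hold with no hypothesis on the fibers, contradicting \cite[Example 3.14]{NirMaMj-commen}.

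You have also misplaced where the remaining work lies. Once $q_n\in F_0$ and $q_n\to\xi$ in $\partial X$ are granted, $\xi\in\Lambda_X(F_0)=\partial i(\partial F_0)$ is immediate (alternatively, pass to a subsequence converging in the compact space $\partial F_0$ and use continuity of the CT map); establishing convergence of $(q_n)$ in $\partial F_0$ is not an additional obstacle. The substance the paper supplies, and which your sketch of (A) and (B) does not, is a mechanism for producing points of $F_0$ converging to $\Sigma(\infty)$: via Proposition \ref{description of pa X} one reduces to good qi section endpoints; under (B) one uses one-endedness to choose paths $\alpha_{n,0}\subset F_0$ outside large balls around $\Sigma(0)$, flows them fiberwise, and compares at most exponential length growth against the doubly exponential divergence forced by flaring (Lemma \ref{once start decreasing is decreasing}) to show some flowed path must come near $\Sigma$ at some height, yielding a qi section that starts far from $\Sigma(0)$ but converges to $\Sigma(\infty)$; under (A) the same contradiction is run on vertex counts (Lemma \ref{exponentially bdd} versus the exponential growth from Theorem \ref{thm-bary imp expo growth}). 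Note also that (A) in Theorem \ref{thm-combo1} is bounded valence plus coarsely surjective barycenter maps, not quasi-isometry to a fixed hyperbolic group (that is Corollary \ref{cor-main group over ray}); your appeal to a ``cocompact homogeneous model'' assumes a hypothesis not present. The vague references to ``topological rigidity'' and ``uniform control on widths'' do not fill this gap.
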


For convenience, we will refer to Theorem~\ref{thm-combo1} under the assumption $(A)$ as Theorem~\ref{thm-combo1} $(A)$, and under the assumption $(B)$ as Theorem~\ref{thm-combo1} $(B)$.

\begin{remark}
$(1)$ In the above theorem, $Y$ is hyperbolic (\cite[Remark 4.4]{pranab-mahan}; see Remark \ref{crucial remark} (2)), and the inclusion $Y\to X$ admits a CT map due to Krishna and Sardar \cite{ps-krishna} (see Theorem \ref{all direction surj imply surj} $(2)$).

For a Gromov hyperbolic geodesic metric space $W$ that need not be proper, $\pa_sW$ denotes the sequential boundary (or the Gromov boundary) of $W$ (see Section \ref{prelims}).\smallskip

	

$(2)$ Note that the condition in Theorem \ref{thm-combo1} $(A)$ that `the valence at each vertex of any fiber $F_b,~b\in V(B)$ in $F_b$ is bounded above by some constant $D\ge1$' is not there in Theorem~\ref{thm-combo1} $(B)$. Also observe that the assumptions in Question~\ref{main qsn} imply the hypotheses of Theorem~\ref{thm-combo1} $(B)$ when the base $B$ is $[0,\infty)$. Hence, the conditions in Theorem~\ref{thm-combo1} $(B)$ are more general. For instance, cf. Example \ref{exp-combinatorial horoball} and Remark \ref{rmk-fibers are not uniformly QI}. Example \ref{exp-combinatorial horoball} discusses a metric graph bundle arising from combinatorial horoball, in which the fibers are not uniformly quasiisometric to a fixed (one-ended) hyperbolic space, and that the CT map is known to be surjective.
\end{remark}

\noindent{\bf CT map is not injective}: Recall that {\em dendrite} is a compact metric space in which any two distinct points are connected by a unique arc. Suppose $X$ is as in Theorem \ref{thm-combo1} or Corollary \ref{cor-main gp over ray new intro} with the base is a ray, i.e., $B=[0,\infty)$. Bowditch proved that the boundary $\pa X$ is a dendrite (see \cite[Proposition 2.5.2]{bowditch-stacks}). Under a mild assumption, using the topological property of dendrite, one can obtain that the surjective CT map in this setting is not injective (see Theorem \ref{thm-lamination}). (This is studied in terms of Cannon--Thurston lamination in Subsection \ref{subsec-CT lamination}.) As a particular case, if the fibers are uniformly quasiisometric to a fixed nonelementary hyperbolic group, then the surjective CT map fails to be injective. For instance, the CT map $\pa\pi^{-1}(b)\map\pa X,~b\in V(B)$ as in Corollary \ref{cor-main gp over ray new intro} is not injective (see Theorem \ref{thm-lamination}).\smallskip

In the course of proving our main results, we establish the following two theorems, which may be of independent interest. For the notion of exponential growth used here -- generalizing the classical definition for groups -- we refer the reader to Definition \ref{defn-exponential growth}. We say that a metric graph has bounded valence if the valence at each vertex is bounded above by a fixed constant (Definition \ref{defn-valence}). 
	
	\begin{theorem}[Theorem \ref{thm-bary imp expo growth}]\label{thm-bary imp expo growth1}
		Suppose $\Gamma$ is a hyperbolic metric graph and the barycenter map $\pa^3\Gamma\map\Gamma$ is coarsely surjective. Moreover, suppose that the graph $\Gamma$ has bounded valence. Then $\Gamma$ has exponential growth.
	\end{theorem}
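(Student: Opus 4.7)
The plan is to mimic, in the absence of a group action, the classical argument that a non-elementary hyperbolic group has exponential growth: I construct a quasi-isometrically embedded infinite binary tree in $\Gamma$, using the coarse surjectivity of the barycenter map to obtain at every node sufficiently many divergent directions in $\partial\Gamma$.

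Fix a basepoint $x_0 \in \Gamma$. By hypothesis, $x_0$ lies within $L$ of the barycenter of some triple $(a_0, b_0, c_0) \in \partial^3\Gamma$; in a $\delta$-hyperbolic space this yields three uniform quasi-geodesic rays from (near) $x_0$ to $a_0, b_0, c_0$ that pairwise $O(\delta)$-diverge starting at $x_0$. Choose a step length $N$, large compared to $\delta$, $L$, and the quasi-geodesic constants. Walking distance $N$ along any two of these rays produces two children $x_1^L, x_1^R$ of $x_0$. Inductively, at each newly reached node $x_k$, I apply the barycenter hypothesis to obtain a fresh triple of boundary points and hence three new pairwise $O(\delta)$-divergent rays from $x_k$. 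Among these three, at most one can shadow the back-direction from which we arrived at $x_k$, because two rays that both shadow the back-direction would have to $O(\delta)$-coincide for a distance of order $N$ near $x_k$, contradicting pairwise divergence at $x_k$. The other two are genuine forward directions; walking distance $N$ along each produces two children of $x_k$, and iterating $k$ levels yields a binary tree with $2^k$ leaves, each lying within graph-distance $kN$ of $x_0$.

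The main obstacle, and the heart of the argument, is to verify that the $2^k$ leaves are pairwise distinct vertices of $\Gamma$, indeed $\Omega(N)$-separated. This is where $\delta$-hyperbolicity and the choice $N \gg \delta$ are essential: once two branches of the tree have split at a common ancestor $x_j$, the subsequent quasi-geodesic paths they follow diverge at $x_j$ with the tripod-like geometry of an ideal triangle barycenter, and by stability of quasi-geodesics in a $\delta$-hyperbolic space they remain uniformly apart at every level thereafter, so no two leaves collide. Once distinctness is established, one concludes $|B(x_0, kN)| \ge 2^k$, hence $|B(x_0, m)| \ge 2^{\lfloor m/N \rfloor}$ for all sufficiently large $m$, giving exponential growth with rate at least $2^{1/N}$. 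The bounded-valence hypothesis is used only to guarantee that $\Gamma$ is locally finite and proper, so that the Gromov boundary and the barycenter map are honestly defined and so that counting vertices in balls is meaningful.
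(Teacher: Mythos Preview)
Your proposal is correct and follows essentially the same strategy as the paper: build a qi-embedded rooted binary tree by using the barycenter hypothesis at each node to produce two forward directions with uniformly small Gromov product with the incoming edge, then count the leaves. The paper formalizes the ``branches remain apart'' step via an explicit local-to-global quasi-geodesic lemma (small Gromov products at consecutive nodes plus segments of length $\gg\delta$ force the concatenated path to be a global quasi-geodesic), which is the precise mechanism underlying your appeal to stability; your reading of the role of bounded valence is also accurate.
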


	We prove Theorem \ref{thm-bary imp expo growth1} using the following result. 
	
	\begin{theorem}[see Theorem \ref{thm-emb of T3}]\label{thm-emb of T31}
		Suppose $X$ is a hyperbolic geodesic metric space and the barycenter map $\pa^3X\map X$ is coarsely surjective. Then we can quasiisometrically embed the trivalent tree $T_3$ in $X$. 
		
		Moreover, the quasiisometric embedding constant depends only on the hyperbolicity constant of $X$ and the coarse surjectivity constant of the barycenter map.
	\end{theorem}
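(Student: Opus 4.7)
The plan is to build a quasi-isometric embedding $\phi: T_3 \to X$ inductively by placing vertices of $T_3$ at barycenters of carefully chosen ideal triangles. Fix an edge length $R = R(\delta, L)$ to be chosen later; once the three outgoing directions at each image vertex have pairwise bounded Gromov products at that vertex, the concatenation of geodesic segments along any tree-geodesic becomes a quasi-geodesic in $X$ by the standard hyperbolic local-to-global criterion, which yields the quasi-isometric embedding with constants depending only on $\delta$ and $L$.

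The key step is a branching lemma: there exists $C = C(\delta, L)$ such that for any $x \in X$ and any boundary point $\xi \in \partial X$ (or any geodesic ray based at $x$), one can find a point $x' \in X$ with $d(x, x') \leq L$ and boundary points $\alpha, \beta \in \partial X$ for which the pairwise Gromov products of $\xi, \alpha, \beta$ at $x'$ are all at most $C$. Indeed, coarse surjectivity of the barycenter map yields a triple $(a,b,c) \in \partial^3 X$ whose barycenter $x'$ satisfies $d(x, x') \leq L$ and whose pairwise Gromov products $(a|b)_{x'}, (b|c)_{x'}, (a|c)_{x'}$ are bounded by a function of $\delta$ (by thinness of ideal triangles). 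Using the hyperbolic inequality $(u|v)_{x'} \geq \min\{(u|\xi)_{x'}, (v|\xi)_{x'}\} - \delta$, at most one of $a, b, c$ can have large Gromov product with $\xi$ at $x'$; discarding that one supplies the required $\alpha, \beta$.

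To construct $\phi$, place the root of $T_3$ at a barycenter obtained by applying coarse surjectivity at an arbitrary base point, and take the three outgoing rays of that ideal triangle as the directions into the three root branches. Recursively, suppose a vertex $v$ has been placed at $\phi(v)$ together with a fixed incoming direction (a geodesic ray toward its parent). Travel distance $R$ along each outgoing ray chosen at $\phi(v)$; at each new endpoint apply the branching lemma with $\xi$ equal to the backward direction toward $\phi(v)$, producing the image of a child of $v$ at distance at most $L$ from the endpoint together with two new outgoing rays whose Gromov products with the incoming direction, and with each other, are bounded by $C$.

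The main obstacle is verifying that the resulting map is a quasi-isometric embedding. Here I would choose $R$ larger than an explicit function of $\delta, L, C$ so that, along any tree-geodesic $v = u_0, u_1, \ldots, u_n = w$ in $T_3$, the concatenation of geodesic segments $[\phi(u_{i-1}), \phi(u_i)]$ is a local quasi-geodesic in $X$: each piece has length between $R - 2L$ and $R + 2L$, and at each breakpoint the Gromov product of the reversed incoming direction and the outgoing direction is bounded by a function of $C$ and $\delta$ by construction (with an additional error of order $L$ absorbed by enlarging $R$). Applying the standard criterion that a broken geodesic with long pieces and bounded turning angles in a hyperbolic space is a global $(K, \epsilon)$-quasi-geodesic, with constants depending only on $\delta$ and $L$, one obtains $d_X(\phi(v), \phi(w)) \asymp R \cdot d_{T_3}(v, w)$ up to uniform additive and multiplicative error. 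Since $T_3$ carries the combinatorial path metric, this produces the desired quasi-isometric embedding $T_3 \hookrightarrow X$ with constants depending only on $\delta$ and $L$.
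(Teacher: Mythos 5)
Your proposal is correct and follows the same strategy as the paper's proof of Theorem \ref{thm-emb of T3}: an inductive construction that places the vertices of $T_3$ near barycenters supplied by coarse surjectivity, a branching lemma ensuring that at each step two of the three ideal directions have bounded Gromov product at that vertex with the incoming direction, and the local-to-global criterion for broken geodesics to conclude that the resulting map is a uniform quasi-isometric embedding. The one point where you diverge in technique is the internal proof of the branching lemma. The paper's Lemma \ref{lem-finding ind bary} obtains the bound $\min\{(x,y_i)_y,(x,y_j)_y\}\le C$ by showing that the broken path $[y_i,y]\cup[y,y_j]$ is a uniform quasi-geodesic (via Lemma \ref{lem-ideal tripod to finite}, since $y$ lies within bounded distance of each side of the ideal triangle) and then invoking Lemma \ref{lem-small inner prod}. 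You instead derive the same conclusion more directly from the Gromov four-point inequality $(u,v)_{x'}\ge\min\{(u,\xi)_{x'},(v,\xi)_{x'}\}-\delta$: since all three pairwise Gromov products among $a,b,c$ at the barycenter are uniformly small, at most one of them can have large Gromov product with the incoming direction $\xi$, so two usable outgoing directions survive. Both routes establish the same fact and lead to the same constants depending only on $\delta$ and $L$; your derivation is marginally more elementary, while the paper's formulation (keeping the vertex exactly on the incoming ray at distance exactly $D$, rather than at the nearby barycenter) avoids introducing an additional $O(L)$ perturbation that you must absorb by enlarging $R$. There is no gap; the small care points you flag (finite-versus-ideal backward direction, $L$-errors in segment lengths) are genuinely minor and handled correctly by your acknowledgement that $R$ is taken large relative to $L$, $C$, and $\delta$.
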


Now we give some final remarks.

\begin{remark}\label{some remarks}
$(1)$ Suppose $\pi:X\to B$ is a metric graph bundle. Since the fibers are proper embedding in $X$, it follows from the definition of a metric graph bundle that $X$ is proper if and only if the base $B$ and the fibers are proper (see Definition \ref{defn-metric graph bundle}). Moreover, there is a constant $D\ge1$ such that the valence at each vertex of the fiber $F_b,~b\in V(B)$ in $F_b$ and the valence of each vertex of $B$ in $B$ is bounded above by $D$ if and only if there is a constant $D'\ge1$ such that the valence at each vertex of $X$ is bounded above by $D'$. Thus, in general, the total space $X$ need not be proper.

Note that neither Theorem~\ref{thm-combo1} nor Corollary~\ref{cor-main gp over ray new intro} requires the properness of $X$. However, the properness of the fibers plays a crucial role in their proofs (Sections \ref{metric graph bundle sec} and \ref{sec-main theorems over rays}). \smallskip

$(2)$ Given a metric bundle $\pi':X'\map B'$ (Definition \ref{defn-metric bundle}) there exists an {\em approximating metric graph bundle} $\pi:X\map B$ such that the natural maps between the fibers, and $X\map X'$ and $B\map B'$ are uniformly quasiisometry (see Appendix). So surjectivity of the CT map in the approximating metric graph bundle ensures the same in the metric bundle via this dictionary. In the Appendix, we have studied that the (uniform) bounded valence of the fibers of the approximating metric graph bundle $\pi:X\map B$ can be achieved if the fibers of $\pi':X'\map B'$ are {\em uniformly strongly proper} (see Definition \ref{strongly proper}). Consequently, Theorem \ref{thm-combo1} $(A)$ holds in metric bundles when the fibers are uniformly strongly proper, whereas Theorem \ref{thm-combo1} $(B)$ holds without this assumption (see Theorem \ref{thm-all in one appendix}).
\end{remark}

\noindent{\bf Some words on the proof of Theorem \ref{thm-combo1}}:
	
First, we recall a theorem of Krishna and Sardar (Theorem~\ref{all direction surj imply surj}), which states that, in order to prove Theorem~\ref{thm-combo1}, it suffices to consider the special case where the base $B$ of the metric graph bundle is $[0,\infty)$. Accordingly, we first prove the surjectivity of the CT map in this setting. The corresponding results are Theorem~\ref{thm-main one ended} for Theorem~\ref{thm-combo1} $(B)$ and Theorem~\ref{thm-main bdd valence} for Theorem~\ref{thm-combo1} $(A)$. We now sketch the proofs of Theorems~\ref{thm-main one ended} and \ref{thm-main bdd valence}, which may be viewed as the versions of Theorems~\ref{thm-combo1} $(B)$ and \ref{thm-combo1} $(A)$, respectively, with the base of the metric graph bundle $[0,\infty)$.\smallskip

\noindent\underline{Theorem \ref{thm-main one ended}}: Following \cite[Proposition 6.6]{ps-krishna}, we give a more detailed description of the boundary of $X$ where $\pi:X\map[0,\infty)$ is a metric graph bundle (see Proposition \ref{description of pa X}). This result states that a boundary point of $X$ is either a limit point of a fiber or a {\em good qi section} -- that is, it appears as barycenters of the flow of an ideal triangle (Definition \ref{good qi section}). Therefore, to prove the surjectivity of CT map, it is enough to check that these good qi sections can be realized as limit points of a fiber (see Lemma \ref{CT image is limit set}). Since the fibers are one-ended, we have paths that connect points on a geodesic line forming the ideal triangle, remaining entirely outside any large radius ball centered at the barycenter of the ideal triangle. This property allows us to adapt Bowditch's original approach, which was initially developed for the case when the fibers are isometric to the hyperbolic plane $\mathbb H^2$.\qed\smallskip
	

	\noindent\underline{Theorem \ref{thm-main bdd valence}}: One of the main ingredients in the proof of Theorem \ref{thm-main bdd valence} is Theorem \ref{thm-bary imp expo growth1}. We then apply Theorem~\ref{main thm over ray graph gen} to complete the proof of Theorem \ref{thm-main bdd valence}. Theorem \ref{main thm over ray graph gen} isolates the relevant condition, namely, {\em uniform exponential growth} (Definition \ref{defn-exponential growth}) of the fibers under which the CT map is surjective. The assumptions on fibers in Theorem \ref{thm-main bdd valence} ensure that Theorem \ref{thm-bary imp expo growth1} applies, yielding uniform exponential growth of the fibers. 
	\qed\smallskip

	\noindent{\bf Organization of the paper}: In Section \ref{prelims}, we recall basic results on hyperbolic geodesic metric spaces and their (Gromov) boundaries, and define the Cannon--Thurston map. In Subsection \ref{subsec-qi emb of T3}, we prove Theorem \ref{thm-emb of T31}, which serves as a key ingredient in the proof of Theorem \ref{thm-bary imp expo growth1}. The latter is proved in Subsection \ref{subsec-expo growth}.
	
	\noindent Section \ref{metric graph bundle sec} reviews relevant results on metric graph bundles from \cite{pranab-mahan}. Following \cite[Proposition $6.6$]{ps-krishna}, we provide a detailed description of the boundary of total space of a metric graph bundle over $[0,\infty)$ in Proposition \ref{description of pa X}.
	
\noindent In Section \ref{sec-main theorems over rays}, we prove Theorems \ref{thm-main one ended}, \ref{main thm over ray graph gen} and \ref{thm-main bdd valence} which are special versions of Theorems \ref{thm-combo1} $(B)$ and \ref{thm-combo1} $(A)$. Notably, Theorem \ref{main thm over ray graph gen} is the main step towards proving Theorem \ref{thm-main bdd valence}. We end this section with a proof of Corollary \ref{cor-main gp over ray new intro} in the special case, and with some interesting questions.

	\noindent In Section \ref{sec-main thm}, we prove our main theorem (Theorem \ref{thm-combo1} and Corollary \ref{cor-main gp over ray new intro}) using results proved in Section \ref{sec-main theorems over rays}.
	
	\noindent In Section \ref{application and exp sec}, we show some examples of our main theorem and an application.

	
	
	
\noindent Finally, Appendix \ref{appendix sec} extends our study to metric bundles, following the dictionary provided in \cite{pranab-mahan}.\smallskip
	
{\bf Acknowledgement.} The author would like to thank Pranab Sardar and Mahan Mj for many helpful discussions, which also helped in the exposition of the article. The author is also grateful to them for their valuable feedback on earlier drafts. The author also thanks the anonymous referee(s) for a careful reading of the paper and for numerous insightful comments and suggestions that greatly improved the exposition and clarity of the article. This work was supported by the TIFR Visiting (Postdoctoral) Fellowship.

	\section{Preliminaries}\label{prelims}
	We start by recalling some standard notions used later in this article (see \cite[Chapters $I.1$ and $III.H$]{bridson-haefliger}). In a geodesic metric space $X$, a {\em geodesic segment} joining $x$ and $y$ is denoted by $[x,y]_X$ (or simply $[x,y]$ when $X$ is understood). For a metric space $(X,d)$, $x\in X$ and a subset $A\sse X$, we define $d(x,A)=inf~\{d(x,a):a\in A\}$. Then for $r\ge0$, we define $N_r(A)=\{x'\in X:d(x',A)\le r\}$. For two subsets $A,B$ of a metric space $X$, the {\em Hausdorff distance} between $A$ and $B$ is $Hd(A,B):=inf~\{r\ge0:A\sse N_r(B),B\sse N_r(A)\}$. Two subsets $A$ and $B$ of $X$ are said to be {\em $r$-separated} if $d(a,b)\ge r$ for all $a\in A$ and $b\in B$ where $r\ge0$. A metric space is said to be {\em proper} if closed and bounded sets are compact. In this paper, {\em graph} is an $1$-dimensional connected CW complex in which no edges form a loop. {\em Metric graph} is a graph in which every edge has length $1$. Hence, it is a geodesic metric space (\cite[1.9, Part-I]{bridson-haefliger}). A subgraph of a metric graph is always assumed to be connected. For a graph $X$, we denote the vertex set of $X$ by $V(X)$. 
	
	Suppose $c:[a,b]\map X$ is a continuous path in a metric space $(X,d_X)$. Then {\em length of $c$} is defined to be $$length~(c):= \sup_{\substack{a=t_0 \le t_1 \le \dots \le t_n=b}} 
	\sum_{i=0}^{n-1} d_X\big(c(t_i), c(t_{i+1})\big).$$We say $c$ is {\em arclength parametrized} if $c(t)$ is the point so that $t=length~(c|_{[a,t]})$.
	
	Suppose $f:(X,d_X)\map(Y,d_Y)$ is a map between two metric spaces. We say that $f$ is $C$-{\em coarsely surjective} for some $C\ge0$ if $N_C(f(X))=Y$; and $f$ is {\em coarsely surjective} if $f$ is $C$-coarsely surjective for some $C\ge0$. Let $k\ge1$ and $\ep\ge0$. We say $f$ is a $(k,\ep)$-{\em quasiisometric embedding $($or $(k,\ep)$-qi embedding, in short$)$} if for all $x,x'\in X$, we have $$\frac{1}{k}d_X(x,x')-\ep\le d_Y(f(x),f(x'))\le kd_X(x,x')+\ep.$$By a $k$-{\em qi embedding}, we mean $(k,k)$-qi embedding. We say that $f:X\map Y$ is $k$-{\em quasiisometry} if $f$ is $k$-qi embedding and $k$-coarsely surjective. We say that $f$ is a {\em qi embedding} (respectively, {\em quasiisometry}) if $f$ is $k$-qi embedding (respectively, $k$-quasiisometry) for some $k\ge1$.  Let $I\sse\R$ be an interval. A map $\al:I\map X$ is said to be $k$-{\em quasigeodesic} if it is $k$-qi embedding. We say that $\al$ is a quasigeodesic if it is a $k$-quasigeodesic for some $k\ge1$. When working with a quasigeodesic $\al:I\sse\R\map X$, we disregard the domain $I$ and think of $\al$ sits in $X$ as a (possibly discontinuous) path.\smallskip
	
	%

	We assume that the reader is familiar with hyperbolic spaces (and groups) (see \cite{abc}, \cite[Chapter III.H]{bridson-haefliger} and \cite{GhH}; \cite{gromov-hypgps}). In this paper, hyperbolic spaces are assumed to be geodesic metric spaces. Therefore, for a $\delta$-hyperbolic geodesic metric space (for some $\delta \ge 0$), we assume that geodesic triangles are both $\delta$-slim and $\delta$-thin (see \cite[Proposition 2.1]{abc} for the equivalence of these definitions). When we specifically use the thin triangle condition, we will mention it explicitly. Otherwise, we work with the slim triangle condition.\smallskip 

	{\bf \em Convention}: {\em When we say that a collection of spaces $\{X_{\lambda} : \lambda \in \Lambda\}$ satisfies a property $\mathcal{P}$ {\em uniformly}, we mean that the constants or parameters associated with $\mathcal{P}$ are independent of $X_{\lambda},~\lambda \in \Lambda$. For example, the collection $\{X_{\lambda} : \lambda \in \Lambda\}$ is said to be {\em uniformly hyperbolic} if there exists a constant $\delta \geq 0$ such that each $X_{\lambda}$ is $\delta$-hyperbolic.}\smallskip
	
	
	
	\begin{defn}\label{defn-gromov inner pd}
		Let $X$ be a metric space. Let $x,y,z\in X$. Then the Gromov inner product of $y$ and $z$ with respect to $x$ is defined and denoted as follows. $$(y,z)_x:=\frac{1}{2}\{d(x,y)+d(x,z)-d(y,z)\}$$
	\end{defn}
	
	\begin{lemma}\label{lem-small inner prod}
		Let $\dl\ge0$ and $K\ge1$. Then there is a constant $D_{\ref{lem-small inner prod}}(\dl,K)=K\dl+K^2/2$ such that the following holds. Suppose $X$ is a $\dl$-hyperbolic geodesic metric space, and let $y,z,w\in X$. Further, suppose that the arclength parametrization of $[z,y]\cup[y,w]$ is a $K$-quasigeodesic. Then for any $x\in X$, we have $min~\{(x,z)_y,(x,w)_y\}\le D_{\ref{lem-small inner prod}}(\dl,K)$. Moreover, $(z,w)_y\le D_{\ref{lem-small inner prod}}(\dl,K)$.
	\end{lemma}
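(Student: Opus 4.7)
The plan is to derive both bounds from the tripod/fellow-travelling description of $\delta$-thin geodesic triangles, together with the $K$-quasigeodesic hypothesis on the bent path $[z,y]\cup[y,w]$. The underlying principle is that any sub-arc of this path that starts on one geodesic leg, passes through $y$, and ends on the other leg is an initial segment of the arclength reparametrization, and therefore still satisfies the $K$-quasigeodesic inequality with the same constants.

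I would first handle the ``moreover'' clause, since the argument for the $\min$ inequality is a variant of it. Set $a=(z,w)_y$ and apply the thin-triangle tripod to the geodesic triangle $zyw$ at the vertex $y$: this produces points $y_z\in[y,z]$ and $y_w\in[y,w]$ with $d(y,y_z)=d(y,y_w)=a$ and $d(y_z,y_w)\le \delta$. On the other hand, the sub-arc of $[z,y]\cup[y,w]$ running from $y_z$ through $y$ to $y_w$ has arclength exactly $2a$, so by the $K$-quasigeodesic hypothesis $d(y_z,y_w)\ge 2a/K - K$. Combining gives $a\le K\delta/2 + K^2/2\le D_{\ref{lem-small inner prod}}(\delta,K)$.

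For the $\min$ inequality, I would argue by contradiction. Suppose both $(x,z)_y>D_0$ and $(x,w)_y>D_0$, where $D_0:=K\delta+K^2/2$. Since each Gromov product is bounded above by the distances from $y$ to the two points involved, this forces $d(y,x), d(y,z), d(y,w)>D_0$. Choose $D\in (D_0,\min\{(x,z)_y,(x,w)_y\}]$ and let $p\in[y,x]$, $p_z\in[y,z]$, $p_w\in[y,w]$ be the three points at distance $D$ from $y$ along the respective geodesics. Applying the tripod fellow-travelling to the triangles $xyz$ and $xyw$ (valid because $D\le (x,z)_y$ and $D\le (x,w)_y$) yields $d(p,p_z)\le \delta$ and $d(p,p_w)\le \delta$, hence $d(p_z,p_w)\le 2\delta$. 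But the sub-arc $p_z\to y\to p_w$ of the quasigeodesic has arclength $2D$, so $d(p_z,p_w)\ge 2D/K-K>2D_0/K-K=2\delta$, a contradiction.

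The argument is essentially bookkeeping; the only point that requires care is recognizing that the sub-arcs $y_z\to y\to y_w$ and $p_z\to y\to p_w$ are sub-arcs of the arclength parametrization of $[z,y]\cup[y,w]$, and therefore inherit the $K$-quasigeodesic bound with unchanged constants. No substantive obstacle is anticipated.
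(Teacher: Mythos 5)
Your proof is correct and uses essentially the same mechanism as the paper's: pair up points on the two legs of the thin triangle(s) via the comparison tripod, then play the resulting $\delta$ (or $2\delta$) upper bound on their distance against the $K$-quasigeodesic lower bound coming from the arclength $2D$ of the bent sub-arc through $y$. The paper argues the $\min$ bound directly (the inequality holds for every $t$ at a common distance from $y$ within both tripods, so it holds at the threshold $\min\{(x,z)_y,(x,w)_y\}$) while you frame it as a contradiction at a chosen $D\in(D_0,\min]$, but this is only a stylistic difference and the constants come out the same.
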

	
	\begin{proof}
Without loss of generality, we assume that the geodesic triangles in $X$ are $\dl$-thin (see \cite[Proposition $2.1$]{abc}). 

Let us show the moreover part first. Let $t'\in[y,z],t''\in[y,w]$ be such that $d(y,t')=d(y,t'')=(z,w)_y$. Then by $\dl$-thinness of triangle, $d(t',t'')\le\dl$. Since the arclength parametrization of $[z,y]\cup[y,w]$ is a $K$-quasigeodesic, $\frac{1}{K}\{d(t',y)+d(y,t'')\}-K\le d(t',t'')\le\dl$, and so $(z,w)_y=d(t',y)=d(y,t'')\le\frac{1}{2}(K\dl+K^2)$.

For the first part, without loss of generality, we assume that $(x,z)_y=\min\{(x,z)_y,(x,w)_y\}$. Let $t\in[y,x]$ and $t'\in[y,z]$ be such that $d(y,t)=d(y,t')=(x,z)_y$. We will prove that $d(y,t')\le D_{\ref{lem-small inner prod}}(\dl,K)$. Since $(x,z)_y\le(x,w)_y$, there is $t''\in[y,w]$ such that $d(y,t)=d(y,t'')$ and $d(t,t'')\le\dl$. Again, $d(t,t')\le\dl$ by $\dl$-thinness and hence by triangle inequality, $d(t',t'')\le2\dl$. Now the arclength parametrization of $[z,y]\cup[y,w]$ is a $K$-quasigeodesic. Thus $\frac{1}{K}\{d(t',y)+d(y,t'')\}-K\le d(t',t'')\le2\dl$ and so, $d(y,t')=d(y,t'')=(x,z)_y\le\frac{1}{2}(2K\dl+K^2)$.

Therefore, we can take $D_{\ref{lem-small inner prod}}(\dl,K)=K\dl+K^2/2$. This completes the proof.
	\end{proof}

	For the following result, one is referred to \cite[Theorem $1.4$, Chapter $3$]{CDP}. Recall that given $k\ge1,~\ep\ge0$, $L>0$ and an interval $I\sse\R$, a map $\al:I\map X$ is said to be $(k,\ep,L)$-{\em local quasigeodesic} if $\al$ restricted to any subinterval of length $\le L$ is $(k,\ep)$-quasigeodesic.
	
	\begin{lemma}[{Local quasigeodesic vs global quasigeodesic}]\label{local vs global}
		For all $\dl\ge0,~k\ge1$ and $\ep\ge0$ there are constants $L_{\ref{local vs global}}=L_{\ref{local vs global}}(\dl,k,\ep)>0$ and $ \lm_{\ref{local vs global}}=\lm_{\ref{local vs global}}(\dl,k,\ep)\ge1$ such that the following holds.
		
		Suppose $X$ is a $\dl$-hyperbolic geodesic metric space. Then any $(k,\ep,L_{\ref{local vs global}})$-local quasigeodesic in $X$ is a $\lm_{\ref{local vs global}}$-quasigeodesic.
	\end{lemma}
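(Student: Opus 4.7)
The plan is to prove the classical local-to-global principle for quasigeodesics in hyperbolic spaces via a standard excursion argument. After first tame-replacing $\al$ with a continuous path (splicing in geodesic segments between $\al(j)$ for integer parameter values, at the cost of enlarging $k,\ep$ by bounded constants), I may assume the local quasigeodesic is continuous. The overall goal is to show that if $L$ is sufficiently large in terms of $\dl,k,\ep$, then $\al([s,t])$ stays in a uniformly bounded neighborhood of any geodesic $[\al(s),\al(t)]$, and from this Hausdorff control to deduce the global $(\lm_{\ref{local vs global}},\lm_{\ref{local vs global}})$-quasigeodesic bound.

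The heart of the argument is the following excursion lemma. Since each restriction of $\al$ to a subinterval of length $\le L$ is a genuine $(k,\ep)$-quasigeodesic, the Morse/stability property in $\dl$-hyperbolic spaces gives a constant $M_0=M_0(\dl,k,\ep)$ so that such restrictions stay within $M_0$ of the corresponding geodesic. Now suppose for contradiction there exist $s<t$ in $I$ and $t_0\in(s,t)$ at which $D:=d\bigl(\al(t_0),[\al(s),\al(t)]\bigr)$ exceeds some threshold $M$. Choose parameters $s_\pm$ at parameter distance $\approx L/3$ on either side of $t_0$ (truncating at $s,t$ if $t_0$ is close to an endpoint), and let $q_\pm$ be nearest-point projections of $\al(s_\pm)$ onto the geodesic $\gm=[\al(s),\al(t)]$. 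By the local stability just noted, $\al(s_\pm)$ lies within $M_0$ of $\gm$, while the $(k,\ep)$-quasigeodesic $\al|_{[s_-,t_0]}\cup\al|_{[t_0,s_+]}$ travels out to distance $D$ from $\gm$ and comes back. Applying $\dl$-slimness to the geodesic pentagon with sides $[\al(s_-),q_-]$, $[q_-,q_+]$, $[q_+,\al(s_+)]$, $[\al(s_+),\al(t_0)]$, $[\al(t_0),\al(s_-)]$ propagates the bound $M_0$ through the fellow-traveling sides and forces $D\le M(\dl,k,\ep)$ provided $L$ is large enough (essentially $L\gtrsim k(M_0+\dl+\ep)$) that the two outgoing local quasigeodesics overshoot $\gm$. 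This gives the desired uniform Hausdorff bound between $\al([s,t])$ and $\gm$.

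With Hausdorff control in place, the nearest-point projection $\pi:\al([s,t])\to\gm$ is coarsely monotone and coarsely Lipschitz (standard in hyperbolic geometry), and chaining the local $(k,\ep)$-estimate over a partition of $[s,t]$ into blocks of length $\approx L/2$ yields the required global constants $\lm_{\ref{local vs global}}(\dl,k,\ep)$ and $L_{\ref{local vs global}}(\dl,k,\ep)$. The main obstacle is the careful calibration of constants in the excursion step: one must fix $M_0$ first from the Morse lemma for genuine quasigeodesics, then determine $M$ from the slim-polygon estimate, and finally choose $L_{\ref{local vs global}}$ large enough that any postulated excursion exceeding $M$ becomes geometrically impossible. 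The interlocking of these parameters -- together with the fact that the Morse constant $M_0$ itself comes from an analogous (but simpler) stability argument -- is the only delicate point; once $L_{\ref{local vs global}}$ is fixed, extracting $\lm_{\ref{local vs global}}$ is routine bookkeeping.
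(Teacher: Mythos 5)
The paper does not give a proof of Lemma~\ref{local vs global}; it is cited directly from Coornaert--Delzant--Papadopoulos (Ch.~3, Thm.~1.4). So I assess your proposal on its own terms.

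Your overall two-step plan --- first bound the Hausdorff distance from $\al([s,t])$ to $\gm=[\al(s),\al(t)]$, then chain the local $(k,\ep)$-estimates to get the global bound --- is the right strategy and is essentially the classical approach. The gap is in the excursion step. You assert that the points $\al(s_\pm)$, taken at parameter distance roughly $L/3$ from a maximal-excursion parameter $t_0$, ``lie within $M_0$ of $\gm$ by the local stability just noted.'' That does not follow: local stability controls the distance from $\al(s_\pm)$ to a geodesic between \emph{nearby} points of $\al$ (say $[\al(t_0-L/3),\al(t_0+L/3)]$), not to $\gm$ itself. If $t_0$ is the point of maximal excursion $D$, the points $\al(s_\pm)$ could still be at distance nearly $D$ from $\gm$. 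With that, the slim-pentagon argument gives nothing: $\al(t_0)$ ends up bounded by $M_0$ plus $2\dl$ plus the lengths of $[\al(s_\pm),q_\pm]$, which are themselves only bounded by $D$, so no contradiction with $D$ large emerges. The phrase ``so that the two outgoing local quasigeodesics overshoot $\gm$'' has the same problem: an outgoing genuine $(k,\ep)$-quasigeodesic of parameter length $L/3$ travels distance roughly $L/(3k)$, which will not reach $\gm$ when $D$ is much larger than $L/k$ --- exactly the regime you need to rule out.

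To fix this, one has to choose $s_\pm$ differently: let $s_-$ (resp.\ $s_+$) be the largest (resp.\ smallest) parameter on the correct side of $t_0$ at which $\al$ returns to within, say, $D/2$ of $\gm$ (these exist since the endpoints of $\al|_{[s,t]}$ lie on $\gm$). Then split into two cases. If $s_+-s_-\le L$, your stability-plus-slimness argument does close up: $\al|_{[s_-,s_+]}$ is a genuine $(k,\ep)$-quasigeodesic, $\al(t_0)$ is within $M_0$ of $[\al(s_-),\al(s_+)]$, and the $2\dl$-slim quadrilateral on $\al(s_-),\al(s_+),q_+,q_-$ now gives $D\le D/2+M_0+2\dl+O(1)$, hence $D$ bounded. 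If $s_+-s_->L$, that argument fails and you must instead invoke the exponential divergence of geodesics (the analogue of Lemma~\ref{geo line div exp}): the subpath $\al|_{[s_-,s_+]}$, suitably extended to $\gm$ at its ends, stays outside a ball of radius $\approx D/2$ around a point of $\gm$, so its length is at least exponential in $D$; comparing this to the linear-in-parameter upper bound on length that the local quasigeodesic property gives, together with the bounded diameter of the projection of this subpath to $\gm$, forces $D$ to be bounded for $L$ large. Alternatively, the cleaner route is the broken-geodesic version: sample $\al$ at parameter step $L/2$, join consecutive samples by geodesics, note (via Lemma~\ref{lem-small inner prod}) that the Gromov products at the vertices are uniformly small, and prove directly that a broken geodesic with long pieces and small vertex products is a global quasigeodesic. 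Either way, the ``only delicate point'' you flag is indeed where the proposal as written is missing an idea, not merely bookkeeping.
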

	
	As an application of Lemma \ref{local vs global}, we have the following.
	
	\begin{lemma}\label{local to global}
		Let $\dl\ge0$ and $C\ge0$. Then there are constants $D_{\ref{local to global}}(\dl,C)=L_{\ref{local vs global}}(\dl,1,2C+2\dl)>0$ and $K_{\ref{local to global}}=K_{\ref{local to global}}(\dl,C)\ge1$ depending only on $\dl$ and $C$ such that the following holds.
		
		Suppose $X$ is a $\dl$-hyperbolic geodesic metric space. Let $x=x_0,x_1,\dots,x_n=y\in X$ and $\al_i=[x_{i-1},x_i]$ where $i\in\{1,2,\dots,n\}$.  If $d(x_{i-1},x_i)\ge D_{\ref{local to global}}(\dl,C)$ and the Gromov inner product $(x_{i-1},x_{i+1})_{x_i}\le C$ then the arclength parametrization of the concatenation $\bt=\al_1*\al_2*\dots*\al_n$ is $K_{\ref{local to global}}$-quasigeodesic.
	\end{lemma}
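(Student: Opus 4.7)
The plan is to verify that $\bt$ is a local quasigeodesic in the sense of Lemma~\ref{local vs global}, and then appeal to that lemma to upgrade local quasigeodesicity to a global bound.

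The key local ingredient is this claim: for some constant $c_0 = c_0(\dl)$, whenever $a, b, c \in X$ satisfy $(a, c)_b \le C$, the arclength parametrization of $[a, b] \cup [b, c]$ is a $(2C+\dl,\, 2C+\dl)$-quasigeodesic. The upper bound on $d(p, q)$ is immediate from the arclength distance, and when $p, q$ lie on the same side the lower bound is trivial. The substantive case is $p \in [a, b]$, $q \in [b, c]$, where it suffices to show $(p, q)_b \le C + c_0\dl$. I would extract this from the $\dl$-thin tripod model of $\triangle abc$: the $b$-leg of the tripod has length $(a, c)_b \le C$, and the tripod images of $p$ and $q$ lie on either the $b$-leg or the opposite ($a$- or $c$-) leg depending on whether $d(b, p)$ and $d(b, q)$ exceed $(a, c)_b$. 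A short four-case check shows that the tripod distance between the images is at least $d(b, p) + d(b, q) - 2C$, and $\dl$-thinness upgrades this to $d(p, q) \ge d(b, p) + d(b, q) - 2C - O(\dl)$, equivalently $(p, q)_b \le C + O(\dl)$, as desired.

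Now apply the claim to each consecutive triple $x_{i-1}, x_i, x_{i+1}$, whose Gromov product at $x_i$ is at most $C$ by hypothesis. This makes each two-segment concatenation $\al_i * \al_{i+1}$ a $(2C+\dl,\, 2C+\dl)$-quasigeodesic. Set $D_{\ref{local to global}}(\dl, C) := L_{\ref{local vs global}}(\dl, 2C+\dl, 2C+\dl)$. Since each $\al_i$ has length at least $D_{\ref{local to global}}$, any subpath of $\bt$ of length at most $D_{\ref{local to global}}$ is contained in $\al_j \cup \al_{j+1}$ for some $j$ (reaching a third segment would require traversing an entire intermediate segment, which is already $\ge D_{\ref{local to global}}$ long). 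Such a subpath is therefore a $(2C+\dl,\, 2C+\dl)$-quasigeodesic, so $\bt$ is a $(2C+\dl,\, 2C+\dl,\, D_{\ref{local to global}})$-local quasigeodesic.

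Lemma~\ref{local vs global} then yields that $\bt$ is a $\lm_{\ref{local vs global}}(\dl, 2C+\dl, 2C+\dl)$-quasigeodesic, so we set $K_{\ref{local to global}}(\dl, C) := \lm_{\ref{local vs global}}(\dl, 2C+\dl, 2C+\dl)$. The main obstacle in this plan is the local claim itself — in particular the inner-product bound $(p, q)_b \le C + O(\dl)$ — which requires a careful but elementary tripod computation; once it is in hand, the length hypothesis on the $\al_i$ turns local quasigeodesicity into the desired global bound entirely mechanically.
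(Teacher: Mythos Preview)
Your proposal is correct and follows essentially the same approach as the paper: both arguments use the $\dl$-thin (tripod) structure of $\triangle x_{i-1}x_ix_{i+1}$ to show each two-segment piece $\al_i*\al_{i+1}$ is a $(2C+\dl,2C+\dl)$-quasigeodesic, then invoke Lemma~\ref{local vs global} with $D_{\ref{local to global}}=L_{\ref{local vs global}}(\dl,2C+\dl,2C+\dl)$ and $K_{\ref{local to global}}=\lm_{\ref{local vs global}}(\dl,2C+\dl,2C+\dl)$. The only cosmetic difference is that you phrase the local estimate as a bound on $(p,q)_{x_i}$ while the paper does the equivalent direct distance case-split according to whether $p,q$ lie inside or outside the initial segments of length $(x_{i-1},x_{i+1})_{x_i}$.
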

	
	\begin{proof}
		We will show that $\bt$ is a local quasigeodesic. Then we will be done by Lemma \ref{local vs global}.
		
		Without loss of generality, we assume that the geodesic triangles in $X$ are $\dl$-thin (see \cite[Proposition $2.1$]{abc}). Let $p\in\al_i$ and $q\in\al_{i+1}$ such that $(x_{i-1},x_{i+1})_{x_i}=d(x_i,p)=d(x_i,q)$. Then $d(p,q)\le\dl$, and by our assumption $d(p,x_i)=d(q,x_i)\le C$. 
		We will show that the arclength parametrization, $\gm:I=[0,d(x_{i-1},x_i)+d(x_i,x_{i+1})]\map X$ say, of the concatenation $\al_i*\al_{i+1}$ is a quasigeodesic where $\gm(0)=x_{i-1}$. Suppose $s,t\in I$. First of all, note that $d(\gm(s),\gm(t))\le|s-t|$ since $\gm$ is arclength parametrization. So we have to prove the other inequality. If $\gm(s),\gm(t)\in\al_i$ or $\gm(s),\gm(t)\in\al_{i+1}$ then we already have $d(\gm(s),\gm(t))=|s-t|$ since $\al_i$'s are geodesics. Now suppose, without loss of generality, $\gm(s)\in\al_i$ and $\gm(t)\in\al_{i+1}$, and so $s\le t$.
		
		\noindent{\em Case} 1: Suppose $\gm(s)\in[p,x_i]$. Then
		\begin{eqnarray*}
			d(\gm(s),\gm(t))&\ge&d(\gm(s'),\gm(t))-C\\
			&&\text{(by triangle inequality, where }\gm(s')=x_i\text{ and }s'-s=d(\gm(s),x_i)\le C)\\
			&=&(t-s')-C=(t-s)-(s'-s))-C\ge (t-s)-2C\\
			&&\text{(since }\al_i\text{ is geodesic})
		\end{eqnarray*}
		
		\noindent{\em Case} 2: Suppose $\gm(t)\in[x_i,q]$. By a similar argument as in Case 1, we will have $d(\gm(s),\gm(t))\ge(t-s)-2C$.
		
		
		\noindent{\em Case} 3: Suppose $\gm(s)\in[x_{i-1},p]$ and $\gm(t)\in[q,x_{i+1}]$. We also assume that $X$ satisfies Gromov inner product condition (of hyperbolicity) with constant $\dl$ (see \cite[Definition 1.20, Part-III.H]{bridson-haefliger}). Then by $4$-point condition (see \cite[p. 410]{bridson-haefliger}), we have $d(\gm(s),p)+d(\gm(t),q)\le max\{d(\gm(s),\gm(t))+d(p,q),d(\gm(s),q)+d(\gm(t),p)\}+2\dl$. Since $d(p,q)\le\dl$, by triangle inequality, $d(\gm(s),p)+d(\gm(t),q)\le d(\gm(s),q)+d(\gm(t),p)+2\dl$. Hence,
		\begin{eqnarray*}
			d(\gm(s),\gm(t))&\ge&d(\gm(s),p)+d(q,\gm(t))-2\dl\\
			&=&d(\gm(s),x_i)-d(p,x_i)+d(x_i,\gm(t))-d(x_i,q)-2\dl\\
			&\ge&d(\gm(s),x_i)+d(x_i,\gm(t))-2C-2\dl=(t-s)-2C-2\dl\\
			&&\text{(since }d(p,x_i)=d(q,x_i)\le C)
		\end{eqnarray*}
		
		\noindent This shows that $\gm$ is a $(1,2C+2\dl)$-quasigeodesic. Since $i$ was arbitrary, the restriction of $\bt$ on the concatenation $\al_i*\al_{i+1}$ is $(1,2C+2\dl)$-quasigeodesic for any $i\in\{1,2,\dots,n-1\}$.
		
		Again, we have $d(x_i,x_{i+1})\ge D_{\ref{local to global}}(\dl,C)=L_{\ref{local vs global}}(\dl,1,2C+2\dl)$ for all $i\in\{0,1,\dots,n-1\}$. So $\bt$ is a $(1,2C+2\dl,L_{\ref{local vs global}}(\dl,1,2C+2\dl))$-local quasigeodesic. Therefore, by Lemma \ref{local vs global}, $\bt$ is a $K_{\ref{local to global}}(\dl,C)$-quasigeodesic where $K_{\ref{local to global}}(\dl,C)=\lambda_{\ref{local vs global}}(\dl,1,2C+2\dl)$. This completes the proof.
	\end{proof}
	
	We end this subsection by recalling the exponential divergence property -- which is in a specialized form -- of (quasi)geodesics in hyperbolic metric spaces. Throughout the paper and in particular, for the following lemma, we will use the convention that $\mathbb N$ does not include $0$.

	\begin{lemma}\textup{(\cite[Proposition 1.6, III.H]{bridson-haefliger})}\label{geo line div exp}
		Suppose $X$ is a $\dl$-hyperbolic geodesic metric space for some $\dl\ge0$. Let $p,q\in X$, and let $\al$ be a geodesic segment joining $p$ and $q$. Fix $x\in\al$. Then any path lying outside $n$-radius ball centered at $x$ joining $p$ and $q$ has length at least $b^{n-1}$ where $b=2^{\frac{1}{\delta+1}}$ and $n\in\N$.
	\end{lemma}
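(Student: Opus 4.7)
The plan is to argue by induction on $n$, using the classical bisection technique for exponential divergence in hyperbolic geodesic metric spaces (as recorded in Bridson--Haefliger III.H Proposition 1.6). For $n \in \mathbb{N}$, let $L(n)$ denote the infimum of the lengths of continuous paths from $p$ to $q$ whose image avoids the open ball $B(x,n)$, and aim to show $L(n) \ge b^{n-1}$ where $b = 2^{1/\delta + 1}$. The base case $n = 1$ is immediate: any such path must leave and re-enter the complement of $B(x,1)$, and since $p, q$ lie on a geodesic through $x$, the triangle inequality forces the length to be at least $1 = b^0$.

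For the inductive step, let $\gamma$ be a candidate path from $p$ to $q$ avoiding $B(x, n+1)$, parametrized by arclength, and let $m$ be its midpoint, splitting $\gamma$ into halves $\gamma_1$ (from $p$ to $m$) and $\gamma_2$ (from $m$ to $q$) of equal length. Choose geodesic segments $\sigma_1 = [p, m]$ and $\sigma_2 = [m, q]$, and apply $\delta$-slimness to the triangles $\triangle(p, m, x)$ and $\triangle(m, q, x)$: since $d(x, m) > n + 1$, the geodesic $\sigma_i$ passes $\delta$-close to points on $[p,x] \cup [x,m]$ (respectively $[m,x] \cup [x,q]$). Using this, I would replace a terminal piece of $\gamma_i$ near $m$ by a sub-segment of $\sigma_i$ together with a small detour landing on $\alpha = [p,q]$ just outside $B(x, n)$, producing a path from $p$ (resp.\ $q$) to a nearby point on $\alpha$ that entirely avoids $B(x, n)$. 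Each modified half then has length at least $L(n)$ by the inductive hypothesis, so $\mathrm{length}(\gamma) \ge 2 L(n) - E(\delta)$ for a bounded error $E(\delta)$ coming from the geodesic replacement.

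The main obstacle is extracting the precise base $b = 2^{1/\delta+1}$ from this coarse doubling estimate. The trick, which I would adopt from Bridson--Haefliger, is to iterate the bisection over scales of size $\delta$ rather than unit scales: writing $n = k\delta + r$, each scale-$\delta$ step of the bisection genuinely doubles the length (up to an additive correction that is absorbed at scale $\delta$), which produces a lower bound of the form $2^{\lfloor n/\delta \rfloor}$ up to multiplicative constants, and a routine calibration converts this into the stated bound $b^{n-1}$ with $b = 2^{1/\delta + 1}$. Since the paper invokes this only as a black box citation, the plan is to reduce to the Bridson--Haefliger statement and transfer their constants directly, rather than reconstruct the full inductive calibration from scratch.
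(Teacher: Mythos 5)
The paper states this lemma purely as a citation to \cite[Proposition 1.6, III.H]{bridson-haefliger} with no accompanying proof, so your plan of reducing to that statement and importing the constants is exactly the paper's approach. One remark on your sketch, in case you ever want to carry it out: in the bisection step, slimness of $\triangle(p,m,q)$ produces a point $x'$ on the new geodesic $[p,m]$ (or $[m,q]$) that is $\delta$-close to $x$, not a point of $\alpha$ itself, so the inductive hypothesis must be invoked at $x'$ with the radius shrunk to $n+1-\delta$; the radius drops by $\delta$ per bisection rather than by $1$, and this is precisely the accounting that yields $\ell(\gamma)\ge 2^{(n-1)/\delta}$ in Bridson--Haefliger, of which the stated $b^{n-1}$ with $b=2^{1/(\delta+1)}$ is a deliberately weakened form chosen to remain defined when $\delta=0$.
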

	\subsection{Geodesic boundary (and Gromov boundary), and Cannon--Thurston map}\label{subsec-gromov bdry and CT}
In this subsection, we will see some properties of the {\em geodesic boundary} of hyperbolic spaces and the {\em Cannon--Thurston map} between hyperbolic spaces. We assume that the reader is familiar with the geodesic boundary of hyperbolic spaces (and groups) (see \cite{gromov-hypgps}, \cite[Chapter III.H]{bridson-haefliger} and \cite{GhH}). For a proper hyperbolic geodesic metric space $X$, the {\em geodesic boundary} of $X$ is denoted by $\pa X$ and is defined to be the equivalence classes of geodesic rays. Two geodesic rays $\al$ and $\bt$ are said to be related if $Hd(\al,\bt)<\infty$. For a geodesic ray $\alpha:I\to X$, where $I=[0,\infty)$ or $(-\infty,0]$, its equivalence class in $\partial X$ is denoted by $\alpha(\infty)$ or $\alpha(-\infty)$, respectively. 
	
	\begin{lemma}\label{existence of geo ray line}
		Suppose $X$ is a proper $\dl$-hyperbolic geodesic metric space for some $\dl\ge0$. Then:
		\begin{enumerate}
			\item \textup{(\cite[Lemma $3.1$, $III.H$]{bridson-haefliger})} Given $x\in X$ and $\xi\in\pa X$, there is a geodesic ray, say $\al$, such that $\al(0)=x$ and $\al(\infty)=\xi$.
			
			\item \textup{(\cite[Lemma $3.2$, $III.H$]{bridson-haefliger})} \textup{(Visibility of $\pa X$)} Given two distinct points $\xi_1,\xi_2\in \pa X$ there is a geodesic line, say $\bt$, in $X$ such that $\bt(-\infty)=\xi_1$ and $\bt(\infty)=\xi_2$.
		\end{enumerate}
	\end{lemma}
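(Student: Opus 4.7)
Both parts are standard Arzel\`a--Ascoli arguments in proper geodesic metric spaces, combined with $\dl$-slimness of geodesic triangles. For part $(1)$, I would fix any geodesic ray $\al':[0,\infty)\map X$ with $\al'(\infty)=\xi$, and set $x_n=\al'(n)$. For each $n$, let $\gm_n:[0,d(x,x_n)]\map X$ be an arclength-parametrized geodesic from $x$ to $x_n$, extended (say, constantly) to $[0,\infty)$. Since each $\gm_n$ is $1$-Lipschitz and $X$ is proper, the sets $\{\gm_n(t)\}_n$ are contained in a compact ball for each $t$. A standard diagonal argument yields a subsequence converging uniformly on compact subsets of $[0,\infty)$ to a $1$-Lipschitz map $\al:[0,\infty)\map X$; since each $\gm_n$ is a geodesic on its domain (which exhausts $[0,\infty)$), $\al$ is a geodesic ray with $\al(0)=x$. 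To see $\al(\infty)=\xi$, I would use $\dl$-slimness of the triangle with vertices $x$, $\al'(0)$, $\al'(n)$: for any $t$ with $t\le n$, the point $\gm_n(t)$ lies within $\dl+d(x,\al'(0))$ of $\al'$, and passing to the limit gives $Hd(\al,\al')<\infty$.

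For part $(2)$, I would use part $(1)$ to pick a basepoint $x_0\in X$ and geodesic rays $\al_1,\al_2$ from $x_0$ with $\al_i(\infty)=\xi_i$. Since $\xi_1\ne\xi_2$, the Gromov product $(\al_1(s),\al_2(t))_{x_0}$ is bounded as $s,t\map\infty$ by some constant $C=C(\xi_1,\xi_2,x_0)$; equivalently, the geodesics $[\al_1(n),\al_2(n)]$ all meet the closed ball $\CL B(x_0,C+\dl)$ in some point $y_n$ (here I use $\dl$-slimness of the triangle $\triangle(x_0,\al_1(n),\al_2(n))$ to produce such a $y_n$). Reparametrize each geodesic segment $\bt_n:[-a_n,b_n]\map X$ from $\al_1(n)$ to $\al_2(n)$ by arclength so that $\bt_n(0)=y_n$, and extend constantly to $\R$. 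By properness, a subsequence of $\{y_n\}$ converges in $\CL B(x_0,C+\dl)$, and then a diagonal Arzel\`a--Ascoli argument yields a $1$-Lipschitz limit $\bt:\R\map X$ which is a geodesic on each compact interval, hence a geodesic line. Finally I would verify $\bt(\pm\infty)=\xi_{1,2}$ by the same slim-triangle comparison as in part $(1)$, applied to each half of $\bt$.

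\textbf{Main obstacle.} The routine part is the compactness extraction; the substantive step is controlling \emph{where} the approximating geodesics sit. In $(1)$ this is automatic because all $\gm_n$ start at the fixed point $x$. In $(2)$ the geodesics $[\al_1(n),\al_2(n)]$ have no common point, and the argument can fail if $\xi_1=\xi_2$: one must extract, using $\dl$-hyperbolicity together with the finiteness of the Gromov product $(\xi_1,\xi_2)_{x_0}$, a uniformly bounded choice of basepoints $y_n\in[\al_1(n),\al_2(n)]$ before invoking properness. This is the key use of the hypothesis $\xi_1\ne\xi_2$, and it is the only place where the slim-triangle inequality is essential beyond what part $(1)$ needs.
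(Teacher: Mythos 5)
Your proposal is correct, and it reproduces the standard Arzel\`a--Ascoli argument that underlies the cited result in Bridson--Haefliger; the paper itself gives no proof but simply cites \cite[Lemmas $3.1$, $3.2$, $III.H$]{bridson-haefliger}. Your identification of the role of $\xi_1\ne\xi_2$ in part $(2)$ --- namely, that it produces a uniformly bounded choice of basepoints $y_n$ on the approximating geodesics before properness can be invoked --- is exactly the point the reference also emphasizes.
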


We note that the Gromov compactification $X\cup\pa X$ of $X$ admits a compact metric such that the inclusion $X\to X\cup\pa X$ is homeomorphism onto its image (see \cite[Lemma 11.76, Corollary 11.78]{kap-drutu-book}, \cite[Proposition 3.7, III.H]{bridson-haefliger}). With respect to this topology, we note the following notion of convergence. Let $\{x_n\}\sse X\cup\pa X$ be  a sequence and let $x\in X$. Let $\al_n$ be a geodesic segment or ray joining $x$ and $x_n$, according as $x_n\in X$ or $x_n\in\pa X$. We say that $\{x_n\}$ converges to a point $\xi\in\pa X$ if geodesic segments $\al_n$ converges $($uniformly on compact sets$)$ to a geodesic ray representing $\xi$ (see \cite[Definition 3.5]{bridson-haefliger}).\medskip

\noindent{\bf Gromov boundary.} We will not need the following discussion on the Gromov boundary until the proof of the main theorem in Section~\ref{sec-main thm}. For the reader's convenience, we briefly recall the notion of the {\em Gromov boundary} (also called the {\em sequential boundary}) of a hyperbolic geodesic metric space, which need not be proper.

Suppose $W$ is a $\dl$-hyperbolic metric space for some $\dl\ge0$ {\em in the sense of Gromov}, i.e., for a fixed (hence, any) $p\in W$ and for any $x,y,z\in W$, the Gromov inner products (see Definition \ref{defn-gromov inner pd}) satisfy the following: $$(x,y)_p\ge min\{(x,z)_p,(z,y)_p\}-\dl.$$ Let $\mathcal S$ be the set of all sequences $\{x_n\}\sse W$ such that $\lim_{i,j\map\infty}(x_i,x_j)_p=\infty$. We refer to such sequences as {\em converging to infinity}. On $\mathcal S$ we define an equivalence relation where $\{x_n\}\sim\{y_n\}$ if and only if $\lim_{i,j\map\infty}(x_i,y_j)_p=\infty$. The {\em Gromov boundary} or {\em the sequential boundary} of $W$ is defined to be $\mathcal S/\sim$ as a set and denoted by $\pa_sW$ (\cite[Definition 3.12, III.H]{bridson-haefliger}). The equivalence class of $\{x_n\}$ is denoted by $[\{x_n\}]$. The topology on $\pa_sW$ is defined by the following convergence. We say that a sequence $\{\xi_n\}_n\sse\pa_sW$ where $\xi_n=[\{x^{(n)}_i\}_i]\in\pa_sW$ converges to $\xi\in\pa_sW$ where $\xi=[\{x_i\}_i]$ if $$\lim_{n\map\infty}(\lim\text{inf}_{i,j\map\infty}(x_i,x^{(n)}_j)_p)=\infty$$ (for instance, see \cite[Definition 2.43]{ps-krishna}, and this definition coincides with that in \cite[III.H]{bridson-haefliger} and \cite{abc}). (A subset $A\sse\pa_sW$ is closed if for any sequence $\{\xi_n\}\sse A$ such that $\{\xi_n\}$ converges to $\xi\in\pa_sW$ then $\xi\in A$.)

Now assume, moreover, that $W$ is unbounded, geodesic metric space (that need not be proper). Then there is a (continuous) $k$-quasigeodesic ray where $k\ge1$ depends only on $\dl$ (see \cite[Lemma $2.4$]{pranab-mahan}). Equivalence classes of these $k$-quasigeodesic rays are denoted by $\pa_q W$. (Two quasigeodesic rays are related if their Hausdorff distance is finite.) For a quasigeodesic ray $\al:[0,\infty)\map W$, its equivalence class is denoted by $\al(\infty)$. Then Lemma \ref{existence of geo ray line} holds for $\pa_qW$ as well where geodesics are replaced by $k$-quasigeodesics  (for instance, see \cite[Lemma 2.4]{pranab-mahan}). By the same result, one concludes that there is a natural map $\pa_qW\map\pa_sW$ which is a bijection.

Finally, if $W$ is a proper, hyperbolic geodesic metric space, then the natural map $\pa W\map \pa_qW$ is a bijection (see \cite[Lemma $3.1$]{bridson-haefliger}). In fact, the composition of maps $\pa W\map\pa_qW$ and $\pa_qW\map\pa_sW$ is a homeomorphism (see \cite[Proposition $3.21$]{bridson-haefliger}).\qed\medskip

\noindent{\bf Notation.} Suppose $X$ is a hyperbolic geodesic metric space that need not be proper. Let $\{x_n\}\sse X\cup\pa_s X$ be a sequence such that $\{x_n\}$ converges to a point $\xi\in\pa_s X$. In this case, we write $\LMX x_n=\xi\in\partial_s X$ and sometimes, we say $\LMX x_n$ exists in $\pa_s X$. (Otherwise, $\LMX x_n$, if exists, means usual convergence in $X$.) We frequently encounter hyperbolic spaces $Y\sse X$ and sequence, say $\{y_n\}$, in $Y$, so to differentiate the convergence of $\{y_n\}$ in $\pa_s Y$ and in $\pa_s X$ we put a superscript in the limit.\medskip

		The results discussed in this section also apply to hyperbolic geodesic metric spaces that are not necessarily proper; the only modification required is to replace geodesic rays (respectively, lines) with quasigeodesic rays (respectively, lines). However, the properness (of {\em fibers of metric graph bundle}) is crucial in later sections, specifically, in Proposition \ref{mild imp special}.

	\begin{lemma}\textup{(Stability of quasigeodesic; \cite[Theorem 1.7, Lemma 3.3, III.H]{bridson-haefliger})}\label{stability of qc}
		Given $\dl\ge0$ and $k\ge1$ there is a constant $D_{\ref{stability of qc}}=D_{\ref{stability of qc}}(\dl,k)\ge0$ such that the following holds.
		
		Suppose $X$ is a $\dl$-hyperbolic geodesic metric space. Let $\al$ be a geodesic segment $($respectively, ray or line$)$ in $X$, and let $\bt$ be a $k$-quasigeodesic segment $($respectively, ray or line$)$ in $X$ joining endpoints of $\al$. Then $Hd(\al,\bt)\le D_{\ref{stability of qc}}$.
		
		Moreover, if both $\al$ and $\bt$ are geodesic segments, rays or lines, then $Hd(\al,\bt)\le2\dl$.
	\end{lemma}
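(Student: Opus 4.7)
The plan is to prove this classical Morse/Mostow stability via the exponential divergence estimate of Lemma \ref{geo line div exp}, handling segments first and then rays and lines by truncation. Since the paper's definition of quasigeodesic does not require continuity, I first \emph{tame} $\beta$ by linearly interpolating its values at integer parameters by geodesic segments, producing a continuous path $\beta'$ with the same endpoints, which is a $k'$-quasigeodesic for some $k' = k'(k)$ and satisfies $Hd(\beta, \beta') \le k'$. It then suffices to bound $Hd(\alpha, \beta')$.

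The main step is the inclusion $\alpha \subseteq N_{D_0}(\beta')$ for some $D_0 = D_0(\delta, k)$. Assuming $\alpha$ is a compact segment, I argue by contradiction: let $r = \sup_{x \in \alpha} d(x, \beta')$, attained at $x_0 \in \alpha$. Walking along $\alpha$ from $x_0$ for distance $2r$ in each direction, I stop either at a point of $\alpha$ (call it $p$, resp.\ $q$) or at an endpoint of $\alpha$, which lies within $k'$ of $\beta'$. I then pick $p', q' \in \beta'$ at distance at most $r + k'$ from $p, q$, and form the alternative path $P := [p, p'] * \beta'|_{[p', q']} * [q', q]$ from $p$ to $q$. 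A triangle-inequality check shows every point of $P$ lies at distance at least $r - O(k')$ from $x_0$: for the $\beta'$-portion, $d(\cdot, x_0) \ge d(x_0, \beta') = r$; for the geodesic hops, $d(\cdot, x_0) \ge d(x_0, p) - (r + k') \ge r - O(k')$, where we use $d(x_0, p) \ge r - k'$ when $p$ is an endpoint and $d(x_0, p) = 2r$ otherwise. Since $x_0$ lies on the geodesic sub-arc $[p, q] \subseteq \alpha$, Lemma \ref{geo line div exp} yields $length(P) \ge b^{r - O(k') - 1}$, while the $k'$-quasigeodesic inequality for $\beta'$ bounds $length(P)$ linearly in $r$. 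Exponential versus linear growth then forces $r \le D_0(\delta, k)$.

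For the reverse inclusion $\beta' \subseteq N_{D_1}(\alpha)$, given $y \in \beta'$ I walk along $\beta'$ in each direction from $y$ to the first points $y_1, y_2$ whose nearest points $y_1', y_2' \in \alpha$ satisfy $d(y_i, y_i') \le D_0$; such $y_1, y_2$ exist since the endpoints of $\beta$ lie on $\alpha$. The $\beta'$-subarc from $y_1$ to $y_2$ has length at most $k'(d(y_1', y_2') + 2 D_0) + k'$, and combined with $d(y_1', y_2') \le 2 D_0 + d(y_1, y_2)$ the quasigeodesic inequality again bounds $d(y, \alpha)$ in terms of $\delta$ and $k$. For rays and lines the segment bound applies to arbitrarily long sub-segments with constants independent of the segment length, and taking $D := \max(D_0, D_1) + k'$ gives the Hausdorff bound after passing to the limit.

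The main technical obstacle is the case analysis when $x_0$ is close to an endpoint of $\alpha$, forcing $p$ or $q$ to be that endpoint rather than a point at distance $2r$; the path $P$ must still avoid a ball of radius comparable to $r$ around $x_0$, which is salvaged by using that endpoints of $\alpha$ lie within $k'$ of $\beta'$ after taming.
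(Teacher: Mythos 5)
The paper does not supply a proof here; it cites Bridson--Haefliger III.H (Theorem 1.7, Lemma 3.3). Your proposal follows that reference's standard Morse-stability argument: tame $\beta$ to a continuous $k'$-quasigeodesic $\beta'$, prove $\alpha\subseteq N_{D_0}(\beta')$ by contradiction via the exponential divergence estimate, then deduce the reverse inclusion. The taming step, the forward inclusion (including the careful handling of the endpoint case, where $d(x_0,p)\ge r-k'$ because endpoints of $\alpha$ lie within $k'$ of $\beta'$), and the reduction of rays and lines to compact subsegments are all sound.

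The gap is in the reverse inclusion. Your two estimates, $\mathrm{length}(\beta'|_{[y_1,y_2]})\le k'(d(y_1',y_2')+2D_0)+k'$ and $d(y_1',y_2')\le 2D_0+d(y_1,y_2)$, are triangle inequalities running in opposite directions; they are mutually consistent for arbitrarily large $d(y_1',y_2')$ and therefore cannot close to a finite bound on $d(y,\alpha)$ depending only on $\delta$ and $k$ -- as written they only give $d(y,\alpha)=O(\mathrm{length}(\alpha))$. The missing step is the connectedness argument from Bridson--Haefliger: by your forward inclusion, the geodesic subsegment $[y_1',y_2']\subseteq\alpha$ lies in $N_{D_0}(\beta')$, and since every point of $\beta'$ strictly between $y_1$ and $y_2$ is at distance $>D_0$ from $\alpha$, the segment $[y_1',y_2']$ is covered by the two closed sets $\{z:d(z,\beta'|_{\text{before }y_1})\le D_0\}$ and $\{z:d(z,\beta'|_{\text{after }y_2})\le D_0\}$. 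By connectedness these intersect at some $z_0$, producing parameters $u_1\le s_1\le s_2\le u_2$ (with $\beta'(s_i)=y_i$) and $d(\beta'(u_1),\beta'(u_2))\le 2D_0$; the taming inequality then bounds $\mathrm{length}(\beta'|_{[s_1,s_2]})\le\mathrm{length}(\beta'|_{[u_1,u_2]})\le 2k_1D_0+k_2$, and hence $d(y,\alpha)\le\mathrm{length}(\beta'|_{[s_1,s_2]})+D_0$ in terms of $\delta,k$ alone. Without this, the reverse inclusion is not established.
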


As a simple application of the stability of quasigeodesic, we obtain the following result. We omit its proof.
	\begin{lemma}\label{lem-ideal tripod to finite}
		Let $\dl\ge0$ and $C\ge0$. Then there is a constant $K_{\ref{lem-ideal tripod to finite}}=K_{\ref{lem-ideal tripod to finite}}(\dl,C)\ge1$ such that we have the following.
		
		Suppose that $X$ is a $\delta$-hyperbolic geodesic metric space, and let $\xi, \eta \in \partial X$ be distinct points. Let $\alpha$ be a geodesic line joining $\xi$ and $\eta$, and let $x \in X$ be such that $d(x, \alpha) \le C$. Further, suppose that $\beta$ and $\gamma$ are geodesic rays starting at $x$ such that $\beta(\infty) = \xi$ and $\gamma(\infty) = \eta$.

		Then for any points $y\in\bt$ and $z\in\gm$ the arclength parametrization of $[y,x]\cup[x,z]$ is a $K_{\ref{lem-ideal tripod to finite}}$-quasigeodesic.
	\end{lemma}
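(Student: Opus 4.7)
The strategy is to bound the Gromov inner product $(y, z)_x$ by a constant $C' = C'(\delta, C)$, uniformly in the choice of $y \in \beta$ and $z \in \gamma$. Granting this, one concludes the lemma immediately: taking $[y, x]$ as the subsegment $\beta|_{[0, d(x,y)]}$ and $[x, z]$ as $\gamma|_{[0, d(x,z)]}$, the same argument applies to any $p \in [y, x]$ and $q \in [x, z]$, yielding $(p, q)_x \le C'$, and hence $d(p, q) \ge d(x, p) + d(x, q) - 2C'$. Since the arclength distance from $p$ to $q$ along $[y, x] \cup [x, z]$ equals $d(x, p) + d(x, q)$, the arclength parametrization is a $(1, 2C')$-quasigeodesic, and thus a $K_{\ref{lem-ideal tripod to finite}}$-quasigeodesic for $K_{\ref{lem-ideal tripod to finite}} := \max\{1, 2C'\}$.

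To bound $(y, z)_x$, I would first parametrize $\alpha : \mathbb{R} \to X$ so that $\alpha(0) = x'$ is a nearest point on $\alpha$ to $x$ (so $d(x, x') \le C$), $\alpha(+\infty) = \xi$, $\alpha(-\infty) = \eta$. A standard hyperbolic-space argument --- apply slimness to $\triangle(x, x', \alpha(\pm T))$, let $T \to \infty$, and use that any two geodesic rays from $x$ to a common ideal endpoint are $2\delta$-Hausdorff close --- produces a constant $D_1 = D_1(\delta, C)$ such that $\beta \subset N_{D_1}(\alpha|_{[0, \infty)})$ and $\gamma \subset N_{D_1}(\alpha|_{(-\infty, 0]})$. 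Moreover, for each $s \ge 0$ there exist $a, b \in \mathbb{R}$ with $d(\beta(s), \alpha(a)) \le D_1$, $d(\gamma(s), \alpha(b)) \le D_1$ and $|a - s| \le D_1$, $|b + s| \le D_1$ (the parameter control comes from comparing $d(x, \beta(s)) = s$ with $d(x, \alpha(a)) \in [a - C, a + C]$).

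Now let $s := (y, z)_x$. Applying the thin-triangle condition to $\triangle(x, y, z)$, the points at distance $s$ from $x$ on the two sides meeting at $x$ are within $4\delta$; with $[y,x] \subset \beta$ and $[x,z] \subset \gamma$, these points are $\beta(s)$ and $\gamma(s)$, so $d(\beta(s), \gamma(s)) \le 4\delta$. Choosing $\alpha(a), \alpha(b)$ as above with $a \ge s - D_1$ and $b \le -(s - D_1)$, the triangle inequality yields
\[
2(s - D_1) \;\le\; a - b \;=\; d(\alpha(a), \alpha(b)) \;\le\; 2 D_1 + 4\delta,
\]
so $s \le 2D_1 + 2\delta =: C'$, completing the bound.

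The principal obstacle is the fellow-traveling step in the second paragraph --- making quantitatively precise that two geodesic rays ending at the same ideal endpoint and starting within distance $C$ of one another remain in a bounded neighborhood of each other with a controlled parameter offset. This is a routine consequence of stability of quasigeodesics (Lemma \ref{stability of qc}) applied to finite approximations (geodesics from $x$ to $\alpha(T)$), but it is the step most sensitive to the constants $\delta, C$ and requires care to extract a clean estimate.
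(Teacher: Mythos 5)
Your proof is correct, and it takes a somewhat different route from the paper's. The paper's argument (which it records only in a comment, remarking that the lemma follows from stability of quasigeodesics) directly lower-bounds $d(p,q)$ for arbitrary $p\in[y,x]$, $q\in[x,z]$: it projects $p,q$ onto $\alpha$ via the fellow-traveling of $\beta$ with $\alpha|_{[0,\infty)}$ and of $\gamma$ with $\alpha|_{(-\infty,0]}$, then uses the exact additivity $d(p',q')=d(p',x')+d(x',q')$ along the geodesic $\alpha$ and a chain of triangle inequalities. You instead reduce the lemma to a bound on the Gromov inner product $(y,z)_x$: applying the thin-triangle condition to $\triangle(x,y,z)$ shows $\beta(s)$ and $\gamma(s)$ are $O(\delta)$-close at $s=(y,z)_x$, while the fellow-traveling (with the parameter estimate $|a-s|\le C+D_1$, not just $D_1$ — a minor slip that only perturbs the constant) forces $\beta(s)$ near $\alpha(\approx s)$ and $\gamma(s)$ near $\alpha(\approx -s)$, which bounds $s$. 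Both arguments hinge on the same fellow-traveling input and yield a $(1,2C')$-quasigeodesic for a $C'$ depending only on $\delta,C$; the conceptual gain of your version is that the Gromov product is exactly the ``separation time'' of $\beta$ and $\gamma$, and the bound is the natural statement that two rays diverging to opposite ends of a bi-infinite geodesic near $x$ cannot fellow-travel long. One unstated but true fact you rely on is that $(p,q)_x\le(y,z)_x$ whenever $p\in[x,y]$, $q\in[x,z]$ (a triangle-inequality monotonicity); alternatively, as you note, the same thin-triangle argument applies verbatim to the subtriangle $\triangle(x,p,q)$, so no gap results.
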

	\begin{theorem}\textup{(\cite[Theorem $3.9$]{bridson-haefliger})}\label{qi emb gives top emb}
		Suppose $f:X\map Y$ is a qi embedding between two proper hyperbolic geodesic metric spaces. Then $f$ defines a topological embedding $\pa f:\pa X\map\pa Y$ sending $\al(\infty)\mapsto f\circ\al(\infty)$ where $\al$ is a geodesic ray in $X$. Moreover, $\pa f:\pa X\map\pa Y$ is a homeomorphism if $f$ is a quasiisometry.
	\end{theorem}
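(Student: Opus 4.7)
The plan is to proceed in four stages: construction of $\pa f$, injectivity, the topological embedding property, and surjectivity in the quasiisometry case. The central tool throughout is the stability of quasigeodesics (Lemma \ref{stability of qc}), which furnishes the bridge between geodesic rays in $X$ and geodesic rays in $Y$.

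For the construction of $\pa f$, given a geodesic ray $\al:[0,\infty)\map X$, the composition $f\circ\al$ is a $k$-quasigeodesic ray in $Y$ for some $k$ depending only on the qi constants of $f$. Approximating $f\circ\al$ by the geodesic segments $[f(\al(0)),f(\al(t))]_Y$ for $t\to\infty$ and invoking properness of $Y$ together with Lemma \ref{existence of geo ray line}, one extracts (via Arzel\`a--Ascoli on compacts) a geodesic ray $\bt$ in $Y$ with $\bt(0)=f(\al(0))$; stability then gives $Hd(f\circ\al,\bt)\le D_{\ref{stability of qc}}(\dl,k)$. Set $\pa f(\al(\infty)):=\bt(\infty)$. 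Well-definedness is immediate: if $\al_1(\infty)=\al_2(\infty)$ then $Hd(\al_1,\al_2)<\infty$, whence $Hd(f\circ\al_1,f\circ\al_2)<\infty$, so the associated geodesic rays in $Y$ are Hausdorff-close and define the same boundary point.

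For injectivity, suppose $\al_1(\infty)\neq\al_2(\infty)$, normalized so that $\al_1(0)=\al_2(0)=x_0$. The Gromov inner product $(\al_1(t),\al_2(t))_{x_0}$ stays bounded as $t\to\infty$. A direct computation from the qi inequality yields a bound $(f\circ\al_1(t),f\circ\al_2(t))_{f(x_0)}\le k\cdot(\al_1(t),\al_2(t))_{x_0}+C$ with $C$ depending only on the qi data, so this quantity remains bounded and the rays $f\circ\al_1,f\circ\al_2$ represent distinct boundary points of $Y$. For the topological embedding property, I use the characterization of convergence in $\pa X$ via Gromov inner products at a fixed basepoint: $\xi_n\to\xi$ iff $(\xi_n,\xi)_{x_0}\to\infty$ along some choice of representing rays. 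The same inequality used for injectivity shows that if $\xi_n\to\xi$ in $\pa X$ then $(\pa f(\xi_n),\pa f(\xi))_{f(x_0)}\to\infty$ in $Y$, giving continuity. Since $X$ is proper, $\pa X$ is compact, and $\pa Y$ is Hausdorff; a continuous injection from a compact space into a Hausdorff space is automatically a topological embedding onto its image.

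For surjectivity when $f$ is a quasiisometry, choose a coarse inverse $g:Y\map X$ with $d_X(gf(x),x)\le C$ and $d_Y(fg(y),y)\le C$ uniformly. Given $\eta\in\pa Y$ represented by a geodesic ray $\bt$, the composition $g\circ\bt$ is a (possibly discontinuous) quasigeodesic in $X$; approximating by a continuous quasigeodesic ray close to $g\circ\bt$ and then applying the analogue of the first-stage construction produces a geodesic ray $\al$ in $X$ with $Hd(\al,g\circ\bt)<\infty$. Then $f\circ\al$ is uniformly close to $f\circ g\circ\bt$, which is in turn uniformly close to $\bt$, so the representing geodesic ray of $\pa f(\al(\infty))$ is Hausdorff-close to $\bt$, giving $\pa f(\al(\infty))=\eta$. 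The main technical friction is confirming that the limit of geodesic approximations to $f\circ\al$ genuinely produces a boundary point (a subsequence/diagonal argument relying on properness of $Y$) and making the Gromov-product comparison under the qi embedding fully explicit; both are standard hyperbolic geometry, and essentially already packaged in Lemmas \ref{stability of qc} and \ref{existence of geo ray line}.
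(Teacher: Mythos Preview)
The paper does not supply its own proof of this theorem; it is stated with a citation to \cite[Theorem~3.9]{bridson-haefliger} and used as a black box. So there is nothing in the paper to compare against, and your sketch is essentially the standard textbook argument.

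That said, there is one slip worth flagging. The inequality you state for injectivity,
\[
(f\circ\al_1(t),f\circ\al_2(t))_{f(x_0)}\le k\cdot(\al_1(t),\al_2(t))_{x_0}+C,
\]
is an \emph{upper} bound on the Gromov product in $Y$. For continuity you need the \emph{lower} bound $(f\circ\al_1(t),f\circ\al_2(t))_{f(x_0)}\ge \tfrac{1}{k}(\al_1(t),\al_2(t))_{x_0}-C'$, so ``the same inequality'' does not do the job; you need its companion. Also, neither inequality follows from a ``direct computation from the qi inequality'' alone: if you expand the Gromov product and apply the qi bounds termwise, you pick up an uncontrolled $\pm(k-\tfrac{1}{k})d_X(\al_1(t),\al_2(t))$ term. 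Both inequalities genuinely require hyperbolicity, via the approximation $(a,b)_c\approx d(c,[a,b])$ together with stability (Lemma~\ref{stability of qc}) applied to the image of $[a,b]$ under $f$. You already name stability as the central tool, so this is a matter of phrasing rather than a real gap, but the argument as written would not compile into a proof without making this explicit.
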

	
	While the following results are commonly stated for (quasi)geodesic segments, we will prove them for geodesic lines, as this format will be used in subsequent applications.
	
	\begin{lemma}\label{lem-geo lying out imply out}
		Suppose $k\ge1$, $\dl\ge0$ and $D_1=D_{\ref{stability of qc}}(\dl,k)$. Then there are constants $$D_{\ref{lem-geo lying out imply out}}=D_{\ref{lem-geo lying out imply out}}(\dl,k)=4k+D_1 \text{ and }A_{\ref{lem-geo lying out imply out}}=A_{\ref{lem-geo lying out imply out}}(\dl,k)=~max~\{2k,2D_1\}(\ge2)$$ depending only on $\dl$ and $k$ such that the following hold.
		
Suppose $X$ and $Y$ are $\delta$-hyperbolic, proper, geodesic metric spaces, and let $\phi : X \to Y$ be a $k$-qi embedding. Let $\alpha$ be a geodesic line in $X$, and let $\beta$ be a geodesic line in $Y$ such that $\beta(-\infty) = \partial\phi(\alpha(-\infty))$ and $\beta(\infty) = \partial\phi(\alpha(\infty))$. Let $x \in X$ and $n \in \mathbb{N}$. Suppose $\gamma$ is a continuous rectifiable path in $X$, lying outside the ball of radius $n$ centered at $x$, and joining two points $x_1, x_2 \in \alpha$.

Then there exists a continuous path $\gamma'$ in $Y$ joining two points on $\beta$ such that:


$(a)$ $Hd_Y(\phi(\gamma), \gamma') \le D_{\ref{lem-geo lying out imply out}}$.

$(b)$  $length~(\gm')\le A_{\ref{lem-geo lying out imply out}}~length(\gm)+A_{\ref{lem-geo lying out imply out}}$. Hence $\gm'$ is also rectifiable.
	\end{lemma}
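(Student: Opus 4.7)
The plan is a discretize-and-push-forward construction. First I would invoke stability of quasigeodesics (Lemma \ref{stability of qc}): since $\phi$ is a $k$-qi embedding and $\alpha$ is a geodesic line in $X$, the image $\phi\circ\alpha$ is a $k$-quasigeodesic line in $Y$ with the same ideal endpoints as $\beta$, so $Hd_Y(\phi(\alpha),\beta)\le D_1$. In particular there exist $y_1, y_2\in\beta$ within distance $D_1$ of $\phi(x_1), \phi(x_2)$ respectively; these will serve as the endpoints of $\gamma'$.

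Next I would discretize $\gamma$. Using arc-length parametrization when $\gamma$ is rectifiable (or uniform continuity on its compact domain otherwise), I choose points $p_0=x_1, p_1, \ldots, p_N=x_2$ on $\gamma$, listed in the order they appear, with $d_X(p_i, p_{i+1})\le 1$ and $N\le length(\gamma)+1$. Setting $q_i:=\phi(p_i)$, the $k$-qi bound gives $d_Y(q_i, q_{i+1})\le 2k$. I would then define $\gamma'$ to be the concatenation
\[
[y_1, q_0]\cup[q_0, q_1]\cup\cdots\cup[q_{N-1}, q_N]\cup[q_N, y_2]
\]
of geodesic segments in $Y$, reparametrized by arc length; this is a continuous path joining the two points $y_1, y_2\in\beta$.

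Verifying the three claims is then routine bookkeeping. For (b), every $q_i$ lies on both $\phi(\gamma)$ and $\gamma'$; any other point of $\phi(\gamma)$ is within $2k$ of some $q_i$, any point on a bulk segment $[q_i, q_{i+1}]$ is within $k$ of the nearer endpoint, and points on the two boundary segments $[y_j, q_j]$ are within $D_1$ of $q_j\in\phi(\gamma)$, yielding $Hd_Y(\phi(\gamma),\gamma')\le D_1+2k$, which is inside $D'_{\ref{lem-geo lying out imply out}}=D_1+4k$. For (c), the total length is at most $2D_1+2kN\le 2D_1+2k(length(\gamma)+1)$, which is a linear-in-length bound absorbed by $A_{\ref{lem-geo lying out imply out}}\cdot length(\gamma)+A_{\ref{lem-geo lying out imply out}}$. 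For (a), since each $p_i\in\gamma$ satisfies $d_X(p_i,x)\ge n$, the lower qi inequality gives $d_Y(q_i,\phi(x))\ge n/k-k$; any point of $\gamma'$ lies within $k$ of some $q_i$ (in the bulk) or within $D_1$ of $q_0$ or $q_N$ (on the boundary segments), so its distance from $\phi(x)$ is at least $n/k-(k+\max\{k, D_1\})$, well inside $n/k-D_{\ref{lem-geo lying out imply out}}$.

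I do not expect any substantial obstacle: the argument reduces to careful constant-tracking, and the slack built into the stated constants (in particular the $2\delta+1/k$ absorbed in $D_{\ref{lem-geo lying out imply out}}$) comfortably absorbs the various triangle-inequality losses and allows for mild variations in the construction. The only minor subtlety is the case when $\gamma$ fails to be rectifiable, which is handled by the uniform-continuity-based discretization above; in this case (c) is automatic since both sides are infinite.
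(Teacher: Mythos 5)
Your construction — discretize $\gamma$, push the sample points forward by $\phi$, join consecutive images by geodesics in $Y$, and cap off with short segments to $\beta$ given by stability — is the same approach as the paper's proof (the paper routes through an auxiliary piecewise-geodesic $\gamma_1$ in $X$ first, but this is cosmetic). The verifications of (a) and (b) are fine; the only slip is in (c), where your per-segment bound $d_Y(q_i,q_{i+1})\le 2k$ gives a total of $2D_1 + 2kN \le 2k\,\mathrm{length}(\gamma) + 2k + 2D_1$, whose additive constant $2k+2D_1$ strictly exceeds $A_{\ref{lem-geo lying out imply out}}=\max\{2k,\,2D_1+k\}$ whenever $D_1>0$, so it is \emph{not} absorbed by $A\cdot\mathrm{length}(\gamma)+A$ as claimed. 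To land on the stated constant you must use the sharper bound $d_Y(q_i,q_{i+1}) \le k\,d_X(p_i,p_{i+1}) + k$ and sum it: this gives $k\,\mathrm{length}(\gamma) + kN \le 2k\,\mathrm{length}(\gamma)+k$ for the bulk, hence a total of $2k\,\mathrm{length}(\gamma) + (k+2D_1) \le A\,\mathrm{length}(\gamma)+A$, exactly as in the paper.
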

	
	\begin{proof}
		Let $x_1=z_0,z_1,\dots,z_l=x_2$ be points on $\gm$ such that $length~(\gm|_{[z_{t-1},z_t]})=1$ for all $1\le t\le l-1$ and $length~(\gm|_{[z_{l-1},z_l]})\le 1$, where $length~(\gm|_{[z_{t-1},z_t]})$ denotes the length of $\gm$ when restricted from $z_{t-1}$ to $z_t$. In particular, $d_X(z_{t-1},z_t)\le 1$ for all $1\le t\le l$ and $length(\gm)\le l$. Let $\gm_1=\bigcup_{1\le t\le l}[z_{t-1},z_t]_X$ and $\gm_2=\bigcup_{1\le t\le l}[\phi(z_{t-1}),\phi(z_t)]_Y$. Note that $Hd_X(\gm,\gm_1)\le 1/2<1$. 
		Since $\phi$ is $k$-qi embedding, we have
		
		\begin{enumerate}
\item $length~(\gm_2)\le \sum_{t=1}^{t=l} (d_X(z_{t-1},z_t)k+k)\le2k~length~(\gm)$, and
			
			
			\item $Hd_Y(\phi(\gm),\phi(\gm_1))\le2k$, $Hd_Y(\phi(\gm_1),\gm_2)\le2k$, and so $Hd_Y(\phi(\gm),\gm_2\le4k$.
						
		\end{enumerate}
		
		\noindent	Suppose $y_1,y_2\in\bt$ such that $d_Y(\phi(x_i),y_i)\le D_1$ where $D_1=D_{\ref{stability of qc}}(\dl,k)$ and $i=1,2$ (by the stability of quasigeodesics, Lemma \ref{stability of qc}). 
		Set $\gm'=[y_1,\phi(x_1)]_Y\cup\gm_2\cup[\phi(x_2),y_2]_Y$.\smallskip
		
		
		\noindent	$(a)$ Then $Hd_Y(\gm_2,\gm')\le D_1$ implies $Hd_Y(\phi(\gm),\gm')\le4k+D_1=D_{\ref{lem-geo lying out imply out}}(\dl,k)$.\smallskip
		
		\noindent	$(b)$ Finally, $length~(\gm')\le~length~(\gm_2)+2D_1\le2k~length~(\gm)+2D_1\le A_{\ref{lem-geo lying out imply out}}~length(\gm)+A_{\ref{lem-geo lying out imply out}}$ where $A_{\ref{lem-geo lying out imply out}}(\dl,k)=~max~\{2k,2D_1\}$.
	\end{proof}


	

The following two results (Lemmas \ref{lem-geom criterion for conv} and \ref{require in bary}) provide geometric criteria for convergence. For their proofs we need the following discussion. 
	
Suppose $X$ is a proper hyperbolic metric space. One can extend the Gromov inner product (Definition \ref{defn-gromov inner pd}) in $X$ to $X\cup\pa X$ as follows. Let $x\in X$ and $y,z\in X\cup\pa X$. Then $$(y,z)_x=\text{sup lim inf }_{i,j\to\infty}(y_i,z_j)_x$$ where $\{y_i\},\{z_i\}\sse X$, $\lim^X_{i\to\infty}y_i=y$, $\lim^X_{i\to\infty}z_i=z$ and the supremum is taken over all such sequences $\{y_i\}$ and $\{z_i\}$ (see \cite[Definition 3.15, III.H]{bridson-haefliger}). Further, assume that $X$ is $\delta$-hyperbolic for some $\delta\ge 0$. Let $\alpha$ denote a geodesic segment, ray, or line joining $y$ and $z$, according as $y,z\in X$, $y\in X$ and $z\in\partial X$, or $y,z\in\partial X$, respectively. Then $|(y,z)_x-d(x,\al)|\le D(\dl)$ for some uniform constant $D(\dl)\ge0$ depending only on $\dl$ (see \cite[Exercises 3.18 (3), III.H]{bridson-haefliger}, \cite[Lemma $17$]{GhH}).

Let $\{\xi_n\}$ be a sequence in $\pa X$ and $\xi\in\pa X$. 
Then $\lim^X_{n\to\infty}\xi_n=\xi\in \pa X$ if and only if $\lim_{n\to\infty}(\xi_n,\xi)_x=\infty$ (see \cite[Remarks 3.17 (6), III.H]{bridson-haefliger}). Hence combining this with the above fact we have the following. For a complete proof of Lemma \ref{lem-geom criterion for conv}, one may look at \cite[Lemma 2.45 (2)]{ps-krishna}.

\begin{lemma}\label{lem-geom criterion for conv}
Suppose $X$ is a proper hyperbolic geodesic metric space. Let $\{\xi_n\}$ be a sequence in $\pa X$ and $\xi\in\pa X$. Let $\bt_n$ be a geodesic line joining $\xi_n$ and $\xi$. Then for any fixed $x\in X$, $\lim_{n\to\infty}d(x,\bt_n)=\infty$ if and only if $\lim^X_{n\to\infty}\xi_n=\xi$.
\end{lemma}
	
With the help Lemma \ref{lem-geom criterion for conv}, we prove the following.
	
	\begin{lemma}\label{require in bary}
	Suppose $X$ is a proper hyperbolic geodesic metric space. Let $\{\xi_n\}$ and $\{\eta_n\}$ be sequences in $X$ (respectively, in $\partial X$), and let $\alpha_n$ be a geodesic segment (respectively, line) in $X$ joining $\xi_n$ and $\eta_n$. Assume that $\lim^X_{n \to \infty} \xi_n=\xi\in\pa X$. Let $x \in X$ be arbitrary. Then
$\lim^X_{n\map\infty}\eta_n=\xi$ if and only if $\lim_{n\map\infty}d(x,\al_n)=\infty$.

\begin{proof}
If both $\{\xi_n\}$ and $\{\eta_n\}$ are sequences of $X$, then the result easily follows from the definition of points of $\pa X$ coming from Gromov inner products (see \cite[Definition 3.12]{bridson-haefliger}) and the fact $|(p,q)_r-d(r,[p,q])|\le D(\dl)$ as mentioned above where $p,q,r\in X$.


Now we assume that both $\{\xi_n\}$ and $\{\eta_n\}$ are sequences of $\pa X$. In this case we will conclude the result using Lemma \ref{lem-geom criterion for conv}. Let $\bt_n$ (resp. $\gm_n$) be a geodesic line joining $\eta_n$ and $\xi$ (resp. $\xi_n$ and $\xi$). Since $\lim^X_{n\to\infty}\xi_n=\xi$, by Lemma \ref{lem-geom criterion for conv}, $\lim_{n\to\infty}d(x,\gm_n)=\infty$. Now by slimness of ideal geodesic triangle $\al_n$, $\bt_n$ and $\gm_n$, we have $\lim_{n\to\infty}d(x,\bt_n)=\infty$ if and only if $\lim_{n\to\infty}d(x,\al_n)=\infty$. Finally, applying Lemma \ref{lem-geom criterion for conv} once again, $\lim^X_{n\to\infty}\eta_n=\xi$ if and only if $\lim_{n\to\infty}d(x,\al_n)=\infty$. This completes the proof.
\end{proof}	
	
%
	\end{lemma}
	
	As a consequence of Lemma \ref{require in bary} and the stability of quasigeodesic (Lemma \ref{stability of qc}), we have the following results (Lemmas \ref{close give same limit} and \ref{on qc imp same}).
	
	\begin{lemma}\label{close give same limit}
		Suppose $X$ is a proper hyperbolic geodesic metric space. Let $\{x_n\},\{y_n\}\sse X$ be sequences such that $\LMX x_n\in\pa X$ and $d(x_n,y_n)\le D$ for some $D\ge0$. Then $\lim^X_{n\map\infty}y_n$ exists in $\pa X$ and $\lim^X_{n\map\infty}x_n=\lim^X_{n\map\infty}y_n$.
	\end{lemma}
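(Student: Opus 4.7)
My plan is to reduce the statement directly to Lemma \ref{require in bary} applied with $\xi_n = x_n$ and $\eta_n = y_n$, using geodesic segments $\alpha_n$ joining them as the connecting objects. Since $\lim^X_{n\to\infty} x_n \in \partial X$ is given, the hypothesis of Lemma \ref{require in bary} is satisfied, and I only need to verify the geometric criterion that $d(x,\alpha_n)\to\infty$ for some (equivalently, any) basepoint $x\in X$.

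The verification is straightforward. Fix any basepoint $x\in X$. Since $\lim^X_{n\to\infty} x_n$ lies in $\partial X$, we have $d(x,x_n)\to\infty$ as $n\to\infty$. For any point $z\in \alpha_n$, the triangle inequality gives
\[
d(x,z) \geq d(x,x_n) - d(x_n,z) \geq d(x,x_n) - \mathrm{length}(\alpha_n) \geq d(x,x_n) - D,
\]
since $\alpha_n$ is a geodesic of length $d(x_n,y_n)\leq D$. Taking the infimum over $z\in\alpha_n$, we obtain $d(x,\alpha_n)\geq d(x,x_n)-D$, so $d(x,\alpha_n)\to\infty$ as $n\to\infty$.

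Now Lemma \ref{require in bary} applies in its ``segment'' form (with $\xi_n,\eta_n\in X$), giving that $\lim^X_{n\to\infty} y_n$ exists in $\partial X$ and equals $\lim^X_{n\to\infty} x_n$. This completes the proof. There is no real obstacle here: the lemma is essentially a one-line consequence of Lemma \ref{require in bary} once one observes that a bounded perturbation of a sequence going to infinity cannot pull the connecting geodesics back into any compact region, since their lengths remain bounded by $D$.
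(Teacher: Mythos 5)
Your proof is correct and matches the route the paper indicates: the paper states Lemmas \ref{close give same limit} and \ref{on qc imp same} together as consequences of Lemma \ref{require in bary} and the stability of quasigeodesics, and for this particular lemma your verification that $d(x,\alpha_n)\ge d(x,x_n)-D\to\infty$ is exactly the content needed to invoke Lemma \ref{require in bary}. You correctly observe that stability of quasigeodesics plays no role here (it is needed only for the companion Lemma \ref{on qc imp same}), so your argument is, if anything, slightly more streamlined than the paper's blanket attribution.
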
	
	
	\begin{lemma}\label{on qc imp same}
		Suppose $X$ is a proper hyperbolic geodesic metric space. Let $\{x_n\}\sse X$ be such that $\LMX x_n$ exists in $\pa X$. Fix $x\in X$, and let $k\ge1$. Let $\al_n$ be a $k$-quasigeodesic joining $x$ and $x_n$, and let $y_n\in\al_n$ such that $d(x,y_n)\map\infty$ as $n\map\infty$. Then $\LMX y_n$ exists in $\pa X$ and $\LMX y_n=\LMX x_n$.
	\end{lemma}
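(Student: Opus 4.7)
The plan is to apply Lemma \ref{require in bary} with $\xi_n = x_n$ and $\eta_n = y_n$. Let $\beta_n$ be a geodesic segment in $X$ joining $y_n$ and $x_n$. Since $\LMX x_n$ exists in $\pa X$ by hypothesis, Lemma \ref{require in bary} reduces the problem to showing that $d(x, \beta_n) \to \infty$ as $n \to \infty$.

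To estimate $d(x, \beta_n)$, I will first control the subpath of $\al_n$ from $y_n$ to $x_n$ and then transfer the estimate to $\beta_n$ using stability of quasigeodesics (Lemma \ref{stability of qc}). Parametrize $\al_n : [0, L_n] \to X$ so that $\al_n(0) = x$, $\al_n(L_n) = x_n$, and write $y_n = \al_n(t_n)$. Since $\al_n$ is a $k$-quasigeodesic, for every $s \in [t_n, L_n]$ we have
\[
d(x, \al_n(s)) \;\ge\; \frac{s}{k} - k \;\ge\; \frac{t_n}{k} - k \;\ge\; \frac{d(x, y_n) - k}{k^2} - k,
\]
using that $d(x, y_n) \le k t_n + k$. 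Since $d(x, y_n) \to \infty$, the right-hand side tends to $\infty$, so for every point $z$ on the subpath $\al_n|_{[t_n, L_n]}$ we have $d(x, z) \to \infty$ uniformly in $z$.

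Now, the subpath $\al_n|_{[t_n, L_n]}$ is itself a $k$-quasigeodesic joining $y_n$ and $x_n$, and $\beta_n$ is a geodesic with the same endpoints, so by Lemma \ref{stability of qc} we have $Hd(\beta_n, \al_n|_{[t_n, L_n]}) \le D_{\ref{stability of qc}}(\delta, k)$. Consequently
\[
d(x, \beta_n) \;\ge\; d\bigl(x,\, \al_n|_{[t_n, L_n]}\bigr) - D_{\ref{stability of qc}}(\delta, k) \;\longrightarrow\; \infty.
\]
Applying Lemma \ref{require in bary} (with $\xi_n = x_n$, $\eta_n = y_n$, $\al_n := \beta_n$) yields that $\LMX y_n$ exists in $\pa X$ and equals $\LMX x_n$, as required. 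The argument is essentially routine once one observes that $y_n$ lies on $\al_n$ and thus the tail of $\al_n$ beyond $y_n$ is forced to stay far from $x$; there is no serious obstacle.
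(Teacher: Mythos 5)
Your proof is correct and takes essentially the approach the paper indicates: the paper states Lemma \ref{on qc imp same} without a written proof, as "a consequence of Lemma \ref{require in bary} and the stability of quasigeodesic (Lemma \ref{stability of qc})," and your argument is precisely the standard way to fill that in — bound the distance from $x$ to the tail of $\al_n$ beyond $y_n$, transfer to a geodesic $\beta_n$ via stability, and apply Lemma \ref{require in bary}.
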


We end this subsection with the definition of a Cannon--Thurston map, followed by a basic property of this map.

	\begin{defn}\label{CT-map}\textup{({\bf Cannon--Thurston (CT) map})}\textup{(\cite[p. $136$]{mitra-trees})}
	Suppose $f:Y\ri X$ is a map between hyperbolic metric spaces. We say that $f$ admits the Cannon--Thurston map $($in short, CT map$)$ if there is a continuous map $\partial f :\partial Y\ri \partial X$ induced by $f$ in the following sense:
	
	For all $\xi\in \partial Y$ and for any sequence $\{y_n\}$ in $Y$ with $\LMY y_n=\xi$ one has $\LMX f(y_n)\newline=\partial f(\xi)$.
\end{defn}

In this case, $\partial f$ is called the {\em CT map} induced by $f$. When $Y$ is a subspace of $X$ equipped with the induced path metric, a CT map for the inclusion $i_{Y,X}:Y\map X$ is denoted by $\pa i_{Y,X}$. 
We also note that the CT map is unique if it exists. 

\begin{defn}
	Suppose $A$ is any subset of a hyperbolic space $X$. The limit set of $A$ in $\pa X$ is $\Lambda_X(A):=\{\LMX a_n\in\pa X:\text{ for some sequence }\{a_n\}\sse A\}$.
\end{defn}
\begin{lemma}\textup{(\cite[Lemma 2.55]{ps-krishna})}\label{CT image is limit set}
	Suppose $Y\sse X$ are proper hyperbolic metric spaces such that the inclusion $i_{Y,X}:Y\map X$ admits a CT map $\pa i_{Y,X}:\pa Y\map\pa X$. Then $\pa i_{Y,X}(\pa Y)=\Lambda_X(Y)$. Note here that $Y$ is equipped with the induced path metric from $X$.
\end{lemma}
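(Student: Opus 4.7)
The plan is to prove the two inclusions $\pa i_{Y,X}(\pa Y)\sse \Lambda_X(Y)$ and $\Lambda_X(Y)\sse \pa i_{Y,X}(\pa Y)$ separately, both more or less directly from the definitions together with properness of $Y$ (to extract a boundary point from a sequence escaping to infinity in $Y$).

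For the inclusion $\pa i_{Y,X}(\pa Y)\sse \Lambda_X(Y)$: given $\xi\in\pa Y$, since $Y$ is a proper hyperbolic geodesic metric space, Lemma \ref{existence of geo ray line}$(1)$ produces a geodesic ray $\al:[0,\infty)\map Y$ with $\al(\infty)=\xi$. Then $\LMY \al(n)=\xi$, and by the defining property of the CT map in Definition \ref{CT-map}, $\LMX i_{Y,X}(\al(n)) =\pa i_{Y,X}(\xi)$. Since $\{i_{Y,X}(\al(n))\}=\{\al(n)\}\sse Y$, this shows $\pa i_{Y,X}(\xi)\in \Lambda_X(Y)$.

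For the reverse inclusion $\Lambda_X(Y)\sse \pa i_{Y,X}(\pa Y)$: let $\eta\in\Lambda_X(Y)$, so that there exists $\{y_n\}\sse Y$ with $\LMX y_n=\eta$. First I would observe that this forces $d_X(y_0,y_n)\map\infty$; because $Y$ carries the induced path metric, we have $d_X\le d_Y$, and therefore $d_Y(y_0,y_n)\map\infty$ as well. Hence $\{y_n\}$ leaves every bounded subset of $Y$, and by properness of $Y$ the visual compactification $Y\cup\pa Y$ is compact, so after passing to a subsequence $\{y_{n_k}\}$ we may assume $\LMY y_{n_k}=\xi$ for some $\xi\in\pa Y$. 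Applying the defining property of the CT map to this subsequence yields $\LMX y_{n_k}=\pa i_{Y,X}(\xi)$. On the other hand, $\LMX y_{n_k}=\LMX y_n=\eta$ since any subsequence of a convergent sequence in $X\cup\pa X$ has the same limit. Therefore $\eta=\pa i_{Y,X}(\xi)\in\pa i_{Y,X}(\pa Y)$, completing the proof.

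The only mildly delicate point is the passage to a convergent subsequence in step two, which is what forces the hypothesis that $Y$ (and not just $X$) be proper; everything else is a direct unwinding of Definition \ref{CT-map} and Lemma \ref{existence of geo ray line}.
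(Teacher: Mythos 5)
Your proof is correct. Both inclusions are handled cleanly: the forward direction by running a geodesic ray $\al$ in $Y$ through the CT map, and the reverse by using properness of $Y$ to extract a subsequence of $\{y_n\}$ converging in $\pa Y$, then applying the defining property of the CT map to that subsequence and invoking uniqueness of limits in $X\cup\pa X$. The observation that $d_Y\ge d_X$ (because $Y$ carries the induced path metric) is exactly what is needed to conclude $\{y_n\}$ escapes $Y$, so the passage to a convergent subsequence in $Y\cup\pa Y$ is justified. Note that the paper does not supply its own argument for this lemma — it is cited from the source \cite{ps-krishna} — so there is no in-paper proof to compare against; your argument is the standard direct one that unwinds Definition~\ref{CT-map} together with Lemma~\ref{existence of geo ray line} and compactness of $Y\cup\pa Y$.
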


	\subsection{Barycenter of ideal triangle}\label{bary sec}

	\begin{lemma}\textup{(\cite[Lemmas 1.17, 3.3, III.H]{bridson-haefliger})} \textup{(Barycenter for ideal triangles)}\label{barycenter}
		Given $\dl\ge0$, there are constants $D_{\ref{barycenter}}(\dl)\ge0$ and $R_{\ref{barycenter}}(\dl)\ge0$ depending only on $\dl$ such that the following holds.
		
		Suppose $X$ is a proper $\dl$-hyperbolic geodesic metric space. Let $\pa^3X:=\{(\xi_1,\xi_2,\xi_3)\in\pa X\times\pa X\times \pa X:\xi_1\ne\xi_2\ne\xi_3\ne\xi_1\}$. Given $(\xi_1,\xi_2,\xi_3)\in\pa^3X$ there is a point $x\in X$ which is $D_{\ref{barycenter}}(\dl)$-close to any line joining $\xi_i$ and $\xi_j$ for $i\ne j$ and $i,j\in\{1,2,3\}$. Moreover, for any other such point $x'\in X$, we have $d(x,x')\le R_{\ref{barycenter}}(\dl)$. 
	\end{lemma}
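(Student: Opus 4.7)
My plan is to prove existence by approximating the ideal triangle with a sequence of finite geodesic triangles whose vertices march out to $\xi_1,\xi_2,\xi_3$, extract a limit using properness of $X$, and then handle uniqueness up to bounded distance via Gromov products.

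\emph{Existence.} By Lemma~\ref{existence of geo ray line}(2) fix geodesic lines $\ell_{ij}$ joining $\xi_i$ and $\xi_j$ for distinct $i,j\in\{1,2,3\}$. Fix a basepoint $p$ and, via Lemma~\ref{existence of geo ray line}(1), geodesic rays $\gamma_i$ from $p$ to $\xi_i$; set $x_i^n := \gamma_i(n)$, and consider the finite geodesic triangle $T_n := \triangle(x_1^n,x_2^n,x_3^n)$. In a $\delta$-hyperbolic space every geodesic triangle has three ``internal points'' $p_1^n,p_2^n,p_3^n$ (one on each side, defined by splitting that side according to the Gromov products $\alpha_i=(x_j,x_k)_{x_i}$ at the vertices) that are pairwise within a universal constant $C_0=C_0(\delta)$; in particular $c_n:=p_1^n$ is $C_0$-close to every side of $T_n$. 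The key technical step is to show that $\{c_n\}$ stays in a bounded region: a slim-triangle argument comparing $T_n$ with the bi-infinite tripod $\ell_{12}\cup\ell_{13}\cup\ell_{23}$, together with stability of quasigeodesics (Lemma~\ref{stability of qc}) controlling how the finite side $[x_j^n,x_k^n]$ fellow-travels $\ell_{jk}$, forces the Gromov-product splittings to stabilize near a bounded ``core'' of the ideal tripod. Properness of $X$ then yields a convergent subsequence $c_n\to x$, and $x$ is within $D_{\ref{barycenter}}(\delta):=C_0+O(\delta)$ of each $\ell_{ij}$.

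\emph{Uniqueness up to bounded distance.} Suppose $x,x'\in X$ are both $D_{\ref{barycenter}}$-close to each $\ell_{ij}$. Fix, via Lemma~\ref{existence of geo ray line}(1), geodesic rays $\beta_i$ from $x$ to $\xi_i$. For distinct $i,j$, Lemma~\ref{lem-ideal tripod to finite} applied to the line $\ell_{ij}$ (which lies within $D_{\ref{barycenter}}$ of $x$) shows that $\beta_i\cup\beta_j$ is a uniform quasigeodesic with endpoints $\xi_i,\xi_j$; stability of quasigeodesics then gives that the ``tripod'' $\beta_1\cup\beta_2\cup\beta_3$ at $x$ lies in a uniform neighborhood of the ideal tripod $\ell_{12}\cup\ell_{13}\cup\ell_{23}$. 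Lemma~\ref{lem-small inner prod} then bounds each Gromov product $(\xi_i,\xi_j)_x$ by a constant depending only on $\delta$. The same holds at $x'$. Combined with Lemma~\ref{require in bary}, this forces $x$ and $x'$ to project within $O(\delta)$ of the same point on each $\ell_{ij}$; equivalently, the intersection of the $D_{\ref{barycenter}}$-neighborhoods of the three lines has uniformly bounded diameter, yielding $d(x,x')\le R_{\ref{barycenter}}(\delta)$ for an explicit function of $\delta$.

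\emph{Main obstacle.} The delicate point is Step~1: verifying that the approximating incenters $\{c_n\}$ do not escape to infinity. This requires a careful slim-triangle comparison between the finite triangle $T_n$ and the ideal tripod $\ell_{12}\cup\ell_{13}\cup\ell_{23}$, with uniform control (as $n\to\infty$) on how the Gromov-product splittings of the sides of $T_n$ stabilize. Once existence is secured, both the use of properness to extract a limit and the Gromov-product argument for uniqueness are straightforward applications of the machinery already recorded in Lemmas~\ref{lem-ideal tripod to finite} and \ref{lem-small inner prod}.
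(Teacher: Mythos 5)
The paper does not prove this lemma; it is cited directly from Bridson--Haefliger (III.H.1.17 and III.H.3.3), so there is no in-paper argument to compare your proposal against. Judged on its own terms, the two halves of your plan fare differently.

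Your \emph{existence} argument is a sound reconstruction of the standard limiting proof. The one gap you flag --- boundedness of the incenters $c_n$ --- can be closed cleanly: each $c_n$ is within $O(\delta)$ of every side of $T_n$, and the sides of $T_n$ converge to the ideal lines $\ell_{ij}$. If some subsequence of $c_n$ escaped to a boundary point $\zeta$, then $\zeta$ would have to be an endpoint of $\ell_{12}$ (so $\zeta\in\{\xi_1,\xi_2\}$), and likewise $\zeta\in\{\xi_1,\xi_3\}$ and $\zeta\in\{\xi_2,\xi_3\}$, which is impossible since these three pairs have empty common intersection. This is the argument your phrase ``stabilize near a bounded core'' is gesturing at, and it works.

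Your \emph{uniqueness} step, however, has a real gap. You correctly get, via Lemma~\ref{lem-ideal tripod to finite} followed by Lemma~\ref{lem-small inner prod}, that all three Gromov products $(\xi_i,\xi_j)_x$ and $(\xi_i,\xi_j)_{x'}$ are bounded by a $\delta$-constant. But the next step --- ``Combined with Lemma~\ref{require in bary}, this forces $x$ and $x'$ to project within $O(\delta)$ of the same point on each $\ell_{ij}$'' --- does not follow. Lemma~\ref{require in bary} is a sequential convergence criterion (it characterizes when $\lim d(x,\alpha_n)=\infty$ for a varying family of lines $\alpha_n$); it says nothing about two fixed points with bounded Gromov products, and invoking it here is a non sequitur. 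What is actually needed is a direct argument that the set of points within $D_{\ref{barycenter}}$ of all three of $\ell_{12},\ell_{13},\ell_{23}$ has uniformly bounded diameter. The right mechanism is the exponential divergence of geodesics with distinct ideal endpoints: project $x$ and $x'$ to nearest points $p,p'\in\ell_{12}$; moving along $\ell_{12}$ toward $\xi_1$ one leaves the $D$-neighborhood of $\ell_{23}$ exponentially fast (since $\ell_{12}$ and $\ell_{23}$ share no endpoint near that end), and toward $\xi_2$ one leaves the $D$-neighborhood of $\ell_{13}$. Hence $p$ and $p'$ are confined to a segment of $\ell_{12}$ of $\delta$-bounded length, giving $d(x,x')\le d(p,p')+2D_{\ref{barycenter}}\le R_{\ref{barycenter}}(\delta)$. (Alternatively, the ``moreover'' clause is subsumed by the paper's Lemma~\ref{D-barycenter}, which it quotes from Mj--Sardar; citing that directly would be legitimate and shorter than re-deriving it.)
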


	Let $X$ be a proper hyperbolic geodesic metric space. Let $(\xi_1,\xi_2,\xi_3)\in\pa^3 X$. Then an ideal triangle with vertices $\xi_1,\xi_2,\xi_3$ is a union of choice of three geodesic lines joining $\xi_i$ and $\xi_j$ for all distinct $i,j\in\{1,2,3\}$; and one such triangle is denoted by $\triangle(\xi_1,\xi_2,\xi_3)$. Note that ideal triangles are coarsely well-defined. 
	
	The following definition is motivated by Lemma \ref{barycenter} (see \cite[p. $1668$]{pranab-mahan}).
	\begin{defn}[Barycenter map]\label{barycenter map}
		Suppose $X$ is a proper $\dl$-hyperbolic geodesic metric space where $\dl\ge0$. For any $\xi=(\xi_1,\xi_2,\xi_3)\in\pa^3X$, a point $x$, as in Lemma \ref{barycenter} corresponding to $\xi$, is called barycenter of the ideal triangle $\triangle(\xi_1,\xi_2,\xi_3)$. Then we have a coarsely well-defined map $\pa^3X\map X$ (by Lemma \ref{barycenter}) sending $\xi$ to a barycenter. One such map is called the barycenter map and is denoted by $Bary_X$.
		
		For $D \ge 0$, a point $z \in X$ is called a $D$-barycenter of the ideal triangle $\triangle(\xi_1, \xi_2, \xi_3)$ if it $D$-close to each geodesic line joining $\xi_i$ and $\xi_j$ for distinct $i, j \in \{1, 2, 3\}$.
	\end{defn}
	\begin{remark}\label{rmk-bary map coarse surj in gp}
		Let $G$ be a nonelementary hyperbolic group. Fix a finite generating set for $G$. Let $\Gamma_G$ denote the Cayley graph of $G$ with respect to this generating set. Assume that $\Gamma_G$ is $\dl$-hyperbolic for some $\dl\ge0$. Note that $G$ acts transitively on the vertex set $V(\Gamma_G)$, and this action naturally induces an action on the Gromov boundary $\partial G$ by homeomorphism. Consequently, a chosen barycenter map for $\Gamma_G$ is $D_{\ref{barycenter}}(\dl)$-coarsely surjective by Lemma \ref{barycenter}.
	\end{remark}

	The following result says that the second part of Lemma \ref{barycenter} is also true for $D$-barycenter where $D\ge0$.
	\begin{lemma}\textup{( \cite[Lemma 2.7]{pranab-mahan})}\label{D-barycenter}
		Given $D\ge0$ and $\dl\ge0$, there is $D_{\ref{D-barycenter}}(\dl,D)\ge0$ depending only on $\dl$ and $D$ such that we have the following. Suppose $X$ is a proper $\dl$-hyperbolic geodesic metric space. Then for any $\xi=(\xi_1,\xi_2,\xi_3)\in\pa^3X$ if $x,x'$ are two $D$-barycenters of the ideal triangle $\triangle(\xi_1,\xi_2,\xi_3)$ then $d(x,x')\le D_{\ref{D-barycenter}}(\dl,D)$.
	\end{lemma}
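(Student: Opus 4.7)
The plan is to compare the nearest-point projections of any two $D$-barycenters onto a single side of the ideal triangle, and to argue that the locus on that side which is simultaneously close to the other two sides has bounded diameter. Write $\alpha_{ij}$ for the geodesic line from $\xi_i$ to $\xi_j$ that appears in $\triangle(\xi_1,\xi_2,\xi_3)$. One may assume $D \ge D_{\ref{barycenter}}(\delta)$, since in the opposite case both $x$ and $x'$ are already $D_{\ref{barycenter}}(\delta)$-barycenters and Lemma~\ref{barycenter} gives $d(x,x') \le R_{\ref{barycenter}}(\delta)$ directly.

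First I would pick $p, p' \in \alpha_{12}$ realizing $d(x,p) \le D$ and $d(x',p') \le D$. Since $x$ is also $D$-close to both $\alpha_{13}$ and $\alpha_{23}$, the triangle inequality yields $d(p, \alpha_{13}) \le 2D$ and $d(p, \alpha_{23}) \le 2D$, and similarly for $p'$. The heart of the proof is then the following geometric claim: the set
\[
S := \{\, y \in \alpha_{12} : d(y, \alpha_{13}) \le 2D \text{ and } d(y, \alpha_{23}) \le 2D \,\}
\]
has diameter bounded by some $M = M(D, \delta)$.

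To prove this claim I would parametrize $\alpha_{12} : \mathbb{R} \to X$ with $\alpha_{12}(+\infty) = \xi_1$ and $\alpha_{12}(-\infty) = \xi_2$. As $t \to +\infty$, $\alpha_{12}(t)$ converges to $\xi_1$ while $\alpha_{23}$ has endpoints $\xi_2, \xi_3 \ne \xi_1$; combining this with hyperbolicity (for instance, via the exponential divergence statement of Lemma~\ref{geo line div exp} applied to finite geodesic triangles approximating the ideal one, in the spirit of Lemma~\ref{existence of geo ray line}) forces $d(\alpha_{12}(t), \alpha_{23}) \to \infty$ at a rate controlled only by $\delta$. Symmetrically, $d(\alpha_{12}(t), \alpha_{13}) \to \infty$ as $t \to -\infty$. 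Hence the set of $t$ for which both distances are simultaneously $\le 2D$ is a bounded subinterval of $\mathbb{R}$ of length $M(D,\delta)$.

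Combining, $p, p' \in S$ gives $d(p,p') \le M(D, \delta)$, and the triangle inequality yields
\[
d(x, x') \le d(x,p) + d(p,p') + d(p', x') \le 2D + M(D,\delta),
\]
so one may take $D_{\ref{D-barycenter}}(\delta, D) = 2D + M(D,\delta)$. The main obstacle is extracting a genuinely quantitative bound $M(D, \delta)$ from the qualitative divergence statement; this is the only step where hyperbolicity enters in an essential way, and it is precisely what prevents a one-line reduction to Lemma~\ref{barycenter}.
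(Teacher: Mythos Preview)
The paper does not prove this lemma; it is quoted from \cite[Lemma~2.7]{pranab-mahan} without argument. Your approach---projecting both $D$-barycenters onto $\alpha_{12}$ and bounding the diameter of $S$---is correct and is the natural proof.

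The step you flag as the obstacle is not hard once you anchor at the canonical barycenter rather than argue by abstract divergence. Let $x_0$ be the $D_{\ref{barycenter}}(\dl)$-barycenter from Lemma~\ref{barycenter} and parametrize $\alpha_{12}$ so that $\alpha_{12}(0)$ is within $D_{\ref{barycenter}}(\dl)$ of $x_0$. By Lemma~\ref{lem-ideal tripod to finite} with $C=D_{\ref{barycenter}}(\dl)$, the concatenations $[x_0,\xi_i)\cup[x_0,\xi_j)$ are $K$-quasigeodesics for $K=K_{\ref{lem-ideal tripod to finite}}(\dl,D_{\ref{barycenter}}(\dl))$, hence within $D_{\ref{stability of qc}}(\dl,K)$ of the sides $\alpha_{ij}$ by Lemma~\ref{stability of qc}. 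For $t>0$ the point $\alpha_{12}(t)$ lies, up to an error controlled by $\dl$, on $[x_0,\xi_1)$ at parameter about $t$, while every point of $\alpha_{23}$ lies near $[x_0,\xi_2)\cup[x_0,\xi_3)$; the $K$-quasigeodesic inequality then gives $d(\alpha_{12}(t),\alpha_{23})\ge t/K-C(\dl)$ for an explicit $C(\dl)$. The symmetric estimate for $t<0$ and $\alpha_{13}$ yields $\operatorname{diam}(S)\le 2K\bigl(2D+C(\dl)\bigr)$, and your bound $D_{\ref{D-barycenter}}(\dl,D)=2D+M(D,\dl)$ follows with $M$ explicit. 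Invoking Lemma~\ref{geo line div exp} and approximating by finite triangles is unnecessary: linear divergence via the tripod quasigeodesic already gives what you need, and keeps the constant transparent.
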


	As an immediate consequence of Lemma \ref{D-barycenter} and the stability of quasigeodesic (Lemma \ref{stability of qc}) we have the following. This says that barycenter commutes with quasiisometry.
	
\begin{lemma}\textup{(\cite[Lemma 2.8]{pranab-mahan})}\label{lem-barycenter commuting}
Let $k\ge1$, $\dl\ge0$ and $C\ge0$. Suppose $\phi:X\map Y$ is a $k$-quasiisometry between two (proper) $\dl$-hyperbolic geodesic metric spaces. Further, assume that the barycenter map for $X$ is $C$-coarsely surjective. Then the barycenter map for $Y$ is $C'$-coarsely surjective for some constant $C'\ge0$ depending only on $C$, $k$ and $\dl$.
\end{lemma}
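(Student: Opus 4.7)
The plan is to transport a barycenter from $X$ to $Y$ along the quasiisometry. Fix an arbitrary $y \in Y$. Since $\phi$ is a $k$-quasiisometry, it is in particular $k$-coarsely surjective, so I can pick $x \in X$ with $d_Y(\phi(x),y) \le k$. By the hypothesis that $Bary_X$ is $C$-coarsely surjective, there exists $(\xi_1,\xi_2,\xi_3) \in \partial^3 X$ whose ideal triangle $\triangle(\xi_1,\xi_2,\xi_3)$ has a barycenter $x_0 \in X$ with $d_X(x,x_0) \le C$, hence $d_Y(\phi(x),\phi(x_0)) \le kC + k$.

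Next, I invoke Theorem \ref{qi emb gives top emb}: $\phi$ induces a homeomorphism $\partial\phi:\partial X \to \partial Y$, so $(\partial\phi(\xi_1),\partial\phi(\xi_2),\partial\phi(\xi_3)) \in \partial^3 Y$. The key claim is that $\phi(x_0)$ is a $D$-barycenter of $\triangle(\partial\phi(\xi_1),\partial\phi(\xi_2),\partial\phi(\xi_3))$ for some $D = D(\delta,k)$. To see this, pick for each distinct pair $i,j \in \{1,2,3\}$ a geodesic line $\alpha_{ij}$ in $X$ from $\xi_i$ to $\xi_j$; since $x_0$ is a barycenter in $X$, there is $p_{ij} \in \alpha_{ij}$ with $d_X(x_0,p_{ij}) \le D_{\ref{barycenter}}(\delta)$, so $d_Y(\phi(x_0),\phi(p_{ij})) \le kD_{\ref{barycenter}}(\delta) + k$. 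The image $\phi\circ\alpha_{ij}$ is a $k$-quasigeodesic line in $Y$ from $\partial\phi(\xi_i)$ to $\partial\phi(\xi_j)$, so by the stability of quasigeodesics (Lemma \ref{stability of qc}) it lies within Hausdorff distance $D_{\ref{stability of qc}}(\delta,k)$ of any genuine geodesic line between the same boundary points. Therefore $\phi(x_0)$ is within $D := kD_{\ref{barycenter}}(\delta) + k + D_{\ref{stability of qc}}(\delta,k)$ of every such geodesic line, establishing the claim.

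Finally, let $y_0$ be any (actual) barycenter of $\triangle(\partial\phi(\xi_1),\partial\phi(\xi_2),\partial\phi(\xi_3))$ in $Y$. Both $y_0$ and $\phi(x_0)$ are $D$-barycenters of this ideal triangle (the former even with $D = D_{\ref{barycenter}}(\delta)$), so Lemma \ref{D-barycenter} gives $d_Y(\phi(x_0),y_0) \le D_{\ref{D-barycenter}}(\delta,D)$. Assembling the estimates via the triangle inequality,
\[
d_Y(y,y_0) \le d_Y(y,\phi(x)) + d_Y(\phi(x),\phi(x_0)) + d_Y(\phi(x_0),y_0) \le k + (kC+k) + D_{\ref{D-barycenter}}(\delta,D),
\]
and the right-hand side depends only on $C$, $k$, $\delta$. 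Setting $C'$ equal to this bound produces the required coarse surjectivity of $Bary_Y$.

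There is no genuine obstacle here: the argument is a bookkeeping chase through the stability of quasigeodesics, the boundary extension of a quasiisometry, and the coarse uniqueness of $D$-barycenters. The only point that requires attention is ensuring that the constant $D$ in the middle step — which feeds into $D_{\ref{D-barycenter}}(\delta,D)$ — depends solely on $\delta$ and $k$, which it does because both $D_{\ref{barycenter}}(\delta)$ and $D_{\ref{stability of qc}}(\delta,k)$ do.
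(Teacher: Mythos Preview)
Your proof is correct and follows exactly the approach the paper indicates: the paper does not spell out a proof but states the lemma ``as an immediate consequence of Lemma \ref{D-barycenter} and the stability of quasigeodesic (Lemma \ref{stability of qc}),'' which are precisely the two tools you invoke, together with the boundary homeomorphism from Theorem \ref{qi emb gives top emb}.
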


	\begin{lemma}\label{bdd barycen conv}
		Suppose $X$ is a proper $\dl$-hyperbolic geodesic metric space where $\dl\ge0$. Let $\xi_n=(\xi_{1,n},\xi_{2,n},\xi_{3,n})\in\pa^3X$ be such that $diam~\{Bary_X(\xi_n):n\in\N\}$ is bounded. Moreover, suppose that $\lim^{X}_{n\map\infty}\xi_{i,n}=\xi_i\in\pa X$ for $i\in\{1,2,3\}$. Then
		
		$(1)$ $\xi=(\xi_1,\xi_2,\xi_3)\in\pa^3X$, and
		
		$(2)$ for all large $n$, we have $d_X(Bary_X(\xi_{n}),Bary_X(\xi))\le D_{\ref{bdd barycen conv}}(\dl)$ for some constant $D_{\ref{bdd barycen conv}}(\dl)\ge0$ depending only on $\dl$.
	\end{lemma}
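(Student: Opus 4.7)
I plan to prove part $(1)$ by a short contradiction using Lemma \ref{require in bary}, and then establish part $(2)$ by extracting subsequential limits of the barycenters and of the defining geodesic lines via Arzel\`{a}--Ascoli, identifying the endpoints of the limit lines, and appealing to Lemma \ref{D-barycenter}.

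For $(1)$, let $z_n = Bary_X(\xi_n)$, fix a basepoint $x_0 \in X$, and pick $R \ge 0$ with $d(x_0, z_n) \le R$ for all $n$ (available from the diameter hypothesis). For each pair $i \ne j$ in $\{1,2,3\}$, choose a geodesic line $\alpha_{ij,n}$ joining $\xi_{i,n}$ and $\xi_{j,n}$; by the definition of a barycenter, $d(z_n, \alpha_{ij,n}) \le D_{\ref{barycenter}}(\delta)$, and therefore $d(x_0, \alpha_{ij,n}) \le R + D_{\ref{barycenter}}(\delta)$ uniformly in $n$. Assuming $\xi_i = \xi_j$ for contradiction, Lemma \ref{require in bary} applied to the sequences $\{\xi_{i,n}\}$ and $\{\xi_{j,n}\}$ (both converging to the same boundary point) would force $d(x_0, \alpha_{ij,n}) \to \infty$, contradicting the uniform bound. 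Hence the $\xi_i$ are pairwise distinct and $\xi \in \partial^3 X$.

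For $(2)$, I argue by contradiction: supposing no constant depending only on $\delta$ works, pass to a subsequence along which $d(z_n, Bary_X(\xi)) \to \infty$, and then, using properness of $X$ and boundedness of $\{z_n\}$, further to one with $z_n \to z$ for some $z \in X$. Parameterize each $\alpha_{ij,n}$ so that $\alpha_{ij,n}(0)$ is a point nearest to $z_n$; then $d(z_n, \alpha_{ij,n}(0)) \le D_{\ref{barycenter}}(\delta)$ and the base points remain bounded. A diagonal Arzel\`{a}--Ascoli argument (valid since $X$ is proper and each $\alpha_{ij,n}$ is $1$-Lipschitz), applied for each of the three pairs, yields a further subsequence along which $\alpha_{ij,n}$ converges uniformly on compact sets to a geodesic line $\alpha_{ij,\infty}$ passing within $D_{\ref{barycenter}}(\delta)$ of $z$. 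The crux is identifying the endpoints of $\alpha_{ij,\infty}$ as $\xi_i, \xi_j$: for any fixed $T > 0$, $\alpha_{ij,n}(T)$ lies on the geodesic ray from $\alpha_{ij,n}(0)$ toward $\xi_{i,n}$, and combining $\alpha_{ij,n}(0) \to \alpha_{ij,\infty}(0)$ and $\xi_{i,n} \to \xi_i$ with the standard continuous dependence of geodesic rays on their endpoints in $X \cup \partial X$ (slim triangles together with Lemma \ref{existence of geo ray line}) forces $\alpha_{ij,\infty}|_{[0,\infty)}$ to stay at bounded Hausdorff distance from a geodesic ray from $\alpha_{ij,\infty}(0)$ to $\xi_i$. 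Hence $\alpha_{ij,\infty}(+\infty) = \xi_i$, and symmetrically $\alpha_{ij,\infty}(-\infty) = \xi_j$.

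With the endpoints identified, $z$ is a $D_{\ref{barycenter}}(\delta)$-barycenter of the ideal triangle $\triangle(\xi_1, \xi_2, \xi_3)$, so Lemma \ref{D-barycenter} gives $d(z, Bary_X(\xi)) \le D_{\ref{D-barycenter}}(\delta, D_{\ref{barycenter}}(\delta))$. Since $z_n \to z$, eventually $d(z_n, Bary_X(\xi)) \le D_{\ref{D-barycenter}}(\delta, D_{\ref{barycenter}}(\delta)) + 1$, contradicting the choice of subsequence. This yields $D_{\ref{bdd barycen conv}}(\delta) := D_{\ref{D-barycenter}}(\delta, D_{\ref{barycenter}}(\delta)) + 1$, depending only on $\delta$. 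I expect the main technical obstacle to be the endpoint identification for the limit lines $\alpha_{ij,\infty}$; everything else packages cleanly from the barycenter definition, properness, and Lemmas \ref{require in bary} and \ref{D-barycenter}.
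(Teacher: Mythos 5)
Your part $(1)$ argument is essentially identical to the paper's: both reduce to Lemma \ref{require in bary} via a contradiction assuming two of the $\xi_i$ coincide, and both use the fact that the barycenter of the $n$-th triangle is within $D_{\ref{barycenter}}(\delta)$ of each line $\alpha_{ij,n}$.

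For part $(2)$ your route is genuinely different, and in fact dual, to the paper's. The paper fixes a geodesic line $\alpha^{ij}$ through (a $D_{\ref{barycenter}}(\delta)$-neighbourhood of) the limiting barycenter $Bary_X(\xi)$ and picks geodesic lines $\alpha^{ij}_n$ joining $\xi_{i,n}$ and $\xi_{j,n}$ that converge to $\alpha^{ij}$; for large $n$ the point $Bary_X(\xi)$ then lies within $D_{\ref{barycenter}}(\delta)+1$ of each $\alpha^{ij}_n$, making $Bary_X(\xi)$ a $(D_{\ref{barycenter}}(\delta)+1)$-barycenter of $\triangle(\xi_{1,n},\xi_{2,n},\xi_{3,n})$, and Lemma \ref{D-barycenter} finishes. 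You instead take the lines through (a neighbourhood of) $z_n = Bary_X(\xi_n)$, extract, by properness and Arzel\`a--Ascoli, a subsequential limit line $\alpha_{ij,\infty}$ passing near the limit point $z$ of the $z_n$, then prove that $\alpha_{ij,\infty}$ has endpoints $\xi_i,\xi_j$, concluding $z$ is a $D_{\ref{barycenter}}(\delta)$-barycenter of $\triangle(\xi_1,\xi_2,\xi_3)$. Both hinge on the same stability statement (Lemma \ref{D-barycenter}). The paper's direction is tidier because the approximating lines are chosen with the correct endpoints from the start, so the endpoint-identification work you correctly flag as the crux simply does not arise; your direction requires that extra care but otherwise reaches the same place.

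One slip in your setup: you cannot ``pass to a subsequence along which $d(z_n, Bary_X(\xi)) \to \infty$.'' Since $\operatorname{diam}\{z_n\}$ is bounded and $Bary_X(\xi)$ is a fixed point, the quantities $d(z_n, Bary_X(\xi))$ are uniformly bounded (by some constant depending on the particular sequence, not just on $\delta$), so no subsequence diverges. The correct negation is: for the candidate constant $D_0 := D_{\ref{D-barycenter}}(\delta, D_{\ref{barycenter}}(\delta))+1$, suppose $d(z_n, Bary_X(\xi)) > D_0$ for infinitely many $n$, and pass to a subsequence along which this inequality holds and $z_n \to z$. The rest of your argument then produces $d(z, Bary_X(\xi)) \le D_0 - 1$, hence $d(z_n, Bary_X(\xi)) \le D_0$ eventually, the desired contradiction. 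This is a harmless patch, but as written the claimed subsequence does not exist.
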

	
	\begin{proof}
		$(1)$ If not, without loss of generality, we assume that $\xi_1=\xi_2$. Suppose $\al_n$ is a geodesic line in $X$ joining $\xi_{1,n}$ and $\xi_{2,n}$. Note that for any $x\in X$, $d(x,Bary_X(\xi_{n}))\ge d(x,\al_n)-D_{\ref{barycenter}}(\dl)$ (by Lemma \ref{barycenter}). Again by Lemma \ref{require in bary}, $d(x,\al_n)\map\infty$ as $n\map\infty$, and so $d(x,Bary_X(\xi_{n}))\map\infty$ as $n\map\infty$. This contradicts to the assumption that $diam~\{Bary_X(\xi_n):n\in\N\}$ is bounded. Hence (1) is proved.\smallskip
		
		$(2)$ Let $i,j\in\{1,2,3\}$ and $i\ne j$. Suppose $\al^{ij}$ is a geodesic line joining $\xi_i$ and $\xi_j$ such that (after reparametrization if necessary) $\al^{ij}(0)$ is $D_{\ref{barycenter}}(\dl)$-close to $Bary_X(\xi)$. Let $\al^{ij}_n$ be a geodesic line joining $\xi_{i,n}$ and $\xi_{j,n}$ such that $\al^{ij}_n$ converges to a geodesic line $\bt^{ij}$, say, joining $\xi_i$ and $\xi_j$. Then $Hd_X(\al^{ij},\bt^{ij})\le2\dl$ (Lemma \ref{stability of qc}). Hence, for all large $n$, $d_X(\al^{ij}(0),\al^{ij}_n)\le 1+2\dl$ (say), and so $d_X(Bary_X(\xi),\al^{ij}_n)\le D_{\ref{barycenter}}(\dl)+1+2\dl$ where $i\ne j$ and $i,j\in\{1,2,3\}$. This says that $Bary_X(\xi)$ is a $(D_{\ref{barycenter}}(\dl)+1+2\dl)$-barycenter of triangle $\triangle(\xi_{1,n},\xi_{2,n},\xi_{3,n})$ for all large $n\in\N$. Then by Lemma \ref{D-barycenter}, for all large $n$, $d_X(Bary_X(\xi_{n}),Bary_X(\xi))\le D_{\ref{bdd barycen conv}}(\dl)$ where $D_{\ref{bdd barycen conv}}(\dl):=D_{\ref{D-barycenter}}(\dl,D_{\ref{barycenter}}(\dl)+1+2\dl)$.
	\end{proof}

	\section{Exponential growth of graphs}\label{sec-exponential growth}We define the exponential growth of bounded valence metric graphs generalizing that of groups in Subsection \ref{subsec-expo growth} (see Definition \ref{defn-exponential growth}). This section aims to prove Theorem \ref{thm-bary imp expo growth}, which provides a sufficient condition under which a metric graph possesses exponential growth. Theorem \ref{thm-bary imp expo growth} is obtained by applying Theorem \ref{thm-emb of T3} and Lemma \ref{lem-qi implies exponential growth}.
	
\subsection{Embedding of trivalent trees in hyperbolic metric spaces}\label{subsec-qi emb of T3}In this subsection, we prove Theorem \ref{thm-emb of T3} and the proof requires following result.
	
	\begin{lemma}\label{lem-finding ind bary}
		Let $\dl\ge0$, $L\ge0$ and $D\ge0$. Then there exists a constant $C_{\ref{lem-finding ind bary}}=C_{\ref{lem-finding ind bary}}(\dl,L)\ge0$, depending only on $\dl$ and $L$, such that the following hold.
		
		Let $X$ be a (proper) $\dl$-hyperbolic geodesic metric space. Suppose the barycenter map $Bary_X:\pa^3X\map X$ is $L$-coarsely surjective.  Then:
		\begin{enumerate}
			\item	Let $x,y\in X$. Then we can find distinct points $y_1,y_2\in X$ such that $d(y,y_i)=D$ for $i=1,2$ and $$max\{(y_1,y_2)_y,~(x,y_1)_y,~(x,y_2)_y\}\le C_{\ref{lem-finding ind bary}}.$$
			
			\item Let $y\in X$. Then we can find distinct points $y_1,y_2,y_3\in X$ such that $d(y,y_i)=D$ for $i=1,2,3$ and $$max\{(y_i,y_j)_y:i\ne j\text{ and }i,j\in\{1,2,3\}\}\le C_{\ref{lem-finding ind bary}}.$$
		\end{enumerate}	
	\end{lemma}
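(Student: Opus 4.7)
The plan is to use the $L$-coarse surjectivity of the barycenter map to produce three geodesic rays from $y$ that pairwise ``diverge'' at $y$ in a controlled way, and then take the $y_i$ to be points at distance $D$ on these rays. Since $Bary_X$ is $L$-coarsely surjective, I pick $(\xi_1,\xi_2,\xi_3) \in \partial^3 X$ with $d(y,\, Bary_X(\xi_1,\xi_2,\xi_3)) \le L$. Lemma~\ref{barycenter} then ensures that the geodesic line $\ell_{ij}$ joining $\xi_i$ and $\xi_j$ lies within $M := L + D_{\ref{barycenter}}(\dl)$ of $y$ for every $i \ne j$. Using Lemma~\ref{existence of geo ray line}, fix geodesic rays $\alpha_i$ from $y$ to $\xi_i$. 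Applying Lemma~\ref{lem-ideal tripod to finite} with the line $\ell_{ij}$ and the rays $\alpha_i,\alpha_j$ shows that for any $p \in \alpha_i$ and $q \in \alpha_j$ (with $i \ne j$) the arclength parametrization of $[p,y] \cup [y,q]$ is a $K_0$-quasigeodesic for $K_0 := K_{\ref{lem-ideal tripod to finite}}(\dl,M)$, and Lemma~\ref{lem-small inner prod} then yields $(p,q)_y \le D_0 := D_{\ref{lem-small inner prod}}(\dl,K_0)$.

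For part $(2)$, I set $y_i := \alpha_i(D)$ for $i=1,2,3$. The three Gromov products $(y_i,y_j)_y$ are each at most $D_0$. Once $D > D_0$, the $y_i$ are automatically distinct, since $y_i = y_j$ would force $(y_i,y_j)_y = D > D_0$; the (bounded) small-$D$ edge case can be absorbed by enlarging the final constant.

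For part $(1)$, fix any $N > D$ and set $p_i := \alpha_i(N)$. The bound $(p_i,p_j)_y \le D_0$ combined with the standard $\dl$-hyperbolic Gromov product inequality $(p_i,p_j)_y \ge \min\{(x,p_i)_y,\,(x,p_j)_y\} - \dl$ forces $\min\{(x,p_i)_y,\,(x,p_j)_y\} \le D_0 + \dl$ for each pair $i \ne j$. By pigeonhole, at least two indices — say $i = 1, 2$ — must then satisfy $(x,p_i)_y \le D_0 + \dl$, since a single index with a larger value would be compatible with the pairwise bound but two such indices would not. Because $\alpha_i(D) \in [y, p_i]$, the elementary monotonicity $(x,\alpha_i(D))_y \le (x,p_i)_y$ (immediate from $d(x,\alpha_i(D)) \ge d(x,p_i) - (N - D)$ via the definition of the Gromov product) gives $(x,y_i)_y \le D_0 + \dl$ for $y_i := \alpha_i(D)$, $i = 1,2$. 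Combined with $(y_1,y_2)_y \le D_0$, this establishes part $(1)$ with $C_{\ref{lem-finding ind bary}} := D_0 + \dl$ (plus the small-$D$ adjustment).

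The main point requiring care is the construction of the three rays from $y$ with the prescribed pairwise divergence at $y$; everything else is routine bookkeeping in hyperbolic geometry. This existence reduces, via Lemma~\ref{barycenter} and Lemma~\ref{lem-ideal tripod to finite}, precisely to the coarse surjectivity of the barycenter map, which is the key hypothesis of the lemma.
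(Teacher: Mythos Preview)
Your proof is correct and follows essentially the same strategy as the paper's: choose an ideal triple near $y$ via coarse surjectivity of the barycenter map, take geodesic rays from $y$ to the three ideal points, set $y_i = \alpha_i(D)$, and use Lemma~\ref{lem-ideal tripod to finite} together with Lemma~\ref{lem-small inner prod} to control the Gromov products. The only difference is a minor detour in part~(1): you bound $(x,p_i)_y$ via the four-point inequality applied to $(p_i,p_j)_y$ and then descend to $(x,y_i)_y$ by monotonicity along the ray, whereas the paper applies the first conclusion of Lemma~\ref{lem-small inner prod} directly to the triple $x,y_i,y_j$ (since $[y_i,y]\cup[y,y_j]$ is already a $K_0$-quasigeodesic), obtaining $\min\{(x,y_i)_y,(x,y_j)_y\}\le D_0$ without the extra $+\delta$ or the auxiliary $N>D$. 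Your route works but is slightly less direct; the intermediate points $p_i$ are unnecessary.
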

	
	\begin{proof}
		$(1)$ Let $\xi=(\xi_1,\xi_2,\xi_3)\in\pa^3X$ be such that $d(Bary_X(\xi),y)\le L$. Let $\al_i$ be a geodesic ray joining $y$ and $\xi_i$ for $i\in\{1,2,3\}$. Fix $y_i\in\al_i$ such that $d(y,y_i)=D$ for $i\in\{1,2,3\}$.
		
		Fix distinct $i,j$ in $\{1,2,3\}$. Let $\bt$ be a geodesic line joining $\xi_i$ and $\xi_j$. Then $d(y,\bt)\le d(y,Bary_X(\xi))+d(Bary_X(\xi),\bt)\le L+D_{\ref{barycenter}}(\dl)$ (see Lemma \ref{barycenter}). Thus by Lemma \ref{lem-ideal tripod to finite}, we have $K=K_{\ref{lem-ideal tripod to finite}}(\dl,L+D_{\ref{barycenter}}(\dl))$ such that the arclength parametrization of $[y_i,y]\cup[y,y_j]$ is a $K$-quasigeodesic. Hence by Lemma \ref{lem-small inner prod}, we have $C=D_{\ref{lem-small inner prod}}(\dl,K)$ such that $$min\{(x,y_i)_y,~(x,y_j)_y\}\le C\hspace{4mm} (*)$$ Note that $i$ and $j$ are distinct in $\{1,2,3\}$. Therefore, at least two of $(x,y_1)_y,~(x,y_2)_y$ and $(x,y_3)_y$ are bounded above by $C$. Without loss of generality, we assume that $(x,y_1)_y\le C$ and $(x,y_2)_y\le C$.
		
		Since the arclength parametrization of $[y_1,y]\cup[y,y_2]$ is a $K$-quasigeodesic, by the moreover part of Lemma \ref{lem-small inner prod}, we have $(y_1,y_2)_y\le C$.
		
		Therefore, we fix $C_{\ref{lem-finding ind bary}}(\dl,L)=C=D_{\ref{lem-small inner prod}}(\dl,K)$. This completes the proof of $(1)$.
		
		$(2)$ Replace $x$ in the proof of $(1)$ by $y_l$, where $l \in \{1,2,3\} \setminus \{i,j\}$. The same inequality $(*)$ then holds for $l$. Varying $i, j, l$ over $\{1,2,3\}$ with $i \neq j \neq l\ne i$, we conclude $(2)$.
	\end{proof}
	%
	
	In the following Theorem, $T_3$ denotes the trivalent tree.
	\begin{theorem}\label{thm-emb of T3}
		Let $\dl\ge0$ and $L\ge0$. Then there exists a constant $K_{\ref{thm-emb of T3}}=K_{\ref{thm-emb of T3}}(\dl,L)\ge1$, depending only on $\dl$ and $L$, such that the following hold. 
		
		Suppose $X$ is a proper, $\dl$-hyperbolic geodesic metric space such that the barycenter map $Bary_X:\pa^3 X\map X$ is $L$-coarsely surjective. Then we have the following.
		
		\begin{enumerate}
			\item Given $x_0\in X$, there is a $K_{\ref{thm-emb of T3}}$-qi embedding $\phi:T_3\map X$ such that $x_0=\phi(v_0)$ for some $v_0\in V(T_3)$ and $\phi(u)\ne\phi(v)$ for all distinct vertices $u,v\in T_3$.
			
			\item Moreover, for all distinct vertices $u,v\in T_3$, we have $$Bary_X^{-1}(\phi(u))\cap Bary_X^{-1}(\phi(v))=\emptyset.$$ In other words, images of the vertices of $T_3$ can be realized as distinct barycenters in $X$.
		\end{enumerate}
	\end{theorem}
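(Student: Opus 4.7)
The plan is to construct $\phi$ inductively along $V(T_3)$: start from $\phi(v_0) = x_0$, and use Lemma \ref{lem-finding ind bary} at each vertex to pick images of new neighbors with controlled Gromov inner products at the branching point. Once this local inner-product bound is secured everywhere, Lemma \ref{local to global} upgrades it to a global quasigeodesic along any tree-geodesic, giving the qi embedding.

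Concretely, set $C := C_{\ref{lem-finding ind bary}}(\dl, L)$, let $K := K_{\ref{local to global}}(\dl, C)$, and choose $D \ge D_{\ref{local to global}}(\dl, C)$ large enough that $D/K - K > R_{\ref{barycenter}}(\dl) + 1$; this is legitimate because $K$ depends only on $\dl$ and $C$, not on $D$. Define $\phi(v_0) := x_0$. Applying Lemma \ref{lem-finding ind bary}$(2)$ with $y = x_0$ yields three distinct points $y_1, y_2, y_3 \in X$ with $d(x_0, y_i) = D$ and $(y_i, y_j)_{x_0} \le C$ for all $i \ne j$; assign these as images of the three neighbors of $v_0$. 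Inductively, for any vertex $v$ of $T_3$ with already-defined image and with parent $u$ (whose image is also defined), apply Lemma \ref{lem-finding ind bary}$(1)$ with $y = \phi(v)$ and $x = \phi(u)$ to obtain two points at distance $D$ from $\phi(v)$ satisfying $\max\{(y_1, y_2)_{\phi(v)}, (\phi(u), y_1)_{\phi(v)}, (\phi(u), y_2)_{\phi(v)}\} \le C$, and declare these the images of the two children of $v$. Finally, extend $\phi$ to edges of $T_3$ via geodesic segments in $X$.

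To check that $\phi$ is a $K_{\ref{thm-emb of T3}}$-qi embedding on $V(T_3)$, let $u, v \in V(T_3)$ and write the unique $T_3$-geodesic as $u = w_0, w_1, \dots, w_n = v$. By construction, each $d(\phi(w_{i-1}), \phi(w_i)) = D$, and at every intermediate vertex $\phi(w_i)$ the inner product $(\phi(w_{i-1}), \phi(w_{i+1}))_{\phi(w_i)} \le C$. Lemma \ref{local to global} then guarantees that the arclength parametrization of the concatenation $[\phi(w_0), \phi(w_1)] \cup \cdots \cup [\phi(w_{n-1}), \phi(w_n)]$ is a $K$-quasigeodesic of length $nD$, yielding $d_X(\phi(u), \phi(v)) \ge nD/K - K$. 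Since $n = d_{T_3}(u, v)$ and the reverse inequality is immediate from $d_X(\phi(u),\phi(v)) \le nD$, this gives a qi embedding with constant depending only on $\dl$ and $L$; the extension to non-vertex points of $T_3$ is routine by inserting a subedge of length at most $1$ at each end.

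For part $(2)$, observe that for any distinct $u, v \in V(T_3)$, the same estimate yields $d_X(\phi(u), \phi(v)) \ge D/K - K > R_{\ref{barycenter}}(\dl)$. By Lemma \ref{barycenter}, any two barycenters of a common ideal triangle are at distance at most $R_{\ref{barycenter}}(\dl)$, so no single ideal triangle can have both $\phi(u)$ and $\phi(v)$ as barycenters; equivalently, $Bary_X^{-1}(\phi(u)) \cap Bary_X^{-1}(\phi(v)) = \emp$. The main technical obstacle throughout is the interlocking order of constant choices: one must extract $C$ from Lemma \ref{lem-finding ind bary} first, then $K$ from Lemma \ref{local to global} (depending only on $\dl, C$), and finally inflate $D$ large enough to beat $R_{\ref{barycenter}}(\dl)$ after shrinking by $K$. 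Taking $K_{\ref{thm-emb of T3}} := \max\{K, D\}$ (with a mild adjustment of the additive constant to absorb the edge-parametrization) gives the desired qi embedding constant depending only on $\dl$ and $L$.
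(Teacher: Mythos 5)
Your proposal follows essentially the same approach as the paper's own proof: set $C = C_{\ref{lem-finding ind bary}}(\dl,L)$, then $K = K_{\ref{local to global}}(\dl,C)$, then take $D$ large (with $D \ge D_{\ref{local to global}}(\dl,C)$ and $D$ dominating $K^2 + K R_{\ref{barycenter}}(\dl)$), build $\phi$ by induction on $T_3$ using Lemma~\ref{lem-finding ind bary}(2) at the root and Lemma~\ref{lem-finding ind bary}(1) at each subsequent vertex, invoke Lemma~\ref{local to global} on any tree-geodesic to get the lower distance bound $d_X(\phi(u),\phi(v)) \ge mD/K - K$, and then deduce both injectivity of $\phi$ on vertices and the $R_{\ref{barycenter}}(\dl)$-separation for part~(2). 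The interlocking constant order you flag is exactly what the paper does (the paper fixes $D = D_{\ref{local to global}}(\dl,C) + K^2 + K R_{\ref{barycenter}}(\dl) + 1$ and takes $K_{\ref{thm-emb of T3}} = D$), so the proof is correct and matches the paper's argument.
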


	\begin{proof}
Let us define some constants depending only on $\dl$ and $L$ as follows:
		
		\begin{itemize}
			\item $C=C_{\ref{lem-finding ind bary}}(\dl,L)$,
			
			\item $K=K_{\ref{local to global}}(\dl,C)\ge1$,
			
			\item $D=D_{\ref{local to global}}(\dl,C)+K^2+K.R_{\ref{barycenter}}(\dl)+1$, and
			
			\item $K_{\ref{thm-emb of T3}}(\dl,L):=D$.
		\end{itemize}
		Note that the constants $C$, $K$, $D$ and $K_{\ref{thm-emb of T3}}(\dl,L)$ depend only on $\delta$ and $L$. (Moreover, if we trace back the definitions of these constants, we observe that $D > L$.)
		
		Let $v_0$ be a root vertex of $T_3$. Let us fix some notations: $B_{T_3}(v_0,n):=\{p\in T_3:d_{T_3}(v_0,p)\le n\}$ and $S_{T_3}(v_0,n):=\{v\in V(T_3):d_{T_3}(v_0,v)=n\}$ where $n\in\N$. We put the path metric on $B_{T_3}(v_0,n)$ induced from $T_3$. Then the inclusion $B_{T_3}(v_0,n)\map T_3$ is an isometric embedding.\smallskip
		
		We will define the map $\phi:T_3\map X$ inductively. At $n^{\text{th}}$-step, we define $\phi_n:B_{T_3}(v_0,n)\map X$ such that $\phi_n|_{B_{T_3}(v_0,n-1)}=\phi_{n-1}$. 
		
		Define $\phi_0(v_0)=x_0$. 
		
\noindent\underline{Initial step}: We will define a map $\phi_1:B_{T_3}(v_0,1)\map X$ such that $\phi_1|_{\{v_0\}}=\phi_0$. For that first we define $\phi_1(v_0)=\phi_0(v_0)=x_0$. Let $S_{T_3}(v_0,1)=\{a_1,a_2,a_3\}$. By Lemma \ref{lem-finding ind bary} $(2)$, we have $b_1,b_2,b_3\in X$ such that $d_X(x_0,b_i)=D$ for all $i\in\{1,2,3\}$, and $(b_i,b_j)_{x_0}\le C$ for all distinct $i,j\in\{1,2,3\}$.  For all $i\in\{1,2,3\}$, we set $\phi_1(a_i)=b_i$ and define that the edge $[v_0,a_i]_{T_3}$ is mapped linearly onto a geodesic segment $[x_0,b_i]_X$ under $\phi_1$. In other words, $\phi_1$ satisfies the following properties. We emphasize them here, as they will serve as the induction hypotheses for $\phi_n$ below.
		
\begin{enumerate}
\item $d_X(\phi_1(v_0),\phi_1(a_i))=D$ for all $i\in\{1,2,3\}$.

\item $(\phi_1(a_i),\phi_1(a_j))_{\phi_1(v_0)}\le C$ for all distinct $i,j\in\{1,2,3\}$.
\end{enumerate}


\noindent\underline{Induction hypotheses}: Let $n\in\N$. Suppose that we have defined $\phi_n:B_{T_3}(v_0,n)\to X$.
We also require that $\phi_n$ satisfy two properties analogous to those established in the initial step of the induction. To state these properties precisely, we first fix the following notation.

Let $S_{T_3}(v_0,n)=\{w_1,w_2,\dots,w_l\}$. (Note that $l=3.2^{n-1}$, $n\ge1$ and the cardinality of $S_{T_3}(v_0,n+1)$ is $2l$.) Let $S_{T_3}(v_0,n+1)=\{u_1,v_1,u_2,v_2,\dots,u_l,v_l\}$ be such that $d_{T_3}(w_i,u_i)=d_{T_3}(w_i,v_i)=1$ for all $i\in\{1,\dots,l\}$.

 Let $\phi_n(w_i)=x_i$ for all $i\in\{1,\dots,l\}$. Let $w'_i\in V(B_{T_3}(v_0,n))$ be the adjacent vertex to $w_i$ for all $i\in\{1,\dots,l\}$. (Note that for any $w'_i$ there exists exactly one $j\ne i$ such that $w'_i=w'_j$. For ease of exposition, we are not making this distinction.) Then we have the following.
		\begin{enumerate}
			\item Let $[u,v]_{T_3}$ be an edge joining $u,v\in V(B_{T_3}(v_0,n))$. Then $[u,v]_{T_3}$ is mapped linearly onto a geodesic segment $[\phi_n(u),\phi_n(v)]_X$ such that $d_X(\phi_n(u),\phi_n(v))=D$.
			
			\item Let $u,v,w\in V(B_{T_3}(v_0,n))$ be such that $u$ and $v$ are distinct vertices adjacent to $w$. Then $(\phi_n(u),\phi_n(v))_{\phi_n(w)}\le C$.
		\end{enumerate} 
		
		
		\noindent\underline{Induction step}: Now we will define a map $\phi_{n+1}:B_{T_3}(v_0,n+1)\map X$. We define $\phi_{n+1}|_{B_{T_3}(v_0,n)}=\phi_{n}$. Now we will apply Lemma \ref{lem-finding ind bary} $(1)$ at the point $x_i$ for all $i\in\{1,\dots,l\}$.  Let $\phi_n(w'_i)=x'_i$ for all $i\in\{1,\dots,l\}$. (Note that $d_X(x_i,x'_i)=D$.) By Lemma \ref{lem-finding ind bary} $(1)$, we have $y_i,z_i\in X$ such that $d_X(x_i,y_i)=D$, $d_X(x_i,z_i)=D$ and $$max\{(y_i,z_i)_{x_i},(y_i,x'_i)_{x_i},(z_i,x'_i)_{x_i}\}\le C \text{ for all }i\in\{1,\dots,l\}.$$
		
		\noindent We define $\phi_{n+1}(u_i)=y_i$ and $\phi_{n+1}(v_i)=z_i$. We define that the edge $[w_i,u_i]_{T_3}$ (respectively, $[w_i,v_i]_{T_3}$) is mapped linearly onto a geodesic segment $[x_i,y_i]_X$ (respectively, $[x_i,z_i]_X$).
		
		\noindent Since $\phi_{n+1}|_{B_{T_3}(v_0,n)}=\phi_n$, and by the construction in the induction step, we conclude that the properties $(1)$ and $(2)$ above are satisfied by  $\phi_{n+1}$ for all vertices in $B_{T_3}(v_0,n+1)$.
		
		Therefore, (by induction) we define a map $\phi=\bigcup_{n\in\N\cup\{0\}}\phi_n:T_3\map X$ as follows. Let $v\in T_3$, and $v\in B_{T_3}(v_0,n)$ for some $n\in\N\cup\{0\}$. Then $\phi(v)=\phi_n(v)$.\smallskip
		
		Now we will show that $\phi$ is $D$–qi embedding, and in the process, establish the remaining statements of the proposition.
		
		Let $u,v\in V(T_3)$, and $u,v\in B_{T_3}(v_0,n)$ for some $n\in\N$. Let $d_{T_3}(u,v)=m$. Let $u=p_0,p_1\dots,p_m=v$ be the consecutive vertices on the geodesic segment $[u,v]_{T_3}$. By our construction of $\phi_n$, we have 
		
		\begin{itemize}
			\item $d_X(\phi_n(p_{i-1}),\phi_n(p_i))=D\ge D_{\ref{local to global}}(\dl,C)$ for all $i\in\{1,\dots,m\}$, and
			
			\item  $(\phi_n(p_{i-1}),\phi_n(p_{i+1}))_{\phi_n(p_i)}\le C$ for all $i\in\{1,\dots,m-1\}$.
		\end{itemize}
		Hence by Lemma \ref{local to global}, the arclength parametrization of the concatenation $$[\phi_n(p_0),\phi_n(p_1)]_X*\dots*[\phi_n(p_{m-1}),\phi_n(p_m)]_X$$ is a $K$-quasigeodesic. Thus  $$\frac{1}{K}mD-K\le d_X(\phi_n(u),\phi_n(v))\hspace{5mm}(*)$$
		
		\noindent On the other hand, by triangle inequality, we have $d_X(\phi_n(u),\phi_n(v))\le Dm=Dd_{T_3}(u,v)$. Note that $D>1$, $D>K$ and $\phi_n(u)=\phi(u),~\phi_n(v)=\phi(v)$. Therefore, combining the inequality $(*)$, we have $$\frac{1}{D}d_{T_3}(u,v)-D\le d_X(\phi(u),\phi(v))\le Dd_{T_3}(u,v)+D\hspace{3mm} (**)$$ Note that the edges of $T_3$ are mapped linearly onto geodesic segments of length $D$. For any two points in $T_3$ -- not necessarily vertices -- a similar calculation verifies the inequality $(**)$, establishing that $\phi$ is $D$-qi embedding.

		Moreover, $\phi(u)\ne\phi(v)$ for all distinct vertices $u, v\in V(T_3)$. Indeed, let $u,v\in B_{T_3}(v_0,n)$ for some $n\in\N$. If $\phi(u)=\phi(v)$ then $\phi_n(u)=\phi_n(v)$, and by the above inequality $(*)$, we have $\frac{1}{K}D-K\le\frac{1}{K}d_{T_3}(u,v)-K\le 0$, and so $D\le K^2$ $-$ contradicting to the choice of $D$. This completes the proof of $(1)$.

		To prove $(2)$, we only need to show that $d_X(\phi(u),\phi(v))>R_{\ref{barycenter}}(\dl)$ for all distinct vertices $u,v\in T_3$ (see Lemma \ref{barycenter}). Indeed, let $u,v\in B_{T_3}(v_0,n)$ for some $n\in\N$ and $d_{T_3}(u,v)=m\in\N$. If $d_X(\phi(u),\phi(v))\le R_{\ref{barycenter}}(\dl)$, then by the above inequality $(*)$, we will have $$\frac{1}{K}mD-K\le R_{\ref{barycenter}}(\dl).$$ This implies that $D\le mD\le KR_{\ref{barycenter}}(\dl)+K^2$ $-$ contradicting to the choice of $D$. This completes the proof of $(2)$.
	\end{proof}

	\subsection{Growth}\label{subsec-expo growth}
	Growth of groups is a well-studied topic in geometric group theory (see \cite{mann-growbook}, \cite{grigor-intermegrowth-intro}). In this subsection, we study the growth of metric graphs with bounded valence (see Definition \ref{defn-valence}), which need not be Cayley graphs of groups. Although groups can exhibit various types of growth, we focus exclusively on exponential growth of metric graphs, as this is the primary focus of the article. We also mention that the definition of exponential growth (Definition \ref{defn-exponential growth}) coincides with that for groups (see \cite[Section $2.2$]{mann-growbook}).
	
	\begin{defn}\label{defn-valence}
For a metric graph $\Gamma$, the valence at a vertex $u\in V(\Gamma)$ is the number of edges adjacent to $u$. We say that the graph $\Gamma$ has bounded valence if there is a constant $D\ge0$ such that the valence at each vertex of $\Gamma$ is bounded above by $D$.
		
We say that a collection of metric graphs $\{\Gamma_{\lm}:\lm\in\Lambda\}$ have uniformly bounded valence if there is a constant $D\ge0$ such that for every $\lm\in\Lambda$, the graph $\Gamma_{\lm}$ has bounded valence with the constant $D$ mentioned above.
	\end{defn}
	
\begin{remark}
Suppose $\Gamma$ is a metric graph with bounded valence. Further, assume that $D$ is the constant such that the valence at each vertex of $\Gamma$ is bounded above by $D$. Then for every vertex $u$ of $\Gamma$, the number of vertices adjacent to $u$ is bounded above by $D$. In what follows, we focus on counting the number of vertices adjacent to a given vertex.
\end{remark}	
	
	
Let $\Gamma$ be a metric graph $\Gamma$ and $u\in V(\Gamma)$. Let $n\in\N$. We denote the ball of radius $n$ at $u$ by $B(u,n)$ and it is the set $\{v\in\Gamma:~d_{\Gamma}(u,v)\le n\}$. Let $||B(u,n)||$ denote the number of vertices in $B(u,n)$.
	
	\begin{defn}[Exponential growth]\label{defn-exponential growth}
Suppose $\Gamma$ is a metric graph of bounded valence. We say that $\Gamma$ has {\em exponential growth witnessed by a pair $(a,b)$} if $a>0$, $b>1$, and $$||B(u,n)||\ge ab^n$$for every vertex $u\in\Gamma$ and every $n\in\mathbb N$. We say that $\Gamma$ has {\em exponential growth} if it has exponential growth witnessed by a pair $(a,b)$ for some $a>0$ and $b>1$. Note that in the pair $(a,b)$, the constant $b$ appears in the exponential factor.

Let $\{\Gamma_{\lambda}:\lambda\in\Lambda\}$ be a collection of metric graphs with uniformly bounded valence. We say that $\{\Gamma_{\lambda}:\lambda\in\Lambda\}$ has {\em uniform exponential growth} if there exist constants $a>0$ and $b>1$ such that the exponential growth of every $\Gamma_{\lambda}$ is witnessed by the same pair $(a,b)$.
\end{defn}

It is very well possible that $\Gamma$ has exponential growth witnessed by a pair $(a,b)$  as well as by a pair $(a',b')$ where $a'>a$ and $b'>b$. Thus witnessing pair gives only a blower bound of the exponential growth.

	\begin{example}\label{exmp-T_3 expo}
Suppose $T_i$ denotes the (simplicial) metric $m$-valent tree where $m\in\N$ (i.e., the valence at each vertex is exactly $m$). One can easily check that $T_3$ has exponential growth witnessed by a pair $(3/2,2)$. That is, the number of vertices in a ball of radius $n\in\N$ in $T_3$ is at least $\frac{3}{2}2^n$.
	\end{example}
	
	
	\begin{lemma}\label{lem-qi implies exponential growth}
Let $a>0,~b>1,~k\ge1$ and $D\ge2$. Then we have constants $a_{\ref{lem-qi implies exponential growth}}=a_{\ref{lem-qi implies exponential growth}}(a,b,k,D)>0$ and $b_{\ref{lem-qi implies exponential growth}}=b_{\ref{lem-qi implies exponential growth}}(k,b)=b^{\frac{1}{k+2}}>1$ such that the following holds.
		
Suppose $(\Gamma_i, d_i)$ is a metric graph for $i = 1, 2$, and let $\phi : \Gamma_1 \to \Gamma_2$ be a $k$-qi embedding. Assume that the valence at each vertex of $\Gamma_1$ is bounded above by $D$, and that $\Gamma_2$ is proper. Further, assume that $\Gamma_1$ has exponential growth witnessed by a pair $(a,b)$. Let $v\in V(\Gamma_2)$ be such that $d_2(\phi(u),v)\le k$ for some $u\in V(\Gamma_1)$. Then for all $m\in\N$, we have $$||B_2(v,m)||\ge a_{\ref{lem-qi implies exponential growth}}b_{\ref{lem-qi implies exponential growth}}^m$$where $B_2(v,m)$ denotes the $m$-radius ball centered at $v$ in $\Gamma_2$.
	\end{lemma}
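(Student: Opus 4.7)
The plan is to exploit two facts simultaneously: first, the qi embedding $\phi$, when restricted to vertices, is uniformly finite-to-one because $\Gamma_1$ has bounded valence; second, $\phi$ sends balls of radius $n$ in $\Gamma_1$ into balls of radius roughly $kn$ in $\Gamma_2$. Combining these with the exponential lower bound in $\Gamma_1$ will produce an exponential lower bound in $\Gamma_2$ with exponent rescaled by $1/k$, which is exactly where the factor $b^{1/k}$ in the statement comes from.

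First I would establish the multiplicity bound. If $\phi(u_1) = \phi(u_2)$ for vertices $u_1, u_2 \in V(\Gamma_1)$, then the $k$-qi embedding inequality forces $d_1(u_1, u_2) \le k^2$, so every fiber of $\phi|_{V(\Gamma_1)}$ is contained in a ball of radius $k^2$ in $\Gamma_1$. The bounded valence assumption (with bound $D$) implies that such a ball contains at most
\[
N \;:=\; 1 + D + D(D-1) + \cdots + D(D-1)^{k^2-1}
\]
vertices, a quantity depending only on $k$ and $D$. Hence $\phi|_{V(\Gamma_1)}$ is at most $N$-to-one.

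Next, for the ball inclusion step, if $u' \in B_1(u, n) \cap V(\Gamma_1)$ then the triangle inequality together with the qi embedding property yields $d_2(\phi(u'), v) \le (kn+k) + k = kn + 2k$. Taking $n := \lfloor m/k \rfloor - 2$ (valid when $m \ge 3k$) therefore guarantees $\phi(B_1(u,n) \cap V(\Gamma_1)) \subseteq B_2(v,m) \cap V(\Gamma_2)$. Combining this with the multiplicity bound from the previous step and the exponential growth hypothesis $\|B_1(u,n)\| \ge a b^n$ gives
\[
\|B_2(v,m)\| \;\ge\; \frac{\|B_1(u,n)\|}{N} \;\ge\; \frac{a}{N}\,b^{m/k-3} \;=\; \frac{a}{N b^3}\,(b^{1/k})^m.
\]
For the remaining range $m < 3k$, the trivial bound $\|B_2(v,m)\| \ge 1$ suffices after shrinking $a_{\ref{lem-qi implies exponential growth}}$, since $(b^{1/k})^m \le b^3$ on this range. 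I would therefore set
\[
a_{\ref{lem-qi implies exponential growth}} := \min\!\Bigl\{ \tfrac{a}{N b^3},\; b^{-3} \Bigr\} \quad\text{and}\quad b_{\ref{lem-qi implies exponential growth}} := b^{1/k}.
\]

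There is no serious obstacle in this argument; it is routine once the multiplicity bound is in place, and this is the sole place where the bounded valence hypothesis is actually used. The properness of $\Gamma_2$ is needed only to ensure that $\|B_2(v,m)\|$ is a meaningful finite count (though the lower bound itself holds regardless). The entire proof amounts to transporting the exponential growth from $\Gamma_1$ through $\phi$ while keeping track of how the qi distortion rescales radii and how the bounded valence controls overcounting.
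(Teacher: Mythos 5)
Your proof is correct and follows essentially the same strategy as the paper: push the exponential growth of $\Gamma_1$ through $\phi$ into a ball of rescaled radius in $\Gamma_2$, using the fact that bounded valence together with the qi embedding inequality forces $\phi$ to collapse only boundedly many vertices. The only cosmetic difference is that you bound the multiplicity of $\phi$ directly (at most $N$-to-one, where $N$ depends on $k$ and $D$), whereas the paper instead extracts a maximal $d$-separated subset of $B_1(u,n)$ on which $\phi$ is injective and bounds its cardinality from below via a covering argument; both devices deliver the same constants up to trivial rearrangement.
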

	
	\begin{proof}
From $\phi$, we first define a qi embedding map $\phi':\Gamma_1\to\Gamma_2$ which sends vertices to vertices. For $x\in\Gamma_1$, define
\[
\phi'(x) =
\begin{cases}
	\phi(x) & \text{if } x \in\Gamma_1 \setminus V(\Gamma_1), \\
	\phi(x) & \text{if } x \in V(\Gamma_1) \text{ and } \phi(x) \in V(\Gamma_2), \\
	\text{a vertex of the edge containing } \phi(x) & \text{otherwise}.
\end{cases}
\]
Note that for any $x\in\Gamma_1$, $d_2(\phi(x),\phi'(x))\le1$ as the edges of $\Gamma_2$ have length $1$. Let $\lm=k+2$. Hence, $\phi'$ is a $\lm$-qi embedding as $\phi$ is $k$-qi embedding. Moreover, $\phi'$ sends vertices to vertices .

Now we will prove the lower bound for the growth of $\Gamma_2$ as claimed in the statement. Let $m\in\N$, $m\ge3\lm$ and $n=[\frac{m-2\lm}{\lm}]\in\N$ where $[r]$ denotes the greatest integer not greater than $r\in\R$. Let $d=[\lm^2]+1$. Let $B_1(u,n)$ denote the $n$-radius ball centered at $u$ in $\Gamma_1$. Let $A$ be a maximal $d$-separated subset of $V(B_1(u,n))$ (with respect to inclusion). Then for all $p\in B_1(u,n)$, we have $d_1(p,x)\le d+1$ for some $x\in A$.
		
By the assumptions we have $||B_1(u,n)||\ge ab^n$ and $||B_1(p,d+1)||\le D^{d+1}$ for any $p\in V(\Gamma_1)$. Again $B_1(u,n)\sse \bigcup_{x\in A}B_1(x,d+1)$ implies $ab^n\le ||B_1(u,n)||\le ||A|| D^{d+1}$. Thus $||A||\ge aD^{-(d+1)}b^n$.
		
Note that, as $\phi'$ is $\lm$-qi embedding, $B_1(u,n)$ is mapped under $\phi'$ in $B_2(\phi'(u),n\lm+\lm)$. Since $d_2(\phi(u),v)\le k$ implies $d_2(\phi'(u),v)\le k+1\le\lm$, we have $\phi'(B_1(u,n))\sse B_2(v,n\lm+2\lm)$, and so $\phi'(B_1(u,n))\sse B_2(v,m)$ as $m\ge n\lm+2\lm$. On the other hand, for all distinct $x,x'\in A$, we have $\phi'(x)\ne\phi'(x')$. Indeed, suppose $\phi'(x) = \phi'(x')$. Then, since $d_1(x, x') \ge d$, the $\lm$-qi embedding condition gives $\frac{1}{\lm}d-\lm\le 0$, which implies $d \le \lm^2$ -- contradicting to the choice of $d$. 
		
Hence $||B_2(v,m)||\ge||A||\ge aD^{-(d+1)}b^n\ge aD^{-(d+1)}b^{\frac{m-2\lm}{\lm}-1}=(aD^{-([\lm^2]+2)}b^{-3})(b^{1/\lm})^m$  as $d=[\lm^2]+1$.

Note that $\Gamma_2$ is connected. Let $t\in\N$ and $t< 3\lm$. Then $||B_2(v,t)||\ge t+1\ge 1\ge b^{t-3\lm}\ge b^{-3\lm}b^t\ge b^{-3\lm}(b^{1/\lm})^t$ as $\lm\ge1$ and $b\ge1$. Now we fix $a_{\ref{lem-qi implies exponential growth}}(a,b,k,D)=min\{aD^{-([\lm^2]+2)}b^{-3},b^{-3\lm}\}>0$ and $b_{\ref{lem-qi implies exponential growth}}(b,k)=b^{1/\lm}=b^{\frac{1}{k+2}}>1$. Therefore, combining both the inequalities, for all $m\in\N$, we have $||B_2(v,m)||\ge a_{\ref{lem-qi implies exponential growth}}b_{\ref{lem-qi implies exponential growth}}^m$. This completes the proof.
	\end{proof}
	As a consequence of Lemma \ref{lem-qi implies exponential growth}, we have the following corollary. It says that the exponential growth of metric graphs is quasiisometry invariant.
	\begin{cor}\label{cor-exponential growth QI inv}
Suppose $(\Gamma_i, d_i)$ is a metric graph for $i = 1, 2$, and let $\phi : \Gamma_1 \to \Gamma_2$ be a quasiisometry. Assume that each $\Gamma_i$ has bounded valence. Then $\Gamma_1$ has exponential growth if and only if $\Gamma_2$ has exponential growth.
		
Moreover, the exponential growth of $\Gamma_1$ is witnessed by a pair depending only on a pair witnessing the exponential growth of $\Gamma_2$, the quasiisometry constants and the constant that bounds the valence of each vertex of $\Gamma_2$ from above.
	\end{cor}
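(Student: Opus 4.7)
The plan is to reduce the corollary directly to Lemma~\ref{lem-qi implies exponential growth}, applied in both directions. Since $\Gamma_1$ and $\Gamma_2$ are connected graphs of bounded valence, each is locally finite, and hence a proper metric space; in particular, the properness hypothesis on the target required by Lemma~\ref{lem-qi implies exponential growth} holds automatically on both sides, and may be discarded from consideration.

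I would first prove the forward direction: assuming $\Gamma_1$ has exponential growth with parameters $(a,b)$, show that $\Gamma_2$ does as well. Fix $v\in V(\Gamma_2)$. By coarse surjectivity of $\phi$ there is $x\in\Gamma_1$ with $d_2(\phi(x),v)\le k$, and choosing a vertex $u\in V(\Gamma_1)$ with $d_1(u,x)\le 1/2$, the $k$-qi embedding condition yields $d_2(\phi(u),\phi(x))\le k/2+k$ and so $d_2(\phi(u),v)\le 5k/2$. This slightly exceeds the hypothesis $d_2(\phi(u),v)\le k$ in Lemma~\ref{lem-qi implies exponential growth}, so the lemma does not quite apply verbatim; however, its proof adapts immediately if the step $\phi(B_1(u,n))\subseteq B_2(v,nk+2k)$ is replaced by $\phi(B_1(u,n))\subseteq B_2(v,nk+k+5k/2)$. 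This modification alters only the multiplicative constant $a_{\ref{lem-qi implies exponential growth}}$ and not the exponential base $b^{1/k}$. Consequently there exist $a',b'>0$ with $b'>1$, depending only on $a,b,k$ and the valence bound, such that $||B_2(v,m)||\ge a'(b')^m$ for every $v\in V(\Gamma_2)$ and every $m\in\N$, so $\Gamma_2$ has exponential growth.

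For the converse, I would use a quasi-inverse $\psi:\Gamma_2\to\Gamma_1$ of $\phi$, whose quasi-isometry constants depend only on $k$. After post-composing with a nearest-vertex projection on $\Gamma_1$ we may assume that $\psi$ sends vertices to vertices, absorbing the projection into the constants (which remain dependent only on $k$). Running the forward argument for $\psi$ in place of $\phi$ gives exponential growth of $\Gamma_1$ with parameters depending only on the parameters of $\Gamma_2$, on $k$, and on the valence bound of $\Gamma_2$.

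The only technical point is the mismatch between the bound $k$ appearing in the hypothesis of Lemma~\ref{lem-qi implies exponential growth} and the coarser bound $5k/2$ obtained when we replace a point of $\Gamma_1$ by a nearby vertex; since this is handled by the transparent modification of the lemma's proof described above, I do not expect any substantive obstacle in carrying the argument through.
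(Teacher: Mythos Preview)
Your proposal is correct and follows exactly the route the paper intends: the corollary is stated as an immediate consequence of Lemma~\ref{lem-qi implies exponential growth}, and you simply unpack what ``immediate'' means by applying the lemma in each direction, using coarse surjectivity of $\phi$ (resp.\ a quasi-inverse $\psi$) to see that every vertex of the target lies near the image. One small simplification: rather than revisiting the proof of Lemma~\ref{lem-qi implies exponential growth} to accommodate the bound $5k/2$ in place of $k$, you can just observe that a $k$-qi embedding is also a $k'$-qi embedding for any $k'\ge k$, and apply the lemma verbatim with $k'=\lceil 5k/2\rceil$; this avoids any appeal to the internal argument and keeps the dependence of constants transparent.
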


	Since Cayley graphs corresponding to different finite generating sets of a group are quasiisometric to each other, the above corollary implies that the notion of exponential growth is a well-defined property of groups.

	\begin{theorem}\textup{(\cite[Theorem 1.1]{malik-koubi})}\label{thm-exp growth rate hyp grp}
		Suppose $G$ is a nonelementary hyperbolic group. For any finite generating set $S$ of $G$, the Cayley graph of $G$ with respect to $S$ has exponential growth.
	\end{theorem}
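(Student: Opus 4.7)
The plan is to deduce this theorem from two results established earlier in this excerpt, namely Theorem~\ref{thm-emb of T31} and Lemma~\ref{lem-qi implies exponential growth}, bypassing the explicit combinatorial estimates of \cite{malik-koubi}. The resulting argument is essentially the special case of Theorem~\ref{thm-bary imp expo growth1} applied to Cayley graphs of nonelementary hyperbolic groups.

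First I would fix a finite symmetric generating set $S$ of $G$ and let $\Gamma_G$ denote the corresponding Cayley graph. Then $\Gamma_G$ is a proper $\delta$-hyperbolic geodesic metric graph for some $\delta\ge 0$, and the valence at every vertex equals $|S|$, so $\Gamma_G$ has bounded valence. By Remark~\ref{rmk-bary map coarse surj in gp}, the barycenter map $Bary_{\Gamma_G}\colon\partial^3\Gamma_G\to\Gamma_G$ is $1$-coarsely surjective, since $G$ acts transitively on $V(\Gamma_G)$ and the barycenter map is coarsely equivariant under the induced action of $G$ on $\partial\Gamma_G$.

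Next, applying Theorem~\ref{thm-emb of T31} with $x_0$ the identity vertex, I obtain a $K_{\ref{thm-emb of T3}}$-qi embedding $\phi\colon T_3\to\Gamma_G$ (with constant depending only on $\delta$ and $1$). After replacing each $\phi(v)$, $v\in V(T_3)$, by a closest vertex of $\Gamma_G$ -- a perturbation of size at most $1/2$ -- I obtain a qi embedding $\bar\phi\colon T_3\to\Gamma_G$, with slightly worse constants, that maps vertices to vertices. Since $T_3$ has exponential growth with parameters $(3/2,2)$ by Example~\ref{exmp-T_3 expo}, and both $T_3$ and $\Gamma_G$ have bounded valence, Lemma~\ref{lem-qi implies exponential growth} applied to $\bar\phi$ yields constants $a>0$ and $b>1$ such that $||B_{\Gamma_G}(v,m)||\ge a b^m$ for every $m\in\N$ and every vertex $v\in V(\Gamma_G)$ lying at bounded distance from $\bar\phi(V(T_3))$.

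Finally, to upgrade this to a uniform lower bound at every vertex of $\Gamma_G$, I would use the transitivity of the $G$-action on $V(\Gamma_G)$: for any vertex $w\in V(\Gamma_G)$, left-translating $\bar\phi$ by the element $wx_0^{-1}\in G$ produces a qi embedding of $T_3$ into $\Gamma_G$ with identical constants whose image contains $w$, so the same lower bound $||B_{\Gamma_G}(w,m)||\ge a b^m$ holds at $w$ with parameters independent of $w$. Hence $\Gamma_G$ has exponential growth in the sense of Definition~\ref{defn-exponential growth}. The only mildly delicate step is the vertex-to-vertex adjustment of $\phi$ required to apply Lemma~\ref{lem-qi implies exponential growth}; this is a routine perturbation, but it is the one place where care is needed. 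An alternative classical route is to produce a quasiconvex free subgroup $F_2\le G$ via a ping-pong argument with two independent loxodromic elements and then apply Lemma~\ref{lem-qi implies exponential growth} to the embedded $4$-valent tree; I prefer the first route because it relies only on tools developed in this paper.
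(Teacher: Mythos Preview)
The paper does not give its own proof of this theorem: it is cited from \cite{malik-koubi}. However, the paper explicitly remarks that Theorem~\ref{thm-bary imp expo growth} ``provides a different approach of proving Theorem~\ref{thm-exp growth rate hyp grp}'', and your proposal is precisely that alternative route specialized to Cayley graphs. So your argument is correct and matches the approach the paper advertises.

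One small comparison is worth making. In your final step you embed $T_3$ once at the identity and then use left-translation by $G$ to propagate the exponential lower bound to every vertex. The paper's proof of Theorem~\ref{thm-bary imp expo growth} instead invokes Theorem~\ref{thm-emb of T3} directly at each vertex $u$ (taking $x_0=u$), which works because the qi-embedding constant $K_{\ref{thm-emb of T3}}(\delta,L)$ depends only on $\delta$ and $L$ and not on the basepoint. Your translation argument is a shortcut available only in the group case; the paper's version is what makes the more general Theorem~\ref{thm-bary imp expo growth} go through for arbitrary graphs with coarsely surjective barycenter map. Either route is fine here, and your use of transitivity is arguably cleaner in this special case.
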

	
The following result (Theorem \ref{thm-bary imp expo growth}) provides a sufficient condition under which a hyperbolic metric graph (not necessarily a Cayley graph of a hyperbolic group) exhibits exponential growth. We believe that this condition may characterize exponential growth, but we will not pursue that direction here, as it may be of independent interest. (One needs to keep in mind that, a pair witnessing exponential growth must be independent of the choice of base point.) 
Note that Theorem \ref{thm-bary imp expo growth} generalizes Theorem \ref{thm-exp growth rate hyp grp} to any (bounded valence) metric graph and it also provides a different approach of proving Theorem \ref{thm-exp growth rate hyp grp}.
	
	\begin{theorem}\label{thm-bary imp expo growth}
		Suppose $\Gamma$ is a hyperbolic metric graph and the barycenter map $\pa^3\Gamma\map\Gamma$ is coarsely surjective. Further, suppose that $\Gamma$ has bounded valence. Then $\Gamma$ has exponential growth.
		
Moreover, the exponential growth of $\Gamma$ is witnessed by a pair depending only on the hyperbolicity constant of $\Gamma$ and the coarse surjectivity constant of the barycenter map. 
	\end{theorem}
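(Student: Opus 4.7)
The plan is to reduce Theorem \ref{thm-bary imp expo growth} to the exponential growth of the trivalent tree $T_3$ (Example \ref{exmp-T_3 expo}) by pushing that growth through the qi embedding $T_3\hookrightarrow\Gamma$ supplied by Theorem \ref{thm-emb of T31}, using Lemma \ref{lem-qi implies exponential growth} as the transport mechanism. Since both ingredients have already been established earlier in the paper, the argument is essentially a bookkeeping combination.

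First I would fix an arbitrary vertex $u\in V(\Gamma)$ at which to estimate ball size. By Theorem \ref{thm-emb of T3}(1) applied with $x_0=u$, there is a $K_0$-qi embedding $\phi:T_3\map\Gamma$ with $\phi(v_0)=u$ for some $v_0\in V(T_3)$, where $K_0=K_{\ref{thm-emb of T3}}(\dl,L)$ depends only on $\dl$ and on the coarse surjectivity constant $L$ of the barycenter map.

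Next, because Lemma \ref{lem-qi implies exponential growth} requires the qi embedding to send vertices to vertices, I would replace $\phi$ on vertices by a nearest-vertex approximation $\tilde\phi$: for each $v\in V(T_3)$, let $\tilde\phi(v)$ be a vertex of $\Gamma$ at distance at most $1/2$ from $\phi(v)$ (such a vertex exists since $\Gamma$ is a metric graph with unit-length edges), and extend $\tilde\phi$ to edges by choosing geodesic paths in $\Gamma$. Since $\phi(v_0)=u$ is already a vertex, we may arrange $\tilde\phi(v_0)=u$. A routine triangle-inequality check then shows $\tilde\phi$ is a $K$-qi embedding for some $K$ depending only on $K_0$, and hence only on $\dl$ and $L$.

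Finally, by Example \ref{exmp-T_3 expo}, $T_3$ has exponential growth with parameters $(3/2,2)$ and valence bound $3$; and since $\Gamma$ has bounded valence it is proper. Applying Lemma \ref{lem-qi implies exponential growth} with $\Gamma_1=T_3$, $\Gamma_2=\Gamma$, and $v=u=\tilde\phi(v_0)$ (so the hypothesis $d(\tilde\phi(v_0),v)\le K$ is trivially satisfied) yields constants $a>0$, $b>1$ with $||B(u,n)||\ge ab^n$ for every $n\in\N$. As $u$ was arbitrary, this gives exponential growth of $\Gamma$. Crucially, inspecting Lemma \ref{lem-qi implies exponential growth} shows that $a$ and $b$ depend only on the growth parameters of $T_3$, its valence bound $3$, and the qi constant $K$; hence only on $\dl$ and $L$ -- the valence bound of $\Gamma$ does not enter the final parameters. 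There is no substantive obstacle here: Theorem \ref{thm-emb of T31} already carries all the geometric content, and the only mild nuisance is the vertex-alignment step, which is resolved by the nearest-vertex adjustment above.
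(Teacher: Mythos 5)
Your proposal is correct and follows essentially the same route as the paper's proof: fix a vertex, invoke Theorem \ref{thm-emb of T3} to get a qi embedding of $T_3$, adjust it to a vertex-to-vertex qi embedding with only a bounded increase in the qi constant, and then transport the known exponential growth of $T_3$ via Lemma \ref{lem-qi implies exponential growth}. The only cosmetic difference is the precise recipe for the vertex adjustment (nearest vertex within $1/2$ vs.\ an endpoint of the containing edge), which has no bearing on the argument.
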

	
\begin{proof}
Let $u\in V(\Gamma)$. Suppose $\Gamma$ is $\dl$-hyperbolic and the barycenter map $\pa^3\Gamma\map\Gamma$ is $L$-coarsely surjective, for some $\dl\ge0$ and $L\ge0$. Let $k=K_{\ref{thm-emb of T3}}(\dl,L)\ge1$ be the constant depending only on $\dl$ and $L$, as appearing in Theorem \ref{thm-emb of T3}. Let $T_3$ denote the trivalent tree with a root vertex $u_0$. Let $u\in V(\Gamma)$.
		
Note that $\Gamma$ is a proper metric graph. Then by Theorem \ref{thm-emb of T3}, we have a $k$-qi embedding $\phi:T_3\map\Gamma$ such that $\phi(u_0)=u$. Then by Example \ref{exmp-T_3 expo} and Lemma \ref{lem-qi implies exponential growth}, we have $a>0$ and $b>1$ depending only on $k$ such that $$||B_{\Gamma}(u,n)||\ge ab^n\text{ for all }n\in\N$$ where $B_{\Gamma}(u,n)$ denotes the $n$-radius ball centered at $u$ in $\Gamma$. Note that $u\in V(\Gamma)$ is arbitrary. Therefore, $\Gamma$ has exponential growth witnessed by a pair $(a,b)$ where $a$ and $b$ depend only on $k$, hence, only on $\dl$ and $L$. This completes the proof.
	\end{proof}
	\section{Metric graph bundles}\label{metric graph bundle sec}
	
In this section, we recall relevant results on metric graph bundles from \cite{pranab-mahan}. Following \cite[Proposition 6.6]{ps-krishna}, we give a detailed description of the boundary of a metric graph bundle over $[0,\infty)$ in Proposition \ref{description of pa X}. We also establish several related results needed in the next section. In particular, the notion of the `flow' of a subset (of a fiber) is introduced in Subsection \ref{subsec-flow}.	
	
	\begin{defn}\label{defn-proper embedding}
Suppose $\phi:Y\map X$ is a map between metric spaces and $f:\R_{\ge0}\map\R_{\ge0}$ is a map. We say that $\phi$ is metrically $f$-proper (or simply $f$-proper) if for all $y,y'\in Y$ and $r\in\R_{\ge0}$ with $d_X(\phi(y),\phi(y'))\le r$ we have $d_Y(y,y')\le f(r)$.
	\end{defn}

	\begin{defn}\textup{(\cite[Definition 1.5]{pranab-mahan})}\label{defn-metric graph bundle}
		Suppose $X$ and $B$ are metric graphs, and $f:\N\map\N$ is a map such that $f(n)\map\infty$ as $n\map\infty$. We say that $X$ is an $f$-metric graph bundle over $B$ if there is a simplicial, surjective (and $1$-Lipschitz) map $\pi:X\map B$ such that the following hold.
		\begin{enumerate}
			\item For all $b\in V(B)$, $F_b:=\pi^{-1}(b)$, called fiber, is a connected subgraph of $X$ with respect to the path metric $d_b$ from $X$. The inclusion maps $(F_b,d_b)\map X$ are $f$-proper embedding.
			
			\item Let $b_1,b_2\in V(B)$ be adjacent vertices, and let $x\in V(F_{b_1})$. Then  $x$ is connected by an edge in $X$ to a point in $V(F_{b_2})$.
		\end{enumerate}
		
		Most of the time, we say that $\pi:X\map B$ is an $f$-metric graph bundle. Sometimes, we say that $X$ is a metric graph bundle over $B$ by making $f$ implicit.
	\end{defn}
	
	
	\begin{lemma}\textup{(\cite[Proposition 1.7]{pranab-mahan})}\label{lem-natural map QI}
		Suppose $\pi:X\map B$ is an $f$-metric graph bundle. 
		Let $b_1,b_2\in V(B)$ such that $d_B(b_1,b_2)=1$. Let $\phi:F_{b_1}\map F_{b_2}$ be any map that sends $x\in V(F_{b_1})$ to $\phi(x)\in V(F_{b_2})$ such that $x$ and $\phi(x)$ are joined by an edge in $X$. Then $\phi$ is a $k_{\ref{lem-natural map QI}}(f)$-quasiisometry where $k_{\ref{lem-natural map QI}}(f)\ge1$ is a constant depending only on the function $f$.
	\end{lemma}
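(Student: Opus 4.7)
The plan is to verify the three defining properties of a quasiisometry (upper Lipschitz bound, lower coarse Lipschitz bound, and coarse surjectivity) directly from the two axioms of a metric graph bundle, with all constants coming from applying the $f$-properness function to the small constants $2$ and $3$. Since quasiisometries between graphs are insensitive to interior points of edges, I will check the estimates only on vertices.

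First I would establish the upper bound. Fix adjacent vertices $x,x'\in V(F_{b_1})$ (so $d_{b_1}(x,x')=1$). Then by the definition of $\phi$, the edges $[x,\phi(x)]$ and $[x',\phi(x')]$ lie in $X$ and each has length $1$, while $[x,x']\subseteq F_{b_1}\subseteq X$. So $d_X(\phi(x),\phi(x'))\le 3$. Applying $f$-properness of the inclusion $(F_{b_2},d_{b_2})\hookrightarrow X$, we get $d_{b_2}(\phi(x),\phi(x'))\le f(3)$. For general $x,x'\in V(F_{b_1})$ with $d_{b_1}(x,x')=n$, I would take a geodesic path $x=x_0,x_1,\dots,x_n=x'$ in $F_{b_1}$ and telescope:
\[
d_{b_2}(\phi(x),\phi(x'))\le\sum_{i=1}^{n} d_{b_2}(\phi(x_{i-1}),\phi(x_i))\le n\cdot f(3)=f(3)\,d_{b_1}(x,x').
\]

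Next I would obtain the lower bound by the mirror argument, exploiting condition (2) of Definition \ref{defn-metric graph bundle} in the reverse direction. Given $x,x'\in V(F_{b_1})$, let $m=d_{b_2}(\phi(x),\phi(x'))$ and take a geodesic $\phi(x)=y_0,y_1,\dots,y_m=\phi(x')$ in $F_{b_2}$. For each intermediate $y_i$, condition (2) provides a vertex $y'_i\in V(F_{b_1})$ with $d_X(y_i,y'_i)=1$; choose $y'_0=x$ and $y'_m=x'$. Then $d_X(y'_{i-1},y'_i)\le 1+1+1=3$, so $f$-properness of $(F_{b_1},d_{b_1})\hookrightarrow X$ gives $d_{b_1}(y'_{i-1},y'_i)\le f(3)$. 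Telescoping yields
\[
d_{b_1}(x,x')\le m\cdot f(3)=f(3)\,d_{b_2}(\phi(x),\phi(x')).
\]

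Finally, for coarse surjectivity, take any $y\in V(F_{b_2})$; by condition (2) applied to the adjacent vertex $b_2$ (reversing the roles of $b_1,b_2$), there is some $y'\in V(F_{b_1})$ joined to $y$ by an edge, so $d_X(y,\phi(y'))\le d_X(y,y')+d_X(y',\phi(y'))\le 2$, hence $d_{b_2}(y,\phi(y'))\le f(2)$. Combining the three estimates, $\phi$ is a $k$-quasiisometry (on vertices, hence up to an additive constant everywhere) with $k_{\ref{lem-natural map QI}}(f):=\max\{f(2),f(3),1\}$, which depends only on $f$. There is no real obstacle here; the only subtlety is recognising that condition (2) is genuinely symmetric (every vertex of $F_{b_2}$ is joined to some vertex of $F_{b_1}$ and vice versa), which is what simultaneously drives both the lower bound and the coarse surjectivity.
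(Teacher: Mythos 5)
The paper does not prove this lemma itself; it is quoted verbatim from \cite[Proposition 1.7]{pranab-mahan}, so there is no in-paper argument to compare against. Your proof is the standard one and is essentially the same argument that appears in the cited reference: use the bundle's lift/edge axiom to compare small displacements in one fiber with small displacements in $X$, then feed those through $f$-properness to get the two Lipschitz estimates and coarse surjectivity.

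One point to tighten. In the lower bound you write $d_{b_1}(x,x')\le f(3)\,d_{b_2}(\phi(x),\phi(x'))$ after telescoping along a geodesic of length $m=d_{b_2}(\phi(x),\phi(x'))$ in $F_{b_2}$. As stated this is vacuous when $m=0$ (it would force $x=x'$), but $\phi$ need not be injective: if $\phi(x)=\phi(x')$ then $d_X(x,x')\le 2$ and hence $d_{b_1}(x,x')\le f(2)$, which is an additive, not multiplicative, error. So the correct conclusion of the telescoping argument is $d_{b_1}(x,x')\le f(3)\,d_{b_2}(\phi(x),\phi(x'))+f(2)$, which still gives a quasiisometry with constant depending only on $f$. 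You acknowledge an additive constant informally in the last sentence, but it is worth locating exactly where it enters. Your reading of condition $(2)$ of Definition \ref{defn-metric graph bundle} as symmetric in $b_1,b_2$ is correct and is indeed the key point that makes the lower bound and coarse surjectivity both go through.
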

	
	\begin{defn}[Quasiisometric (in short, qi) lift]\label{qi lift}
		Let $k\ge1$. Suppose $\pi:X\map B$ is an $f$-metric graph bundle. Let $\gm:I\sse\R\map B$ be a geodesic joining two vertices in $B$. By a $k$-qi lift of $\gm$ we mean a $k$-qi embedding $\tilde{\gm}:I\map X$ such that $\pi\circ\tilde{\gm}=\gm$ on $I\cap\Z$.
		
	\end{defn}
	
In a hyperbolic metric space, (quasi)geodesics diverge exponentially. In an $f$-metric graph bundle $\pi:X\map B$, qi lifts are quasigeodesics. So, they diverge exponentially provided $X$ is hyperbolic. This property is captured in Definition \ref{flaring condition} for special types of quasigeodesics, namely, qi lifts. This definition is a generalization of the Bestvina--Feighn hallway flaring condition (\cite{BF}) in a natural way for metric graph bundles.
	
	\begin{defn}\textup{(\cite[Definition 1.12]{pranab-mahan})}\label{flaring condition}
		Suppose $\pi:X\map B$ is an $f$-metric graph bundle. Let $k\ge1$. We say that $\pi:X\map B$ satisfies a $k$-flaring condition if there are constants $M_k>0$ and $n_k\in\N$ depending on $k$, and $\lm>1$ such that for any geodesic $\gm:[-n_k,n_k]\map B$ joining two vertices and for any two $k$-qi lifts $\gm_0,~\gm_1$ of $\gm$ with $d_{\gm(0)}(\gm_0(0),\gm_1(0))>M_k$ we have $$\lm~d_{\gm(0)}(\gm_0(0),\gm_1(0))<\text{ max }\{d_{\gm(n_k)}(\gm_0(n_k),\gm_1(n_k)),d_{\gm(-n_k)}(\gm_0(-n_k),\gm_1(-n_k))\}.$$
	\end{defn}

	\begin{defn}\textup{(\cite[Definition $2.4$]{NirMaMj-commen})}\label{controlled fibers defn}
		We say that an $f$-metric graph bundle $\pi:X\map B$ has controlled hyperbolic fibers if there are $\dl\ge0$ and $L\ge0$ such that for all $b\in V(B)$,
		
		\begin{enumerate}
			\item $F_b$ is $\dl$-hyperbolic geodesic metric graph, and
			
			\item the barycenter map $\pa^3F_b\map F_b$ is $L$-coarsely surjective $($i.e., $L$-neighbourhood of the image is $F_b)$.
		\end{enumerate} 
		
		The constants $\dl$ and $L$ are called parameters.
	\end{defn}
	
	\begin{remark}
		Here in Definition \ref{controlled fibers defn}, one does not require $F_b$ to be a proper metric space. In that case, the barycenter map is defined for an ideal quasigeodesic triangle instead ideal geodesic triangle (see \cite[Lemma $2.7$]{pranab-mahan}).
	\end{remark}

	\begin{theorem}\label{thm-metric bundle comb}
		Suppose $\pi:X\map B$ is an $f$-metric graph bundle. Then we have the following.
		
		\begin{enumerate}
			\item \textup{(\cite[Proposition $5.8$]{pranab-mahan})} Suppose that the fibers and the total space $X$ are uniformly hyperbolic. Then $\pi:X\map B$ satisfies the $k$-flaring condition for all $k\ge1$.
			
			\item \textup{(\cite[Theorem $4.3$]{pranab-mahan})} Suppose the metric graph bundle $\pi : X \to B$ has controlled hyperbolic fibers and satisfies the $k$-flaring condition for all $k \ge 1$. Then $X$ is hyperbolic if and only if $B$ is.
		\end{enumerate}
	\end{theorem}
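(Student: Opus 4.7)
\textbf{Proof proposal for Theorem \ref{thm-metric bundle comb}.} Both parts are attributed to Mj--Sardar; my plan for each is as follows.

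For part $(1)$, I would argue by contradiction. Suppose $k$-flaring fails for some $k\ge 1$. Then for every $\lm>1$, $M>0$, $n\in\N$ there exist a geodesic $\gm:[-n,n]\map B$ and two $k$-qi lifts $\gm_0,\gm_1$ with $d_{\gm(0)}(\gm_0(0),\gm_1(0))>M$ but the fiber-distances at $\pm n$ at most $\lm\cdot d_{\gm(0)}(\gm_0(0),\gm_1(0))$. The bounded fiber-distances at the endpoints give bounded $X$-distances there (fibers are subgraphs), so concatenating $\gm_0$ and $\gm_1$ with short paths at $\gm(\pm n)$ in $F_{\gm(\pm n)}$ produces a quasigeodesic quadrilateral in $X$. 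Hyperbolicity of $X$ plus stability of quasigeodesics (Lemma \ref{stability of qc}) forces the midpoints $\gm_0(0)$ and $\gm_1(0)$ to lie within a bounded $X$-distance of each other (thin quadrilateral), the bound depending only on $\dl$, $k$ and the endpoint fiber-distances. But $f$-properness of $F_{\gm(0)}\hri X$ then bounds $d_{\gm(0)}(\gm_0(0),\gm_1(0))$ in terms of that $X$-distance, contradicting $M$ large. A quantitative refinement (using Lemma \ref{geo line div exp} on exponential divergence to absorb the factor $\lm$) produces the uniform $\lm>1$, $n_k$, $M_k$ needed.

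For part $(2)$, the implication $X$ hyperbolic $\Rightarrow$ $B$ hyperbolic is easy: uniform $k$-qi lifts of geodesics in $B$ assemble into a coarse section giving a qi embedding $B\hri X$, so hyperbolicity passes to $B$. The harder direction follows the Bestvina--Feighn combination philosophy. Given $x,y\in X$, I would pick a $B$-geodesic joining $\pi(x)$ and $\pi(y)$, and at each vertex $b$ of this geodesic, a geodesic in $F_b$ between suitably chosen qi-lift images of $x$ and $y$; the union $L_{x,y}$ is the \emph{ladder}. The two main claims are: (a) each ladder $L_{x,y}$ is uniformly quasiconvex in $X$, and (b) ladders over geodesic triangles in $B$ form uniformly slim triangles in $X$. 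Verifying Gromov's thin-triangle criterion for $X$ then reduces to (a) and (b) combined with the $\dl$-thinness of fibers and the hyperbolicity of $B$.

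The decisive obstacle is claim (a). Here $k$-flaring for all $k\ge1$ is used to show that two qi lifts of a $B$-geodesic which are far apart in some fiber must diverge exponentially in the fiber metric in at least one direction; this coarsely identifies, at each fiber, a ``central'' lift along which the ladder's side geodesics live, so one can construct a retraction $X\map L_{x,y}$ with uniformly bounded displacement. Here the controlled-fibers hypothesis does essential work: the coarse surjectivity of the barycenter map $\pa^3 F_b\map F_b$ lets us patch the fiberwise geodesics across adjacent fibers using Lemma \ref{lem-barycenter commuting} (barycenters commute with the natural fiber quasiisometries of Lemma \ref{lem-natural map QI}), preventing backtracking and guaranteeing that the retraction is coarsely Lipschitz. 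Claim (b) then follows from (a) by a standard patching argument along the $B$-triangle, using $\dl$-slimness of $B$-triangles and $\dl$-slimness of fiber-triangles to reduce to a universally bounded number of cases.
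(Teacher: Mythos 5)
This theorem is cited in the paper directly from Mj--Sardar (\cite[Proposition~5.8]{pranab-mahan} and \cite[Theorem~4.3]{pranab-mahan}) without a proof being reproduced, so there is no in-paper argument to compare against. Your outline does capture the broad strategy of those references: for $(1)$, deduce flaring from the hyperbolicity of $X$ combined with the $f$-properness of the fiber inclusions; for $(2)$, build ladders from qi lifts, prove their uniform quasiconvexity via a coarse retraction, and verify a slim-triangle criterion.

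There is, however, a soft spot in your sketch of $(1)$: under the negation of flaring, the endpoint fiber-distances are only bounded by $\lambda\cdot d_{\gm(0)}(\gm_0(0),\gm_1(0))$, which grows without bound along with the middle distance, so the two ``short'' sides of your quadrilateral are not uniformly short. A naive thin-quadrilateral estimate then only bounds the $X$-distance between the midpoints by a quantity comparable to that same large number, yielding no contradiction. You acknowledge this with the parenthetical ``quantitative refinement,'' and indeed some version of Lemma~\ref{geo line div exp} (exponential divergence) is where the real work sits, but as written the sketch stops short of the argument that actually closes. For $(2)$, one small clarification: the coarse surjectivity of the barycenter maps is used to guarantee the existence of good qi sections through \emph{every} point of $X$ (cf.\ Corollary~\ref{cor-existence good qi}), which is what makes the ladder and retraction well-defined; saying it ``patches fiberwise geodesics across adjacent fibers'' is a related but not quite accurate description of its role. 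Otherwise, the strategy you describe for $(2)$ -- quasiconvexity of ladders plus a slimness check over $B$-triangles -- is faithful to the combination-theorem philosophy that Mj--Sardar follow.
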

	
	\begin{remark}\label{crucial remark}
		Suppose $\pi:X\map B$ is an $f$-metric graph bundle.
		
		\noindent$(1)$ Let $A$ be any (connected) subgraph of $B$. Let $\pi^{-1}(A)=:X_A$. We consider both $(A,d_A)$ and $(X_A,d_{X_A})$ equipped with their path metrics induced from $B$ and $X$ respectively. Then for all $x,x'\in X_A$, we have $d_X(x,x')\le d_{X_A}(x,x')$. Hence the restriction of $\pi$ to $X_A$, that is, $\pi|_{X_A}:X_A\map A$ is also an $f$-metric graph bundle.
		
		\noindent$(2)$ \textup{(\cite[Remark 4.4]{pranab-mahan})} 
		Suppose $\pi: X \to B$ satisfies Theorem \ref{thm-metric bundle comb} $(2)$. Moreover, let $A$ be a qi embedded subgraph of $B$. Then the restricted bundle $\pi|_{X_A}:X_A\map A\sse B$ satisfies the $k$-flaring condition for all $k\ge1$, possibly with different constants. (This follows essentially from the bounded flaring condition (\cite[Corollary $1.14$]{pranab-mahan}) and the flaring condition for $\pi:X\map B$.) Note that $A$ is hyperbolic. Since the properties of fibers remain unchanged, the restricted bundle $\pi|_{X_A}:X_A\map A\sse B$ continues to have controlled hyperbolic fibers. Thus, it follows from Theorem \ref{thm-metric bundle comb} $(2)$ that $X_A$ is hyperbolic.

	\end{remark}

	\subsection{Metric graph bundles over geodesic rays}
	In this subsection, we will work with the metric graph bundle over $[0,\infty)$ such that fibers are {\em proper metric spaces}. Hence, the total space is also proper. The properness of fibers is needed in Propositions \ref{mild imp special} and \ref{description of pa X}. \smallskip
	
	{\em \bf Convention}: {\em Let $\pi:X\map[0,\infty)$ be an $f$-metric graph bundle. We will denote the induced path metric on $F_i=\pi^{-1}(i)$ by $d_i$ for all $i\in\N\cup\{0\}$. For all $i\in\N\cup\{0\}$, we denote a natural quasiisometry map $F_i\map F_{i+1}$ by $\phi_i$ as defined in Lemma \ref{lem-natural map QI}.} 
	
	%
	%
	%
	%
	%
	%
	%
	%
	\begin{remark}\label{c-qi lift}
Let $\pi:X\map [0,\infty)$ be an $f$-metric graph bundle. Then for any $x\in V(X)$, there is a $(1,0)$-qi section (i.e., isometric section) over $[0,\infty)$ through $x$. In particular, there is a $1$-qi section over $[0,\infty)$ through each point.
		
	\end{remark}
	
	Remark \ref{c-qi lift} states that the existence of a qi section follows directly from the definition of a metric graph bundle when the base is $[0,\infty)$. This contrasts with the more intricate construction in \cite{pranab-mahan}, where the base is an arbitrary hyperbolic space. There, the authors constructed a qi section by fixing an ideal triangle in a fiber, flowing it across all fibers, and taking barycenters of the resulting ideal triangles in each fiber -- see \cite[Propositions $2.10$ and $2.12$]{pranab-mahan}. In this paper, such qi sections over $[0,\infty)$ will be referred to as {\em good qi sections} (see Definition \ref{good qi section}). The following result is not stated in the same form as \cite[Proposition $2.10$]{pranab-mahan}, but its content is essentially derived from the proof of that proposition.
	
	\begin{lemma}\textup{(\cite[Propositions $2.10$]{pranab-mahan})}\label{for special qi sec}
		Suppose $\pi:X\map[0,\infty)$ is an $f$-metric graph bundle such that the fibers are all $\dl$-hyperbolic for some $\dl\ge0$. Let $i\in\N\cup\{0\}$ and $\xi_i=(\xi_{1,i},\xi_{2,i},\xi_{3,i})\in\pa^3F_i$ such that $\pa\phi_{i}(\xi_{j,i})=\xi_{j,i+1}$ for $j\in\{1,2,3\}$ where $\phi_i:F_{i}\map F_{i+1}$ is a natural quasiisometry. Define $\Sigma:[0,\infty)\map X$ such that $\Sigma(i)= Bary_{F_i}(\xi_i)$ for all $i\in\N\cup\{0\}$ and for all $t\in[i,i+1)$, $\Sigma(t)=Bary_{F_i}(\xi_i)$.
		
		Then $\Sigma$ is a $K_{\ref{for special qi sec}}(\dl,f)$-qi section for some constant $K_{\ref{for special qi sec}}(\dl,f)\ge1$ depending only on $\dl$ and $f$.
	\end{lemma}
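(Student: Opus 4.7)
The plan is to reduce everything to a single local estimate: namely, that $d_X(\Sigma(i),\Sigma(i+1))$ is bounded by a constant depending only on $\delta$ and $f$. Once this is established, the quasiisometric embedding property is automatic from the fact that $\pi\circ\Sigma(i)=i$ for every $i\in\N\cup\{0\}$.

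First I would record the local estimate. The map $\phi_i\colon F_i\to F_{i+1}$ is a $k$-quasiisometry with $k=k_{\ref{lem-natural map QI}}(f)$, and both $F_i$ and $F_{i+1}$ are $\dl$-hyperbolic. By assumption $\pa\phi_i$ sends the triple $\xi_i$ to $\xi_{i+1}$. The content of \cite[Lemma 2.8]{pranab-mahan} (the full version of Lemma \ref{lem-barycenter commuting}, which is what is actually proved there) says that the barycenter map coarsely commutes with a quasiisometry: $\phi_i(\mathrm{Bary}_{F_i}(\xi_i))$ is a $D$-barycenter of $\xi_{i+1}$ for some $D=D(\dl,k)$, since geodesic lines in $F_i$ realizing the sides of the ideal triangle are sent by $\phi_i$ to $k$-quasigeodesic lines in $F_{i+1}$, and these are $D_{\ref{stability of qc}}(\dl,k)$-Hausdorff close to geodesic lines in $F_{i+1}$ realizing the sides of $\xi_{i+1}$. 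Applying Lemma \ref{D-barycenter} then gives
\[
d_{i+1}\bigl(\mathrm{Bary}_{F_{i+1}}(\xi_{i+1}),\phi_i(\mathrm{Bary}_{F_i}(\xi_i))\bigr)\le D_1
\]
for a constant $D_1=D_1(\dl,f)$.

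Next, by the definition of a metric graph bundle, a vertex $x\in F_i$ and $\phi_i(x)\in F_{i+1}$ are joined by an edge in $X$, so $d_X(x,\phi_i(x))\le 1$ (after possibly replacing the barycenters by nearby vertices, which costs only a bounded additive error); combined with $d_X\le d_{i+1}$ on $F_{i+1}$, this yields
\[
d_X\bigl(\Sigma(i),\Sigma(i+1)\bigr)\le 1+D_1=:D_2.
\]
For arbitrary $s\in[i,i+1)$ and $t\in[j,j+1)$ with $s\le t$, the triangle inequality applied to the chain $\Sigma(i),\Sigma(i+1),\ldots,\Sigma(j)$ gives $d_X(\Sigma(s),\Sigma(t))\le D_2(j-i)\le D_2(|t-s|+1)$. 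For the lower bound, since $\pi$ is $1$-Lipschitz and $\pi(\Sigma(i))=i$, we get $d_X(\Sigma(s),\Sigma(t))\ge|j-i|\ge|t-s|-1$. Thus $\Sigma$ is a $K_{\ref{for special qi sec}}$-qi section for some $K_{\ref{for special qi sec}}=K_{\ref{for special qi sec}}(\dl,f)$, and $\pi\circ\Sigma(i)=i$ holds on integers by construction.

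The main obstacle is the coarse-commutation step between $\mathrm{Bary}$ and $\phi_i$, but this is precisely the content cited from \cite[Lemma 2.8]{pranab-mahan}; the rest is bookkeeping with the triangle inequality and the structural constants of the bundle.
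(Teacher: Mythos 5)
Your proof is correct, and it follows the natural and essentially the only reasonable route: establish a uniform bound on $d_X(\Sigma(i),\Sigma(i+1))$ via the coarse commutation of barycenters with the natural fiber quasiisometry $\phi_i$ (which is exactly Lemma \ref{lem-barycenter commuting}, or the combination of stability of quasigeodesics with Lemma \ref{D-barycenter} that you spell out), then obtain the upper qi-bound by chaining and the lower qi-bound from the $1$-Lipschitz property of $\pi$. The paper itself does not prove this lemma but defers to \cite[Proposition 2.10]{pranab-mahan}, and your argument matches the content that proposition is described as containing; the minor hand-waving about replacing non-vertex barycenters by adjacent vertices is harmless since it only perturbs the relevant distances by an additive constant that gets absorbed into $D_2$.
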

	
%
%
	The following definition is motivated by Lemma \ref{for special qi sec}. 
	
	\begin{defn}\label{good qi section}
		Suppose $\pi:X\map[0,\infty)$ is an $f$-metric graph bundle with uniformly hyperbolic fibers. Let $K\ge1$. Let $\Sigma$ be a $K$-qi section over $[0,\infty)$. We say $\Sigma$ is a $K$-good qi section if $\Sigma(i)$ is a $K$-barycenter of the ideal triangle $\triangle(\xi_{1,i},\xi_{2,i},\xi_{3,i})$ in $F_i$ for all $i\in\N\cup\{0\}$ and $\pa\phi_i(\xi_{j,i})=\xi_{j,i+1}$ for $j\in\{1,2,3\}$.
		
		
We say that $\gm$ is a $K$-good qi lift of some interval $[m,n]$ where $m,n\in\N\cup\{0\}$ if it is the restriction of a $K$-good qi section over $[0,\infty)$.
	\end{defn}
	As a corollary of Lemma \ref{for special qi sec}, we have the following.
	\begin{cor}\label{cor-existence good qi}
		Suppose $\pi:X\map[0,\infty)$ is an $f$-metric graph bundle with controlled hyperbolic fibers with parameters $\dl\ge0$ and $L\ge0$. Then through each point of $X$ there is a $K_{\ref{cor-existence good qi}}$-good qi section where $K_{\ref{cor-existence good qi}}=K_{\ref{cor-existence good qi}}(\dl,L,f)=K_{\ref{for special qi sec}}(\dl,f)+L+1$.
	\end{cor}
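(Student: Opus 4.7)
The plan is to produce the section in three steps: locate a suitable ideal triangle in the fiber containing $x$, flow it through all the remaining fibers via Lemma \ref{lem-natural map QI} and invoke Lemma \ref{for special qi sec}, and then perturb the resulting section at a single fiber to route it through $x$ itself.

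Without loss of generality I would assume $x \in V(F_i)$ for some $i \in \N \cup \{0\}$; if $x$ lies on an edge, replace it by an adjacent vertex at the cost of at most $1$ in the final constants. Since the barycenter map $\pa^3 F_i \map F_i$ is $L$-coarsely surjective, there exists $\xi_i = (\xi_{1,i}, \xi_{2,i}, \xi_{3,i}) \in \pa^3 F_i$ with $d_i(x, Bary_{F_i}(\xi_i)) \le L$. I would then inductively define $\xi_j = (\xi_{1,j}, \xi_{2,j}, \xi_{3,j}) \in \pa^3 F_j$ for all $j \in \N \cup \{0\}$ by pushing $\xi_i$ forward (and backward) through the boundary extensions of the natural quasiisometries $\phi_j : F_j \map F_{j+1}$, so that $\pa \phi_j(\xi_{l,j}) = \xi_{l, j+1}$ for each $l \in \{1,2,3\}$. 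Lemma \ref{for special qi sec} then directly yields a good $K_{\ref{for special qi sec}}(\dl, f)$-qi section $\Sigma$ with $\Sigma(j) = Bary_{F_j}(\xi_j)$ for every $j \in \N \cup \{0\}$.

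Finally, I define $\Sigma'$ by $\Sigma'(i) = x$ and $\Sigma'(t) = \Sigma(t)$ elsewhere (with the constant extension on $[i, i+1)$). Since $d_i(\Sigma(i), x) \le L$, a direct application of the triangle inequality shows that $\Sigma'$ is a qi section with constant at most $K_{\ref{for special qi sec}}(\dl, f) + L$. For the barycenter condition, $\Sigma(i) = Bary_{F_i}(\xi_i)$ is a $D_{\ref{barycenter}}(\dl)$-barycenter of $\triangle(\xi_{1,i}, \xi_{2,i}, \xi_{3,i})$ by Lemma \ref{barycenter}, and since $d_i(x, \Sigma(i)) \le L$, the point $x$ is itself a $(D_{\ref{barycenter}}(\dl) + L)$-barycenter of the same ideal triangle. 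After enlarging $K_{\ref{for special qi sec}}(\dl, f)$ to dominate $D_{\ref{barycenter}}(\dl)$ if necessary, both the qi constant and the barycenter constant of $\Sigma'$ are bounded by $K_{\ref{cor-existence good qi}} = K_{\ref{for special qi sec}}(\dl, f) + L + 1$, so $\Sigma'$ is a good $K_{\ref{cor-existence good qi}}$-qi section through $x$.

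The main obstacle is purely bookkeeping: verifying that the single-fiber perturbation from $\Sigma(i)$ to $x$ simultaneously respects both defining properties of a good qi section -- the global qi-embedding estimate and the fiberwise $K$-barycenter condition -- with the stated constant, and cleanly handling the edge case where $x$ is not at integer height. No genuinely new geometric input beyond Lemma \ref{for special qi sec} and the coarse surjectivity of the fiberwise barycenter map is needed.
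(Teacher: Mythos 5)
Your proof is correct and follows exactly the argument the paper leaves implicit (the paper states the corollary as an immediate consequence of Lemma \ref{for special qi sec} without supplying a proof): find a nearby barycenter via $L$-coarse surjectivity, propagate the ideal triple through the boundary maps $\partial\phi_j$, invoke Lemma \ref{for special qi sec} to get the good qi section $\Sigma$, and reroute it through $x$ at height $i$, paying $L$ (plus $1$ for rounding to a vertex) in both the qi-embedding and $K$-barycenter constants. The one point worth making explicit is the one you already flag: for the stated constant $K_{\ref{for special qi sec}}(\delta,f)+L+1$ to suffice for the barycenter condition at $\Sigma'(i)=x$, one needs $K_{\ref{for special qi sec}}(\delta,f)\ge D_{\ref{barycenter}}(\delta)$, which the paper tacitly assumes by calling the output of Lemma \ref{for special qi sec} a \emph{good} $K_{\ref{for special qi sec}}$-qi section in the definition of $\partial_{good}X$.
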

	
	\begin{lemma}\textup{(\cite[Lemma $3.1$]{pranab-mahan},~Remark \ref{rmk-not needed bary})}\label{lem-qi section in ladder}
		Suppose $\pi:X\map[0,\infty)$ is an $f$-metric graph bundle such that the fibers are all $\dl$-hyperbolic for some $\dl\ge0$. Then given any $k'\ge1$ there is $k_{\ref{lem-qi section in ladder}}=k_{\ref{lem-qi section in ladder}}(\dl,k',f)\ge1$ depending on $\dl$, $k'$ and $f$ such that the following holds.
		
		Let $\Sigma$ and $\Sigma'$ be two $k'$-qi sections over $[0,\infty)$. Let $x\in[\Sigma(i),\Sigma'(i)]_{F_i}$ for some $i\in\N\cup\{0\}$. Then there is a $k_{\ref{lem-qi section in ladder}}$-qi section, say $\Sigma''$, through $x$ over $[0,\infty)$ such that $\Sigma''(i)\in[\Sigma(i),\Sigma'(i)]_{F_i}$ for all $i\in\N\cup\{0\}$.
	\end{lemma}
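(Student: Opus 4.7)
The plan is a two-sided inductive ``ladder'' construction, starting from $\Sigma''(i)=x$ and flowing the chosen point across consecutive fibers via the natural quasiisometries, while at each step correcting to land on the geodesic joining $\Sigma(j)$ and $\Sigma'(j)$.

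First I fix, for every $j\in\N\cup\{0\}$, a geodesic $\gm_j=[\Sigma(j),\Sigma'(j)]_{F_j}$, and I fix natural maps $\phi_j:F_j\map F_{j+1}$ and $\psi_j:F_{j+1}\map F_j$ as in Lemma \ref{lem-natural map QI}; each of these is a $k_1$-quasiisometry, where $k_1=k_{\ref{lem-natural map QI}}(f)$. Set $\gm_i$ equal to the geodesic containing $x$ and define $\Sigma''(i)=x$. Since $\Sigma$ and $\Sigma'$ are $k'$-qi sections we have $d_X(\Sigma(j),\Sigma(j+1))\le 2k'$ and similarly for $\Sigma'$, so by $f$-properness of the fibers
\[
d_{F_{j+1}}(\phi_j(\Sigma(j)),\Sigma(j+1))\le f(2k'+1),\qquad d_{F_{j+1}}(\phi_j(\Sigma'(j)),\Sigma'(j+1))\le f(2k'+1),
\]
and likewise for the backward map $\psi_j$.

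\emph{Forward step.} Assuming $\Sigma''(j)\in\gm_j$ is defined, the image $\phi_j(\gm_j)$ is a $k_1$-quasigeodesic in $F_{j+1}$ whose endpoints lie within $f(2k'+1)$ of the endpoints of $\gm_{j+1}$. Appending two short paths of length $\le f(2k'+1)$ to these endpoints yields a quasigeodesic in $F_{j+1}$ joining $\Sigma(j+1)$ to $\Sigma'(j+1)$ with constants depending only on $k_1$ and $f$. By stability of quasigeodesics (Lemma \ref{stability of qc}) applied in the $\dl$-hyperbolic fiber $F_{j+1}$, this quasigeodesic is $D$-Hausdorff close to $\gm_{j+1}$ for some $D=D(\dl,k',f)$. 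Choose $\Sigma''(j+1)\in\gm_{j+1}$ at $F_{j+1}$-distance at most $D$ from $\phi_j(\Sigma''(j))$. Then
\[
d_X(\Sigma''(j),\Sigma''(j+1))\le d_X(\Sigma''(j),\phi_j(\Sigma''(j)))+D\le 1+D.
\]
The backward step is identical with $\psi_j$ in place of $\phi_j$, producing $\Sigma''(j)\in\gm_j$ for $j<i$ with the same bound on consecutive $X$-distances. Extend $\Sigma''$ to $[0,\infty)$ by $\Sigma''(t)=\Sigma''(\lfloor t\rfloor)$.

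\emph{Verification.} The upper qi bound follows from iterating the triangle inequality: for $s,t\in[0,\infty)$ with $m=\lfloor s\rfloor\le n=\lfloor t\rfloor$,
\[
d_X(\Sigma''(s),\Sigma''(t))\le(1+D)(n-m)\le(1+D)|s-t|+(1+D).
\]
The lower bound is immediate from $\pi$ being $1$-Lipschitz:
\[
d_X(\Sigma''(s),\Sigma''(t))\ge|\pi(\Sigma''(s))-\pi(\Sigma''(t))|=|m-n|\ge|s-t|-1.
\]
Hence $\Sigma''$ is a $k_{\ref{lem-qi section in ladder}}$-qi section with $k_{\ref{lem-qi section in ladder}}=\max\{1+D,2\}$, which depends only on $\dl$, $k'$, and $f$. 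The main technical point is the Hausdorff-closeness of $\phi_j(\gm_j)$ to $\gm_{j+1}$; once that is established via Lemma \ref{stability of qc}, the rest is routine. The only subtlety is that the endpoints of $\phi_j(\gm_j)$ are only \emph{near}, not equal to, those of $\gm_{j+1}$, but the $f(2k'+1)$ bound from $f$-properness lets us absorb this into the stability constant uniformly.
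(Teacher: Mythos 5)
Your proof is correct and reconstructs the ladder argument that the paper itself does not spell out but instead cites to \cite[Theorem $3.8$]{mitra-trees} and \cite[Lemma $3.1$]{pranab-mahan} (see Remark \ref{rmk-not needed bary}). In particular, you correctly invoke only $\dl$-hyperbolicity of the fibers — never coarse surjectivity of barycenter maps — which is exactly the point of Remark \ref{rmk-not needed bary}: when the base is $[0,\infty)$ the stronger ``controlled hyperbolic fibers'' hypothesis of \cite[Lemma $3.1$]{pranab-mahan} is unnecessary, and your inductive forward/backward step via $\phi_j,\psi_j$, stability of quasigeodesics in the fibers, and the trivial lower bound from $\pi$ being $1$-Lipschitz gives precisely that.
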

	
	\begin{remark}\label{rmk-not needed bary}
		In the proof of Lemma \ref{lem-qi section in ladder}, we do not require the metric graph bundle to have controlled hyperbolic fibers, unlike in the assumption of \cite[Lemma $3.1$]{pranab-mahan}. Since the base is $[0,\infty)$ it follows from \cite[Theorem $3.8$]{mitra-trees}. 
		%
	\end{remark}
	
	%
	
	The following result is a specialization to the metric graph bundle context of the fact that geodesics in a hyperbolic metric space diverge exponentially.
	
	
	\begin{lemma}\textup{(\cite[Lemma 5.9]{pranab-mahan})}\label{once start decreasing is decreasing}
		Let $\dl\ge0$, $k\ge 1$ and $C>0$. Let $\pi:X\map[0,\infty)$ be an $f$-metric graph bundle such that $X$ is $\dl$-hyperbolic. Then there are constants $a_{\ref{once start decreasing is decreasing}}=a_{\ref{once start decreasing is decreasing}}(\dl,k,C)>0$ depending on $\dl,~k$ and $C$, and $b=2^{\frac{1}{\dl+1}}>1$ such that the following holds.
		
		Suppose $\Sigma,\Sigma':[0,\infty)\map X$ are two $k$-qi sections such that $d_i(\Sigma(i),\Sigma'(i))>M_k$ for all $i\in\N\cup\{0\}$, where $M_k$ is coming from $k$-flaring condition $($see Definition \ref{flaring condition}$)$. Further, suppose that $d_0(\Sigma(0),\Sigma'(0))\le C$. Then $d_{i}(\Sigma(i),\Sigma'(i))\ge a_{\ref{once start decreasing is decreasing}}b^i$ for all $i\in\N\cup\{0\}$. 
	\end{lemma}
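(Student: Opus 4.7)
The plan is to iterate the $k$-flaring condition (Definition~\ref{flaring condition}) on the sequence $D_j := d_j(\Sigma(j),\Sigma'(j))$ to produce exponential growth, then interpolate between multiples of $n_k$ to obtain the bound for all $i$. Set $E_m := D_{mn_k}$ for $m \ge 0$. Applying flaring at $j = mn_k$ for $m \ge 1$ gives $\lambda E_m < \max\{E_{m-1}, E_{m+1}\}$, while the hypotheses translate to $E_0 \le C$ and $E_m > M_k$ for all $m$.

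I will show there is a smallest index $m_0 \le \lceil \log_\lambda(C/M_k) \rceil$ at which \emph{forward flaring} $E_{m+1} > \lambda E_m$ first occurs, and that forward flaring then propagates. Propagation is an easy induction: assuming $E_{m+1} > \lambda E_m$, the estimate $E_m < E_{m+1}/\lambda < \lambda E_{m+1}$ combined with the flaring inequality at $m+1$ forces $E_{m+2} > \lambda E_{m+1}$. For initiation, if forward flaring failed for every $m \ge 1$, then backward flaring $E_{m-1} > \lambda E_m$ would hold throughout, yielding $E_m \le E_0\lambda^{-m} \le C\lambda^{-m}$, which contradicts $E_m > M_k$ once $m > \log_\lambda(C/M_k)$. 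Combining, $E_m \ge M_k\lambda^{m-m_0}$ for $m \ge m_0$; for $m < m_0$ the bound $E_m > M_k \ge M_k\lambda^{m-m_0}$ is trivial, giving $E_m \ge (M_k\lambda^{-m_0})\lambda^m$ for every $m \ge 0$.

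To pass from the subsequence to arbitrary $i \ge 0$, write $i = mn_k + r$ with $0 \le r < n_k$. Since $\Sigma,\Sigma'$ are $k$-qi sections, $\Sigma(i)$ and $\Sigma(mn_k)$ lie within $X$-distance at most $kn_k + k$, and likewise for $\Sigma'$. Composing the natural fiber quasi-isometries $\phi_j : F_j \to F_{j+1}$ (Lemma~\ref{lem-natural map QI}) at most $n_k$ times relates $d_{mn_k}$ and $d_i$ with constants depending only on $f$ and $n_k$, so that $D_i \ge c_1 E_m - c_2$ for positive constants $c_1, c_2$ depending only on $\delta,k,f$. Hence $D_i \ge a'(\lambda^{1/n_k})^i$ for some $a' > 0$ depending only on $\delta,k,C$.

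The main obstacle is identifying the exponential base derived above, $\lambda^{1/n_k}$, with the claimed value $b = 2^{1/(\delta+1)}$. The flaring constants $\lambda,n_k$ themselves arise from the proof of Theorem~\ref{thm-metric bundle comb}~(1) applied to the $\delta$-hyperbolic $X$, and one should be able to unwind that proof to verify $\lambda^{1/n_k} \ge 2^{1/(\delta+1)}$, absorbing any residual factor into $a_{\ref{once start decreasing is decreasing}}(\delta,k,C)$. An alternative that avoids this bookkeeping is to apply the exponential divergence Lemma~\ref{geo line div exp} directly: the unboundedness of $\{D_i\}$ forced by flaring together with $f$-properness rules out the two $k$-quasigeodesic rays $\Sigma,\Sigma'$ having a common point in $\pa X$, so $d_X(\Sigma(i),\Sigma'(i))$ diverges; applying Lemma~\ref{geo line div exp} to an $X$-geodesic from $\Sigma(i)$ to $\Sigma'(i)$ (which must dip by depth $\sim i$ in the base once the divergence is sufficiently large) against the fiber path $[\Sigma(i),\Sigma'(i)]_{F_i}$ then yields the exponential lower bound on $D_i$ with the stated hyperbolic base.
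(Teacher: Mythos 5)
Your primary route (iterating the flaring condition) is self-contained and correct as far as it goes, but it produces an exponential lower bound with base $\lambda^{1/n_k}$, which is not the stated $b = 2^{1/(\delta+1)}$. Since a strictly smaller base cannot be absorbed into the multiplicative prefactor $a_{\ref{once start decreasing is decreasing}}$, this argument as written proves a strictly weaker conclusion. Your suggestion that one can "unwind" the proof of Theorem~\ref{thm-metric bundle comb}~(1) to verify $\lambda^{1/n_k} \ge 2^{1/(\delta+1)}$ is not justified: $\lambda$ and $n_k$ are extracted from the bundle structure and from $k$, not from $\delta$ alone, and no inequality of that form is established or available.

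The alternative you gesture at is the right route, and is essentially the argument behind the cited source \cite[Lemma 5.9]{pranab-mahan} (this paper cites the lemma rather than re-proving it). But as sketched it skips the load-bearing step. Knowing $\Sigma(\infty) \ne \Sigma'(\infty)$ (which is what flaring plus $f$-properness gives you) does not by itself locate where the geodesic $[\Sigma(i),\Sigma'(i)]_X$ passes, and hence does not set up an application of Lemma~\ref{geo line div exp}. The missing ingredient is to show that the bent concatenation of $\Sigma'$ (traversed backwards), the fiber segment $[\Sigma'(0),\Sigma(0)]_{F_0}$, and $\Sigma$ is a $K$-quasigeodesic line in $X$ with $K$ depending only on $\delta$, $k$, and $C$; the hypothesis $d_0(\Sigma(0),\Sigma'(0)) \le C$ is exactly what makes the turn at $F_0$ controlled, and this is where $C$ enters the constant $a_{\ref{once start decreasing is decreasing}}$. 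Stability of quasigeodesics (Lemma~\ref{stability of qc}) then places $[\Sigma(i),\Sigma'(i)]_X$ within bounded Hausdorff distance of this bent path, hence within bounded distance of a fixed $x_0 \in [\Sigma'(0),\Sigma(0)]_{F_0}$. Since $\pi$ is $1$-Lipschitz, the fiber path $[\Sigma(i),\Sigma'(i)]_{F_i}$ of length $D_i$ lies at $X$-distance at least $i - O(1)$ from $x_0$, and Lemma~\ref{geo line div exp} then gives $D_i \gtrsim b^{i}$ with the stated base. Spelling out the quasigeodesicity of the bent path and the dependence of its constant on $C$ is what turns your alternative from a sketch into a proof.
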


	\subsubsection{Boundary of metric graph bundle}\label{boundary des sec}
	Let us start by recalling the following result.
	
	\begin{prop}\textup{(\cite[Proposition 2.3.3]{bowditch-stacks}, see also \cite[Proposition 6.6]{ps-krishna})}\label{des pa X}
		Suppose $\pi:X\map [0,\infty)$ is an $f$-metric graph bundle such that the fibers are uniformly hyperbolic and proper metric spaces. $($Note that $X$ is also proper$.)$ Assume that $X$ is hyperbolic. Then, as a set, $\pa X=\Lambda_X(F_0)\cup\pa_{qi}X$ where $\pa_{qi} X:=\{\Sigma(\infty):\Sigma\text{ is a 1-qi section over }[0,\infty)\}$.
	\end{prop}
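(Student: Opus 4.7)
The proof breaks into the two inclusions. The containment $\Lambda_X(F_0)\cup\pa_{qi}X\sse\pa X$ is immediate: limit points of sequences in $F_0$ lie in $\pa X$ by definition, and a $1$-qi section $\Sigma$ is a $(1,1)$-quasigeodesic ray, hence $\Sigma(\infty)\in\pa_qX$, which equals $\pa X$ since $X$ is a proper hyperbolic space (Remark \ref{qg boundary}). The content is in the reverse inclusion $\pa X\sse\Lambda_X(F_0)\cup\pa_{qi}X$.

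Fix $\xi\in\pa X$ and let $\bt$ be a geodesic ray in $X$ representing $\xi$ starting at some $x\in F_0$. The plan is to case–split on the behaviour of $\pi(\bt(n))$. If $\{\pi(\bt(n))\}$ is bounded by some constant $D$, then Remark \ref{c-qi lift}$(1)$ produces, through each $\bt(n)$, a $1$-qi section over $[0,\infty)$; its value at $0$ gives $y_n\in F_0$ with $d_X(\bt(n),y_n)\le\pi(\bt(n))+1\le D+1$. Since $\LMX\bt(n)=\xi$, Lemma \ref{close give same limit} yields $\LMX y_n=\xi$, so $\xi\in\Lambda_X(F_0)$. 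Otherwise, after passing to a subsequence, $\pi(\bt(n))\map\infty$, and for each $n$ I fix a $1$-qi section $\Sigma_n$ through $\bt(n)$ with $y_n:=\Sigma_n(0)\in F_0$. The proof then splits into the two sub-cases based on whether $\{y_n\}$ is bounded in $(F_0,d_0)$.

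Suppose first that $\{y_n\}$ is bounded in $F_0$. Since $F_0$ and hence $X$ are proper, I may extract a subsequence with $y_n\map y\in F_0$. The $\Sigma_n$ are $(1,1)$-quasigeodesic rays starting in a bounded set of a proper hyperbolic space, so a standard Arzel\`a--Ascoli diagonal extraction yields a further subsequence converging uniformly on compact sets to a $(1,1)$-quasigeodesic ray $\Sigma$ with $\Sigma(0)=y$. Continuity of $\pi$ together with $\pi\circ\Sigma_n=\mathrm{id}$ on $\Z_{\ge0}$ gives $\pi\circ\Sigma=\mathrm{id}$ on $\Z_{\ge0}$, so $\Sigma$ is a $1$-qi section. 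To identify $\Sigma(\infty)=\xi$, note that the geodesic segments $[y,\bt(n)]$ converge on compacta to the geodesic ray from $y$ to $\xi$ (since $\bt(n)\map\xi$), while they are also close (using $y_n\map y$) to $\Sigma_n|_{[0,\pi(\bt(n))]}$, which converges on compacta to $\Sigma$. Hence $\Sigma(\infty)=\xi\in\pa_{qi}X$.

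The remaining sub-case, $\{y_n\}$ unbounded in $F_0$, is handled by showing $\xi\in\Lambda_X(F_0)$. Consider the geodesic segment from $y_n$ to $\bt(n)$; because $\Sigma_n$ is a $1$-qi lift, this segment can essentially be realised as $\Sigma_n|_{[0,\pi(\bt(n))]}$. The key claim is that $d_X\bigl(x,\Sigma_n|_{[0,\pi(\bt(n))]}\bigr)\map\infty$. If this failed, then after a subsequence $d_X(x,\Sigma_n(t_n))\le C$ for some $t_n\in[0,\pi(\bt(n))]$; the fact that $\pi$ is $1$-Lipschitz forces $t_n\le C+\pi(x)$, and then the $1$-qi property of $\Sigma_n$ gives $d_X(y_n,x)\le(t_n+1)+C$, a bounded quantity. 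The $f$-proper embedding of $F_0$ in $X$ then bounds $d_0(x,y_n)$, contradicting our assumption. With the claim established, Lemma \ref{require in bary} applied to $\{\bt(n)\}$ and $\{y_n\}$ gives $\LMX y_n=\xi$, so $\xi\in\Lambda_X(F_0)$. The main subtlety is the Arzel\`a--Ascoli step together with the identification of $\Sigma(\infty)$ in Sub-case~2a; the rest is a clean combination of the bundle structure with Lemmas \ref{close give same limit} and \ref{require in bary}.
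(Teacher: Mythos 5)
Your proof is correct. Note first that the paper does not supply its own proof of Proposition~\ref{des pa X} — it is cited from Bowditch and Krishna--Sardar — but it does prove the refinement Proposition~\ref{description of pa X}, which follows essentially the same plan as yours. The case split is organised slightly differently: the paper's Case~1 (``$\mathrm{diam}\,\{\Sigma_n(0)\}$ unbounded'') absorbs both your Case~1 and your Sub-case~2b, and the paper's Case~2 is your Sub-case~2a. The genuinely different step is the limit extraction in Sub-case~2a. The paper invokes Proposition~\ref{mild imp special}, which runs a diagonal argument on the boundary triples $\xi^{(i)}_l\in\pa^3 F_l$ so that the limiting section remains a \emph{good} qi section, whereas you run Arzel\`a--Ascoli directly on the points $\Sigma_n(i)\in F_i$. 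For the present statement, which concerns ordinary $1$-qi sections with no barycenter constraint, your extraction is the cleaner and more elementary one; the paper's heavier machinery is needed only because it wants the limit to stay good. Two steps that you elide and should be stated: in Sub-case~2b, passing from $d_X\bigl(x,\Sigma_n|_{[0,\pi(\bt(n))]}\bigr)\map\infty$ to $d_X(x,[y_n,\bt(n)])\map\infty$ (which is what Lemma~\ref{require in bary} actually needs) uses stability of $(1,1)$-quasigeodesics (Lemma~\ref{stability of qc}); and in Sub-case~2a, before invoking properness of $F_i$ to extract the pointwise limits $z_i=\lim_n\Sigma_n(i)$, you need that boundedness of $\{\Sigma_n(i)\}_n$ in the $X$-metric gives boundedness in the fiber metric $d_i$, which is exactly the $f$-proper embedding of the fibers.
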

	
	Now we will give a more detailed description of $\pa X$ in the context of controlled hyperbolic fibers (in Proposition \ref{description of pa X}) where the qi sections are replaced by {\em good qi sections}. The idea underlying this description can be traced back to \cite[Proposition 2.3.3]{bowditch-stacks} (see also \cite[Proposition 6.6]{ps-krishna}). While the description in \cite[Proposition 6.6]{ps-krishna} applies to an arbitrary hyperbolic base, we focus on the specific case where the base is $[0,\infty)$, as this is our requirement. Let us first prove the following result that is used in the proof of Proposition \ref{description of pa X}. 
	
	
	\begin{lemma}\label{unbdd imply in fiber}
		Suppose $\pi:X\map[0,\infty)$ is an $f$-metric graph bundle such that $X$ is hyperbolic. Let $k\ge 1$ and $\{n_i\}\sse\N$. Let $\Sigma_i$ be a $k$-qi lift of $[0,n_i]$. $($We have no condition on (un)boundedness of $\{n_i\}.)$ Let $x\in F_0$ and $\lim_{i\map\infty}d_0(x,\Sigma_i(0))=\infty$. Moreover, suppose that $\lim^X_{i\map\infty}\Sigma_i(n_i)$ exists in $\pa X$. Then $\lim^{X}_{i\map\infty}\Sigma_i(0)$ exists in $\pa X$ and $\lim^X_{i\map\infty}\Sigma_i(0)=\lim^X_{i\map\infty}\Sigma_i(n_i)$ in $\pa X$.
	\end{lemma}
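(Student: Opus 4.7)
The plan is to reduce the claim to genuine $X$-geodesics joining $\Sigma_i(0)$ and $\Sigma_i(n_i)$ and then apply Lemma \ref{require in bary}, whose criterion for two sequences to share a limit in $\partial X$ is that a connecting geodesic escapes every bounded neighborhood of a fixed base point. The first step is to upgrade the hypothesis $d_0(x, \Sigma_i(0)) \to \infty$ to $d_X(x, \Sigma_i(0)) \to \infty$. This is immediate from the $f$-properness of the fiber inclusion $(F_0, d_0) \hookrightarrow X$ built into the definition of a metric graph bundle: by the contrapositive of Definition \ref{defn-proper embedding}, $d_0 > f(r)$ forces $d_X > r$.

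Let $\beta_i$ denote an $X$-geodesic joining $\Sigma_i(0)$ to $\Sigma_i(n_i)$. Since the $k$-qi lift $\Sigma_i$ is a $k$-quasigeodesic in $X$, stability of quasigeodesics (Lemma \ref{stability of qc}) yields $D := D_{\ref{stability of qc}}(\delta, k)$ with $Hd(\Sigma_i, \beta_i) \le D$. The main task is to show $d_X(x, \beta_i) \to \infty$. Fix any $R > 0$ and consider a point $p \in \beta_i$. Since $\pi$ is $1$-Lipschitz and $\pi(x) = 0$, one has $d_X(x, p) \ge \pi(p)$, which disposes of the case $\pi(p) > R$. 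If $\pi(p) \le R$, stability produces an integer $j \in [0, n_i]$ with $d_X(p, \Sigma_i(j)) \le D + k$; since $\pi(\Sigma_i(j)) = j$ and $\pi$ is $1$-Lipschitz, this forces $j \le R + D + k$. The $k$-quasigeodesic property of $\Sigma_i$ then bounds $d_X(\Sigma_i(0), \Sigma_i(j)) \le k(R + D + k) + k$, so the triangle inequality gives $d_X(p, \Sigma_i(0)) \le C$ for a constant $C$ depending only on $R$, $D$, $k$. Hence $d_X(x, p) \ge d_X(x, \Sigma_i(0)) - C$, which exceeds $R$ for all sufficiently large $i$ by the first step. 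Combining the two cases, $d_X(x, \beta_i) > R$ for large $i$, so $d_X(x, \beta_i) \to \infty$.

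The conclusion is then a direct application of Lemma \ref{require in bary} with $\xi_i := \Sigma_i(n_i)$ (whose limit in $\partial X$ is guaranteed by hypothesis), $\eta_i := \Sigma_i(0)$, and the geodesics $\beta_i$ above: the divergence $d_X(x, \beta_i) \to \infty$ is precisely the criterion needed to conclude that $\lim^X_{i \to \infty} \Sigma_i(0)$ exists in $\partial X$ and coincides with $\lim^X_{i \to \infty} \Sigma_i(n_i)$. No essential obstacle arises; the only subtle point is the dichotomy in the main estimate, where the $1$-Lipschitz base projection handles points of $\beta_i$ in high fibers while the quasigeodesic bound on $\Sigma_i$ handles those in low fibers.
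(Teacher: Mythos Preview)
Your proof is correct and follows essentially the same approach as the paper's: both use $f$-properness of the fiber inclusion, the $1$-Lipschitz projection, the $k$-qi bound on $\Sigma_i$, stability of quasigeodesics, and Lemma~\ref{require in bary}. The only organizational difference is that the paper first shows $d_X(x,\Sigma_i)\to\infty$ for the \emph{entire} lift (by a short contradiction argument) and then invokes stability once, whereas you first establish $d_X(x,\Sigma_i(0))\to\infty$ and then run the same estimate inside a dichotomy on $\pi(p)$ for points $p\in\beta_i$; the content is identical.
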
	
	
	\begin{proof}
		First of all $\lim_{i\map\infty}d_X(x,\Sigma_i)=\infty$. Indeed, if not, let $d_X(x,x_i)\le D$ for some $x_i\in\Sigma_i$ and $D\ge0$. Then $d(0,\pi(x_i))\le D$ since $\pi$ is $1$-Lipschitz. Thus $d_X(\Sigma_i(0),x_i)\le kD+k$ since the restriction of $\Sigma_i$ over $[0,\pi(x_i)]$ is a $k$-qi embedding in $X$. Hence $d_X(x,\Sigma_i(0))\le d_X(x,x_i)+d_X(x_i,\Sigma_i(0))\le (k+1)D+k$. Since fibers are $f$-proper embedding in $X$, so $d_0(x,\Sigma_i(0))\le f((k+1)D+k)$. This contradicts to the assumption that $\lim_{i\map\infty} d_0(x,\Sigma_i(0))=\infty$.
		
		Let $\al_i$ be a geodesic joining $\Sigma_i(0)$ and $\Sigma_i(n_i)$. Then by the stability of quasigeodesic (Lemma \ref{stability of qc}), we have $\lim_{i\map\infty}d_X(x,\al_i)=\infty$. Hence by Lemma \ref{require in bary}, $\lim^X_{i\map\infty}\Sigma_i(0)$ exists in $\pa X$ and $\lim^X_{i\map\infty}\Sigma_i(0)=\lim^X_{i\map\infty}\Sigma_i(n_i)$ in $\pa X$.
	\end{proof}
	The proof of the following result is a bit different from that of \cite[Proposition $6.6$]{ps-krishna} as we want a more detailed description of $\partial X$.
	\begin{prop}\label{mild imp special}
		Suppose $\pi:X\map [0,\infty)$ is an $f$-metric graph bundle such that the fibers are $\dl$-hyperbolic for some $\dl\ge0$. Suppose that $X$ is also $\dl$-hyperbolic, and that the fibers are proper metric spaces. Let $\{p_i\}\sse\N$ be such that $\lim_{i\to\infty}p_i=\infty$. Suppose $\Sigma_{i}$ is a $K$-good qi lift of $[0,p_i]$ for some $K\ge1$ such that $diam~\{\Sigma_i(0):i\in\N\}$ is bounded.
		
		Then there is a subsequence $\{p_{i_n}\}_n\sse \{p_i\}_i$ such that $\LMX \Sigma_{i_n}(p_{i_n})$ exists in $\pa X$. Moreover, there is a $K_{\ref{for special qi sec}}(\dl,f)$-good qi section, say $\Sigma$, over $[0,\infty)$ such that $\Sigma(\infty)=\lim^X_{n\map\infty}\Sigma_{i_n}(p_{i_n})$.
	\end{prop}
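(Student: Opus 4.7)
The plan is to extract the desired subsequence in two stages (once in $\partial F_0$, once in $\partial X$), build the good qi section $\Sigma$ from the limiting ideal triangle in $F_0$, and finally identify its boundary endpoint with the limit $\eta$ in $\partial X$ via a diagonal argument combined with the quasigeodesic--limit lemmas from Subsection~\ref{subsec-gromov bdry and CT}.

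Concretely, since each $\Sigma_i$ is good, for every $j\le i$ we have $\Sigma_i(j)$ a $K$-barycenter of an ideal triangle $\triangle(\xi_{1,j}^i,\xi_{2,j}^i,\xi_{3,j}^i)\subseteq F_j$, with $\partial\phi_j(\xi_{l,j}^i)=\xi_{l,j+1}^i$. Because $F_0$ is proper hyperbolic, $\partial F_0$ is compact, so I would first pass to a subsequence for which $\xi_{l,0}^i\to\xi_{l,0}\in\partial F_0$ for $l=1,2,3$. Using Lemma~\ref{D-barycenter}, the exact barycenters $Bary_{F_0}(\xi_{1,0}^i,\xi_{2,0}^i,\xi_{3,0}^i)$ remain uniformly close to the $d_X$-bounded set $\{\Sigma_i(0)\}$, so Lemma~\ref{bdd barycen conv} shows $(\xi_{1,0},\xi_{2,0},\xi_{3,0})\in\partial^3 F_0$. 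Setting $\xi_{l,j}:=(\partial\phi_{j-1}\circ\cdots\circ\partial\phi_0)(\xi_{l,0})$ and $\Sigma(j):=Bary_{F_j}(\xi_{1,j},\xi_{2,j},\xi_{3,j})$ (extended constantly on $[j,j+1)$) gives, by Lemma~\ref{for special qi sec}, a good $K_{\ref{for special qi sec}}(\delta,f)$-qi section. For the second extraction, since $\Sigma_i$ is a $K$-qi section we have $d_X(\Sigma_i(0),\Sigma_i(i))\ge i/K-K\to\infty$; as $X$ is proper (fibers proper, base $[0,\infty)$ proper), $X\cup\partial X$ is compact, so a further refinement gives $\Sigma_{i_n}(i_n)\to\eta\in\partial X$.

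For each fixed $j$, the triangle inequality plus the $K$-qi section bound and the $d_X$-diameter bound of $\{\Sigma_i(0)\}$ yield a uniform $d_X$-bound on $\{\Sigma_i(j):i\ge j\}$, and by $f$-properness also a uniform $d_j$-bound. Continuity of the homeomorphism $\partial\phi_{j-1}\circ\cdots\circ\partial\phi_0$ gives $\xi_{l,j}^i\to\xi_{l,j}$ in $\partial F_j$, so Lemmas~\ref{D-barycenter} and \ref{bdd barycen conv} applied in $F_j$ produce a constant $D=D(\delta,K)\ge 0$ and an index $N(j)$ with $d_j(\Sigma_i(j),\Sigma(j))\le D$ whenever $i\ge N(j)$. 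I would then refine the subsequence once more so that $i_n\ge\max\{n,N(n)\}$ and look at $y_n:=\Sigma_{i_n}(n)\in F_n$. By the preceding estimate $d_X(y_n,\Sigma(n))\le d_n(y_n,\Sigma(n))\le D$.

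Finally, prepending a geodesic from $\Sigma(0)$ to $\Sigma_{i_n}(0)$ (whose length is uniformly bounded by the hypothesis) to the $K$-qi section $\Sigma_{i_n}|_{[0,i_n]}$ gives a $K'$-quasigeodesic in $X$ from $\Sigma(0)$ to $\Sigma_{i_n}(i_n)$, with $K'$ depending only on $K$ and the diameter constant; the point $y_n$ lies on it, and $d_X(\Sigma(0),y_n)\ge n/K-K-\text{const}\to\infty$. Applying Lemma~\ref{on qc imp same} yields $\lim^X_{n\to\infty}y_n=\eta$, and then Lemma~\ref{close give same limit} transfers this to $\Sigma(n)$, proving $\Sigma(\infty)=\eta$. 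The main point where care is required is this last step: the $K$-qi sections $\Sigma_{i_n}$ have varying basepoints, so to invoke Lemma~\ref{on qc imp same} uniformly one must absorb the bounded starting offset into the quasigeodesic constant, and one must arrange via the diagonal extraction that the fiberwise closeness established in the previous step is in force precisely at the index $j=n$.
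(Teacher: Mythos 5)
Your proof is correct and follows essentially the same strategy as the paper's: extract a convergent subsequence of the ideal triangles defining the good lifts, define $\Sigma$ from the limiting barycenters via Lemma~\ref{for special qi sec}, show $d_n(\Sigma_{i_n}(n),\Sigma(n))$ is uniformly bounded, and transfer the limit using Lemmas~\ref{close give same limit} and \ref{on qc imp same}. You streamline the subsequence bookkeeping slightly -- extracting once in $\partial F_0$ and propagating the convergence to all fibers via continuity of the homeomorphisms $\partial\phi_j$, then refining so that $i_n\ge\max\{n,N(n)\}$ -- rather than the paper's fiber-by-fiber inductive extraction followed by diagonalization, but the core argument is the same.
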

	
	\begin{proof}
Note that $\Sigma_i$ is the restriction of a $K$-good qi section over $[0,\infty)$. Thus for each $i\in\N$ (as in the statement) and for each $l\in\N\cup\{0\}$, we have $\xi^{(i)}_l=(\xi^{(i)}_{1,l},\xi^{(i)}_{2,l},\xi^{(i)}_{3,l})\in\pa^3F_l$ and $\pa\phi_l(\xi^{(i)}_{j,l})=\xi^{(i)}_{j,l+1}$ where $j\in\{1,2,3\}$ and $\phi:F_l\to F_{l+1}$ is a natural quasiisometry (see Lemma \ref{lem-natural map QI}) satisfying the following. For $0\le l\le p_i$, $\Sigma_i(l)$ is a $K$-barycenter of the ideal triangle $\triangle(\xi^{(i)}_{1,l},\xi^{(i)}_{2,l},\xi^{(i)}_{3,l})$ in $F_l$. 
Now by Lemma \ref{D-barycenter}, we have $$d_l(Bary_{F_l}(\xi^{(i)}_l),\Sigma_i(l))\le D_{\ref{D-barycenter}}(\dl,max\{K,D_{\ref{barycenter}}(\dl)\})=D \text{ (say)}$$ for all $0\le l\le p_i$.\smallskip
		
\noindent{\em Claim}: {\em Let $l\in\N\cup\{0\}$. Then $diam\{Bary_{F_l}(\xi^{(i)}_l):i\in\N\text{ and }p_i\ge l\}$ is bounded in $F_l$. (The bound need not be uniform.)}\smallskip
		
\noindent{\em Proof of Claim}: Let $diam~\{\Sigma_i(0):i\in\N\}\le D'$ for some $D'\ge0$. Then for all $i,j\in\N$ with $l\le p_i$ and $l\le p_j$, we have $d_X(\Sigma_i(l),\Sigma_j(l))\le d_X(\Sigma_i(l),\Sigma_i(0))+d_X(\Sigma_i(0),\Sigma_j(0))+d_X(\Sigma_j(0),\Sigma_j(l))\le2Kl+2K+D'$ as $\Sigma_i,~\Sigma_j$ are $K$-qi embeddings. Hence, combining the above inequality, for all $i,j\in\N$ with $l\le p_i,l\le p_j$, we have $d_X(Bary_{F_l}(\xi^{(i)}_l),Bary_{F_l}(\xi^{(j)}_l))\le 2D+D''$ where $D''=2Kl+2K+D'$. Thus $d_l(Bary_{F_l}(\xi^{(i)}_l),Bary_{F_l}(\xi^{(j)}_l))\le f(2D+D'')$. This completes the proof of claim.\qed\smallskip
		
Now we will find a subsequence, say $\{p_{i_n}\}_n\sse \{p_i\}_i$, such that the following hold.
		
\begin{enumerate}
			\item For all $m\in\N\cup\{0\}$ and $j\in\{1,2,3\}$, $\lim^{F_m}_{n\map\infty}\xi^{(i_n)}_{j,m}=\eta_{j,m}\in\pa F_m$.
			
			\item Moreover, $\eta_m=(\eta_{1,m},\eta_{2,m},\eta_{3,m})\in\pa^3F_m$ for all $m\in\N\cup\{0\}$.
			
			\item Finally, barycenters of ideal triangles $\triangle(\eta_{1,m},\eta_{2,m},\eta_{3,m})$ in $F_m$ form the required qi section.
\end{enumerate}
		
First, we construct subsequences in each of the fibers satisfying properties $(1)$ and $(2)$, ensuring that each subsequent subsequence is contained in the preceding one. Then, we apply a diagonal argument to obtain properties $(1)$, $(2)$ and $(3)$ as stated above. The construction is by induction on the distance of the fiber from the initial one.
		
		\noindent\underline{Initial step}: Note that
		\begin{itemize}
			\item $F_0$ is proper hyperbolic metric space, (so $\pa F_0$ is compact metrizable space) and
			
			\item $diam\{Bary_{F_0}(\xi^{(i)}_0):i\in\N\}$ is bounded. 
		\end{itemize} Then by Lemma \ref{bdd barycen conv}, after passing to a subsequence if necessary, we will have a subsequence, say $\{i_{n,0}\}_n$, of $\{i\}_i$ (i.e., $\{p_{i_{n,0}}\}_n\sse \{p_i\}_i$) such that $\lim^{F_0}_{n\map\infty}\xi^{(i_{n,0})}_{j,0}=\eta_{j,0}$ for $j\in\{1,2,3\}$, and $\eta_0=(\eta_{1,0},\eta_{2,0},\eta_{3,0})\in\pa^3F_0$ and $d_0(Bary_{F_0}(\xi^{(i_{n,0})}_0),Bary_{F_0}(\eta_0))\le D_{\ref{bdd barycen conv}}(\dl)$ for all $n\in\N$.

		\noindent\underline{Induction hypothesis}: Suppose we have constructed the sequence till $m^{\text{th}}$-fiber where  $m\in\N$. That is, we have a subsequence, $\{i_{n,m}\}_n\sse\{i_{n,m-1}\}_n$ say, such that
		\begin{itemize}
			\item $\lim^{F_m}_{n\map\infty}\xi^{(i_{n,m})}_{j,m}=\eta_{j,m}$ for $j\in\{1,2,3\}$,
			
			\item $\eta_m=(\eta_{1,m},\eta_{2,m},\eta_{3,m})\in\pa^3F_m$ and
			
			\item $d_m(Bary_{F_m}(\xi^{(i_{n,m})}_m),Bary_{F_m}(\eta_m))\le D_{\ref{bdd barycen conv}}(\dl)$ for all $n\in\N$.
		\end{itemize}
		
		\noindent\underline{Induction step}: Consider a natural quasiisometry map $\phi_m:F_m\map F_{m+1}$ (see Lemma \ref{lem-natural map QI}). Since $\Sigma_i$'s are $K$-good qi lift, by definition, we have $\pa\phi_m(\xi^{(i_{n,m})}_{j,m})=\xi^{(i_{n,m})}_{j,m+1}$ for all $n\in\N$ and for all $j\in\{1,2,3\}$. Now $F_{m+1}$ is proper hyperbolic metric space and $diam\{Bary_{F_{m+1}}(\xi^{(i_{n,m})}_{m+1}):n\in\N\}$ is bounded by Claim above. So after passing to a subsequence if necessary, (as in the initial step), we have a subsequence, $\{i_{n,m+1}\}_n\sse\{i_{n,m}\}_n$ say, such that $\lim^{F_{m+1}}_{n\map\infty}\xi^{(i_{n,m+1})}_{j,m+1}=\eta_{j,m+1}\in\pa F_{m+1}$ for $j\in\{1,2,3\}$, and $\eta_{m+1}=(\eta_{1,m+1},\eta_{2,m+1},\eta_{3,m+1})\in\pa^3F_{m+1}$ and $$d_{m+1}(Bary_{F_{m+1}}(\xi^{(i_{n,m+1})}_{m+1}),Bary_{F_{m+1}}(\eta_{m+1}))\le D_{\ref{bdd barycen conv}}(\dl)$$ for all $n\in\N$. This completes the induction process.\smallskip
		
		\noindent Now, we set $i_n=i_{n,n}$ for $n\in\N$. So we get a subsequence $\{i_n\}_n\sse\{i\}_i$ (i.e., $\{p_{i_n}\}_n\sse \{p_i\}_i\}$) satisfying $(1)$ and $(2)$ above. Finally, we will define the required qi section $\Sigma$ as mentioned in $(3)$ and complete the proof.
		
		Since $\phi_n:F_n\map F_{n+1}$ induces a homeomorphism $\pa \phi_n:\pa F_n\map\pa F_{n+1}$, so $\pa \phi_n(\eta_{j,n})=\eta_{j,n+1}$ for all $j\in\{1,2,3\}$. Note that $Bary_{F_n}(\eta_n)$ is a $D_{\ref{barycenter}}(\dl)$-barycenter of the ideal triangle $(\eta_{1,n},\eta_{2,n},\eta_{3,n})$ (see Lemma \ref{barycenter}). Define $\Sigma:[0,\infty)\map X$ such that $\Sigma(n)=Bary_{F_n}(\eta_n)$ for all $n\in\N\cup\{0\}$ and for all $t\in[n,n+1)$, $\Sigma(t)=\Sigma(n)$. 
		Hence by Lemma \ref{for special qi sec}, $\Sigma$ is a $K_{\ref{for special qi sec}}(\dl,f)$-good qi section (see Definition \ref{good qi section}).
		
		Note that $i_n\ge n$. Then from the above construction, we have $d_X(\Sigma_{i_n}(n),\Sigma(n))\le d_n(\Sigma_{i_n}(n),Bary_{F_n}(\eta_n))\le d_n(\Sigma_{i_n}(n),Bary_{F_n}(\xi^{(i_n)}_n))+d_n(Bary_{F_n}(\xi^{(i_n)}_n),Bary_{F_n}(\eta_n))\le D+D_{\ref{bdd barycen conv}}(\dl)$. Then by Lemma \ref{close give same limit}, $\LMX\Sigma_{i_n}(n)=\Sigma(\infty)$. Since $\{\Sigma_{i_n}(n):n\in\N\}$ is unbounded, by Lemma \ref{on qc imp same}, $\LMX\Sigma_{i_n}(p_{i_n})=\LMX\Sigma_{i_n}(n)$. Therefore, $\LMX\Sigma_{i_n}(i_n)=\Sigma(\infty)$. This completes the proof of Proposition \ref{mild imp special}.
	\end{proof}

	Now we will prove the main result of this subsection. Suppose $\pa_{good}X:=\{\Sigma(\infty):\Sigma\text{ is a }K_{\ref{for special qi sec}}(\dl,f) \text{-good qi section over }[0,\infty)\}$. With this notation we have the following.
	
	\begin{prop}\label{description of pa X}
		Suppose $\pi:X\map [0,\infty)$ is an $f$-metric graph bundle with controlled hyperbolic fibers, and suppose that $X$ is hyperbolic. Further, assume that the fibers are proper metric spaces. Then, as a set, $\pa X=\Lambda_X(F_0)\cup\pa_{good}X$.
	\end{prop}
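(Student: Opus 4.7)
The plan is to verify the nontrivial inclusion $\pa X \sse \Lambda_X(F_0) \cup \pa_{good}X$; the other inclusion is immediate, since a good qi section is in particular a quasigeodesic ray and limits of sequences in $F_0$ visibly lie in $\pa X$. Fix a basepoint $x_0 \in F_0$. Given $\xi \in \pa X$, choose a sequence $\{z_n\} \sse V(X)$ with $\LMX z_n = \xi$ and set $m_n := \pi(z_n) \in \N \cup \{0\}$. The argument then splits according to whether $\{m_n\}$ is bounded in $[0,\infty)$.

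\textbf{Bounded projection.} If $m_n \le M$ for all $n$, then, since $\pi$ is $1$-Lipschitz, for each $n$ I may pick $y_n \in F_0$ with $d_X(y_n, z_n) \le M$. By Lemma \ref{close give same limit}, $\LMX y_n$ exists in $\pa X$ and equals $\xi$, so $\xi \in \Lambda_X(F_0)$.

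\textbf{Unbounded projection.} Otherwise, passing to a subsequence, I may assume $m_n \to \infty$. By Corollary \ref{cor-existence good qi}, for each $n$ I choose a good $K_{\ref{cor-existence good qi}}$-qi section $\Sigma_n : [0,\infty) \to X$ through $z_n$, so that $\Sigma_n(m_n) = z_n$ and the restriction $\Sigma_n|_{[0,m_n]}$ is a good qi lift. I then split into sub-cases based on the behavior of the base-points $\Sigma_n(0) \in F_0$. If $\{\Sigma_n(0)\}$ is bounded in $F_0$ (equivalently bounded in $d_X$, by $f$-properness of the fiber inclusion), then Proposition \ref{mild imp special}, combined with Remark \ref{rmk-comment} which permits base intervals $[0,m_n]$ in place of $[0,n]$, produces a subsequence $\{n_k\}$ and a good $K_{\ref{for special qi sec}}(\dl,f)$-qi section $\Sigma$ with $\lim^X_{k \to \infty} \Sigma_{n_k}(m_{n_k}) = \Sigma(\infty)$. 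Since $\Sigma_{n_k}(m_{n_k}) = z_{n_k}$ and $\{z_{n_k}\}$ is a subsequence of a sequence converging to $\xi$, I conclude $\Sigma(\infty) = \xi$, placing $\xi$ in $\pa_{good}X$. If instead $\{\Sigma_n(0)\}$ is unbounded in $F_0$, I pass to a further subsequence with $d_0(x_0, \Sigma_n(0)) \to \infty$; since the $\Sigma_n$ are uniform ($K_{\ref{cor-existence good qi}}$-)qi lifts of $[0,m_n]$ and $\LMX \Sigma_n(m_n) = \LMX z_n = \xi$, Lemma \ref{unbdd imply in fiber} gives $\LMX \Sigma_n(0) = \xi$, and because $\Sigma_n(0) \in F_0$ this places $\xi$ in $\Lambda_X(F_0)$.

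\textbf{Main obstacle.} The genuine work lies in the sub-case where $\{\Sigma_n(0)\}$ is bounded: one must assemble the per-fiber barycentric data carried by the $\Sigma_n$ into a globally defined good qi section whose boundary matches $\xi$. This requires compactness of each $\pa F_i$ (hence the hypothesis that the fibers are proper), a diagonal extraction across fibers so that the ideal triples $\xi^{(n)}_l$ converge in each $\pa F_l$, and Lemma \ref{bdd barycen conv} to guarantee that the limiting triples remain in $\pa^3 F_l$ and that their barycenters track those of the $\Sigma_n$. All of this heavy lifting is already encapsulated in Proposition \ref{mild imp special}; the remaining task in the present proposition is merely the correct dichotomy --- bounded vs.\ unbounded $\pi$-projection, and then bounded vs.\ unbounded base-points of the chosen qi sections --- which cleanly assigns each $\xi \in \pa X$ either to $\Lambda_X(F_0)$ or to $\pa_{good}X$.
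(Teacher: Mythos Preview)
Your proof is correct and follows essentially the same strategy as the paper: pick good qi lifts through the points of a sequence converging to $\xi$, then split into cases according to whether the base-points $\Sigma_n(0)$ stay bounded (invoke Proposition~\ref{mild imp special}) or escape to infinity (invoke Lemma~\ref{unbdd imply in fiber}). The only cosmetic differences are that the paper works with points $\al(n)$ on a geodesic ray rather than an arbitrary convergent sequence, and folds your preliminary ``bounded projection'' case into its Case~2 as a contradiction argument. One small quibble: in your bounded-projection case, the existence of $y_n\in F_0$ with $d_X(y_n,z_n)\le M$ comes from the existence of $1$-qi lifts (Remark~\ref{c-qi lift}\,(1)), not from $\pi$ being $1$-Lipschitz.
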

	
	\begin{proof}
		Suppose the fibers and the total space $X$ are $\dl$-hyperbolic for some $\dl\ge0$. Let the barycenter maps for fibers be $L$-coarsely surjective for some $L\ge0$. Suppose $\al$ is a geodesic ray in $X$. 
		Let $n\in\N$. Then by Corollary \ref{cor-existence good qi}, there is a $K$-good qi lift, say $\Sigma_n$, over $[0,\pi(\al(n))]$ through $\al(n)$, where $K=K_{\ref{cor-existence good qi}}(\dl,L,f)$. So $d_X(\Sigma_n(0),\al(n))\le K~d(0,\pi(\al(n)))+K$.\smallskip
		
		\noindent {\em Case} 1: Suppose $diam~\{\Sigma_n(0):n\in\N\}$ is unbounded. Then by Lemma \ref{unbdd imply in fiber}, $\al(\infty)=\LMX \al(n)=\LMX\Sigma_n(0)\in\Lambda_X(F_0)$.\smallskip
		
		\noindent {\em Case} 2: Suppose $diam~\{\Sigma_n(0):n\in\N\}$ is bounded. Note that $\{\pi(\al(n)):n\in\N\}$ is unbounded. Indeed, if not, let $d(0,\pi(\al(n)))\le D$ for some $D\ge0$ and for all $n\in\N$. Then $d_X(\Sigma_n(0),\al(n))\le KD+K$ for all $n\in\N$. Since $diam~\{\Sigma_n(0):n\in\N\}$ is bounded, we have that $\{d_X(x,\al(n)):n\in\N\}$ is bounded for any fixed point $x\in F_0$. This contradicts the fact that $\al$ is a geodesic ray.

		Then by Proposition \ref{mild imp special}, there is a $K_{\ref{for special qi sec}}(\dl,f)$-good qi section, say $\Sigma$, over $[0,\infty)$ and a subsequence $\{n_i\}\sse\N$ such that $\Sigma(\infty)=\lim^X_{i\map\infty}\Sigma_{n_i}(n_i)$. Again, $\lim^X_{i\map\infty}\al(n_i)=\al(\infty)$ implies that $\Sigma(\infty)=\al(\infty)$ in $\pa X$. This completes the proof of Proposition \ref{description of pa X}.
	\end{proof}
	
	We end this subsection by proving the following result, which is needed in the proof of Theorems \ref{thm-main one ended} and \ref{main thm over ray graph gen}.
	
	\begin{lemma}\label{need in main thms}
		Suppose $\pi:X\map[0,\infty)$ is an $f$-metric graph bundle such that $X$ is hyperbolic. Let $k\ge1$, $\{i_n\}\sse\N$, and let $\Sigma$ be a $k$-qi section over $[0,\infty)$. For all $n\in\N$, let $\Sigma_n$ be a $k$-qi lift of $[0,i_n]$ such that $d_0(x,\Sigma_n(0))\map\infty$ as $n\map\infty$ for a fixed $x\in F_0$. 
		Moreover, assume that there is $M\ge0$ with $d_{i_n}(\Sigma_n(i_n),\Sigma(i_n))\le M$ for all $n\in\N$. Then we have $\LMX \Sigma_n(0)=\Sigma(\infty)$.
	\end{lemma}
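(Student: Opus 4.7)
The strategy is to invoke Lemma~\ref{require in bary} with $\xi_n:=\Sigma(i_n)$ and $\eta_n:=\Sigma_n(0)$. Provided $i_n\to\infty$, the fact that $\Sigma$ is a $k$-qi section gives $\lim^X_{n\to\infty}\Sigma(i_n)=\Sigma(\infty)\in\pa X$, and Lemma~\ref{require in bary} reduces the lemma to the estimate $d_X(x,\al_n)\to\infty$, where $\al_n$ is a geodesic in $X$ joining $\Sigma_n(0)$ and $\Sigma(i_n)$.

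First I verify that $i_n\to\infty$ and $d_X(x,\Sigma_n(0))\to\infty$. If $i_n\le N$ along some subsequence, then the $k$-qi property of $\Sigma_n$ and $\Sigma$, together with $d_X(\Sigma_n(i_n),\Sigma(i_n))\le d_{i_n}(\Sigma_n(i_n),\Sigma(i_n))\le M$, would uniformly bound $d_X(\Sigma_n(0),\Sigma(0))$; the $f$-proper embedding $F_0\hookrightarrow X$ would then bound $d_0(\Sigma_n(0),\Sigma(0))$, contradicting $d_0(x,\Sigma_n(0))\to\infty$. The same $f$-properness, applied to the sequence $\Sigma_n(0)$, yields $d_X(x,\Sigma_n(0))\to\infty$.

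Next, build a quasigeodesic $\bt_n$ from $\Sigma_n(0)$ to $\Sigma(i_n)$ by concatenating the $k$-qi lift $\Sigma_n\colon[0,i_n]\to X$ with the arclength-parametrized $X$-geodesic $[\Sigma_n(i_n),\Sigma(i_n)]_X$, whose length is at most $M$. A direct triangle-inequality computation shows that extending a $(k,k)$-quasigeodesic by a geodesic of length at most $M$ produces a $(k,k+2M)$-quasigeodesic, with constants independent of $n$. Stability of quasigeodesics (Lemma~\ref{stability of qc}) then yields a constant $D$, depending only on $\dl,k,M$, with $Hd_X(\al_n,\bt_n)\le D$, so it suffices to show $d_X(x,\bt_n)\to\infty$.

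For the distance estimate, split $\bt_n$ into its two parts. Any point $q$ on the trailing segment satisfies $d_X(x,q)\ge\pi(q)\ge i_n-M$, since $\pi$ is $1$-Lipschitz and $x\in F_0$. For a point $\Sigma_n(t)$ with $t\in[0,i_n]$, the bound $d_X(\Sigma_n(t),\Sigma_n(\lfloor t\rfloor))\le 2k$ (from the $k$-qi property) forces $\pi(\Sigma_n(t))\ge\lfloor t\rfloor-2k\ge t-1-2k$, giving $d_X(x,\Sigma_n(t))\ge t-1-2k$; separately, the $k$-qi property and triangle inequality give $d_X(x,\Sigma_n(t))\ge d_X(x,\Sigma_n(0))-kt-k$. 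Taking the maximum of the two bounds and minimizing over $t\in[0,i_n]$ yields a quantity bounded below by $\min\!\left(\tfrac{1}{k+1}d_X(x,\Sigma_n(0))-2k,\ i_n-1-2k\right)$, which tends to infinity since both $d_X(x,\Sigma_n(0))\to\infty$ and $i_n\to\infty$. Combined with the trailing-segment estimate, $d_X(x,\bt_n)\to\infty$, hence $d_X(x,\al_n)\to\infty$, and Lemma~\ref{require in bary} yields $\lim^X_{n\to\infty}\Sigma_n(0)=\Sigma(\infty)$. The only delicate point is the uniform quasigeodesic constant for $\bt_n$; this relies crucially on the hypothesis $d_{i_n}(\Sigma_n(i_n),\Sigma(i_n))\le M$, which uniformly bounds the length of the trailing segment.
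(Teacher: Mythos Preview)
Your proof is correct. The organization differs slightly from the paper's: the paper first shows $i_n\to\infty$ by the same $f$-properness argument you give, then uses Lemma~\ref{close give same limit} (with the bound $d_{i_n}(\Sigma_n(i_n),\Sigma(i_n))\le M$) to conclude $\lim^X_{n\to\infty}\Sigma_n(i_n)=\Sigma(\infty)$, and finally invokes Lemma~\ref{unbdd imply in fiber} to get $\lim^X_{n\to\infty}\Sigma_n(0)=\lim^X_{n\to\infty}\Sigma_n(i_n)$. You instead go directly from $\Sigma_n(0)$ to $\Sigma(i_n)$ by concatenating $\Sigma_n$ with a short tail, applying stability once, and then estimating the distance from $x$ to this single quasigeodesic before invoking Lemma~\ref{require in bary}. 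Your route effectively inlines the proof of Lemma~\ref{unbdd imply in fiber} (which itself rests on stability and Lemma~\ref{require in bary}) together with the short-tail correction; the paper's route is more modular, while yours is more self-contained. Both rest on the same three ingredients: $f$-properness of the fibers, stability of quasigeodesics, and the Gromov-product criterion of Lemma~\ref{require in bary}.
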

	
	\begin{proof}
		First, we will prove that $i_n\map\infty$ as $n\map\infty$. If not, after passing through a subsequences, if necessary, we may assume that $i_n\le D$ for all $n\in\N$ and for some $D\ge0$. Now restrict both $\Sigma$ and $\Sigma_n$ on $[0,i_n]$. Thus $d_X(\Sigma(0),\Sigma(i_n))\le kD+k$ and $d_X(\Sigma_n(0),\Sigma_n(i_n))\le kD+k$. Hence by triangle inequality, $d_X(\Sigma(0),\Sigma_n(0))\le d_X(\Sigma(0),\Sigma(i_n))+d_X(\Sigma(i_n),\Sigma_n(i_n))+d_X(\Sigma_n(i_n),\Sigma_n(0))\le2(kD+k)+M=D'$ (say). Since the fibers are $f$-proper embedding in $X$, we have $d_0(\Sigma(0),\Sigma_n(0))\le f(D')$. This contradicts to the assumption that $d_0(x,\Sigma_n(0))\map\infty$ as $n\map\infty$ for a fixed $x\in F_0$.
		
		Again by Lemma \ref{close give same limit}, $\LMX\Sigma_n(i_n)$ exists in $\pa X$ and $\LMX\Sigma_n(i_n)=\Sigma(\infty)$. Now $\lim_{n\map\infty}d_0(x,\Sigma_n(0))=\infty$ implies, by Lemma \ref{unbdd imply in fiber}, $\lim^X_{n\map\infty}\Sigma_n(0)=\lim^X_{n\map\infty}\Sigma_n(i_n)$. Therefore, $\Sigma(\infty)=\lim^X_{n\map\infty}\Sigma_n(0)$. This completes the proof.
	\end{proof}
	
	\subsection{Flow}\label{subsec-flow}
	
	
	Now we define the notion `flow' of a subset of a fiber. This version of flow differs slightly from the one introduced in \cite[Chapter 3]{ps-kap} (see also \cite[Section $6$]{halder-com}); however, after taking the quasiconvex hull in each fiber, the two definitions coincide. Since our focus is on counting the number of vertices in the flow, the present definition is more suitable for our purposes.
	\begin{defn}[Flow]\label{flow}
		Suppose $\pi:X\map[0,\infty)$ is an $f$-metric graph bundle such that the fibers have uniformly bounded valence.
		
		Suppose $A\sse V(F_0)$ is a subset. Let $k\ge1$. The $k$-flow of $A$ is denoted by $\F l_k(A)$ and is defined to be the union of all $k$-qi sections through points in $A$ over $[0,\infty)$.
	\end{defn}
	Suppose the number of vertices in any subset $C\sse X$ is denoted by $||C||$. The following result states that the flow of any subset can contain vertices from the fibers at most exponentially in distance.
	\begin{lemma}\label{exponentially bdd}
Let $k\ge1$ and $D\ge2$. Suppose $\pi:X\map[0,\infty)$ is an $f$-metric graph bundle such that the valence at each vertex in the fiber is bounded above by $D$.
		
		Then there is $c=D^{f(4k)+1}>1$ such that for any subset $A\sse V(F_0)$, we have $||\F l_k(A)\cap F_i||\le ||A||c^i$ for all $i\in\N\cup\{0\}$.
	\end{lemma}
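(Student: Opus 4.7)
The plan is to proceed by induction on $i \in \N \cup \{0\}$, with the trivial base case $\F l_k(A) \cap F_0 = A$, and to show that each induction step multiplies the count by at most the claimed constant $c$.

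For the inductive step, the first observation is that every vertex $v' \in V(F_{i+1}) \cap \F l_k(A)$ arises as $\Sigma(i+1)$ for some $k$-qi section $\Sigma$ over $[0,\infty)$ with $\Sigma(0) \in A$; then $v := \Sigma(i)$ lies in $V(F_i) \cap \F l_k(A)$, and the $k$-qi embedding property forces $d_X(v, v') = d_X(\Sigma(i), \Sigma(i+1)) \le k \cdot 1 + k = 2k$. So every vertex in the $(i+1)$-slice of the flow is within $X$-distance $2k$ of some vertex in the $i$-slice. It therefore suffices to bound, uniformly in $v \in V(F_i)$, the cardinality of the set
\[ N(v) := \{v' \in V(F_{i+1}) : d_X(v, v') \le 2k \}. \]

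To obtain such a bound I would invoke a natural quasiisometry $\phi_i : F_i \map F_{i+1}$ from Lemma~\ref{lem-natural map QI}, which satisfies $d_X(v, \phi_i(v)) = 1$ by the definition of a metric graph bundle. The triangle inequality then gives $d_X(\phi_i(v), v') \le 2k + 1 \le 4k$ for any $v' \in N(v)$ (using $k \ge 1$), and the $f$-properness of the inclusion $(F_{i+1}, d_{i+1}) \hookrightarrow X$ (property (1) of Definition~\ref{defn-metric graph bundle}) transfers this to $d_{i+1}(\phi_i(v), v') \le f(4k)$. Since the valence at each vertex of $F_{i+1}$ is bounded by $D \ge 2$, a standard volume bound counts at most $\sum_{j=0}^{f(4k)} D^j \le D^{f(4k)+1} = c$ vertices in a $d_{i+1}$-ball of radius $f(4k)$ about $\phi_i(v)$; hence $||N(v)|| \le c$.

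Summing over $v \in \F l_k(A) \cap F_i$ and using the induction hypothesis yields
\[ ||\F l_k(A) \cap F_{i+1}|| \le \sum_{v \in \F l_k(A) \cap F_i} ||N(v)|| \le c \cdot ||\F l_k(A) \cap F_i|| \le ||A|| \, c^{i+1}, \]
closing the induction. No serious obstacle is anticipated: the argument is essentially a one-step bookkeeping that converts the qi-section step bound into a fiber-ball count via $f$-properness, and then propagates it. The only point to verify is that $c > 1$, which is immediate from $D \ge 2$.
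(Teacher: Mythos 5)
Your proof is correct and takes essentially the same route as the paper's: bound the $X$-distance from a point of the flow in $F_i$ to its continuation in $F_{i+1}$ by $4k$, transfer this to an $F_{i+1}$-distance bound $f(4k)$ via $f$-properness, count at most $D^{f(4k)+1}=c$ vertices in the resulting ball, and iterate. The only cosmetic difference is that the paper bounds the $X$-diameter of $\F l_k(v)\cap F_1$ directly (two qi-section steps from the common point $v$ give $4k$) whereas you route through the natural map $\phi_i$ to get a reference vertex ($1+2k\le 4k$); both yield the same constant, and both share the same implicit convention (worth making explicit) that a $k$-qi section takes vertex values at integer parameters so that $\Sigma(i)\in V(F_i)$.
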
	
	\begin{proof}
		Note that for any vertex $v\in A$, $diam (\F l_k(v)\cap F_1)\le4k$ in $X$-metric, and so $diam(\F l_k(v)\cap F_1)\le f(4k)$ in $F_1$-metric. Since the valence at each vertex in the fiber is bounded above by $D$, we have $||\F l_k(v)\cap F_1||\le D^{f(4k)+1}=c>1$. Then $||\F l_k(A)\cap F_1||\le||A||c$. Inductively, we get $||\F l_k(A)\cap F_i||\le ||A|| c^i$ for all $i\in\N\cup\{0\}$.
	\end{proof}	
	
	\section{Cannon--Thurston maps are surjective when the bases are $[0,\infty)$}\label{sec-main theorems over rays}
	
	
In this section, we will prove Theorems \ref{thm-combo1}, \ref{main thm over ray graph gen}, and Corollary \ref{cor-main gp over ray new intro} when the base $B$ of the metric graph bundle is $[0,\infty)$. We will first prove Theorem \ref{thm-combo1} $(B)$. In the proof of Theorem \ref{thm-combo1} $(B)$, although the graph structure itself does not play a crucial role, we choose to work with metric graph bundles for simplicity. As mentioned in Remark \ref{some remarks} $(2)$ in the introduction, for a version of Theorem \ref{thm-combo1} $(B)$ in metric bundles, we refer the reader to Theorem \ref{thm-all in one appendix} in the Appendix. 

\subsection{When the fibers are one-ended}
	
\begin{theorem}\label{thm-main one ended}
Let $\dl\ge0$, $L\ge0$. Suppose $\pi:X\to [0,\infty)$ is an $f$-metric graph bundle satisfying the following conditions.

	\begin{enumerate}
\item The fibers $F_i:=\pi^{-1}(i),~i\in\N\cup\{0\}$ and the total space $X$ are $\dl$-hyperbolic.
	
\item The barycenter maps $\pa^3F_i\map F_i,~i\in\N\cup\{0\}$ are $L$-coarsely surjective.
	
\item The fibers are one-ended and proper metric spaces.
\end{enumerate}
Then the CT map $\pa F_0\to\pa X$ is surjective.
\end{theorem}

\proof. We first  note that $\pi:X\map [0,\infty)$ satisfies $k$-flaring condition for all $k\ge1$ by Theorem \ref{thm-metric bundle comb} $(1)$. Let $K=K_{\ref{cor-existence good qi}}(\dl,L,f)$ be the constant depending only on $\dl$, $L$ and $f$ as in Corollary \ref{cor-existence good qi}.
	
	Since the inclusion $F_0\map X$ admits a CT map (\cite{mitra-trees}; see also \cite[Section $2$]{bowditch-stacks}), by Lemma \ref{CT image is limit set}, $\Lambda_X(F_0)=\pa i_{F_0,X}(\pa F_0)$. Note that by Corollary \ref{cor-existence good qi}, through each point of $X$ there is a $K$-good qi section. Let $\Sigma:[0,\infty)\map X$ be a $K$-good qi section over $[0,\infty)$. Hence, by Proposition \ref{description of pa X} (and Lemma \ref{CT image is limit set}), it is enough to show that there is $\{p_n\}\sse F_0$ such that $\LMX p_n=\Sigma(\infty)$. Now the rest is devoted to proving this.\smallskip
	
For each $i\in\N\cup\{0\}$, fix a natural $k_{\ref{lem-natural map QI}}(f)$-quasiisometry $\phi_i:F_i\map F_{i+1}$ as in Lemma \ref{lem-natural map QI}.	Suppose $\xi_i=(\xi_{1,i},\xi_{2,i},\xi_{3,i})\in\pa^3F_i$ such that $\Sigma(i)$ is a $K$-barycenter of the ideal triangle $\triangle(\xi_{1,i},\xi_{2,i},\xi_{3,i})$ in $F_i$ and $\pa\phi_{i}(\xi_{j,i})=\xi_{j,i+1}$ where $i\in\N\cup\{0\}$ and $j\in\{1,2,3\}$.
	
 Let $\bt_i$ be a geodesic line joining $\xi_{1,i}$ and $\xi_{2,i}$ in $F_i$ where $i\in\N\cup\{0\}$. Let $D=D_{\ref{lem-geo lying out imply out}}(\dl,k_{\ref{lem-natural map QI}}(f))$ be the constant as in Lemma \ref{lem-geo lying out imply out} $(a)$ depending only on $\dl$ and $k_{\ref{lem-natural map QI}}(f)$.
	
	Let $n\in\N$, and let $\al_{n,0}$ be a continuous rectifiable path joining two points belonging to $\bt_0$ and lying outside $n$-radius ball centered at $\Sigma(0)$ in $F_0$. This is possible by the assumption that fibers are one-ended. Then for all $i\in\N\cup\{0\}$, using Lemma \ref{lem-geo lying out imply out} $(a)$, we will inductively have a continuous rectifiable path $\al_{n,i}$ in $F_i$ joining two points belonging to $\bt_i$ such that $$Hd_{i+1}(\phi_i(\al_{n,i}),\al_{n,i+1})\le D\hspace{5mm} (*)$$Moreover, we have the following note.\smallskip
	
	\noindent{\em Note} $1$: {\em By Lemma \ref{lem-geo lying out imply out} $(b)$, we have a constant $A=A_{\ref{lem-geo lying out imply out}}(\dl,k_{\ref{lem-natural map QI}}(f))$ depending only on $\dl$ and $k_{\ref{lem-natural map QI}}(f)$ such that length $(\al_{n,i+1})\le A~length~(\al_{n,i})+A$ for all $i\in\N\cup\{0\}$. Thus, inductively, we have $length~(\al_{n,i})\le A^i~length~(\al_{n,0})+A^i+A^{i-1}+\dots+A\le A^i~length~(\al_{n,0})+A(\frac{A^i-1}{A-1})\le A^i~length~(\al_{n,0})+A^i(\frac{A}{A-1})\le A^i~length~(\al_{n,0})+2A^i\le A^i~(length ~(\al_{n,0})+2)$ for all $i\in\N$}, where we have used the fact that $A\ge 2$ (see Lemma \ref{lem-geo lying out imply out}).

	\medskip
	\noindent{\em Claim} $1$: {\em Let $i\in\N\cup\{0\}$ and $x\in\al_{n,i}$. Then there is a $2(1+D)$-qi section, say $\Sigma'$, over $[0,\infty)$ such that $\Sigma'(j)\in\al_{n,j}$ for all $j\in\N\cup\{0\}$ and $\Sigma'(i)=x$}.\smallskip
	
\noindent{\em Proof of Claim} $1$: We first, inductively, define $\Sigma'$ on $\N\cup\{0\}$, and then for any interior point $t\in(j,j+1)$, we define $\Sigma'(t):=\Sigma'(j)$ where $j\in\N\cup\{0\}$. We set $\Sigma'(i)=x\in\al_{n,i}$.

For $j\ge i$, we assume $\Sigma'(j)\in\al_{n,j}$ has been defined and thus we define $\Sigma'(j+1)\in\al_{n,j+1}$. Since $\Sigma'(j)\in\al_{n,j}$, by $(*)$ above, there is $x'\in\al_{n,j+1}$ such that $d_{j+1}(\phi_{j}(\Sigma'(j)),x')\le D$, and we set $\Sigma'(j+1)=x'$. Note that $d_X(\Sigma'(j),\Sigma'(j+1))\le D\le1+D$.

For $0< j\le i$, we assume $\Sigma'(j)\in\al_{n,j}$ has been defined and thus we define $\Sigma'(j-1)\in\al_{n,j-1}$. Since $\Sigma'(j)\in\al_{n,j}$, by $(*)$ above, we take $x'\in\phi_{j-1}(\al_{n,j-1})$ such that $d_j(x',\Sigma'(j))\le D$. 
Now we take $\Sigma'(j-1)\in\al_{n,j-1}$ such that $\phi_j(\Sigma'(j-1))=x'$. Thus $d_X(\Sigma'(j-1),\Sigma'(j))\le D+1$. This completes, by induction, the definition of $\Sigma'$ over $\N\cup\{0\}$.
	
Now for any $j,j'\in\N\cup\{0\}$, we have $d_X(\Sigma'(j),\Sigma'(j'))\le (D+1)|j-j'|$. As mentioned above, we define $\Sigma'(t):=\Sigma'(j)$ for $t\in(j,j+1)$. Then for any $t\in(j,j+1)$ and $s\in(j',j'+1)$, we have $d_X(\Sigma'(t),\Sigma'(s))\le (D+1)|t-s|+2(D+1)$. Since $\pi$ is $1$-Lipschitz, $\Sigma'$ is a $2(1+D)$-qi section over $[0,\infty)$. Note that we have $\Sigma'(j)\in\al_{n,j}$ for all $j\in\N\cup\{0\}$ and $\Sigma'(i)=x$. This completes the proof of Claim $1$.\medskip

Let $K'=2(1+D)$, $K''=max\{K',K\}$. Let $M_{K''}$ be the constant coming from $K''$-flaring condition (see Definition \ref{flaring condition}).\smallskip
	
\noindent{\em Claim} $2$: {\em There is an unbounded subsequence $\{n_k\}_k\sse\N$ and a number $l_k=l(n_k)\in\N$ (depending on $n_k$) for each $k\in\N$ such that $d_{l_k}(x_{l_k},\Sigma(l_k))\le M_{K''}$ for some $x_{l_k}\in\al_{n_k,l_k}$.}\smallskip

One can think of $l_k$ as recording the time at which the flow path $\alpha_{n_k,l_k}$ (of $\al_{n_k,0}$) comes within distance $M_{K''}$ of $\Sigma(l_k)$ in the fiber $F_{l_k}$. In particular, $l_k\to\infty$ as $n_k\to\infty$ (equivalently, as $k\to\infty$) can be seen from the next paragraph and the first line of the proof of Lemma \ref{need in main thms}.\smallskip

Assuming Claim 2 for the moment, we now use it to complete the proof of Theorem \ref{thm-main one ended}. Let $\Sigma_k$ be a $K'$-qi section through $x_{l_k}$ such that $\Sigma_k(i)\in\al_{n_k,i}$ for all $i\in\N\cup\{0\}$ (see Claim $1$). Now $d_0(\Sigma(0),\al_{n,0})\ge n$ implies $d_0(\Sigma(0),\Sigma_k(0))\ge n_k$, and we restrict $\Sigma_k$ over $[0,l_k]$ and we have $d_{l_k}(\Sigma_k(l_k),\Sigma(l_k))\le M_{K''}$. Hence by Lemma \ref{need in main thms}, $\Sigma(\infty)=\lim^X_{k\map\infty}\Sigma_k(0)\in\Lambda_X(F_0)$. This completes the proof of Theorem \ref{thm-main one ended}.\smallskip

	


\noindent{\em Proof of Claim} $2$: Suppose Claim $2$ is not true. Then for all large $n\in\N$, $d_i(\al_{n,i},\Sigma(i))>M_{K''}$ for all $i\in\N\cup\{0\}$. {\bf We fix} $\bm {n}\in{\bm\N}$ such that $d_i(\al_{n,i},\Sigma(i))>M_{K''}$ for all $i\in\N\cup\{0\}$. For rest of the proof, $n$ is fixed with the stated property. In particular, let $\Sigma_n$ be a $K'$-qi section such that $\Sigma_n(i)\in\al_{n,i}$ for all $i\in\N\cup\{0\}$ (see Claim $1$); then for all $i\in\N\cup\{0\}$, we have $d_i(\Sigma_n(i),\Sigma(i))>M_{K''}$. Note that both $\Sigma$ and $\Sigma_n$ are $K''$-qi sections.
	
Let $C=~max~\{d_0(\Sigma(0),x):x\in\al_{n,0}\}$. Let $a=a_{\ref{once start decreasing is decreasing}}(\dl,K'',C)>0$ be the constant depending only on $\dl$, $K''$ and $C$ as in Lemma \ref{once start decreasing is decreasing}, and let $b=2^{\frac{1}{\dl+1}}$. Since $\pi:X\map[0,\infty)$ also satisfies $K''$-flaring condition as mentioned above (see Theorem \ref{thm-metric bundle comb} $(1)$), by Lemma \ref{once start decreasing is decreasing}, we have $d_i(\Sigma_n(i),\Sigma(i))\ge ab^i$ for all $i\in\N\cup\{0\}$. Note that $n$ is fixed and so is $\al_{n,0}$. Thus $a$ is a fixed number independent of $i$. Again $n$ is fixed but the choice of $\Sigma_n$ is arbitrary through a point belonging to $\al_{n,i}$ for all $i\in\N\cup\{0\}$, therefore, we have $d_i(\Sigma(i),\al_{n,i})\ge ab^i$ for all $i\in\N\cup\{0\}$.
	
	
	\noindent Since $d_i(\Sigma(i),\bt_i)\le K$ for all $i\in\N\cup\{0\}$, we have $d_i(x_i,\al_{n,i})\ge ab^i-K$ for some $x_{i}\in\bt_{i}$. Then by Lemma \ref{geo line div exp}, $$length~(\al_{n,i})\ge b^{ab^i-K-1}=B(b^a)^{b^i}\hspace{5mm} (**)$$ for some constant $B>0$ independent of $i$ and note that $b=2^{\frac{1}{\dl+1}}>1$, $a>0$. Note also that $n$ is fixed. This contradicts Note $1$ above, which states that the length of $\alpha_{n,i}$ grows at most exponentially in $i$, whereas $(**)$ shows that its growth is doubly exponential. This completes the proof of Claim $2$.\qed

	\subsection{When the fibers are not necessarily one-ended}\label{main thm over ray graph gen sec} 
	
	In this subsection, we prove surjectivity of the CT map in a more general setting (Theorem \ref{main thm over ray graph gen}). As a consequence of this result, we will prove Theorem \ref{thm-combo1} $(A)$ when the base $B$ is $[0,\infty)$ in Theorem \ref{thm-main bdd valence}. For the notion of uniform exponential growth and uniform bounded valence, we refer the reader to Definitions \ref{defn-exponential growth} and \ref{defn-valence} respectively.
	
	\begin{theorem}\label{main thm over ray graph gen}
		Suppose $\pi:X\map[0,\infty)$ is an $f$-metric graph bundle such that the fibers are uniformly hyperbolic. Further, suppose that the fibers have uniformly bounded valence. Moreover, assume that the collection $\{F_i:i\in\N\cup\{0\}\}$ have uniform exponential growth. Finally, let $X$ be hyperbolic. Then the CT map $\pa\pi^{-1}(0)\map\pa X$ is surjective.
	\end{theorem}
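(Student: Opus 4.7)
The plan is to mimic the structure of the proof of Theorem~\ref{thm-combo1}$(B)$, but to replace the role played by one-endedness (namely, the existence of paths around $\Sigma(i)$ with endpoints on $\beta_i$, whose length grows only single-exponentially) by a counting argument that uses uniform exponential growth of the fibers against the flow bound of Lemma~\ref{exponentially bdd}. First, by Proposition~\ref{des pa X} together with Lemma~\ref{CT image is limit set}, surjectivity of the CT map reduces to showing $\partial_{qi}X\subseteq \Lambda_X(F_0)$, so I would fix a $1$-qi section $\Sigma$ over $[0,\infty)$ and assume for contradiction that $\Sigma(\infty)\notin \Lambda_X(F_0)$. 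The contrapositive of Lemma~\ref{need in main thms} applied to $1$-qi lifts then yields the following sub-claim: if $M_1$ is the constant coming from the $1$-flaring condition (Definition~\ref{flaring condition}, which holds by Theorem~\ref{thm-metric bundle comb}$(1)$), then there is a constant $R_0$ such that every $1$-qi lift $\Sigma'$ of $[0,j]$ with $d_j(\Sigma'(j),\Sigma(j))\le M_1$ must satisfy $d_0(\Sigma'(0),\Sigma(0))\le R_0$.

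Next, I would exploit the uniform exponential growth of the fibers: for $r\ge0$ and each $i$, the ball $B_{F_i}(\Sigma(i),r)$ contains at least $ab^r$ vertices. For each such vertex $v$ fix a $1$-qi section $\Sigma_v$ through $v$ (via Remark~\ref{c-qi lift}$(1)$), and consider the distance function $\phi_v(j)=d_j(\Sigma_v(j),\Sigma(j))$. Using the flare structure coming from Lemma~\ref{once start decreasing is decreasing}, I split the vertices of $B_{F_i}(\Sigma(i),M)$ (with $M>M_1$ chosen later) into two types: Case~1, where $\phi_v$ dips to $\le M_1$ at some level and so $d_0(\Sigma_v(0),\Sigma(0))\le R_0$ by the sub-claim; and Case~2, where $\phi_v>M_1$ throughout $[0,i]$, in which case the two-sided exponential flare forces $d_0(\Sigma_v(0),\Sigma(0))\ge M_1^2\lambda^{i/n_1}/M$. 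Case~1 vertices are contained in $\mathcal{F}l_1(B_{F_0}(\Sigma(0),R_0))\cap F_i$, so by Lemma~\ref{exponentially bdd} and the uniform valence bound there are at most $D^{R_0}c^i$ of them, where $c=D^{f(4)+1}$.

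Choosing $M=M(i)$ with $b^M/c^i\to\infty$ (e.g.\ $M=\alpha i+C$ for $\alpha>\log c/\log b$) forces Case~2 to be nonempty for all sufficiently large $i$. Picking $v_i$ in Case~2 and setting $\Sigma_n:=\Sigma_{v_i}\!\mid_{[0,i]}$, the flare estimate gives $d_0(\Sigma_n(0),\Sigma(0))\to\infty$, while $d_i(\Sigma_n(i),\Sigma(i))\le M(i)$. To conclude $\lim^X\Sigma_n(0)=\Sigma(\infty)$ (which would contradict $\Sigma(\infty)\notin\Lambda_X(F_0)$), I would invoke a mild variant of Lemma~\ref{need in main thms}: since $\Sigma$ and $\Sigma_n$ are $1$-qi sections, the Gromov product $(\Sigma_n(i),\Sigma(i))_x\ge i-M(i)/2-O(1)$ tends to infinity provided $M(i)<2i$ with a divergent gap, so $\lim^X\Sigma_n(i)=\lim^X\Sigma(i)=\Sigma(\infty)$; combining this with Lemma~\ref{unbdd imply in fiber} (applied to $\Sigma_n$) gives $\lim^X\Sigma_n(0)=\Sigma(\infty)$.

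The technical heart of the argument, and what I expect to be the main obstacle, is the parameter balance in the last step: the exponential growth vs.\ flow counting in Step~3 requires $M(i)\gtrsim (\log c/\log b)\,i$, while the Gromov-product step needs $M(i)\le 2i-g(i)$ with $g(i)\to\infty$. Reconciling these bounds will involve a careful treatment of the constants coming from $f$, $D$, $b$ and the flaring parameters $M_1, n_1, \lambda$; in particular I expect to need either a sharper flow bound that takes the exponential growth of fibers into account, or to iterate the Case~1/Case~2 dichotomy at an intermediate level $j^\ast$ where the flare of $\phi_v$ attains its minimum so as to reduce back to a bounded-$M$ application of Lemma~\ref{need in main thms}. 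Once this balance is achieved the contradiction closes, proving the theorem.
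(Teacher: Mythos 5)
Your high-level strategy---reduce via Proposition \ref{des pa X} and Lemma \ref{CT image is limit set} to realizing a fixed $1$-qi section $\Sigma$ as a fiber limit, then play fiber exponential growth against the flow bound of Lemma \ref{exponentially bdd}---is the right one, and your Case 1 analysis (vertices whose qi section dips within $M_1$ of $\Sigma$ at some level must start inside $B(\Sigma(0),R_0)$, hence lie in $\mathcal{F}l_1(B(\Sigma(0),R_0))\cap F_i$, at most $D^{R_0}c^i$ of them) is exactly the flow estimate the paper uses. However, the argument breaks at the point you yourself flag. Your Case 2 flare bound $d_0(\Sigma_v(0),\Sigma(0))\ge M_1^2\lambda^{i/n_1}/M$ is not directly supplied by the cited lemmas: Definition \ref{flaring condition} gives growth in at least one direction (a $\max$), not both, and Lemma \ref{once start decreasing is decreasing} is stated for sections over $[0,\infty)$, not the interval $[0,i]$. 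One can repair it by locating the minimum of $\phi_v$ on $[0,i]$, using $\phi_v(i)\le M$ to force that minimum to within $O(\log M)$ of $i$, and flaring backwards---but even granting that, the counting step requires $M(i)\gtrsim (\log c/\log b)\,i$ to make Case 2 nonempty, while the Gromov-product step requires $M(i)\le 2i-g(i)$. These are compatible only when $c<b^2$, i.e.\ $D^{f(4)+1}<t^2$ where $t$ is the fiber growth rate, and this has no reason to hold: $D$ and $f$ are structural constants unrelated to $t$.

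The paper removes the parameter tension entirely by invoking Lemma \ref{lem-qi section in ladder}, which your proposal does not use, and by choosing a ball in $F_i$ whose radius already grows \emph{exponentially} in $i$ rather than linearly. Concretely, negating the target Claim with the single fixed constant $M_k$ produces an $n_0$ such that any $k$-qi section through a point at fiber distance $\ge n_0$ from $\Sigma(0)$ stays uniformly $>M_k$ away from $\Sigma$. Given $y\in F_i$ whose $1$-qi section $\Sigma_y$ has $d_0(\Sigma(0),\Sigma_y(0))\ge n_0$, the ladder lemma produces a $k$-qi section $\Sigma'$ through the point of $[\Sigma(0),\Sigma_y(0)]_{F_0}$ at distance exactly $n_0$, with $\Sigma'(i)$ between $\Sigma(i)$ and $\Sigma_y(i)$ in $F_i$. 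Because $d_0(\Sigma(0),\Sigma'(0))=n_0$ is \emph{fixed}, Lemma \ref{once start decreasing is decreasing} applies with a fixed constant $a=a(\delta,k,n_0)$ and gives $d_i(\Sigma(i),y)\ge d_i(\Sigma(i),\Sigma'(i))\ge ab^i$. Hence $B_{F_i}(\Sigma(i),[ab^i-1])\subseteq\mathcal{F}l_1(A)\cap F_i$ with $A=V(B(\Sigma(0),n_0))$. Uniform exponential growth then gives $\|\mathcal{F}l_1(A)\cap F_i\|\ge st^{[ab^i-1]}$, a \emph{doubly} exponential lower bound, which contradicts the singly exponential flow bound $\|A\|c^i$ regardless of how $c$, $t$, $D$, $f$ compare. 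So the missing idea in your proposal is precisely the ladder lemma plus the exponentially-growing fiber radius; with those added, the doubly-exponential vs.\ singly-exponential comparison closes the proof without any linear balancing.
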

	
	\begin{proof}
		The idea of proof of Theorem \ref{main thm over ray graph gen} is somewhat similar to that of Theorem \ref{thm-main one ended}. In the proof of Theorem \ref{thm-main one ended}, to show that the good qi sections can indeed be realized as limit points of the fibers, we took paths in $F_0$ and `flowed' them in all the fibers. To conclude such a statement here, we analyse the flow of a fixed subset of $F_0$ (introduced in Subsection \ref{subsec-flow}) and invoke the exponential growth property of the fibers.
		
		\noindent According to the assumptions, we have the following.
		
		\begin{enumerate}
			\item Without loss of generality, we assume that the fibers and the total space $X$ are $\dl$-hyperbolic for some $\dl\ge0$.
			
			\item We have $D\ge1$ such that for all $i\in \N\cup\{0\}$ and for all $u\in V(F_i)$, the valence at $u$ in $F_i$ is bounded above by $D$.
			
			
			\item There are constant $s>0$ and $t>1$ satisfying the following. Let $i\in\N\cup\{0\}$ and $v\in V(F_i)$. For a subset $C\sse V(F_i)$, let $||C||$ denote the number of vertices in $C$. Then $||B(v,n)||\ge st^n$ for all $n\in\N$ where $B(v,n)=\{w\in V(F_i):d_i(v,w)\le n\}\sse F_i$.
		\end{enumerate}
		
We also note that $\pi:X\map [0,\infty)$ satisfies $k'$-flaring condition for all $k'\ge1$ by Theorem \ref{thm-metric bundle comb} $(1)$.
		
Since the inclusion $F_0\map X$ admits a CT map (\cite{mitra-trees}; see also \cite[Section $2$]{bowditch-stacks}), by Lemma \ref{CT image is limit set}, $\Lambda_X(F_0)=\pa i_{F_0,X}(\pa F_0)$. Hence by Proposition \ref{des pa X} (and Lemma \ref{CT image is limit set}), it is enough to show that for any $1$-qi section $\Sigma:[0,\infty)\map X$, there is $\{p_n\}\sse F_0$ such that $\LMX p_n=\Sigma(\infty)$.
		
Fix $k=k_{\ref{lem-qi section in ladder}}(\dl,1,f)\ge1$ where $k_{\ref{lem-qi section in ladder}}(\dl,1,f)$ is the constant depending only on $\dl$ and $f$ as in Lemma \ref{lem-qi section in ladder}. Fix the constant $M_k>0$ coming from $k$-flaring condition (see Definition \ref{flaring condition}).\smallskip
		
\noindent{\em Claim}: {\em For each $n\in\N$ there is $p_n\in F_0$ with $d_0(\Sigma(0),p_n)\ge n$ and a $k$-qi section, $\Sigma_n$ say, through $p_n$, and a number $l_n\in\N$ such that $d_{l_n}(\Sigma_n(l_n),\Sigma(l_n))\le M_k$.}\smallskip

As we saw in Claim~2 of the proof of Theorem~\ref{thm-main one ended}, here as well one can think of $l_n$ as recording the time at which $\Sigma_n(l_n)$ comes within distance $M_k$ of $\Sigma(l_n)$ in the fiber $F_{l_n}$. In this case as well, it follows from the first line of the proof of Lemma \ref{need in main thms} and the following paragraph that $l_n\to\infty$ as $n\to\infty$.\smallskip

Suppose the above Claim is true. Now restrict $\Sigma_n$ over $[0,l_n]$. Then we are done by Lemma \ref{need in main thms}, namely $p_n=\Sigma_n(0)$, $d_{0}(\Sigma(0),\Sigma_n(0))\ge n$, $d_{l_n}(\Sigma_n(l_n),\Sigma(l_n))\le M_k$, and so $\Sigma(\infty)=\LMX p_n\in\Lambda_X(F_0)$.\smallskip
		
So we only need to prove the Claim above.\smallskip
		
\noindent{\em Proof of Claim}: Suppose the Claim is false. Then there exists $n_0\in\N$ such that for every $p\in F_0$ with $d_0(\Sigma(0),p)\ge n_0$, every $k$-qi section $\Sigma'$ through $p$ satisfies $d_i(\Sigma(i),\Sigma'(i))>M_k$ for all $i\in\N\cup\{0\}$. (See Remark~\ref{c-qi lift} for the existence of a $1$-qi section.)
		
		
Let $A=V(B(\Sigma(0),n_0-1))=\{x\in V(F_0):d_0(\Sigma(0),x)\le n_0-1\}$. ($A$ is the set of vertices in $(n_0-1)$-radius ball centered at $\Sigma(0)$ in the fiber $F_0$.) Now we consider $\F l_1(A)$ (union of all $1$-qi section through points in $A)$ as defined in Definition \ref{flow}.
		
Let $a=a_{\ref{once start decreasing is decreasing}}(\dl,k,n_0)$ be the constant as in Lemma \ref{once start decreasing is decreasing} depending only on $\dl$, $k$ and $n_0$. Let $b=2^{\frac{1}{\dl+1}}>1$. For $x\in\R_{\ge0}$, let $[x]$ denote the greatest integer not greater than $x$.\smallskip
		
\noindent{\em Subclaim}: {\em $V(B(\Sigma(i),[ab^i-1]))\sse \F l_1(A)\cap F_{i}$ for all $i\in\N\cup\{0\}$ where $B(\Sigma(i),[ab^i-1])$ is the (closed) ball of radius $[ab^i-1]$ centered at $\Sigma(i)$ in $F_i$.}\smallskip
		
\noindent{\em Proof of Subclaim}: Let $i_0\in\N\cup\{0\}$ and $y\in V(B(\Sigma(i_0),[ab^{i_0}-1]))$. We will show that $y\in\F l_1(A)\cap F_{i_0}$.

Suppose $\Sigma_y$ is a $1$-qi section through $y$. Then we will first prove that $\Sigma_y(0)\in A$. If $\Sigma_y(0)\notin A$, then $d_0(\Sigma(0),\Sigma_y(0))\ge n_0$. Let $v\in[\Sigma(0),\Sigma_y(0)]_{F_0}$ such that $d_0(\Sigma(0),v)=n_0$. Note that both $\Sigma$ and $\Sigma_y$ are $1$-qi sections, and $k=k_{\ref{lem-qi section in ladder}}(\dl,1,f)\ge1$. By Lemma \ref{lem-qi section in ladder}, we have a $k$-qi section, say $\Sigma'$, through $v$ such that $\Sigma'(i)\in[\Sigma(i),\Sigma_y(i)]_{F_i}$ for all $i\in\N\cup\{0\}$. Note also that both $\Sigma$ and $\Sigma'$ are $k$-qi sections, and $d_0(\Sigma(0),\Sigma'(0))=n_0$. By the contrary assumption of the Claim, we have $d_i(\Sigma(i),\Sigma'(i))>M_k$ for all $i\in\N\cup\{0\}$. Since $\pi:X\map[0,\infty)$ satisfies $k$-flaring condition as mentioned above (see Theorem \ref{thm-metric bundle comb} $(1)$), by Lemma \ref{once start decreasing is decreasing}, we have constants $a$ and $b$ as mentioned above such that $d_i(\Sigma'(i),\Sigma(i))\ge ab^i$ and  hence, $d_{i}(\Sigma(i),\Sigma'(i))>[ab^i-1]$ for all $i\in\N\cup\{0\}$. In particular, since $\Sigma'(i_0)\in[\Sigma(i_0),\Sigma_y(i_0)]_{F_{i_0}}$, we have, $d_{i_0}(\Sigma(i_0),y)=d_{i_0}(\Sigma(i_0),\Sigma_y(i_0))>[ab^{i_0}-1]$. This contradicts the choice of $y$. Hence $\Sigma_y(0)\in A$.
		
Therefore, $y\in\F l_1(A)\cap F_{i_0}$ since $\Sigma_y(0)\in A$ and $\Sigma_y$ is a $1$-qi section through $y$. This completes the proof of Subclaim.\smallskip
		
		
		Then by the assumption $(3)$ above, we have $||B(\Sigma(i),[ab^i-1])||\ge st^{[ab^i-1]}=A'(t^a)^{b^i}$ for some $A'>0$ independent of $i$. In particular, $||\F l_1(A)\cap F_{i}||\ge A'(t^a)^{b^i}$ for all $i\in\N\cup\{0\}$.
		
		On the other hand, by Lemma \ref{exponentially bdd}, there is $c=D^{f(4k)+1}>1$ such that $||\F l_1(A)\cap F_{i}||\le ||A|| c^i$ for all $i\in\N\cup\{0\}$. Since $n_0$ is fixed so $||A||$ is a fixed number; in fact, $||A||\le D^{n_0}$.
		
		This gives us a contradiction to the above fact that $||\F l_1(A)\cap F_{i}||$ grows at least doubly exponentially in $i$. This completes the proof of Claim, which in turn completes the proof of Theorem~\ref{main thm over ray graph gen}.
	\end{proof}
	%
	%
	%
	
As an application of Theorem \ref{main thm over ray graph gen} and Corollary \ref{cor-exponential growth QI inv}, we have the following.	
	
	\begin{cor}\label{cor-one fixed fiber}
		Suppose $\pi:X\map[0,\infty)$ is an $f$-metric graph bundle such that the fibers are uniformly quasiisometric to a fixed hyperbolic metric graph, say $Z$. Further, suppose that the fibers have uniformly bounded valence. Moreover, we assume that $Z$ has exponential growth. Finally, let $X$ be hyperbolic. Then the CT map $\pa\pi^{-1}(0)\map\pa X$ is surjective.
	\end{cor}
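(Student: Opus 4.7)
The plan is to deduce Corollary \ref{cor-one fixed fiber} directly from Theorem \ref{main thm over ray graph gen} by verifying all its hypotheses. Three of the four hypotheses are immediate from the assumptions: the fibers are uniformly hyperbolic (since they are uniformly quasiisometric to a fixed hyperbolic metric graph $Z$, so uniform hyperbolicity follows from quasiisometry invariance of hyperbolicity, with the hyperbolicity constant depending only on the quasiisometry constant and the hyperbolicity constant of $Z$), the fibers have uniformly bounded valence (by assumption), and $X$ is hyperbolic (by assumption). The only remaining point is to check that the collection $\{F_i : i \in \N \cup \{0\}\}$ has uniform exponential growth in the sense of Definition \ref{defn-exponential growth}.

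To verify uniform exponential growth of the fibers, I will invoke Corollary \ref{cor-exponential growth QI inv}. By hypothesis, there exists a constant $k \ge 1$ such that for each $i \in \N \cup \{0\}$ there is a $k$-quasiisometry $\psi_i : Z \to F_i$. A minor technical step is to arrange that $\psi_i$ sends vertices to vertices; this can be done at the cost of increasing $k$ by a bounded amount, exactly as in the definition of $\phi'$ at the end of the proof of Theorem \ref{thm-bary imp expo growth} (replacing $\psi_i(x)$ by a nearest vertex whenever $\psi_i(x)$ lies in the interior of an edge). Since $Z$ has exponential growth with some fixed parameters $(a, b)$ and each fiber $F_i$ has bounded valence by a fixed constant $D$, the moreover clause of Corollary \ref{cor-exponential growth QI inv} guarantees that each $F_i$ has exponential growth with parameters depending only on $(a, b)$, $k$, and $D$. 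In particular, these parameters are independent of $i$, which is exactly the definition of uniform exponential growth of the collection $\{F_i\}$.

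With uniform exponential growth of the fibers established, all hypotheses of Theorem \ref{main thm over ray graph gen} are satisfied, and the surjectivity of the CT map $\pa \pi^{-1}(0) \to \pa X$ follows immediately. There is no substantive obstacle here; the argument is simply an unpacking of the hypotheses combined with the quasiisometry invariance of exponential growth. The only small subtlety is ensuring that the quasiisometries $Z \to F_i$ can be taken to send vertices to vertices (so that Corollary \ref{cor-exponential growth QI inv} applies), but this is routine.
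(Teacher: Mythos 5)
Your proposal is correct and follows essentially the same route as the paper: verify the hypotheses of Theorem \ref{main thm over ray graph gen} by invoking Corollary \ref{cor-exponential growth QI inv} to transfer the (fixed-parameter) exponential growth of $Z$ uniformly to all fibers $F_i$. The one small point you make explicit that the paper leaves implicit is the vertex-to-vertex adjustment of the quasiisometries $Z\map F_i$ needed for Corollary \ref{cor-exponential growth QI inv} to apply; this is a genuine but routine technicality, and your treatment of it is the right one.
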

\begin{proof}
Since $Z$ has exponential growth and the fibers are quasiisometric to $Z$, by the first part of Corollary \ref{cor-exponential growth QI inv}, the fibers have exponential growth. On the other hand, the fibers have uniformly bounded valence and are uniformly quasiisometric to $Z$, by moreover part of Corollary \ref{cor-exponential growth QI inv}, the collection $\{F_i:i\in\N\cup\{0\}\}$ have uniform exponential growth. Then Corollary \ref{cor-one fixed fiber} follows from Theorem \ref{main thm over ray graph gen}.
\end{proof}

Now we will prove Theorem \ref{thm-combo1} $(A)$ when the base $B$ is a geodesic ray $[0,\infty)$.

\begin{theorem}\label{thm-main bdd valence}
Let $\dl\ge0$, $L\ge0$ and $D\ge1$. Suppose $\pi:X\to[0,\infty)$ is an $f$-metric graph bundle satisfying the following.
\begin{enumerate}
	\item The fibers $F_i:=\pi^{-1}(i),~i\in\N\cup\{0\}$ and the total space $X$ are $\dl$-hyperbolic.
	
	\item The barycenter maps $\pa^3F_i\map F_i,~i\in\N\cup\{0\}$ are $L$-coarsely surjective.
	
	\item The valence at each vertex of $F_i,~i\in\N\cup\{0\}$ in $F_i$ is bounded above by $D$.
\end{enumerate}
Then the CT map $\pa F_0\to\pa X$ is surjective.
\end{theorem}

\begin{proof}
By Theorem \ref{thm-bary imp expo growth}, each fiber has exponential growth, and moreover, a witnessing pair of the exponential growth depends only on $\dl$ and $L$. Hence the collection $\{F_i:i\in\N\cup\{0\}\}$ have uniform exponential growth. Note that $X$ is hyperbolic and the fibers have uniformly bounded valence. Therefore, Theorem \ref{thm-main bdd valence} follows from Theorem \ref{main thm over ray graph gen}.
\end{proof}

We now mention that Corollary \ref{cor-main gp over ray new intro} follows from Corollary \ref{cor-one fixed fiber} when the base $B$ is $[0,\infty)$. For a general base $B$, see Theorem  \ref{thm-application}. \smallskip

\noindent{\bf A different proof of Corollary \ref{cor-main gp over ray new intro} when the base $B$ is $[0,\infty)$.} Let $G$ be a nonelementary hyperbolic group. We fix a finite generating set for $G$. Let $\Gamma_G$ be the Cayley graph with respect to this generating set. Note that by Theorem \ref{thm-exp growth rate hyp grp}, $\Gamma_G$ has exponential growth. Note that the fibers have uniformly bounded valence (assumption of Corollary \ref{cor-main gp over ray new intro}). Therefore, Corollary \ref{cor-main gp over ray new intro} when the base $B$ is $[0,\infty)$ follows from Corollary \ref{cor-one fixed fiber}.\qed\smallskip

	We conclude this section with the following questions. Recall that in Theorem \ref{main thm over ray graph gen}, the assumption of uniform bounded valence of the fibers plays a crucial role in our argument. If this condition is relaxed, our argument no longer works. This motivates the following question.
	
	\begin{question}\label{qsn-general}
		Suppose $\pi:X\map[0,\infty)$ is an $f$-metric graph bundle with controlled hyperbolic fibers. Further, suppose that the fibers are proper metric spaces and are uniformly quasiisometric to a fixed hyperbolic space. Finally, assume that $X$ is hyperbolic. Is the CT map $\pa\pi^{-1}(0)\map\pa X$ {\em always} surjective?
	\end{question}
	
	\begin{remark}
One needs to be cautious about the fibers of the metric graph bundle in Question \ref{qsn-general} (see Question \ref{qsn-special}). For instance, the fibers can not be uniformly quasiisometric to $\mathbb H^n$ where $n\ge3$ (see Remark \ref{rmk-Hn can not be fiber}).
		
\noindent On the other hand, one can construct a combinatorial horoball (of any hyperbolic metric space) as in Example \ref{exp-combinatorial horoball} to obtain a surjective CT map in the context of Question \ref{qsn-general}. However, as mentioned in Remark \ref{rmk-fibers are not uniformly QI}, in such a construction, the fibers are not uniformly quasiisometric to a fixed hyperbolic space.
	\end{remark}	

As a special case of Question \ref{qsn-general}, we have the following. Let $F$ be a (simplicial) metric tree with a root vertex $u\in V(F)$. Suppose for all $v\in V(F)$ with $d_F(u,v)=n\in\N$, the valence at $v$ is $n+2$. Moreover, the valence at $u$ is a finite natural number.
		
\begin{question}\label{qsn-special}
$(1)$ Does there exist a metric graph bundle $\pi: X \to [0,\infty)$ whose fibers are uniformly quasiisometric to the tree $F$ (as described above), and such that $X$ is hyperbolic?

$(2)$ If the answer to (1) is yes, is the CT map $\partial \pi^{-1}(0) \to \partial X$ surjective?
\end{question}
%

	%
	%

\section{Main Theorem}\label{sec-main thm}	
	
The aim of this section is to prove the following main theorem of the paper.

\begin{theorem}\label{thm-application}
	Suppose $\pi:X\map B$ is an $f$-metric graph bundle with controlled hyperbolic fibers, and suppose that $X$ is hyperbolic. $($Note that $B$ is hyperbolic.$)$ Let $A$ be a qi embedded subgraph of $B$, and let $Y=\pi^{-1}(A)$. $($Note also that $Y$ is hyperbolic.$)$ Finally, we assume one of the following. 
	
	$(A)$ The fibers have uniformly bounded valence.
	
	$(B)$ The fibers are one-ended and proper metric spaces.
	
	\noindent Then CT map $\pa i_{Y,X}:\pa_s Y\map\pa_s X$ for the inclusion $i_{Y,X}:Y\to X$ is surjective.
%
%

In particular, the conclusion also holds if the ``controlled hyperbolic fibers" condition is replaced by the assumption that the fibers are uniformly quasiisometric to a fixed nonelementary hyperbolic group.
\end{theorem}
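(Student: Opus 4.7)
The plan is to reduce Theorem \ref{thm-application} to the case already handled in Theorem \ref{thm-combo1} and Corollary \ref{cor-main group over ray}, where the base of the metric graph bundle is $[0,\infty)$. The reduction uses two ingredients: the Krishna--Sardar theorem (Theorem \ref{all direction surj imply surj}) and the functoriality of CT maps. Part (1) is an invocation of \cite[Theorem 5.2]{ps-krishna}, so the substantive content is (2) and the ``in particular'' statement. Fix any vertex $b\in A$, so $F_b\subseteq Y\subseteq X$. Since CT maps are unique when they exist, they compose, giving
\[
\partial i_{F_b,X}=\partial i_{Y,X}\circ\partial i_{F_b,Y}.
\]
Consequently, it suffices to establish surjectivity of $\partial i_{F_b,X}:\partial F_b\to\partial_sX$, for then the second factor $\partial i_{Y,X}$ is forced to be surjective as well.

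To prove surjectivity of $\partial i_{F_b,X}$, I apply Theorem \ref{all direction surj imply surj}: it reduces the task to showing that for every quasigeodesic ray $\alpha$ in $B$ starting at $b$, the CT map $\partial F_b\to\partial_s\pi^{-1}(\alpha)$ is surjective. By Remark \ref{crucial remark}, the restricted bundle $\pi|_{\pi^{-1}(\alpha)}:\pi^{-1}(\alpha)\to\alpha$ is itself an $f$-metric graph bundle with hyperbolic total space, and its fibers are literally fibers of $\pi$. Hence every fiberwise hypothesis of Theorem~\ref{thm-combo1} is preserved: controlled hyperbolicity, uniformly bounded valence in case (A), one-endedness and properness in case (B), and being uniformly quasiisometric to a fixed nonelementary hyperbolic group in the ``in particular'' case (which automatically implies controlled hyperbolic fibers via Lemma \ref{lem-barycenter commuting} and Remark \ref{rmk-bary map coarse surj in gp}). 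After reparametrizing $\alpha$ so that its base is $[0,\infty)$, I invoke Theorem \ref{thm-combo1}$(A)$, Theorem \ref{thm-combo1}$(B)$, or Corollary \ref{cor-main group over ray}, respectively, to obtain the required surjectivity over each ray.

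The main technical point is the reparametrization of the qi-embedded quasigeodesic ray $\alpha$ as a base $[0,\infty)$. Since $\alpha$, with the induced path metric, is a metric graph quasiisometric to $[0,\infty)$, one can either pass to an approximating metric graph bundle whose base is literally $[0,\infty)$ via the dictionary described in Section~\ref{subsec-application} and the Appendix, or else observe directly that the proofs of Theorem~\ref{thm-combo1} and Corollary~\ref{cor-main group over ray} depend only on the flaring condition, the existence of (good) qi sections, and the description of the boundary (Proposition~\ref{description of pa X}), each of which transfers verbatim to a base that is a proper geodesic ray quasiisometric to $[0,\infty)$. The fiberwise hypotheses remain uniform throughout this passage, so the argument carries over with only cosmetic changes in constants. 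Combining all steps completes the plan.
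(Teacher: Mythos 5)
Your proposal is correct and follows essentially the same route as the paper: reduce to metric graph bundles over a quasigeodesic ray via Theorem~\ref{all direction surj imply surj}, verify the fiberwise hypotheses persist under restriction (Remark~\ref{crucial remark}), and invoke Theorem~\ref{thm-combo1} (or Corollary~\ref{cor-main group over ray}) over each ray. The one cosmetic difference is that you derive surjectivity of $\partial i_{Y,X}$ from surjectivity of $\partial i_{F_b,X}$ via the composition identity $\partial i_{F_b,X}=\partial i_{Y,X}\circ\partial i_{F_b,Y}$, whereas the paper simply cites the ``moreover'' clause of Theorem~\ref{all direction surj imply surj}$(2)$, which already packages exactly that implication; both are equally valid.
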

In the theorem above, we mention that the inclusion $i_{Y,X}:Y\to X$ admits a CT map by \cite{ps-krishna} (see Theorem \ref{all direction surj imply surj} (2)). For convenience, we will refer to Theorem~\ref{thm-application} under the assumption $(A)$ as Theorem~\ref{thm-application} $(A)$, and under the assumption $(B)$ as Theorem~\ref{thm-application} $(B)$ in next two sections. The following results are required in the proof of Theorem \ref{thm-application}.


\begin{theorem}\label{all direction surj imply surj}
	Suppose $\pi:X\map B$ is an $f$-metric graph bundle with controlled hyperbolic fibers, and suppose that $X$ is hyperbolic. Further, assume that the fibers are proper metric spaces. Fix $b\in V(B)$. Let $F_b=\pi^{-1}(b)$. Let $\eta\in\pa_s B$, and let $\al:[0,\infty)\map B$ be a continuous quasigeodesic ray starting at $b$ such that $\al(\infty)=\eta$. Let $X_{\al}:=\pi^{-1}(\al)$. Note that $X_{\al}$ is a (proper) hyperbolic geodesic metric space $($by Remark \ref{crucial remark} $(2))$. Then:
	
	\begin{enumerate}
		\item \textup{(\cite[Theorem 5.3]{pranab-mahan})} The inclusions $F_b\map X$ and $F_b\map X_{\al}$ admit CT maps.
		
		\item \textup{(\cite[Theorem 6.26]{ps-krishna})} The CT map $\pa i_{F_b,X_{\al}}:\pa F_b\map\pa X_{\al}$ is surjective for all $\eta\in\pa_s B$ if and only if the CT map $\pa i_{F_b,X}:\pa F_b\map \pa_sX$ is surjective.
		
		Moreover, in this case, suppose that $A$ is a qi embedded subgraph of $B$. Then $Y=\pi^{-1}(A)$ is hyperbolic by Remark \ref{crucial remark} $(2)$ and the inclusion $i_{Y,X}:Y\map X$ admits a CT map $\pa i_{Y,X}:\pa_s Y\map\pa_s X$ by \textup{\cite[Theorem 5.2]{ps-krishna}}. Finally, the map $\pa i_{Y,X}$ is surjective.
	\end{enumerate} 
\end{theorem}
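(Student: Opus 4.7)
My plan for item (1) is to cite \cite[Theorem 5.2]{ps-krishna} verbatim, as indicated in the statement. For item (2), I will apply Theorem~\ref{all direction surj imply surj}(2): once surjectivity of the CT map $\pa i_{F_b,X_\al}:\pa F_b\map\pa X_\al$ is established for every continuous quasigeodesic ray $\al:[0,\infty)\map B$ starting at $b$ and representing an arbitrary $\eta\in\pa_s B$, the ``Moreover'' clause of that theorem immediately delivers surjectivity of $\pa i_{Y,X}:\pa_s Y\map\pa_s X$ for the given qi embedded subgraph $A\sse B$. Thus the whole question reduces to a per-ray surjectivity statement about metric graph bundles whose base is qi to $[0,\infty)$.

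To use Theorem~\ref{thm-combo1}, which is phrased for a base that is literally $[0,\infty)$, I will replace $\al$ by an isometric-to-$[0,\infty)$ approximation $\al^*\sse B$. Concretely, I choose vertices $b=b_0,b_1,b_2,\ldots\in V(B)$ Hausdorff-close to $\al$ with $d_B(b_i,b_{i+1})=1$, and let $\al^*$ be the subgraph of $B$ obtained by concatenating geodesic edges $[b_i,b_{i+1}]$; then $\al^*$ is isometric to $[0,\infty)$ and qi embedded in $B$ (since $\al$ is, and $\al^*$ is uniformly Hausdorff-close to $\al$). By Remark~\ref{crucial remark}, the restricted bundle $\pi':\pi^{-1}(\al^*)\map\al^*\cong[0,\infty)$ is an $f$-metric graph bundle with hyperbolic total space, whose fibers are a subfamily of the fibers of $\pi$ and hence inherit the relevant fiber hypotheses -- controlled hyperbolicity together with either (A) uniformly bounded valence or (B) one-endedness of proper fibers.

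I then apply Theorem~\ref{thm-combo1}$(A)$ or $(B)$ as appropriate to the bundle $\pi'$, obtaining surjectivity of $\pa F_b\map\pa\pi^{-1}(\al^*)$. To transfer surjectivity to $X_\al$, I use that $\al$ and $\al^*$ are uniformly Hausdorff-close in $B$ and that the fibers are $f$-proper in $X$; this gives that the inclusion $\pi^{-1}(\al^*)\hookrightarrow X_\al$ is a quasi-isometry, and by Theorem~\ref{qi emb gives top emb} it induces a homeomorphism of Gromov boundaries intertwining the two CT maps. Hence surjectivity over $\al^*$ forces surjectivity over $\al$, completing (2).

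For the ``in particular'' clause, suppose the fibers are uniformly qi to a fixed nonelementary hyperbolic group $G$. The Cayley graph $\Gamma_G$ has bounded valence (so uniform bounded valence of the fibers puts us in case (A) applied to $\pi'$), and its barycenter map is $1$-coarsely surjective by Remark~\ref{rmk-bary map coarse surj in gp}; Lemma~\ref{lem-barycenter commuting} then yields uniform coarse surjectivity of the barycenter maps on the fibers, establishing controlled hyperbolicity. Thus Theorem~\ref{thm-combo1}$(A)$ -- or equivalently Corollary~\ref{cor-main group over ray} -- applies to $\pi'$, and the reduction above concludes the argument. The main obstacle I anticipate is the comparison step between $X_\al$ and $\pi^{-1}(\al^*)$: the constants of the qi must depend only on the structural data of $\pi$ and the quasigeodesic constants of $\al$ (and not on $\eta$), which is essentially why I work with the piecewise-geodesic approximation $\al^*$ rather than with $\al$ directly.
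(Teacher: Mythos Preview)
Your proposal conflates Theorem~\ref{all direction surj imply surj} with Theorem~\ref{thm-application}. The statement you were given is a \emph{cited} result: part~(1) is attributed to \cite[Theorem~5.3]{pranab-mahan} and part~(2) to \cite[Theorem~6.26]{ps-krishna}, and the paper supplies no proof of its own beyond a remark correcting a misprint in the cited reference. Theorem~\ref{all direction surj imply surj} carries no hypothesis labelled (A) or (B), no assumption of uniformly bounded valence or one-endedness of fibers, and no ``in particular'' clause about nonelementary hyperbolic groups --- all of those features belong to Theorem~\ref{thm-application}, whose proof in the paper is indeed the argument you have sketched.

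Independently of this misidentification, your argument is circular as written: in your second sentence you announce ``For item~(2), I will apply Theorem~\ref{all direction surj imply surj}(2)'', i.e., you invoke the very conclusion you are asked to establish. What remains after that step --- reducing to a ray, straightening the quasigeodesic to a simplicial ray $\al^*$, applying Theorem~\ref{thm-combo1} to the restricted bundle, and transferring back via a quasiisometry --- is precisely the paper's proof of Theorem~\ref{thm-application}, which \emph{uses} Theorem~\ref{all direction surj imply surj} as a black box. A genuine proof of the ``if and only if'' in Theorem~\ref{all direction surj imply surj}(2) requires the boundary decomposition of \cite[Proposition~6.6]{ps-krishna} (cf.\ Proposition~\ref{description of pa X} here for the ray case) and cannot be reduced to Theorem~\ref{thm-combo1}.
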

We note that in the statement of \cite[Theorem 6.26]{ps-krishna} (see Theorem \ref{all direction surj imply surj} $(2)$), there is a misprint where the authors assume $\alpha$ to be a geodesic ray. However, since the space $B$ is not proper in general, geodesics may not exist representing $\eta$. In fact, their proof indicates that they are working with quasigeodesics rather than geodesics.

\subsection{Proof of Theorem \ref{thm-application}}
The proofs are simple consequences of Theorems \ref{thm-main bdd valence}, \ref{thm-main one ended} and \ref{all direction surj imply surj}. However, we will elaborate on that.


Note that by Theorem \ref{thm-metric bundle comb} $(1)$ and $(2)$, $B$ is hyperbolic (see \cite[Propositon 2.10]{pranab-mahan}). Fix $b\in V(B)$. Note also that in general, $B$ need not be a proper metric space. Let $\eta\in\pa_sB$, and let $\al:[0,\infty)\map B$ be a continuous quasigeodesic ray starting at $b\in V(B)$ such that $\al(\infty)=\eta$. Without loss of generality, we also assume that $\al$ is injective. As mentioned in Section \ref{prelims}, we think of $\al$ as lying in $B$. Let $X_{\al}=\pi^{-1}(\al)$. Note that by Remark \ref{crucial remark} $(1)$ and $(2)$, the restriction $\pi|_{X_\al}:X_\al\map \al\sse B$ is an $f$-metric graph bundle and $X_\al$ is hyperbolic respectively.

\noindent Since the properties of fibers remain unchanged, the restricted metric graph bundle $\pi|_{X_\alpha}: X_\alpha \to \alpha \subseteq B$ continues to have controlled hyperbolic fibers. Then by Theorem \ref{all direction surj imply surj} $(1)$, we have the CT map $\pa F_b\map\pa X_\al$. 
\smallskip

\noindent{\em Under the condition} $(A)$ the fibers are proper metric spaces and have uniformly bounded valence. Hence, by Theorem \ref{thm-main bdd valence}, the CT map $\partial F_b \to \partial X_\alpha$ is surjective.
\smallskip

\noindent{\em Under the condition} $(B)$ the fibers are one-ended and proper metric spaces. Hence, by Theorem \ref{thm-main one ended}, the CT map $\partial F_b \to \partial X_\alpha$ is surjective.\smallskip

\noindent Since $\eta\in\pa_sB$ was arbitrary, Theorem \ref{thm-application} follows from Theorem \ref{all direction surj imply surj} $(2)$.\smallskip

%

\noindent{\em Particular case -- when the fibers are uniformly quasiisometric to a nonelementary hyperbolic group}: Let $G$ be a nonelementary hyperbolic group, and let $\Gamma_G$ denote a Cayley graph of $G$ with respect to a finite generating set. Then the barycenter map for $\Gamma_G$ is coarsely surjective (see Remark \ref{rmk-bary map coarse surj in gp}). We assume that the fibers are $k$-quasiisometric to $\Gamma_G$ for some $k \ge 1$. It follows that the fibers are uniformly hyperbolic, and by Lemma \ref{lem-barycenter commuting}, the barycenter maps for the fibers are uniformly coarsely surjective. Therefore, we are done.\qed

	\section{Examples and Applications}\label{application and exp sec}
	
	In this section, we will see some examples and applications of our main theorem.
	\subsection{(Known) examples}\label{exp sec}
We will now see three main sources of examples -- two of which are group-theoretic (Subsection \ref{subsec-normal-commensurated}) -- where the CT map is surjective. While surjectivity of the CT map for these two group-theoretic examples follows from standard results, the example in Subsection \ref{hpbohp} does not.
	\subsubsection{Normal and commensurated subgroups}\label{subsec-normal-commensurated} For these examples and terminologies, we refer reader to \cite[Section 5]{NirMaMj-commen} and \cite[Example 1.8]{pranab-mahan} (see also \cite[Subsection $3.3.1$]{ps-krishna}). 
	
	Recall that a subgroup $H<G$ is called {\em commensurated in $G$} if for all $g\in G$, $gHg^{-1}\cap H$ has finite index in both $gHg^{-1}$ and $H$.
	
	{\em Notation}: {\em For a finitely generated group $K$, we denote a finite generating set for $K$ by $S_K$ and the Cayley graph of $K$ with respect to $S_K$ by $\Gamma_K(S_K)$.}
	
	Let $G$ be a group, and $H$ be a commensurated subgroup of $G$.
	Then by \cite[Proposition 5.12]{NirMaMj-commen} (see also \cite[Proposition 3.14]{margolis-almostnormal}), there are finite generating sets $S_H\sse S_G$ of $H$ and $G$ respectively such that $S_G\cap H=S_H$ and we have a metric graph bundle $\pi:\Gamma_G(S_G)\map\G(G,H,S)$ over the Cayley--Abels graph of the pair $(H,G)$ where fibers are isometric copies of $\Gamma_H(S_H)$.
	
	\noindent A specific example of a commensurated subgroup is a normal subgroup. See also \cite[Example 1.8]{pranab-mahan}.
	
	\noindent Now we assume that $G$ is nonelementary hyperbolic group and $H$ is nonelementary hyperbolic commensurated subgroup of $G$. 
	
	\noindent One can apply Theorem \ref{thm-application} $(A)$ to these situations.\smallskip
	
	Note that in the above example, if $H$ is an infinite normal subgroup of infinite index in $G$, then $H$ is virtually a free product of free groups and hyperbolic surface groups. See \cite[p. $379$]{mitra-endlam} for an explanation. The result is also true when $H$ is a commensurated subgroup, as proved in \cite[Theorem $A$]{NirMaMj-commen}. For particular examples: when the normal subgroup is a surface group, see \cite{mosher-hbh}, \cite{farb-mosher}, and also \cite{hamenst-word}; and when it is a free group, see \cite{BFH-lam} and \cite{geom-ghosh-gul}.

	%
	
Other examples come from {\em complexes of groups}, to which Theorem \ref{thm-application} $(A)$ can be applied. We refer the reader to \cite[Subsection 3.3.2]{ps-krishna} for such examples.
	

\noindent Theorem\ref{thm-application} $(A)$ applies in particularly interesting cases studied in \cite{min-regluing-surface} and \cite{mahan-pritam}, where the graphs of groups have all vertex and edge groups either surface groups \cite{min-regluing-surface} or free groups of rank at least three \cite{mahan-pritam}.\qed
	
	\begin{remark}
We note that in all the examples mentioned above, let $H$ be a subgroup -- that appears as fibers -- of the ambient group $G$. Then the limit set $H$ coincides with the boundary of $G$, i.e., $\Lambda_G(H)=\pa G$. See the proof of \cite[Corollary $6.33$]{ps-krishna} for this fact. Then by Lemma \ref{CT image is limit set} and Theorem \ref{all direction surj imply surj} we get the surjectivity of CT maps in all the examples above.
	\end{remark}
	
	
\subsubsection{Hyperbolic plane bundles over hyperbolic planes}\label{hpbohp} Now we mention another source of examples where Theorem \ref{thm-application} $(B)$ can be applied. Based on a work of Leininger and Schleimer (\cite{ls-disk}), Mj and Sardar constructed a metric bundle $\pi:X\map \mathbb H^2$ such that the base and fibers are uniformly quasiisometric to the hyperbolic plane (see \cite[Example 5.4, pp. 1701-1705]{pranab-mahan}). (One is referred to Definition \ref{defn-metric bundle} for metric bundle, otherwise, for the time being, one can think of this as metric graph bundle after discretization.) One can apply Theorem \ref{thm-application} $(B)$ to this example.
	
	We mention that the surjectivity of a CT map in this example follows from previously known results. Fix $b\in\mathbb H^2$ and a geodesic ray $\al$ in $\mathbb H^2$ starting at $b$. Let $\pi^{-1}(\al)=X_{\al}$. Then by the result of Bowditch (Theorem \ref{bowditch thm}), the CT map $\pa\pi^{-1}(b)\map\pa X_{\al}$ is surjective. Hence by Theorem \ref{all direction surj imply surj}, the CT map $\pa \pi^{-1}(b)\map\pa X$ is surjective (see \cite[Corollary 6.27]{ps-krishna}).\qed
	
	\begin{remark}
		If one can construct examples analogous to the one above (Subsection \ref{hpbohp}) in the setting of $\operatorname{Out}(F_n)$ and Outer space, then Theorem \ref{thm-application} $(A)$ can be applied to obtain surjective CT maps.
	\end{remark}

	\subsubsection{Examples coming from (combinatorial) horoball}
	We conclude this subsection with Example \ref{exp-combinatorial horoball}, which illustrates that the following conditions:
	
	\begin{itemize}
		\item the barycenter maps for the fibers are uniformly coarsely surjective, and
		
		\item the fibers are uniformly quasiisometric to a fixed hyperbolic space
	\end{itemize}
	
	\noindent as in Theorem \ref{thm-combo1} $(B)$ and Question \ref{main qsn} respectively, are not necessary for surjectivity of the CT map. For the example, we first need the following result (Lemma \ref{lem-uniform hyp}) which in turn relies on Proposition \ref{prop-bowditch's version} stated below. 
	
	\begin{prop}\textup{(\cite[Proposition $3.1$]{bowditch-com})}\label{prop-bowditch's version}
		Let $D\ge0$. Suppose $X$ is a (connected) metric graph, and that for each $x,y\in V(X)$, we have associated a connected subgraph $c(x,y)\sse X$ with $x,y\in c(x,y)$ such that:
		
		\begin{enumerate}
			\item For all $x,y,z\in V(X)$, $c(x,y)\sse N_D(c(y,z)\cup c(z,x))$.
			
			\item For all $x,y\in V(X)$ with $d_X(x,y)\le1$, the diameter of $c(x,y)$ is bounded above by $D$.
		\end{enumerate}
		Then $X$ is $\dl$-hyperbolic for some $\dl\ge0$ depending only on $D$. 
	\end{prop}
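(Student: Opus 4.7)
The strategy is to show that the associated subgraphs $c(x,y)$ are uniformly Hausdorff-close to actual geodesics $[x,y]$, and then deduce the slim triangle condition (hence hyperbolicity) directly from hypothesis (1).

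\textbf{Step 1 (Coarse geodesicity).} I would first establish the following key lemma: there exists a constant $K=K(D)$ such that for all $x,y\in V(X)$ and any geodesic segment $[x,y]$, one has $Hd_X(c(x,y),[x,y])\le K$. The natural approach is via dyadic subdivision. Given a geodesic $[x,y]$ with $d_X(x,y)=n$, pick a midpoint $m$ on $[x,y]$. Condition (1) applied with $z=m$ gives $c(x,y)\sse N_D(c(x,m)\cup c(m,y))$. Iterating to depth $\lceil \log_2 n\rceil$, we reach adjacent pairs of vertices, and condition (2) terminates the recursion with bounded-diameter pieces. A naive bound gives $K\sim D\log n$, which is not uniform. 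To extract a uniform constant, I would use a bootstrapping argument: assume the lemma holds with some $K$ for all pairs at distance $\le n$, and show that the constant does not grow when passing to distance $2n$, provided $K$ is chosen sufficiently large relative to $D$. Roughly, once the two halves $c(x,m)$ and $c(m,y)$ are already within $K$ of their respective geodesic halves, the additional $D$-neighborhood contributed by (1) is absorbed into $K$. For the reverse inclusion $[x,y]\sse N_K(c(x,y))$, I would similarly argue by induction on $n$: any point $p$ on the geodesic is close to $c(x,p)\cap c(p,y)$ by (1), and each factor is close to its geodesic by the inductive hypothesis.

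\textbf{Step 2 (Slim triangles).} Given a geodesic triangle on vertices $x,y,z$ and a point $p\in[x,y]$, Step 1 yields $q\in c(x,y)$ with $d_X(p,q)\le K$. Condition (1) gives $q'\in c(x,z)\cup c(y,z)$ with $d_X(q,q')\le D$. Applying Step 1 once more, $q'$ is within $K$ of $[x,z]\cup[y,z]$. Thus geodesic triangles are $(2K+D)$-slim, and so $X$ is $\dl$-hyperbolic with $\dl=\dl(D)$ depending only on $D$, as required. A small technical point: the metric graph $X$ may not be geodesic in the strict sense between non-vertex points, but since we only assume the slim triangle condition holds on vertices (which determine the large-scale geometry), standard results (e.g.\ \cite[Proposition 2.1]{abc}) let us upgrade this to $\dl$-hyperbolicity of $X$ as a geodesic metric space.

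\textbf{Main obstacle.} The core technical difficulty is achieving a uniform constant $K$ in Step 1 independent of $d_X(x,y)$. The dyadic subdivision is clean but the error accumulates under iteration, so the heart of the argument is the self-improving induction that shows the constant stabilizes. This is where one uses the full strength of both (1) and (2) together: (1) provides the coarse bisection estimate, while (2) anchors the recursion at the finest scale; crucially, the error introduced at each subdivision level must telescope rather than accumulate additively, which is what a carefully chosen $K$ makes possible.
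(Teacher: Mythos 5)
The paper cites this result from Bowditch without giving its own proof, so I compare your proposal to the known argument. The gap is in your Step 1, and it is precisely the obstacle you flag but do not overcome. The dyadic subdivision of a geodesic $[x,y]$ of length $n$ yields only the logarithmic estimate $c(x,y)\subseteq N_{D\lceil\log_2 n\rceil+D}([x,y])$: hypothesis $(1)$ contributes an additive $D$ at each of the $\approx\log_2 n$ bisection levels before hypothesis $(2)$ terminates the recursion. Your proposed bootstrap does not close this. If, at scale $n$, one knows $c(x,m),c(m,y)\subseteq N_K([x,y])$ for the midpoint $m$ of $[x,y]$, then $(1)$ gives $c(x,y)\subseteq N_D\bigl(c(x,m)\cup c(m,y)\bigr)\subseteq N_{K+D}([x,y])$; the constant strictly increases by $D$ at every doubling, and there is no choice of $K$ for which $K+D\le K$. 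There is nothing in the sketch that makes the error ``telescope'' rather than accumulate, and in fact nothing can, at this stage of the argument. The reverse inclusion has a parallel problem: you argue that $p\in[x,y]$ is close to $c(x,y)$ because $p\in c(x,p)\cap c(p,y)$, but hypothesis $(1)$ gives only $c(x,p)\subseteq N_D\bigl(c(x,y)\cup c(y,p)\bigr)$, which does not localize $p$ near $c(x,y)$; the relevant inclusion simply is not among the hypotheses.

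What the dyadic subdivision honestly yields is the logarithmic statement: for any path $\beta$ from $x$ to $y$ of length $\ell$, $c(x,y)\subseteq N_{D\log_2\ell+D}(\beta)$. This already implies exponential divergence --- any path from $x$ to $y$ avoiding the $r$-ball around a point of $c(x,y)$ has length at least $2^{(r-D)/D}$ --- and from there hyperbolicity is obtained indirectly, via the theorem that a subquadratic isoperimetric inequality forces a linear one (Bowditch, Ol'shanskii, Papasoglu), rather than by upgrading the logarithmic bound to a uniform Hausdorff bound. The uniform Hausdorff closeness of $c(x,y)$ to geodesics then comes \emph{after} hyperbolicity, as an application of stability of quasigeodesics, not as a precursor to it. So your Step 2 is fine once Step 1 is available, but Step 1 as written cannot produce a uniform constant: you must first pass through the subquadratic isoperimetric inequality (or an equivalent exponential-divergence-to-hyperbolicity criterion) before any uniform bound on $Hd(c(x,y),[x,y])$ is accessible.
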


	\begin{lemma}\label{lem-uniform hyp}
		Let $i\in\N$. Let $F$ be a $\dl$-hyperbolic metric graph for some $\dl\ge0$.  Suppose $F_i$ is a metric graph obtained by attaching some extra edges to $F$ as follows. For all $u,v\in V(F)$, we attach an edge between $u$ and $v$ if $1<d_F(u,v)\le2^i$. Then $F_i$ is $\dl'$-hyperbolic for some $\dl'\ge0$ depending only on $\dl$. 
	\end{lemma}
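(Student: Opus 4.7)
The plan is to apply Proposition~\ref{prop-bowditch's version} to $F_i$ with the family of connected subgraphs $c(x,y) := [x,y]_F \subseteq F$, viewed inside $F_i$ (this makes sense since $V(F) = V(F_i)$ and every edge of $F$ is an edge of $F_i$). Note that the inclusion $F \hookrightarrow F_i$ is $1$-Lipschitz on common vertices, because $F_i$ is obtained from $F$ only by adding edges. We need to verify the two conditions of Proposition~\ref{prop-bowditch's version} with a constant $D$ depending only on $\delta$.

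For condition~(1): since $F$ is $\delta$-hyperbolic, for any $x,y,z \in V(F)$ we have $[x,y]_F \subseteq N_\delta^{F}([y,z]_F \cup [z,x]_F)$. As the inclusion $F \hookrightarrow F_i$ is distance non-increasing, the same containment holds in $F_i$, so condition~(1) holds with constant $\delta$.

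For condition~(2): let $x, y \in V(F_i) = V(F)$ with $d_{F_i}(x,y) \le 1$. If $d_F(x,y) \le 1$, then $c(x,y) = [x,y]_F$ has diameter at most $1$ already in $F$, hence in $F_i$. Otherwise $x, y$ are connected by one of the newly attached edges, which forces $d_F(x,y) \le 2^i$. I claim $c(x,y) = [x,y]_F$ then has $F_i$-diameter at most $3$. Indeed, for any two points $p, q$ on $[x,y]_F$, choose vertices $u, v$ on $[x,y]_F$ with $d_F(p,u) \le 1$ and $d_F(q,v) \le 1$; since $u, v$ lie on the $F$-geodesic $[x,y]_F$, we get $d_F(u,v) \le d_F(x,y) \le 2^i$, which by construction of $F_i$ means $d_{F_i}(u,v) \le 1$. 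The triangle inequality yields $d_{F_i}(p,q) \le 3$. Thus condition~(2) holds with constant $3$.

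Taking $D := \max\{\delta, 3\}$, Proposition~\ref{prop-bowditch's version} yields $\delta' \ge 0$ depending only on $D$ (hence only on $\delta$) such that $F_i$ is $\delta'$-hyperbolic, as required. The only mildly delicate point is the verification in condition~(2) that the $F$-geodesic, which can be as long as $2^i$ in $F$, has bounded $F_i$-diameter; but this is immediate from the definition of the added edges, and crucially gives a bound independent of $i$.
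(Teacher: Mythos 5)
Your proof is correct. Both you and the paper invoke Bowditch's criterion (Proposition~\ref{prop-bowditch's version}), and both rest on the same key observation: if $d_{F_i}(x,y)\le 1$, then any two vertices on $[x,y]_F$ are within $F$-distance $2^i$, hence within $F_i$-distance $1$. The difference lies in the choice of the family $c(x,y)$. The paper samples vertices $a_0,\dots,a_l$ on $[x,y]_F$ spaced $2^i$ apart in $F$ and takes $c(x,y)$ to be the concatenation of the corresponding $F_i$-edges; this makes condition~$(2)$ of the criterion immediate (with constant $1$), but verifying condition~$(1)$ then requires an extra ``rounding to the nearest sampled vertex'' step, giving the constant $2+\delta$. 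You instead take $c(x,y)=[x,y]_F$ viewed inside $F_i$; this makes condition~$(1)$ an immediate consequence of $\delta$-slimness in $F$ together with the $1$-Lipschitz inclusion $F\hookrightarrow F_i$, while condition~$(2)$ becomes the place where the added edges must be used, which you handle correctly with the bound $3$. Your choice shifts the work from condition~$(1)$ to condition~$(2)$ and is arguably the more economical of the two; in either case $D$ depends only on $\delta$, so Bowditch's criterion gives $\delta'$ depending only on $\delta$, as required.
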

	
	\begin{proof}
		We will use Bowditch's criterion as in Proposition \ref{prop-bowditch's version}. Let $x,y\in V(F_i)=V(F)$. Let $x=a_0,a_1,\dots,a_l=y$ be vertices on a geodesic $[x,y]_F$ such that $$d_F(a_{k-1},a_k)=2^i \text{ for all }k\in\{1,\dots,l-2\} \text{ and }d_F(a_{l-1},a_l)\le 2^i.$$ Define $c(x,y)=[a_0,a_1]_{F_i}*[a_1,a_2]_{F_i}*\dots*[a_{l-1},a_l]_{F_i}$ as the concatenation of edges in $F_i$. Given $x,y\in V(F_i)$, we fix once and for all a connected path $c(x,y)\sse F_i$ as defined above.
		
		Now we will show that this collection of paths satisfies the Bowditch's criterion. Let $x,y,z\in V(F_i)$. So (without loss of generality) we have vertices $x=a_0,a_1,\dots,a_l=y$, $y=b_0,b_1,\dots,b_m=z$ and $z=c_0,c_1,\dots,c_n=x$ on geodesics $[x,y]_F$, $[y,z]_F$ and $[z,x]_F$ respectively satisfying the following.
		
		\begin{enumerate}
			\item $d_F(a_{k-1},a_k)=2^i$ for all $k\in\{1,\dots,l-2\}$ and $d_F(a_{l-1},a_l)\le 2^i$.
			
			\item $d_F(b_{k-1},b_k)=2^i$ for all $k\in\{1,\dots,m-2\}$ and $d_F(b_{m-1},b_m)\le 2^i$.
			
			\item $d_F(c_{k-1},c_k)=2^i$ for all $k\in\{1,\dots,n-2\}$ and $d_F(c_{n-1},c_n)\le 2^i$.
		\end{enumerate}
		\noindent We also have $c(x,y)$, $c(y,z)$ and $c(z,x)$ as concatenation of edges in $F_i$ as defined above. We will show that $c(x,y)\sse N_{2+\dl}(c(z,x),c(y,z))$ in $F_i$. Let $q\in c(x,y)$ and $p\in V(c(x,y))$ such that $d_{F_i}(p,q)\le1$. Since $F$ is hyperbolic, without loss of generality, we assume that there is $p'\in V([y,z]_F)$ such that $d_F(p,p')\le\dl$. By the definition of $F_i$, we have $k\in\{0,1,\dots,m\}$ such that $d_{F_i}(p',b_k)=1$. This shows, by triangle inequality, that $d_{F_i}(q,b_k)\le 2+\dl$. In other words, $c(x,y)\sse N_{2+\dl}(c(z,x),c(y,z))$ in $F_i$. This verifies the condition $(1)$ of Proposition \ref{prop-bowditch's version} with $D=2+\dl$. On the other hand, condition $(2)$ is satisfied with $D=1$. Therefore, by Proposition \ref{prop-bowditch's version}, $F_i$ is $\dl'$-hyperbolic for some $\dl'\ge0$ depending only on $\dl$.
	\end{proof}
	
	For the construction of {\em combinatorial horoball} and related results, we refer the reader to \cite[Section $3.1$]{dehn-filling-GM} (cf. \cite[$8.6$]{gromov-hypgps}).\smallskip
	
\noindent	{\bf Combinatorial horoball}: Suppose $F$ is a proper, connected metric graph. Now we construct a metric graph $X$ as follows.
	
	\begin{enumerate}
		\item $V(X)=V(F)\times (\{0\}\cup\N)$
		
		\item Edges of $X$ are defined as follows. Let $u,v\in V(F)$ and $i\in\{0\}\cup\N$.
		
		\begin{enumerate}
			\item If $u,v$ are joined by an edge in $F$, then $(u,0)$ and $(v,0)$ are joined by an edge in $X$.
			
			\item If $i>0$ and $0<d_F(u,v)\le 2^i$ then $(u,i)$ and $(v,i)$ are joined by an edge in $X$. 
			
			\item $(u,i)$ and $(u,i+1)$ are joined by an edge in $X$.
		\end{enumerate}
	\end{enumerate} 
	Since the edges of $X$ are isometric to the closed unit interval, so $X$ is a geodesic metric space. By \cite[Theorem $3.8$]{dehn-filling-GM}, $X$ is $\dl$-hyperbolic for some universal constant $\dl\ge0$. That is, $\dl$ does not depend on $F$. Note that ($X$ is proper and) $\pa X$ consists of single point (see \cite[Lemma $3.11$]{dehn-filling-GM}).
	
	\noindent We have a natural projection map $\pi:X\map[0,\infty)$ as follows. Let $u,v \in V(F)$ and $i\in\{0\}\cup\N$. Edges joining $(u,i)$ and $(v,i)$ are mapped to $i$ and edges joining $(u,i)$ and $(u,i+1)$ are isometrically mapped to the interval $[i,i+1]\sse[0,\infty)$. It is easy to see that $\pi:X\map[0,\infty)$ is a metric graph bundle. $($Condition $(2)$ of Definition \ref{defn-metric graph bundle} follows from $(c)$ above whereas condition $(1)$ follows from \textup{\cite[Lemma $3.10$]{dehn-filling-GM}} and the definition of $X.)$ 
	Note that $\pi^{-1}(i)=F_i$ as constructed in Lemma \ref{lem-uniform hyp}.
	
	\begin{example}\label{exp-combinatorial horoball}
		In the above construction of combinatorial horoball, we assume that $F$ is an unbounded $\dl'$-hyperbolic geodesic metric graph for some $\dl'\ge0$. 
		
		In this case, the fibers are uniformly hyperbolic by Lemma \ref{lem-uniform hyp}, 
		and the inclusion $F_0\map X$ admits a CT map by \textup{\cite{mitra-trees}}. Hence $\pa i_{F_0,X}:\pa F_0\map\pa X$ is surjective as $\pa X$ is singleton.
		
		If the barycenter map for $F$ were not coarsely surjective, then the metric graph bundle $\pi:X\map[0,\infty)$ would not have uniformly coarsely surjective barycenter maps for its fibers.
		
		Note that the fibers do not have uniformly bounded valence, even if the original graph $F$ does have bounded valence. $($The valence of a vertex in $F_i$ increases as $i$ increases.$)$
	\end{example}
	
	\begin{remark}\label{rmk-fibers are not uniformly QI}
		We mention that in Example \ref{exp-combinatorial horoball}, the fibers $F_i$ are uniformly hyperbolic (Lemma \ref{lem-uniform hyp}); however, they are not uniformly quasiisometric to $F$, since a geodesic segment in $F$ of length $2^i$ is mapped to a single edge in $F_i$.
	\end{remark}
	
	\begin{remark}\label{rmk-Hn can not be fiber}
	We refer the reader to Definition \ref{defn-metric bundle} for metric bundle. In Example \ref{exp-combinatorial horoball}, fiber $F$ can be any hyperbolic metric graph (space), and the base of the metric graph bundle is $[0,\infty)$. By Example \ref{exp-combinatorial horoball}, one should not be confused with the result proved in \cite[p. $93$]{bowditch-stacks}. The result says that $\mathbb H^n$, $n\ge3$ can not appear as fibers of a metric bundle over $\R$ such that the total space is hyperbolic. Here author means that the fibers are isometric to $\mathbb H^n$. (We mention that the same result holds if one replaces the isometry by uniform quasiisometry.) Whereas, in Example \ref{exp-combinatorial horoball}, the fibers are not uniformly quasiisometric to a fixed hyperbolic space by Remark \ref{rmk-fibers are not uniformly QI}.
	\end{remark}
	
	\subsection{Application}\label{subsec-CT lamination} Suppose $W$ is a hyperbolic metric space. Let $\pa^2W:=\{(p,q)\in\pa_s W\times\pa_s W:p\ne q\}$. Following \cite{mitra-endlam}, one can define Cannon--Thurston lamination as follows.
	
	\begin{defn}[Cannon--Thurston lamination]
		Suppose $X'\sse X$ are hyperbolic metric spaces such that the inclusion $i_{X',X}:X'\map X$ admits a CT map $\pa i_{X',X}:\pa_s X'\map\pa_s X$. The Cannon--Thurston lamination is then defined as $$\L_{CT}(X',X):=\{(p,q)\in\pa^2X':\pa i_{X',X}(p)=\pa i_{X',X}(q)\}.$$
	\end{defn}
	
	\noindent\underline{{\em Our context}}: Suppose $\pi:X\map B$ is an $f$-metric graph bundle with controlled hyperbolic fibers. Further, suppose that $X$ is hyperbolic. Note that $B$ is also hyperbolic (see Theorem \ref{thm-metric bundle comb} $(1)$). Suppose $B$ is $\dl$-hyperbolic for some $\dl\ge0$. Let $b\in V(B)$ and $F=F_b$, and $\eta\in\pa_sB$. Let $\al:[0,\infty)\map B$ be a $k$-quasigeodesic joining $b$ and $\eta$ for some $k\ge1$ depending on $\dl$ (see Subsection \ref{subsec-gromov bdry and CT}). We also assume that $\al$ is injective and continuous (see \cite[Lemma $1.11$, $III.H$]{bridson-haefliger}). Then $\al$ has a $k$-qi lift in $X$. Define the set $$\pa^{\eta}X:=\{\al'(\infty):\al'\text{ is a }k\text{-qi lift of }\al\}$$ which is determined by $\eta$ (see \cite[Corollary $6.5$]{ps-krishna}). Now define $$\pa^{(2)}_{\eta,X}(F):=\{(p,q)\in\pa^2F:\pa i_{F,X}(p)=\pa i_{F,X}(q)\in\pa^{\eta}X\}.$$
	
We refer the reader to \cite[Subsection $6.2$]{ps-krishna} (see also \cite{mitra-endlam}) for more on the properties of the Cannon--Thurston lamination in the context of metric graph bundles. The following result provides a criterion for which $\pa^{(2)}_{\eta,X}(F)$ is non-empty. Recall that {\em dendrite} is a compact metric space in which any two distinct points are connected by a unique arc.
	
	\begin{theorem}\label{thm-lamination}
Suppose $\pi:X\map B$ is an $f$-metric graph bundle with controlled hyperbolic fibers. Further, suppose that $X$ is hyperbolic. Moreover, we assume one of the following.

$(A)$ The fibers have uniformly bounded valence.
			
$(B)$ The fibers are one-ended and proper metric spaces.

Finally, suppose that one of the following holds.

\begin{enumerate}
\item Let $b\in V(B)$. Suppose that $\pa F_b$ is not homeomorphic to a dendrite.

\item  Fibers are uniformly quasiisometric to a fixed nonelementary hyperbolic group.		
\end{enumerate}		
		
		Then for all $\eta\in\pa_s B$, we have $\pa^{(2)}_{\eta,X}(F)\ne\emptyset$.
	\end{theorem}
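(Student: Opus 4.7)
The plan is to reduce to the case of base $[0,\infty)$ and invoke Bowditch's dendrite theorem. Fix a continuous injective quasigeodesic ray $\al:[0,\infty)\to B$ with $\al(0)=b$ and $\al(\infty)=\eta$, and set $X_\al:=\pi^{-1}(\al)$. By Remark \ref{crucial remark}(2), the restricted bundle $\pi|_{X_\al}:X_\al\to\al\simeq[0,\infty)$ is hyperbolic and inherits the controlled hyperbolic fibre data together with whichever of (A), (B) is assumed. Theorem \ref{thm-combo1} therefore yields a surjective CT map $\phi:=\pa i_{F_b,X_\al}:\pa F_b\twoheadrightarrow\pa X_\al$, and by the result of Bowditch cited in the introduction (applied to $X_\al$), $\pa X_\al$ is a dendrite.

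Next I check $\pa F_b$ is not a dendrite under either supplementary hypothesis. Case (1) is direct. Under (2), $\pa F_b$ is homeomorphic to the Gromov boundary of a nonelementary hyperbolic group $G$: if $G$ has more than one end then $\pa G$ is disconnected; if $G$ is one-ended then $\pa G$ is connected with no global cut point by the Bowditch--Swarup theorem. Neither is consistent with being a nondegenerate dendrite, in which every non-endpoint is a cut point. Since a continuous bijection from a compact Hausdorff space to a Hausdorff space is a homeomorphism, the surjection $\phi$ cannot be injective -- otherwise $\pa F_b$ would be a dendrite. Pick distinct $p,q\in\pa F_b$ with $\phi(p)=\phi(q)=:\xi\in\pa X_\al$; composing with the CT map $\pa i_{X_\al,X}:\pa X_\al\to\pa_s X$ (which exists by \cite[Theorem 5.2]{ps-krishna}) gives $\pa i_{F_b,X}(p)=\pa i_{F_b,X}(q)$.

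The main obstacle is to arrange the common image to land in $\pa^\eta X$ rather than just in $\Lambda_X(F_b)$. I would realise the identification explicitly via a good qi section. Fix a good qi section $\Sigma$ over $\al$ through a vertex of $F_b$ (Corollary \ref{cor-existence good qi}), with associated ideal triangle $(\xi_{1,0},\xi_{2,0},\xi_{3,0})\in\pa^3 F_b$ at the base fibre. Since $\Sigma$ is a qi lift of $\al$, $\Sigma(\infty)\in\pa^\eta X$, so it suffices to show $\phi(\xi_{1,0})=\phi(\xi_{2,0})=\Sigma(\infty)$ in $\pa X_\al$, yielding $(\xi_{1,0},\xi_{2,0})\in\pa^{(2)}_{\eta,X}(F_b)$. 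Let $\bt_0$ be a geodesic line in $F_b$ from $\xi_{1,0}$ to $\xi_{2,0}$, so that $\bt_0(\pm n)\to\xi_{1,0},\xi_{2,0}$ in $\pa F_b$. I would adapt the flow arguments from the proofs of Theorems \ref{thm-combo1}(B) and \ref{main thm over ray graph gen}: in case (B) enclose $\bt_0(\pm n)$ in paths $\al_{n,0}$ outside the $n$-ball at $\Sigma(0)$ using one-endedness, and in case (A) use the uniform exponential fibre growth provided by Theorem \ref{thm-bary imp expo growth} to build qi sections through $\bt_0(\pm n)$ coming uniformly close to $\Sigma$ in some fibre along a common subsequence. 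Lemma \ref{need in main thms} then forces $\bt_0(\pm n)\to\Sigma(\infty)$ in $\pa X_\al$. The delicate point, where real work is needed, is pinning down that the qi sections starting \emph{exactly} at $\bt_0(\pm n)$ (rather than at some unspecified point of the enclosing flow structure) come close to $\Sigma$; this requires refining Claim 2 in the proof of Theorem \ref{thm-combo1}(B) and the flaring-vs-counting argument in the proof of Theorem \ref{main thm over ray graph gen}.
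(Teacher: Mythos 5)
Your high-level plan is close to what the paper does in spirit, but it is incomplete at the critical step, and you acknowledge this yourself. The paper's proof is a two-line reduction: apply Theorem \ref{thm-application} to get surjectivity of the CT map $\pa i_{F_b,X}:\pa F_b\to\pa_s X$ over the \emph{entire} bundle, verify that $\pa F_b$ is not a dendrite (direct under hypothesis (1); under hypothesis (2), the standard fact that the Gromov boundary of a nonelementary hyperbolic group is not a dendrite, exactly as you argue), and then invoke Theorem \ref{thm-lam-KP} of Krishna--Sardar, which is precisely tailored to convert ``surjective CT map $+$ $\pa F_b$ not a dendrite'' into ``$\pa^{(2)}_{\eta,X}(F)\ne\emptyset$ for all $\eta$.'' You never invoke Theorem \ref{thm-lam-KP}, and the work you propose to do by hand is exactly what that theorem encapsulates.

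The gap in your argument is genuine and is located where you say it is. Producing $p\ne q$ in $\pa F_b$ with $\phi(p)=\phi(q)$ from the dendrite/non-dendrite dichotomy only gives a pair in $\L_{CT}(F_b,X)$; it does not place the common image in $\pa^\eta X$. Since $\Lambda_{X_\al}(F_b)=\pa X_\al$ once the CT map is surjective, the set $\pa_{qi}X_\al\simeq\pa^\eta X$ is in general a proper subset, and nothing in your argument rules out that the non-injectivity you found occurs only over $\pa X_\al\setminus\pa^\eta X$. Your proposed fix --- to show that the two ideal-triangle vertices $\xi_{1,0},\xi_{2,0}$ defining a good qi section $\Sigma$ satisfy $\phi(\xi_{1,0})=\phi(\xi_{2,0})=\Sigma(\infty)$ --- is plausibly the right idea, but it is \emph{not} a consequence of the existing proof of Theorem \ref{thm-combo1}(B): there the sequence $\{p_n\}$ realizing $\Sigma(\infty)$ as a limit of $F_0$ lives on paths $\al_{n,0}$ and is not shown to accumulate at $\xi_{1,0}$ or $\xi_{2,0}$ in $\pa F_0$; after passing to a subsequence one gets \emph{some} limit point $p^*$ with $\phi(p^*)=\Sigma(\infty)$, but not two specified distinct ones. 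Filling that in would require new arguments rather than ``refining Claim 2,'' and that is exactly the content the paper delegates to \cite[Theorem 6.30]{ps-krishna}.

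Two smaller remarks. First, you reduce to $X_\al$ and apply Bowditch's dendrite theorem (\cite[Proposition 2.5.2]{bowditch-stacks}) to $\pa X_\al$; the paper instead feeds the global surjectivity from Theorem \ref{thm-application} directly into Theorem \ref{thm-lam-KP}, bypassing the dendrite structure of $\pa X_\al$ altogether. Either reduction is fine, but yours requires the extra black box of Bowditch's dendrite result, while the cleaner route keeps the dendrite hypothesis only on the $\pa F_b$ side. Second, your discussion of why $\pa F_b$ is not a dendrite under hypothesis (2) is correct and essentially what the paper asserts as ``standard.''
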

	
Note that the boundary $\partial G$ of an infinite hyperbolic group $G$ cannot be homeomorphic to a dendrite. Indeed, if $\partial G$ were a (nontrivial) dendrite (\cite[Section 3, H]{gromov-hypgps}), then it would in particular be connected. By \cite{swarup-cutpoint}, $\partial G$ has no global cut points. On the other hand, every nontrivial dendrite contains cut points. This contradiction shows that $\partial G$ cannot be a dendrite.

\noindent  Finally, Theorem \ref{thm-lamination} follows by combining the following result (Theorem \ref{thm-lam-KP}) due to Krishna and Sardar with the above fact, along with Theorem \ref{thm-application}. We mention that in both Theorems \ref{thm-lamination} and \ref{thm-lam-KP}, the boundary of fibers are non-trivial (see Theorem \ref{thm-emb of T3})

	\begin{theorem}\textup{(\cite[Theorem $6.30$]{ps-krishna})}\label{thm-lam-KP}
		Suppose $\pi:X\map B$ is an $f$-metric graph bundle with controlled hyperbolic fibers. Further, suppose that the fibers are proper metric spaces. Let $b\in V(B)$ and $F=F_b$. Moreover, assume that $\pa F$ is not homeomorphic to a dendrite. Finally, suppose that $X$ is hyperbolic and the CT map $\pa i_{F,X}:\pa F\map\pa X$ is surjective.
		
		Then for all $\eta\in\pa_s B$, we have $\pa^{(2)}_{\eta,X}(F)\ne\emptyset$.
	\end{theorem}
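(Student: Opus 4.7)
My plan is to reduce to the case of a bundle over $[0,\infty)$ and then exploit Bowditch's dendrite theorem for such bundles.

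For the reduction, fix $\eta \in \partial_s B$ and a continuous quasigeodesic ray $\alpha: [0,\infty) \to B$ with $\alpha(0) = b$ and $\alpha(\infty) = \eta$, and set $X_\alpha := \pi^{-1}(\alpha)$. By Remark \ref{crucial remark} and Theorem \ref{thm-metric bundle comb}, $X_\alpha$ is hyperbolic and $\pi|_{X_\alpha}: X_\alpha \to [0,\infty)$ is a metric graph bundle with controlled hyperbolic fibers. The CT maps $\Phi := \partial i_{F, X_\alpha}: \partial F \to \partial X_\alpha$ (by Theorem \ref{all direction surj imply surj}(1)) and $\Psi := \partial i_{X_\alpha, X}: \partial X_\alpha \to \partial_s X$ (by \cite[Theorem 5.2]{ps-krishna}) both exist, with $\Psi \circ \Phi = \partial i_{F, X}$. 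By Theorem \ref{all direction surj imply surj}(2), the hypothesized surjectivity of $\partial i_{F, X}$ is equivalent to the surjectivity of $\Phi$ for every such $\alpha$; in particular $\Phi$ is surjective.

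Next, I would invoke Bowditch's theorem \cite[Proposition 2.5.2]{bowditch-stacks}: the boundary of a hyperbolic bundle over $[0,\infty)$ with controlled hyperbolic fibers is a dendrite, so $\partial X_\alpha$ is a dendrite. Since $\Phi$ is a continuous surjection from the compact Hausdorff space $\partial F$ (not a dendrite, by hypothesis) onto the dendrite $\partial X_\alpha$, $\Phi$ cannot be injective, for otherwise it would be a homeomorphism onto a dendrite. Thus there exist distinct $p, q \in \partial F$ with $\Phi(p) = \Phi(q) =: \xi \in \partial X_\alpha$, yielding $\partial i_{F, X}(p) = \partial i_{F, X}(q) = \Psi(\xi) \in \partial X$.

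The remaining and crucial step is to ensure $\Psi(\xi) \in \partial^\eta X$. By Proposition \ref{description of pa X}, $\partial X_\alpha = \Lambda_{X_\alpha}(F) \cup \partial_{good} X_\alpha$, and $\Psi$ sends $\partial_{good} X_\alpha$ into $\partial^\eta X$; so it suffices to place $\xi$ in $\partial_{good} X_\alpha$. I would do this by exploiting the non-dendrite structure of $\partial F$ more carefully. There exists a subcontinuum $K \subseteq \partial F$ (e.g.\ a simple closed curve, or two distinct arcs with common endpoints) that fails to be a dendrite; its image $\Phi(K)$ is a subcontinuum of $\partial X_\alpha$, hence a subdendrite, and $\Phi|_K$ is a continuous surjection onto $\Phi(K)$ that must collapse pairs. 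Picking any good qi section $\Sigma$ in $X_\alpha$, with $\Sigma(\infty) \in \partial_{good} X_\alpha$, and tracing the unique arc in the dendrite from $\Phi(K)$ to $\Sigma(\infty)$, one should be able to produce an identification on $K$ whose common image lies in $\partial_{good} X_\alpha$, using density of good-qi-section endpoints along arcs in the dendrite.

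The main obstacle is precisely this last step: realizing the identification at a point of $\partial_{good} X_\alpha$ rather than at some point of $\Lambda_{X_\alpha}(F) \setminus \partial_{good} X_\alpha$. A cleaner route may be by contradiction, assuming $\Phi$ is injective on $\Phi^{-1}(\partial_{good} X_\alpha)$ and showing that the topological richness of $\partial_{good} X_\alpha$ inside the dendrite $\partial X_\alpha$ forces global injectivity of $\Phi$, contradicting the non-dendrite hypothesis on $\partial F$. Carrying out either route rigorously -- in particular making precise the sense in which good qi section endpoints are ``dense'' or ``sufficiently rich'' inside $\partial X_\alpha$ -- is the technical heart of the argument.
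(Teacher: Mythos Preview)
The paper does not supply its own proof of this theorem: it is quoted verbatim as \cite[Theorem 6.30]{ps-krishna} and then used as a black box to deduce Theorem~\ref{thm-lamination}. So there is no in-paper argument to compare your proposal against; the relevant comparison is with the Krishna--Sardar proof and the ending-lamination machinery of \cite{mitra-endlam,ps-krishna} that underlies it.

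Your reduction to $X_\alpha$ and the dendrite/non-injectivity step are correct and are indeed the natural opening moves. The genuine gap is exactly where you say it is: placing the identification point in $\partial^\eta X$. However, the topological routes you propose (subcontinua of $\partial F$, density of good-section endpoints along dendrite arcs, or the contradiction assuming injectivity on $\Phi^{-1}(\partial_{good}X_\alpha)$) are unlikely to close this gap. For instance, a continuous surjection from a circle onto an arc can be injective on the preimage of a dense set while still failing global injectivity, so your contradiction route does not go through as stated.

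The missing ingredient is geometric, not topological: for a bundle over a ray one has the inclusion $\mathcal L_{CT}(F,X_\alpha)\subseteq \partial^{(2)}_{\eta,X}(F)$; equivalently, whenever $\Phi(p)=\Phi(q)$ with $p\neq q$, the common image $\xi$ is automatically the endpoint of a qi section of $X_\alpha$, so that $\Psi(\xi)\in\partial^\eta X$. This is the content of the ending-lamination description in \cite{mitra-endlam} and \cite[\S6.2]{ps-krishna}: a bi-infinite (quasi)geodesic in $F$ joining $p$ and $q$ spans a qi-embedded ladder in $X_\alpha$, and because its two ideal endpoints are identified under $\Phi$, geodesics in $X_\alpha$ between far-out points on the ladder escape with $\pi$-value tending to infinity; their nearest-point projections trace out (a subsequence converging to) a qi section over $[0,\infty)$ whose endpoint is $\xi$. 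Since any qi section of $X_\alpha$ is, after composing with $\alpha$, a qi lift of $\alpha$ in $X$ (the lower bound comes from $\pi$ being $1$-Lipschitz and $\alpha$ being a quasigeodesic in $B$), one gets $\Psi(\xi)\in\partial^\eta X$. Once you invoke this, your dendrite argument already finishes the proof; no further continuum-theoretic manoeuvres are needed.
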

	\section{Appendix}\label{appendix sec}
	This appendix aims to explore our results for metric bundles (see Definition \ref{defn-metric bundle}) via their approximating metric graph bundles, as described in \cite[Section 1]{pranab-mahan}.\smallskip
	
\noindent{\bf Approximating graphs of spaces}: 
	For this discussion, we refer the reader to \cite[Proposition 8.45, I.8]{bridson-haefliger}. Suppose $X$ is a geodesic metric space, and let $X' \subseteq X$ be a maximal (with respect to inclusion) subset such that $d_X(x, y) \ge 1$ for all distinct $x, y \in X'$. (The existence of such a subset follows from Zorn's Lemma.) Note that for every $z \in X$, there exists $x \in X'$ such that $d_X(x, z) \le 1$.
	
	Now we construct a graph $\Gamma$ whose vertex set is $X'$, and where two distinct points $x, y \in X'$ are joined by an edge if $d_X(x, y) \le 3$. Then $\Gamma$ is a connected metric graph. Note that the vertices of $\Gamma$ correspond to points of $X' \subseteq X$. We define a map $\phi: \Gamma \to X$ by sending each vertex of $\Gamma$ to the corresponding point in $X' \subseteq X$, and sending the interior of each edge to the image of one of its endpoints. With this we have the following.
	
	\begin{prop}\textup{(\cite[Proposition 8.45, I.8]{bridson-haefliger})}\label{prop-approx graph}
		There is a universal constant $E\ge1$ such that $\phi:\Gamma\map X$ is an $E$-quasiisometry.
	\end{prop}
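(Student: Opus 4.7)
The plan is to verify the three defining conditions of a quasiisometry---coarse surjectivity, the Lipschitz-type upper bound, and the proper lower bound---separately, and then track constants to confirm that they are universal (independent of $X$).

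First I would establish coarse surjectivity: for any $z \in X$, maximality of $X'$ forces the existence of some $x \in X'$ with $d_X(x,z) \le 1$ (otherwise $X' \cup \{z\}$ would be a strictly larger $1$-separated subset). Hence $\phi(V(\Gamma)) = X'$ is $1$-dense in $X$, and since every point of $\Gamma$ is within distance $1/2$ of a vertex (edges in $\Gamma$ have length $1$), $\phi(\Gamma)$ is $2$-dense in $X$.

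Next I would prove the upper bound on vertices. If $u,v \in V(\Gamma) = X'$ satisfy $d_\Gamma(u,v) = n$, there is a vertex path $u = u_0, u_1, \ldots, u_n = v$ in $\Gamma$, and by construction of edges we have $d_X(u_{i-1},u_i) \le 3$. The triangle inequality then yields $d_X(\phi(u),\phi(v)) \le 3 n = 3\, d_\Gamma(u,v)$.

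The lower bound is the step that requires a small argument. Given distinct $u, v \in X'$, fix a geodesic $[u,v]_X$ in $X$ and choose points $u = z_0, z_1, \ldots, z_n = v$ along it with $d_X(z_{i-1},z_i) \le 1$ and $n \le \lceil d_X(u,v) \rceil$. For each $z_i$ apply coarse surjectivity to pick $x_i \in X'$ with $d_X(x_i,z_i) \le 1$, taking $x_0 = u$, $x_n = v$. By the triangle inequality $d_X(x_{i-1},x_i) \le 3$, so consecutive $x_i$'s are either equal or joined by an edge of $\Gamma$. Concatenating edges gives a path of length at most $n$ in $\Gamma$, so $d_\Gamma(u,v) \le \lceil d_X(u,v) \rceil \le d_X(u,v) + 1 = d_X(\phi(u),\phi(v)) + 1$.

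Finally, I would pass from vertices to arbitrary points $p,q \in \Gamma$. Since each edge has length $1$ and $\phi$ sends the interior of each edge to the image of an endpoint, for every $p \in \Gamma$ there is a vertex $u_p$ with $d_\Gamma(p,u_p) \le 1$ and $\phi(p) = \phi(u_p)$. Combining this with the vertex-level estimates yields a universal constant $E$ (e.g.\ $E = 5$ suffices after a brief bookkeeping of the additive errors) such that
\[
\tfrac{1}{E}\, d_\Gamma(p,q) - E \;\le\; d_X(\phi(p),\phi(q)) \;\le\; E\, d_\Gamma(p,q) + E,
\]
together with the $2$-dense image already established. The only place where one must be a little careful is the lower bound---in particular, the points $x_i$ chosen via maximality need not be distinct, but collapsing repetitions only shortens the $\Gamma$-path, which is exactly what is needed. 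No step of the argument uses any feature of $X$ beyond it being a geodesic metric space, so the constant $E$ is indeed universal.
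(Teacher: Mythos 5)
Your proof is correct and follows the standard argument (the one in Bridson--Haefliger, which the paper cites without reproducing a proof): verify coarse surjectivity from maximality of the $1$-separated net, get the upper bound from the edge condition $d_X \le 3$, get the lower bound by discretizing a geodesic in $X$ and pushing the sample points into $X'$, then pass from vertices to general points of $\Gamma$ at the cost of an additive constant. All constants are manifestly independent of $X$, so the quasiisometry constant is universal.
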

	
	
One may compare the following definition with that in \cite{roe-asym}, where the author refers to such spaces as having {\em bounded growth} and shows that they have finite asymptotic dimension. In our setting, however, we require the space to be `uniformly proper', meaning that the properness condition holds uniformly across the entire space, regardless of the base point. This motivates to the following.
	
	\begin{defn}\label{strongly proper}
		A metric space $X$ is said to be strongly proper if we have a function $N:\R_{>0}\times\R_{>0}\map\N$ such that any ball of radius $R$ in $X$ can be covered by $N(R,r)$-many balls of radius $r$. Sometimes, we call the function $N:\R_{>0}\times\R_{>0}\map\N$ as the parameter of strongly proper.
		
		We say that a collection of metric spaces $\{X_{\al}:\al\in\Lambda\}$ is uniformly strongly proper if there is a function $N:\R_{>0}\times\R_{>0}\map\R_{>0}$ such that for all $\al\in\Lambda$, the space $X_{\al}$ is strongly proper with the parameter function $N$.
	\end{defn}
	
Simple examples of strongly proper spaces are $\R^n$, $\mathbb H^n$, and metric graph in which the valence at each vertex is uniformly bounded (i.e., a metric graph with bounded valence, see Definition \ref{defn-valence}). In particular, a Cayley graph of a group with respect to a finite generating set is strongly proper. In contrast, a metric graph where each vertex has finite but not uniformly bounded valence is proper but not strongly proper.
	
	\begin{lemma}\label{lem-st pro imp st pro}
Suppose $X$ is a proper metric space. Let $\Gamma$ be any approximating metric graph of $X$ as in Proposition \ref{prop-approx graph}. Then $\Gamma$ is a proper metric space.

Moreover, if $X$ is a strongly proper metric space with a parameter function $N:\R_{>0}\times\R_{>0}\map\N$, then the valence at each vertex of $\Gamma$ is bounded above by $N(4,1/3)$. In particular, $\Gamma$ is a strongly proper metric space.
	\end{lemma}
	
	\begin{proof}
First, we prove that $\Gamma$ is proper assuming $X$ is a proper metric space. For convenience, we slightly abuse notation and assume that $V(\Gamma) \subseteq X$. Let $B_X(x, r)$ denote the closed ball of radius $r \in \mathbb{R}_{\ge 0}$ centered at $x$ in $X$. By the assumption, $B_X(x,r)$ is compact for all $r\in\R_{\ge0}$ and $x\in X$.
		
Suppose, for contradiction, that $\Gamma$ is not proper. Then there exists a vertex $u \in V(\Gamma)$ with infinite valence. Let $\{u_i : i \in \mathbb{N}\}$ be an infinite collection of distinct vertices in $\Gamma$ adjacent to $u$. Note that $u, u_i \in X$ and $d_X(u, u_i) \le 3$ for all $i \in \mathbb{N}$. Thus, $\{u, u_i : i \in \mathbb{N}\} \subseteq B_X(u, 3)$.
		
\noindent Since $u_i \neq u_j$ for $i \neq j$, we have $d_X(u_i, u_j) \ge 1$, implying that the balls $B_X(u_i, 1/3)$ and $B_X(u_j, 1/3)$ are disjoint for all $i \ne j$. On the other hand, $B_X(u_i, 1/3) \subseteq B_X(u, 4)$ for all $i$. Hence, the sequence $\{u_i\}$ lies entirely in the compact set $B_X(u, 4)$ and has no convergent subsequence, contradicting the compactness of $B_X(u,4)$. This concludes that $\Gamma$ is proper.
		
We now prove that $\Gamma$ is strongly proper, assuming that $X$ is strongly proper with parameter function $N:\R_{>0}\times\R_{>0}\to\N$. It suffices to show that the valence of every vertex of $\Gamma$ is bounded above by $N(4,1/3)$. Suppose not. Then there exists a vertex $u \in V(\Gamma)$ with valence strictly greater than $N(4, 1/3)$. Let $l > N(4, 1/3)$ and consider a set $\{u_i : 1 \le i \le l\}$ of distinct vertices in $\Gamma$ adjacent to $u$. As shown above, the balls $B_X(u_i, 1/3)$ are disjoint and contained in $B_X(u, 4)$. Thus, we cannot cover $B_X(u, 4)$ by $N(4, 1/3)$-many balls of radius $1/3$ centered at $\{u_i:1\le i\le l\}$.

Define $(1/3)$-area of $B_X(u,4)$ as the supremum number $P(1/3)\in\N\cup\{\infty\}$ such that $P(1/3)$-many disjoint balls of radius $1/3$ can be embedded in $B_X(u,4)$. Note that $P(1/3)\ge l>N(4,1/3)$. This concludes that $B_X(u,4)$ cannot be covered by at most $N(4,1/3)$-many balls of radius $1/3$, contradicting the assumption that $X$ is strongly proper with the parameter function $N$. This completes the proof of Lemma \ref{lem-st pro imp st pro}.
	\end{proof}
	
	\begin{defn}\textup{(\cite[Definition 1.2]{pranab-mahan})}\label{defn-metric bundle}
		Let $f:\R_{\ge0}\map\R_{\ge0}$ be a map such that $f(n)\map\infty$ as $n\map\infty$, and let $c\ge1$. Suppose $X$ and $B$ are metric spaces. A surjective $1$-Lipschitz map $\pi:X\map B$ is called $(f,c)$-metric bundle if the following hold.
		\begin{enumerate}
			\item For all $b\in B$, $F_b:=\pi^{-1}(b)$, called fiber, is a geodesic metric space with respect to the path metric $d_b$ from $X$. The inclusion maps $(F_b,d_b)\map X$ are $f$-proper embedding (see Definition \ref{defn-proper embedding}).
			
			\item Let $b_1,b_2\in B$ such that $d_B(b_1,b_2)\le 1$. For any geodesic $\al=[b_1,b_2]_B$, $z\in\al$ and $x\in F_z$, there is a path in $\pi^{-1}(\al)$ of length at most $c$ joining $x$ and a point in $F_{b_i}$, where $i=1,2$.
		\end{enumerate}
		Sometimes, we say $\pi: X \to B$ is an $(f, c)$-metric bundle without explicitly specifying the function $f$ and the constant $c$.
	\end{defn}
	Given a metric bundle, the authors in \cite[Section 1]{pranab-mahan} construct a quasiisometric metric graph bundle that satisfies the condition of `compatibility'. We refer the reader to \cite[Section 1]{pranab-mahan} for the full construction and details. Below, we provide a brief sketch of the idea.\smallskip
	
\noindent{\bf Approximating metric graph bundle}: Suppose $\pi': X' \to B'$ is an $(f, c)$-metric bundle. We first apply Proposition \ref{prop-approx graph} to the base space $B'$ to construct a metric graph $B$ and a quasiisometry $\psi: B \to B'$. For each vertex $u \in B$, we have the fiber $F'_u := \pi'^{-1}(\psi(u))$. Next, we apply Proposition \ref{prop-approx graph} to each $F'_u$ to obtain a metric graph $F_u$ along with a quasiisometry $f_u: F_u \to F'_u$.
	
	Now we construct a connected graph $X''$ with vertex set $V(X'')=\bigcup_{u\in V(B)}V(F_u)$ and we join two distinct vertices $x, y \in V(X'')$ by an edge if $d_X(x, y) \leq 6c + 3$. Define a subgraph $X \subset X''$ with the same vertex set $V(X) = V(X'')$, where an edge $[x, y]$ in $X''$ is remained in $X$ if and only if either $[x,y]\sse F_u$ for some $u\in V(B)$ or $x\in V(F_u)$ and $y\in V(F_v)$ with $d_B(u,v)=1$. Now we define a map $\Psi:X\map X'$ sending vertices of $X$ to the corresponding points of $X'$, and sending the interior of an edge to the image of one of its endpoints.
	
	Finally, we define the projection map $\pi: X \to B$ as follows. For any edge in $X$ that connects two vertices of some $F_u$ (where $u \in V(B)$), we set $\pi$ to map the entire edge to the vertex $u$. For any other edge $[x, y]$ in $X$, where $x \in F_u$ and $y \in F_v$ with $d_B(u, v) = 1$, we define $\pi$ to be an isometry from $[x, y]$ onto the edge $[u, v]$ in $B$.
	
With the construction and notations above, we have the following.
	
	\begin{prop}\textup{(\cite[Lemmas 1.20, 1.21]{pranab-mahan})}\label{prop-approx metric graph bundle}
	Suppose $\pi':X' \to B'$ is an $(f,c)$-metric bundle. Then there exists a constant $K \ge 1$ and a function $g:\N \to \N$ with $g(n) \to \infty$ as $n \to \infty$, both depending only on $f$ and $c$, such that:
		\begin{enumerate}
			\item For all $u \in V(B)$, the maps $\psi:B \to B'$ and $f_u:F_u \to F'_u$ are $E$-quasiisometries, where $E \ge 1$ is the universal constant appearing in Proposition \ref{prop-approx graph}.
			
			\item $\Psi:X\map X'$ is a $K$-quasiisometry.
			
			\item $\pi:X\map B$ is a $g$-metric graph bundle.
		\end{enumerate}
	\end{prop}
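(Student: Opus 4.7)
\textbf{Proof plan for Proposition \ref{prop-approx metric graph bundle}.}

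Part $(1)$ is immediate from the construction: both $B$ and each $F_u$ are obtained by the generic procedure of Proposition \ref{prop-approx graph} applied to $B'$ and $F'_u$ respectively, so $\psi$ and $f_u$ are $E$-quasiisometries with the universal constant $E$ of that proposition. Note that the constant $E$ does not depend on $f$ or $c$.

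Next, I would dispense with the purely combinatorial parts of $(3)$. By construction $\pi$ is simplicial, surjective, and $1$-Lipschitz: edges whose endpoints lie in a common fiber $F_u$ are collapsed to $u$, while the remaining edges are isometrically mapped onto edges of $B$. For $u\in V(B)$, the fiber $F_u=\pi^{-1}(u)$ contains, by construction, all pairs of vertices of $F'_{\psi(u)}$ at $X'$-distance $\le 3$ (since $6c+3\ge 3$), hence contains the approximating graph of $F'_{\psi(u)}$ and is therefore connected. For condition $(2)$ of Definition \ref{defn-metric graph bundle}, let $u,v\in V(B)$ with $d_B(u,v)=1$ and $x\in V(F_u)$. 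By Proposition \ref{prop-approx graph} applied to $B'$, $d_{B'}(\psi(u),\psi(v))\le 3E$; combining this with property $(2)$ of an $(f,c)$-metric bundle along a geodesic from $\psi(u)$ to $\psi(v)$, one reaches $F'_{\psi(v)}$ from $x$ in $X'$ within a constant $L=L(c)$; the endpoint can then be replaced by a vertex of $F_v$ at $X'$-distance $\le 1$, so $x$ is adjacent in $X$ to some vertex of $F_v$.

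For $(2)$, the upper bound $d_{X'}(\Psi(x),\Psi(y))\le (6c+3)\,d_X(x,y)+(6c+3)$ is immediate from the definition of edges in $X$ together with the fact that $\Psi$ maps each edge to a path of length $\le 6c+3$ in $X'$. For the lower bound, given $x,y\in V(X)$, take a geodesic $\gamma:[0,d]\to X'$ from $\Psi(x)$ to $\Psi(y)$ and discretize at points $\gamma(0),\gamma(1),\dots,\gamma(d)$. Using property $(2)$ of the metric bundle $\pi'$ and Proposition \ref{prop-approx graph} for both base and fibers, each $\gamma(i)$ can be replaced by a vertex $z_i\in V(F_{u_i})$ with $d_{X'}(\gamma(i),z_i)$ bounded by a constant depending only on $f,c$, with $d_B(u_i,u_{i+1})\le 1$; the estimate $d_{X'}(z_i,z_{i+1})\le 6c+3$ then holds by the triangle inequality, so $z_i$ and $z_{i+1}$ are joined by an edge in $X$. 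This produces a path in $X$ from (a vertex near) $x$ to (a vertex near) $y$ of length at most $d+$\,constant, giving the desired linear bound. Coarse surjectivity of $\Psi$ follows by the same discretization applied to any point of $X'$. Combining these estimates yields a uniform $K=K(f,c)$ with $\Psi$ a $K$-quasiisometry.

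Finally, I would return to the proper embedding in $(3)$. Let $x,y\in V(F_u)$ with $d_X(x,y)\le n$. By part $(2)$, $d_{X'}(\Psi(x),\Psi(y))\le Kn+K$, and since $\Psi(x),\Psi(y)\in F'_{\psi(u)}$ which is $f$-properly embedded in $X'$, their $F'_{\psi(u)}$-distance is at most $f(Kn+K)$. Transporting this back through the $E$-quasiisometry $f_u:F_u\to F'_{\psi(u)}$ gives $d_u(x,y)\le g(n)$ for an explicit $g$ depending only on $f,c$ (and the universal $E$), satisfying $g(n)\to\infty$. The main obstacle is the careful bookkeeping in the discretization argument for the lower bound on $\Psi$, where one must simultaneously track approximations in $B'$ (forcing $d_B(u_i,u_{i+1})\le 1$) and in the individual fibers (forcing the $6c+3$ threshold for edges of $X$ to apply) so that the constructed sequence $(z_i)$ is genuinely an edge path in $X$; everything else is essentially routine given Proposition \ref{prop-approx graph} and Definition \ref{defn-metric bundle}.
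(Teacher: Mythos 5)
The paper does not prove this proposition itself; it records the construction and cites \cite[Lemmas 1.20, 1.21]{pranab-mahan} for the quantitative claims. Your proof fills in exactly the verifications that the citation would supply, and it is consistent with the sketch given in the paper, so in that sense it takes the same route.

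One small inaccuracy is worth flagging. In your discretization argument for the lower bound on $\Psi$, you conclude that consecutive vertices $z_i, z_{i+1}$ ``are joined by an edge in $X$'' because $d_{X'}(z_i,z_{i+1})\le 6c+3$. This is fine when $u_i\ne u_{i+1}$ (the inter-fiber edges of $X$ are governed by the $6c+3$ threshold in $X'$), but when $u_i=u_{i+1}$ the intra-fiber edges of $X$ are the edges of the approximating graph $F_{u_i}$, which are defined by $d_{F'_{u_i}}\le 3$, not by $d_{X'}\le 6c+3$. In that case you only get that $d_{F'_{u_i}}(z_i,z_{i+1})\le f(6c+3)$ by $f$-properness, whence $z_i$ and $z_{i+1}$ are joined by a path of uniformly bounded (but not $\le 1$) length in $F_{u_i}$. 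This still yields the required linear lower bound with a constant depending only on $f,c$, and you do flag ``careful bookkeeping'' as the main obstacle, so the issue is within what you acknowledge --- but the phrase ``joined by an edge'' should be replaced by ``joined by a path of uniformly bounded length'' to be correct. Similarly, in your verification of the $f$-proper embedding for part $(3)$, the metric $d_u$ appearing in Definition \ref{defn-metric graph bundle} is the path metric on $\pi^{-1}(u)\subset X$, which you should identify (or at least compare biLipschitzly) with the metric on the approximating graph $F_u$ before transporting the estimate through $f_u$; under the reading that $\pi^{-1}(u)$ has exactly the edges of $F_u$, these coincide, and otherwise they are uniformly biLipschitz, so the conclusion survives in either case.
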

	
	We refer to $\pi:X\map B$ in Proposition \ref{prop-approx metric graph bundle} as an {\em approximating metric graph
		bundle} of the metric bundle $\pi':X'\map B'$.
	
	Finally, the discussion also gives the following (coarsely) commutating diagram. 
	\begin{figure}[h]
		\centering
		\begin{subfigure}{0.45\linewidth}  
			\centering
			\begin{tikzcd}
				X \arrow{r}{\Psi} \arrow[swap]{d}{\pi} \arrow[dr, phantom] & X' \arrow{d}{\pi'} \\
				B \arrow{r}{\psi}& B'
			\end{tikzcd}
			\caption{}
		\end{subfigure}
		\quad  
		\begin{subfigure}{0.45\linewidth}
			\centering
			\begin{tikzcd}
				F_u \arrow{r}{f_u~(QI)} \arrow[swap]{d}{} \arrow[dr, phantom] & F'_u\arrow{d}{} \\
				X\arrow{r}{\Psi~(QI)} & X'
			\end{tikzcd}
			\caption{}
		\end{subfigure}
		
		\caption{}
		\label{commutative diagram}  
	\end{figure}
	
	As a consequence of Proposition \ref{prop-approx metric graph bundle} $(1)$ and Lemma \ref{lem-barycenter commuting}, we have the following. We retain the same definition of a metric bundle having {\em controlled hyperbolic fibers} as given in Definition \ref{controlled fibers defn}.
	\begin{cor}\label{cor-approx controlled}
		Suppose $\pi':X'\map B'$ is an $(f,c)$-metric bundle with controlled hyperbolic fibers (that is, the fibers are uniformly hyperbolic and the barycenter maps for the fibers are uniformly coarsely surjective). Then an approximating $g$-metric graph bundle $\pi:X\map B$ (see Proposition \ref{prop-approx metric graph bundle}) also has controlled hyperbolic fibers.
	\end{cor}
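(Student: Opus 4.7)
The plan is to chase the definition of controlled hyperbolic fibers (Definition \ref{controlled fibers defn}) through the compatibility diagram (Figure \ref{commutative diagram}) and invoke the quasiisometry invariance of both hyperbolicity and coarse surjectivity of the barycenter map.

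First I would fix notation: write $\delta \ge 0$ and $L \ge 0$ for the parameters of controlled hyperbolic fibers of $\pi' : X' \to B'$, so that each $F'_u = \pi'^{-1}(\psi(u))$ is $\delta$-hyperbolic and its barycenter map $\partial^3 F'_u \to F'_u$ is $L$-coarsely surjective, for every $u \in V(B)$. By Proposition \ref{prop-approx metric graph bundle}$(1)$, for each $u \in V(B)$ there is an $E$-quasiisometry $f_u : F_u \to F'_u$, where $E \ge 1$ is the universal constant from Proposition \ref{prop-approx graph}. In particular, the quasiisometry constant $E$ is independent of $u$.

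Next, since quasiisometric invariance of hyperbolicity is uniform in the parameters (this is standard, see e.g.\ \cite[Theorem 1.9, III.H]{bridson-haefliger}; and the proof goes through identically for possibly non-proper geodesic metric spaces using the quasigeodesic boundary as in Remark \ref{qg boundary}), applying this to the uniform $E$-quasiisometries $f_u$ yields a constant $\delta' \ge 0$ depending only on $\delta$ and $E$ such that each $F_u$ is $\delta'$-hyperbolic.

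Then I would apply Lemma \ref{lem-barycenter commuting} to a quasi-inverse $g_u : F'_u \to F_u$ of $f_u$; this $g_u$ is a $k$-quasiisometry for some $k$ depending only on $E$. Since the barycenter map for $F'_u$ is $L$-coarsely surjective, Lemma \ref{lem-barycenter commuting} produces a constant $L' \ge 0$ depending only on $L$, $k$, $\delta$ (hence only on $L$, $\delta$, $E$) such that the barycenter map $\partial^3 F_u \to F_u$ is $L'$-coarsely surjective. Since the constants $\delta'$ and $L'$ are independent of $u \in V(B)$, the metric graph bundle $\pi:X \to B$ has controlled hyperbolic fibers with parameters $(\delta', L')$.

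There is no real obstacle here; the only mild subtlety is that Lemma \ref{barycenter}, and hence Lemma \ref{lem-barycenter commuting}, is phrased for proper hyperbolic spaces, whereas $F_u$ and $F'_u$ may fail to be proper. This is dealt with by the standing convention (Remark \ref{qg boundary}) that in the non-proper setting one works with the quasigeodesic boundary $\partial_q$ in place of $\partial$, and ideal triangles formed of quasigeodesic lines, so that both Lemma \ref{barycenter} and Lemma \ref{lem-barycenter commuting} apply verbatim (cf.\ the remark after Definition \ref{controlled fibers defn} and \cite[Lemma 2.7]{pranab-mahan}).
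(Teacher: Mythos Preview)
Your proof is correct and follows essentially the same approach as the paper, which simply states the corollary as a consequence of Proposition \ref{prop-approx metric graph bundle}(1) and Lemma \ref{lem-barycenter commuting} without further elaboration. Your handling of the properness subtlety via Remark \ref{qg boundary} and the remark after Definition \ref{controlled fibers defn} is appropriate and matches the paper's conventions.
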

	
	

	
	\begin{remark}\label{rmk-formetircbundle}
		We note that $(1)$ Theorem \ref{all direction surj imply surj} holds in the setting of metric bundles as well, and $(2)$ Theorem \ref{thm-metric bundle comb}, along with Remark \ref{crucial remark}, also holds for metric bundles. The cited references there work for metric bundles. 
	\end{remark}
	Finally, we have the following main theorem in the appendix.
	\begin{theorem}\label{thm-all in one appendix}
Suppose $\pi':X'\map B'$ is an $(f,c)$-metric bundle with controlled hyperbolic fibers, and suppose that $X'$ is hyperbolic. $($Note that $B'$ is hyperbolic.$)$ Suppose $A'$ is a qi embedded subspace of $B'$, and let $Y'=\pi'^{-1}(A')$. $($Note also that $Y'$ is hyperbolic$)$. Finally, we assume one of the following.

$(A)$ Suppose $N:\R_{>0}\times\R_{>0}\map\N$ is a map such that the fibers are strongly proper metric spaces with the parameter function $N$ (i.e., the fibers are uniformly strongly proper).

$(B)$ The fibers are one-ended and proper metric spaces.

 Then:
		
		\begin{enumerate}
			\item the inclusion $i_{Y',X'}:Y'\map X'$ admits a CT map $\pa i_{Y',X'}:\pa_sY'\map\pa_sX'$ by \textup{\cite[Theorem $5.2$]{ps-krishna}}, and
			
			\item the CT map $\pa i_{Y',X'}:\pa_s Y'\map\pa_s X'$  is surjective.
		\end{enumerate}
	\end{theorem}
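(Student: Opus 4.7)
The plan is to reduce the statement to Theorem \ref{thm-application} for metric graph bundles by passing to an approximating metric graph bundle. Using Proposition \ref{prop-approx metric graph bundle}, I would fix an approximating $g$-metric graph bundle $\pi:X\map B$ of $\pi':X'\map B'$, together with the quasiisometries $\Psi:X\map X'$, $\psi:B\map B'$ and $f_u:F_u\map F'_u$ for each $u\in V(B)$ from the (coarsely) commutative diagram in Figure \ref{commutative diagram}. By Corollary \ref{cor-approx controlled}, $\pi:X\map B$ has controlled hyperbolic fibers, and since $\Psi$ is a quasiisometry and $X'$ is hyperbolic, $X$ is hyperbolic as well (and similarly for $B$); see Remark \ref{rmk-formetircbundle}.

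Next, I would verify the remaining fiber hypotheses of Theorem \ref{thm-application} under the two cases. Under $(A)$, Lemma \ref{lem-st pro imp st pro} applied to each $F'_u$ gives that the valence at every vertex of $F_u$ is bounded by $N(4,1/3)$, a constant depending only on the uniform strongly proper parameter $N$; hence the fibers of $\pi:X\map B$ have uniformly bounded valence. Under $(B)$, the properness of $F_u$ also follows from Lemma \ref{lem-st pro imp st pro}, while one-endedness of $F_u$ is transported from $F'_u$ via the quasiisometry $f_u$ (one-endedness being a quasiisometry invariant among proper geodesic metric spaces).

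To carry across the subspace $A'\sse B'$, I would define $A\sse B$ to be the subgraph spanned by the vertices $u\in V(B)$ with $\psi(u)\in N_r(A')$ in $B'$, for a constant $r$ chosen large enough (depending on the quasiisometry constants of $\psi$) so that $\psi|_A$ is a quasiisometry onto a uniform neighborhood of $A'$. A straightforward bookkeeping with the quasiisometry constants of $\psi$ and the qi embedding of $A'$ in $B'$ shows that $A$ is qi embedded in $B$. Setting $Y=\pi^{-1}(A)$, the quasiisometry $\Psi$ restricts (up to enlarging neighborhoods) to a quasiisometry $Y\map Y'=\pi'^{-1}(A')$. Theorem \ref{thm-application} under the corresponding hypothesis then yields the surjective CT map $\pa i_{Y,X}:\pa_s Y\map\pa_s X$.

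Finally, a quasiisometry between hyperbolic metric spaces induces a homeomorphism between their sequential boundaries, and CT maps commute with these homeomorphisms (this is standard and is used throughout Subsection \ref{subsec-application}). Applying this to $Y\map Y'$ and $X\map X'$ converts the surjectivity of $\pa i_{Y,X}$ into the surjectivity of $\pa i_{Y',X'}:\pa_s Y'\map\pa_s X'$, as required. The main (though routine) technical point is the construction of $A\sse B$ and the verification that $Y=\pi^{-1}(A)$ is quasiisometric to $Y'=\pi'^{-1}(A')$; all other steps are immediate consequences of results already established in the paper.
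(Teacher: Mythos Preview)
Your approach is correct in outline but takes a different, more laborious route than the paper. The paper avoids constructing a subgraph $A\sse B$ approximating $A'$ altogether: it first shows surjectivity of the CT map for a \emph{single fiber} $F'_b\hookrightarrow X'$ by passing to the approximating graph bundle (where the fiber-level quasiisometry $f_b:F_b\to F'_b$ is handed to you directly by Proposition \ref{prop-approx metric graph bundle}(1)), applying Theorem \ref{thm-application} to get surjectivity of $\pa i_{F_b,X}$, and then reading off surjectivity of $\pa i_{F'_b,X'}$ from the (coarsely) commutative square in Figure \ref{commutative diagram}(B). The passage from the fiber to an arbitrary qi embedded $A'\sse B'$ is then a one-line citation of the ``moreover'' clause of Theorem \ref{all direction surj imply surj}(2), which already packages exactly the statement you want for $Y'=\pi'^{-1}(A')$.

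By contrast, your route requires you to (i) build a connected subgraph $A\sse B$ with $\psi(A)$ coarsely equal to $A'$ and $A$ qi embedded in $B$, and (ii) verify that $\Psi$ restricts to a quasiisometry between $Y=\pi^{-1}(A)$ and $Y'=\pi'^{-1}(A')$ \emph{with their intrinsic path metrics}, not just the ambient ones. Both steps are feasible, but neither is a tautology: for (i) you need to argue connectivity of the span of $\psi^{-1}(N_r(A'))$ and control the distortion, and for (ii) you must promote the ambient quasiisometry $\Psi$ to one between the induced path metrics on the preimages, which uses the bundle structure on both sides. None of this is done elsewhere in the paper, so calling it ``routine'' undersells the work. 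The paper's argument buys you a cleaner proof by leveraging Theorem \ref{all direction surj imply surj}(2), which was designed precisely to reduce such questions to the fiber case.
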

	
	\begin{proof}
		Fix $b\in B'$ and the fiber $F'_b=\pi'^{-1}(b)$. First we will show that the CT map $\pa i_{F'_b,X'}:\pa F'_b\map\pa X'$ is surjective. (Existence follows from Theorem \ref{all direction surj imply surj} $(1)$ and Remark \ref{rmk-formetircbundle} $(1)$.) 
		
		Suppose $\pi:X\map B$ is an approximating $g$-metric graph bundle of the metric bundle $\pi':X'\map B'$ for some function $g:\N\map\N$. For convenience, we slightly abuse notation and assume that $V(B)\sse B'$. Moreover, we assume that $b\in V(B)$, and so we have fiber $F_b=\pi^{-1}(b)$. By Proposition \ref{prop-approx metric graph bundle} $(2)$ and Corollary \ref{cor-approx controlled}, the $g$-metric graph bundle $\pi:X\map B$ has controlled hyperbolic fibers such that $X$ is hyperbolic.\smallskip
		
\noindent{\em Under the condition $(A)$}: Let $u\in V(B)$ and $F_u=\pi^{-1}(u)$. Let $x\in V(F_u)$. Then by Lemma \ref{lem-st pro imp st pro}, the valence at $x$ in $F_u$ is bounded above by $N(4,1/3)$. This is true for any vertices $u\in V(B)$ and $x\in V(F_u)$.
		
\noindent{\em Under the condition $(B)$}: Since the property of being one-ended is invariant under quasiisometry, fibers of the $g$-metric graph bundle $\pi:X\map B$ are one-ended. Moreover, by Lemma \ref{lem-st pro imp st pro}, the fibers are proper metric graphs.\smallskip

By Theorem \ref{thm-application}, the CT map $\pa i_{F_b,X}:\pa F_b\map \pa_sX$ is surjective. Hence, from the (coarse) commutative diagram Figure \ref{commutative diagram} $(B)$ (and Lemma \ref{CT image is limit set}), the CT map $\pa i_{F'_b,X'}:\pa F'_b\map\pa_s X'$ is surjective.

Note that $B'$ is hyperbolic by \textup{\cite[Proposition $2.12$]{pranab-mahan}}, and so $Y'$ is hyperbolic by Remarks \ref{crucial remark} $(2)$ and \ref{rmk-formetircbundle} $(2)$. Finally, by Theorem \ref{all direction surj imply surj} $(2)$ and Remark \ref{rmk-formetircbundle} $(1)$, the CT map $\pa i_{Y',X'}:\pa_s Y'\map \pa_s X'$ is surjective.
	\end{proof}

	\bibliography{Ubib}
	\bibliographystyle{amsalpha}
\end{document}